\newtheorem{Def}{\bf Definition}[subsection]
\newtheorem{Thm}[Def]{\bf Theorem}
\newtheorem{Lem}[Def]{\bf Lemma}
\newtheorem{Cor}[Def]{\bf Corollary}
\newtheorem{Pro}[Def]{\bf Proposition}
\newtheorem{Rem}[Def]{\bf Remark}
\newtheorem{ThmA}{\bf Theorem}
\title{\bf Some prime factorization results for free quantum group factors}
\author{Yusuke Isono}
\date{}
\begin{document}
\maketitle

\begin{abstract}
We prove some unique factorization results for tensor products of free quantum group factors. 
They are type III analogues of factorization results for direct products of bi-exact groups established by Ozawa and Popa. 
In the proof, we first take continuous cores of the tensor products, which satisfy a condition similar to condition (AO), and discuss some factorization properties for the continuous cores. 
We then deduce factorization properties for the original type III factors. 
We also prove some unique factorization results for crossed product von Neumann algebras by direct products of bi-exact groups. 
\end{abstract}

\section{\bf Introduction}

We say a $\rm II_1$ factor is \textit{prime} if it is not isomorphic to tensor products of $\rm II_1$ factors. 
The first example of such a factor was given by Popa \cite{Po83}. He proved that any free group factor $L\mathbb{F}_\infty$ (with uncountably many generators) is prime. 
In \cite{Ge96}, Ge proved that $L\mathbb{F}_n$ (with $n\geq 2$) are prime by computing Voiculescu's free entropy. 
Ozawa then proved that all free group factors are solid \cite{Oz03}, meaning that the relative commutant of any diffuse von Neumann subalgebra is amenable (namely, injective). 
Solidity immediately yields primeness of any diffuse non-amenable subalgebras. 
Ozawa's proof relied on the notion of condition (AO) (see Subsection \ref{discrete decomposition}) and $C^*$-algebraic methods. 
Peterson gave a new proof of solidity of free group factors \cite{Pe06}. 

In \cite{Ge96}, Ge asked the following question: 
\begin{itemize}
	\item Is $L\mathbb{F}_2\mathbin{\bar{\otimes}} L\mathbb{F}_2$ isomorphic to $L\mathbb{F}_2\mathbin{\bar{\otimes}} L\mathbb{F}_2\mathbin{\bar{\otimes}} L\mathbb{F}_2$? 
\end{itemize}
Here the symbol $\mathbin{\bar{\otimes}}$ means the tensor product as von Neumann algebras. 
This is an extended primness problem for free group factors, which mentions numbers of tensor components. 
The question was solved by Ozawa and Popa \cite{OP03}. 
They used a combination of a tensor product analogue of condition (AO) and Popa's intertwining techniques, and obtained a relative version of Ozawa's solidity theorem. 
As a result, they deduced the following theorem, which gave a complete answer to the problem. 
See \cite[\textrm{Section 15}]{BO08} for bi-exactness below. 

\vspace{1em}
\noindent
{\bf Factorization theorem of Ozawa and Popa.}
\textit{Let $\Gamma_i$ $(i=1,\ldots,m)$ be non-amenable, ICC, bi-exact discrete groups and $N_j$ $(j=1,\ldots,n)$ be $\rm II_1$ factors. 
If $L\Gamma_1\mathbin{\bar{\otimes}} \cdots \mathbin{\bar{\otimes}} L\Gamma_m=N_1\mathbin{\bar{\otimes}} \cdots\mathbin{\bar{\otimes}} N_n$ $(=:M)$ and $m\leq n$, then $m=n$ and there are $u\in \mathcal{U}(M)$, $\sigma\in  \mathfrak{S}_n$, and $t_i>0$ with $t_1\cdots t_n=1$ such that
$uN_{\sigma(i)}u^*=L\Gamma_i^{t_i}$ for a fixed decomposition $M=L\Gamma_1^{t_1}\mathbin{\bar{\otimes}}\cdots\mathbin{\bar{\otimes}} L\Gamma_n^{t_n}$.
}
\vspace{1em}

Here recall that for a $\rm II_1$ factor $M$ and $t>0$, the \textit{amplification} $M^t$ is defined (up to $*$-isomorphism) as $pMp\mathbin{\bar{\otimes}} \mathbb{M}_n$ for any $n\in\mathbb{N}$ with $t\leq n$ and any projection $p\in M$ with trace $t/n$. 
We also recall that $\rm II_1$ factors $M$ and $N$ are \textit{stably isomorphic} if $M^t\simeq N^s$ for some $t,s>0$. 
For any $\rm II_1$ factors $M_i$ and $t>0$, $M_1\mathbin{\bar{\otimes}} M_2 \simeq (pM_1p\mathbin{\bar{\otimes}} qM_2q)\mathbin{\bar{\otimes}} \mathbb{M}_n\mathbin{\bar{\otimes}} \mathbb{M}_m\simeq M_1^t\mathbin{\bar{\otimes}} M_2^{1/t}$ for some large $n,m$ and projections $p,q$ with traces $t/n$ and $1/mt$. 
So any $\rm II_1$ factor tensor decomposition is determined up to amplifications of tensor components. 
The theorem above then means the uniqueness of the tensor decomposition up to stable isomorphism. 

In the present paper, we study similar factorization results for free quantum group factors. 
It is known that these factors satisfy condition (AO) \cite{Ve05}\cite{VV05}\cite{VV08}, and in fact tensor products of these factors satisfy an analogue of condition (AO) mentioned above for free group factors (see Proposition \ref{AO for tensor algebras}). 
So Ozawa--Popa's factorization result is true if each tensor component is a non-amenable $\rm II_1$ factor. 
However these factors often become type III and, in the general case, Popa's techniques are no loner available. 

To avoid the difficulty, we take continuous cores. 
A condition (AO) type phenomenon on cores of these factors was already observed in \cite{Is12_2}, and we generalize it to cores of the tensor products. 
This enables us to discuss some factorization properties on the continuous cores. 
In particular, we deduce some one-to-one correspondence with respect to Popa's embedding $\preceq$ (see Subsection \ref{Popa's intertwining techniques}) between tensor components on the cores. 
We then turn to see original type III algebras and deduce some factorization results. 
Thus we obtain the following theorem which is the main conclusion of the paper. 
See Definition \ref{class C} for the class $\mathcal{C}$, which contains (duals of) free quantum groups, and Subsection \ref{discrete decomposition} for type $\rm III_1$ factors, Sd-invariants, continuous cores $C_{\phi}(N)$, and centralizer algebras $(N_i)_{\phi_i}$ below. 

\begin{ThmA}\label{A}
Let $\hat{\mathbb{G}}_i$ $(i=1,\ldots,m)$ be discrete quantum groups in $\mathcal{C}$ and $N_j$ $(j=1,\ldots, n)$ be non-amenable von Neumann algebras which admit almost periodic states. 
Assume that there is an inclusion $N:=N_1\mathbin{\bar{\otimes}} \cdots \mathbin{\bar{\otimes}} N_n\subset L^\infty(\mathbb{G}_1)\mathbin{\bar{\otimes}}\cdots\mathbin{\bar{\otimes}} L^\infty(\mathbb{G}_m)=:M$ with a faithful normal conditional expectation. Then we have $n\leq m$. 

Assume further $n=m$, $N=M$, and the following conditions:
\begin{itemize}
	\item Each $L^\infty(\mathbb{G}_i)$ is a factor of type $\rm II_1$ or $\rm III_1$ and its Haar state $h_i$ is $\mathrm{Sd}(L^\infty(\mathbb{G}_i))$-almost periodic. Write $h:=h_1\otimes\cdots\otimes h_n$.
	\item Each $N_i$ is a factor of type $\rm II_1$ or $\rm III_1$ and any $\rm III_1$ factor $N_i$ admits an almost periodic state $\phi_i$ such that $(N_i)_{\phi_i}'\cap N_i=\mathbb{C}$ (put the trace as $\phi_i$ when $N_i$ is a $\rm II_1$ factor). Write $\phi:=\phi_1\otimes\cdots\otimes \phi_n$.
\end{itemize}
Then under the canonical isomorphism $C_\phi(N)\simeq C_h(M)$ with the canonical trace $\mathrm{Tr}$, there exists a unique $\sigma\in  \mathfrak{S}_n$ such that
\begin{equation*}
pC_{\phi_i}(N_i)p \preceq_{C_h(M)} C_{\sigma(i)}(L^\infty(\mathbb{G}_{\sigma(i)}) \quad (i=1,\ldots,n)
\end{equation*}
for any projection $p\in L\mathbb{R}\subset C_\phi(N)$ with $\mathrm{Tr}(p)<\infty$. In this case, $N_i$ and $L^\infty(\mathbb{G}_{\sigma(i)})$ are isomorphic when $N_i$ is a $\rm III_1$ factor, and stably isomorphic when $N_i$ is a $\rm II_1$ factor. 
\end{ThmA}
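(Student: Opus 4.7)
The plan is to reduce the factorization problem to the continuous cores of $N$ and $M$, where the analogue of condition (AO) for tensor products (Proposition \ref{AO for tensor algebras}) and the $C^*$-algebraic techniques of \cite{Is12_2} are available, and then to translate the resulting intertwinings back to the original type III algebras via the almost periodic structure.

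To begin with, I would pass to cores. The almost periodic states $\phi$ on $N$ and $h$ on $M$, together with the conditional expectation $E : M \to N$, induce an inclusion $C_\phi(N) \subset C_h(M)$ with a trace-preserving conditional expectation. Since each $N_i$ is non-amenable, so is $C_{\phi_i}(N_i)$. Inside $C_h(M)$, the AO-type property, combined with an Ozawa-style relative solidity argument adapted from \cite{Is12_2}, should show that for each $i$ and each finite-trace projection $p \in L\mathbb{R} \subset C_\phi(N)$, the compression $p C_{\phi_i}(N_i) p$ must embed, in the sense of Popa's $\preceq_{C_h(M)}$, into some single tensor factor $C_{h_{j(i)}}(L^\infty(\mathbb{G}_{j(i)}))$. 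The other components $C_{\phi_k}(N_k)$ with $k \neq i$ commute with $C_{\phi_i}(N_i)$ and contain diffuse non-amenable subalgebras; a clustering argument using the bi-exactness implicit in the class $\mathcal{C}$ forces the index $j(i)$ to be essentially unique and the assignment $i \mapsto j(i)$ to be injective, yielding $n \leq m$.

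Under the equality assumptions $n = m$ and $N = M$, this injection becomes a permutation $\sigma \in \mathfrak{S}_n$. Running the same argument with the roles of $N$ and $M$ exchanged produces a two-sided version of $\preceq$ on the cores. Uniqueness of $\sigma$ follows from the same disjointness considerations: no $p C_{\phi_i}(N_i) p$ can be simultaneously intertwined into two distinct core tensor components without contradicting non-amenability of the remaining parts.

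The main obstacle I anticipate is promoting the two-sided $\preceq$ on cores to a (stable) isomorphism of the original factors $N_i$ and $L^\infty(\mathbb{G}_{\sigma(i)})$. Popa's intertwining yields, after suitable compression, a unitary $u \in C_h(M)$ conjugating a corner of $C_{\phi_i}(N_i)$ into $C_{h_{\sigma(i)}}(L^\infty(\mathbb{G}_{\sigma(i)}))$, but one still has to verify that this conjugation is compatible with the dual $\mathbb{R}$-actions, so that Takesaki duality can be applied to descend the isomorphism to the original algebras. The centralizer condition $(N_i)_{\phi_i}' \cap N_i = \mathbb{C}$ and the $\mathrm{Sd}$-almost periodicity of $h_i$ are the crucial ingredients: they give compatible discrete decompositions of $N_i$ and $L^\infty(\mathbb{G}_{\sigma(i)})$ by their centralizers and by (duals of) the respective Sd-subgroups of $\mathbb{R}$, which constrain the conjugation to intertwine the flows. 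The distinction between genuine isomorphism in the type $\mathrm{III}_1$ case and stable isomorphism in the type $\mathrm{II}_1$ case should then fall out from the standard amplification formalism on continuous cores.
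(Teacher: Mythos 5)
Your overall architecture for the first half --- pass to the continuous cores, use the tensor-product analogue of condition (AO) together with an Ozawa-style dichotomy to locate each $pC_{\phi_i}(N_i)p$ inside a single core tensor component, and then use disjointness to get injectivity of $i\mapsto j(i)$ and uniqueness of $\sigma$ --- is essentially the paper's strategy (Proposition \ref{AO for tensor cores}, Proposition \ref{location core}, Lemma \ref{reduction lemma}, and Lemmas \ref{uniqueness1}--\ref{uniqueness2}, the latter two resting on the approximate-containment machinery of Subsection \ref{appro in core} rather than a ``clustering argument,'' but in the same spirit). Be aware, though, that the reduction is not a one-shot application of the dichotomy: it is an iteration in which, after each embedding $qC_\phi(N_Y)q\preceq C_h(M_X)$, one must conjugate by a partial isometry in the basic construction $\langle C_h(M),C_h(M_X)\rangle$ (Corollary \ref{Popa embed4}) to place the remaining components inside $\mathbb{C}e_{M_1}\mathbin{\bar{\otimes}} f C_h(M_X)f$ before applying the dichotomy again; this bookkeeping is what actually delivers $|X^c|=|Y^c|$ and hence $n\le m$.

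The genuine gap is in your final step. You propose to take the intertwiner on the cores, check that it is ``compatible with the dual $\mathbb{R}$-actions,'' and then apply Takesaki duality to descend to an isomorphism $N_i\simeq L^\infty(\mathbb{G}_{\sigma(i)})$. Popa's intertwining gives you no control whatsoever over the dual action: the partial isometry $v$ and the embedding $\theta$ produced by $\preceq_{C_h(M)}$ are obtained from a circumcenter/spectral-projection argument in the basic construction and there is no reason they should be equivariant for the dual flow, so this step would fail as stated. The paper avoids duality entirely. From $pC_\phi(N_i)p\preceq_{C_h(M)}C_h(M_{\sigma(i)})$ it passes to relative commutants \emph{inside the core} (using Lemma \ref{relative commutant in cores} to identify $C_\phi(N)\cap C_{\phi_i}(N_i)'$ with a centralizer), strips off the $L\mathbb{R}$ factor via \cite[Proposition 2.10]{BHR12} to obtain an embedding $(M_{\sigma(i)})_h\preceq_M (N_i)_\phi$ \emph{between subalgebras of the original algebra} $M$, and then applies the type III relative-commutant lemma (Lemma \ref{relative commutant}) twice to upgrade this to $M_{\sigma(i)}\preceq_M N_i$. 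Lemma \ref{hariawase2} then produces a partial isometry $u\in M$ itself with $uM_{\sigma(i)}u^*\subset uu^*N_iuu^*$, and the isomorphism (resp.\ stable isomorphism) is extracted by showing the relative commutant $(uM_{\sigma(i)}u^*)'\cap uu^*N_iuu^*$ is amenable (by the already-proved first half of the theorem) and then a type I factor (by fullness of $M_{\{\sigma(i)\}^c}$, which comes from Lemmas \ref{fullness of tensor factors} and \ref{C is full}). This fullness input, which you do not mention, is also what guarantees that the centralizers $(N_i)_{\phi_i}$ are non-amenable $\mathrm{II}_1$ factors in the first place and hence that the core-level arguments apply; the hypotheses $(N_i)_{\phi_i}'\cap N_i=\mathbb{C}$ and $\mathrm{Sd}$-almost periodicity of $h_i$ enter here, not through any discrete decomposition compatibility.
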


We mention that for any $\hat{\mathbb{G}}\in \mathcal{C}$, if $L^\infty(\mathbb{G})$ is a type $\rm III_1$ factor and its Haar state $h$ is $\mathrm{Sd}(L^\infty(\mathbb{G}))$-almost periodic, 
then it satisfies $L^\infty(\mathbb{G})_h'\cap L^\infty(\mathbb{G})=\mathbb{C}$ (see Subsections \ref{discrete decomposition} and \ref{CQG}). 
So as a particular case, we can put $N_i=L^\infty(\mathbb{H}_i)$ for $\hat{\mathbb{H}}_i\in\mathcal{C}$ with Haar state $\phi_i$ which is a trace or $\mathrm{Sd}(L^\infty(\mathbb{H}_i))$-almost periodic.

In the paper, we also prove some unique factorization results for crossed product von Neumann algebras by direct product groups. 
In this situation, we assume that the given isomorphism preserves subalgebras on which groups act, so that we can compare direct product groups. 
We obtain the following theorem. 
In the theorem, the symbol $\rtimes$ means the crossed product von Neumann algebras. 

\begin{ThmA}\label{B}
Let $\Gamma_i$ $(i=1,\ldots,m)$ and $\Lambda_j$ $(j=1,\ldots,n)$ be non-amenable  discrete groups. Let $(A,\mathrm{Tr}_A)$ and $(B,\tau_B)$ be semifinite tracial von Neumann algebras with $\mathrm{Tr}_A|_{\mathcal{Z}(A)}$ semifinite and $\tau_B(1)=1$. Write $\Gamma:=\Gamma_1\times \cdots \times \Gamma_m$ and $\Lambda:=\Lambda_1\times \cdots \times \Lambda_n$. 
Let $\alpha$ and $\beta$ be trace preserving actions of $\Gamma$ and $\Lambda$ on $(A,\mathrm{Tr}_A)$ and $(B,\tau_B)$ respectively. 
Assume the following conditions:
\begin{itemize}
	\item There is an inclusion $B\rtimes \Lambda\subset p(A\rtimes \Gamma)p$ for a $\mathrm{Tr}_A$-finite projection $p\in \mathcal{Z}(A)$, which sends $B$ onto $pAp$. 
	\item Either that $\mathcal{Z}(A)$ is diffuse or $A$ is a $\rm II_1$ factor.
	\item Actions $\alpha$ and $\beta$ are free on $A$ and $B$ respectively. Actions $\alpha|_{\Gamma_i}$ and $\beta|_{\Lambda_j}$ are ergodic on $\mathcal{Z}(A)$ and $\mathcal{Z}(B)$ respectively for all $i$ and $j$. 
	\item All $\Gamma_i$ are bi-exact and $A$ is amenable. 
\end{itemize}
Then we have $n\leq m$. 
If moreover $n=m$, then there exists a unique $\sigma\in  \mathfrak{S}_n$ such that 
\begin{equation*}
B\rtimes \Lambda_i \preceq_{A\rtimes\Gamma} A\rtimes\Gamma_{\sigma(i)} \quad (i=1,\ldots,n).
\end{equation*}
The same conclusions are true without amenability of $A$, if $\Gamma$ is weakly amenable, $\mathrm{Tr}_A$ is finite and $p=1_A$. 
\end{ThmA}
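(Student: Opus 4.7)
The proof follows the strategy pioneered by Ozawa and Popa, adapted to the crossed product setting. The key technical input, which should be established earlier in the paper via condition (AO)-type arguments for $A\rtimes\Gamma$ stemming from the bi-exactness of each $\Gamma_i$ (with the amenability of $A$, respectively the weak amenability of $\Gamma$ together with tracial finiteness of $A$ and $p=1$, entering precisely at this stage), is the following dichotomy: for every von Neumann subalgebra $Q\subset p(A\rtimes\Gamma)p$ and every $i\in\{1,\ldots,m\}$, either $Q\preceq_{A\rtimes\Gamma} A\rtimes\widehat{\Gamma}_i$, where $\widehat{\Gamma}_i:=\prod_{k\neq i}\Gamma_k$, or the normalizer $\mathcal{N}_{pMp}(Q)''$ is amenable relative to $A\rtimes\widehat{\Gamma}_i$ inside $M:=A\rtimes\Gamma$.

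I would apply this dichotomy to $Q_j:=B\rtimes\Lambda_j$ for each $j\in\{1,\ldots,n\}$. Since $\widehat{\Lambda}_j$ normalizes $\Lambda_j$ in $\Lambda$, the normalizer of $Q_j$ in $B\rtimes\Lambda$ contains the full crossed product $B\rtimes\Lambda$. Set $J_j:=\{i : Q_j\not\preceq_M A\rtimes\widehat{\Gamma}_i\}$; for $i\in J_j$, the dichotomy forces $B\rtimes\Lambda$ to be amenable relative to $A\rtimes\widehat{\Gamma}_i$ inside $M$. First, $J_j\neq\emptyset$: otherwise a Vaes-type intersection lemma for Popa's intertwining would yield $Q_j\preceq_M \bigcap_i A\rtimes\widehat{\Gamma}_i = A$, contradicting non-amenability of $\Lambda_j$ together with the freeness and ergodicity hypotheses on $\beta|_{\Lambda_j}$. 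The heart of the argument is to show $|J_j|\leq 1$: if $i\neq i'$ both belong to $J_j$, a standard doubling lemma for relative amenability (in the spirit of Ozawa and Ioana--Popa--Vaes) chains the two relative amenability statements into one over $A\rtimes(\widehat{\Gamma}_i\cap\widehat{\Gamma}_{i'})$, and iterating through $J_j$ eventually contradicts the non-amenability of $\Lambda$. Hence $J_j=\{\sigma(j)\}$ is a singleton, and the intersection rule for Popa's embedding, applied to the $m-1$ remaining intertwinings, produces
\begin{equation*}
Q_j\preceq_M \bigcap_{i\neq\sigma(j)} A\rtimes\widehat{\Gamma}_i = A\rtimes\Gamma_{\sigma(j)},
\end{equation*}
which is the desired conclusion.

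To finish, the inequality $n\leq m$ and uniqueness of $\sigma$ reduce to showing that $\sigma:\{1,\ldots,n\}\to\{1,\ldots,m\}$ is injective. If $\sigma(j)=\sigma(j')$ for $j\neq j'$, combining the intertwinings for $Q_j$ and $Q_{j'}$ via a normalizer argument would give $B\rtimes(\Lambda_j\times\Lambda_{j'})\preceq_M A\rtimes\Gamma_{\sigma(j)}$, contradicting the singleton property $|J|=1$ when the dichotomy is reapplied to the larger algebra. The main obstacle throughout is the dichotomy of the first step: one must adapt Ozawa's $C^*$-algebraic (AO)-style argument to crossed products by direct products of bi-exact groups, tracking all the complement subalgebras $A\rtimes\widehat{\Gamma}_i$ simultaneously and producing relative amenability conclusions sharp enough to feed into the doubling lemma used in the second step. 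A secondary difficulty is the careful bookkeeping of freeness and ergodicity on $\mathcal{Z}(A)$, $\mathcal{Z}(B)$ needed to rule out the degenerate case $Q_j\preceq A$ and to translate relative amenability into the claimed intertwining.
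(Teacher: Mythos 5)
There is a genuine gap, in fact two. First, the dichotomy you take as the key technical input --- for \emph{each} $i$, either $Q\preceq_M A\rtimes\widehat{\Gamma}_i$ or $\mathcal{N}_{pMp}(Q)''$ is amenable relative to $A\rtimes\widehat{\Gamma}_i$ --- is a Popa--Vaes type theorem and requires weak amenability of $\Gamma_i$; it cannot be extracted from condition (AO)/bi-exactness plus amenability of $A$ alone. What bi-exactness of the $\Gamma_i$ actually yields (Proposition \ref{bi-exact theorem} in the paper) is a single global alternative: either $B\rtimes\Lambda_0$ embeds into $A\rtimes\Gamma_0$ for \emph{some} $\Gamma_0=\widehat{\Gamma}_i$, or there is a conditional expectation of $p\langle M,A\rangle p$ onto $L\Lambda_0'\cap pMp$, i.e.\ a statement about the \emph{relative commutant}, not the normalizer. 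Applied to $Q_j=B\rtimes\Lambda_j$ this is vacuous, since by freeness and ergodicity $Q_j'\cap pMp=\mathbb{C}p$ is amenable and the first alternative can fail with no contradiction. This is why the paper applies the dichotomy to the complementary product $B\rtimes\Lambda_{\{j\}^c}$, whose relative commutant contains the non-amenable $L\Lambda_j$, and then iterates (Lemma \ref{reduction lemma2}), dropping one $\Lambda_k$ and one $\Gamma_i$ at each step; the nontrivial work is making the iteration legitimate via the explicit description of intertwiners $d\in A'\cap\langle M,A\rtimes\Gamma_0\rangle\simeq\mathcal{Z}(A)\mathbin{\bar{\otimes}}\ell^\infty(\Gamma/\Gamma_0)$ coming from freeness (Proposition \ref{intertwiner cross} and Corollary \ref{intertwiner cross2}), which your sketch bypasses entirely.

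Second, even granting your dichotomy, the step $|J_j|\leq 1$ does not follow. Chaining the relative amenability statements over $i,i'\in J_j$ gives that $B\rtimes\Lambda$ is amenable relative to $A\rtimes\bigl(\widehat{\Gamma}_i\cap\widehat{\Gamma}_{i'}\bigr)=A\rtimes\Gamma_{\{i,i'\}^c}$, and iterating over all of $J_j$ gives amenability relative to $A\rtimes\Gamma_{J_j^c}$. This contradicts non-amenability of $\Lambda$ only when $J_j^c=\emptyset$, i.e.\ when $J_j=\{1,\ldots,m\}$; for $2\leq|J_j|\leq m-1$ there is no contradiction whatsoever (indeed $B\rtimes\Lambda$ could literally sit inside $A\rtimes\Gamma_{J_j^c}$). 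So your argument only shows $J_j\neq\{1,\ldots,m\}$, which does not single out one index $\sigma(j)$ and does not produce the intersection $\bigcap_{i\neq\sigma(j)}A\rtimes\widehat{\Gamma}_i=A\rtimes\Gamma_{\sigma(j)}$. The injectivity/uniqueness of $\sigma$ also needs more than a ``normalizer argument'': combining $B\rtimes\Lambda_i\preceq_M A\rtimes\Gamma_j$ and $B\rtimes\Lambda_k\preceq_M A\rtimes\Gamma_j$ into $B\rtimes(\Lambda_i\times\Lambda_k)\preceq_M A\rtimes\Gamma_j$ is exactly the delicate point the paper handles with a Sako-style circumcenter argument, after first averaging the intertwining projections over $\Lambda_{\{i\}^c}$ and $\Gamma_{\{j\}^c}$ to arrange that they are not orthogonal.
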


We mention that for $B\rtimes \Lambda_i \preceq_{A\rtimes\Gamma} A\rtimes\Gamma_{\sigma(i)}$, we can find an embedding and an intertwiner of a special form, which is discussed in Subsection \ref{Intertwiners inside crossed product von Neumann algebras}.

In the final section, we give a different approach to factorization properties for bi-exact and weakly amenable group factors.

\bigskip

Throughout the paper, we always assume that discrete groups are countable, quantum group $C^*$-algebras are separable, von Neumann algebras have separable predual, and Hilbert spaces are separable.

\bigskip

\noindent 
{\bf Acknowledgement.} The author would like to thank Yuki Arano, Cyril Houdayer, Yasuyuki Kawahigashi, Narutaka Ozawa, Hiroki Sako, and Yoshimichi Ueda for fruitful conversations. 
He was supported by JSPS, Research Fellow of the Japan Society for the Promotion of Science.

\section{\bf Preliminaries}

\subsection{\bf Fullness and Discrete decompositions}\label{discrete decomposition}

In the subsection, we recall Connes's discrete decomposition and related notions. We refer the reader to \cite{Co74} (see also \cite{Dy94}).

Let $\omega$ be a free ultra filter on $\mathbb{N}$. Consider two $C^*$-algebras
\begin{eqnarray*}
A_\omega&:=&\{(x_n)_n\in \ell^\infty(M) \mid \|\phi(\cdot\hspace{0.1em} x_n)-\phi(x_n\hspace{0.1em} \cdot)\|_{M_*}\rightarrow 0 \textrm{ as  $n\rightarrow\omega$, for all }\phi\in M_* \},\\
J_\omega&:=&\{(x_n)_n\in \ell^\infty(M) \mid x_n\rightarrow 0 \textrm{ as  $n\rightarrow\omega$ in the $*$-strong topology}\}.
\end{eqnarray*}
The quotient $C^*$-algebra $A_\omega/J_\omega$ becomes a von Neumann algebra and we denote it by $M_\omega$. 
We say a factor $M$ is \textit{full} if $\mathrm{Inn}(M)$ is closed in $\mathrm{Aut}(M)$ in the u-topology, namely, the topology of pointwise norm convergence in $M_*$. 
A factor $M$ is full if and only if $M_\omega\simeq\mathbb{C}$ for some (any) ultra filter $\omega$.

Let $M$ be a von Neumann algebra and $\phi$ a faithful normal semifinite weight on $M$. 
Then the \textit{modular operator} $\Delta_\phi$ and the \textit{modular conjugation} $J_\phi$ are defined on $L^2(M,\phi)$ as a closed operator and an anti linear map. The map $J_\phi$ satisfies $M'=J_\phi MJ_\phi \simeq M^{\rm op}$ and it gives a \textit{canonical right action} of $M^{\rm op}$ on $L^2(M,\phi)$. The family $\sigma_t^\phi:=\mathrm{Ad}\Delta_\phi^{it}$ $(t\in\mathbb{R})$ gives an $\mathbb{R}$-action on $M$ called the \textit{modular action} of $\phi$. 
The \textit{continuous core} is defined as $C_\phi(M):=M\rtimes_{\sigma^\phi} \mathbb{R}$, which does not depends on the choice of $\phi$. 
We say a type III factor $M$ is of \textit{type $\rm III_1$} if $C_\phi(M)$ is a $\rm II_\infty$ factor. 
The \textit{centralizer algebra} is defined as 
\begin{equation*}
M_\phi:=\{x\in M\mid \Delta_\phi^{it}x=x\Delta_\phi^{it} \textrm{ for all }t\in\mathbb{R}\},
\end{equation*}
We say the weight $\phi$ is \textit{almost periodic} if the modular operator $\Delta_{\phi}$ is diagonalizable, namely, 
it is of the form $\Delta_\phi=\sum_{\lambda\in \mathrm{ptSp}(\Delta_\phi)}\lambda E_\lambda$, where $\mathrm{ptSp}(\Delta_\phi)\subset \mathbb{R}^*_+$ is the point spectrum of $\Delta_\phi$ and $E_\lambda$ are spectrum projections. 
For a subgroup $\Lambda\subset \mathbb{R}^*_+$, we say $\phi$ is \textit{$\Lambda$-almost periodic} if it is almost periodic and $\mathrm{ptSp}(\Delta_\phi)\subset\Lambda$. 
Any almost periodic weight $\phi$ is semifinite on $M_\phi$ and hence there is a faithful normal conditional expectation from $M$ onto $M_\phi$ \cite[\textrm{Theorem IX.4.2}]{Ta2}. 
When $M$ is a factor with an almost periodic weight, its \textit{Sd-invariant} is defined as 
\begin{equation*}
\mathrm{Sd}(M):=\bigcap_{\phi\textrm{ is almost periodic on $M$}}\mathrm{ptSp}(\Delta_\phi).
\end{equation*}
It becomes a subgroup of $\mathbb{R}^*_+$. 
When $M$ is a full type III factor, an almost periodic wight $\phi$ is $\mathrm{Sd}(M)$-almost periodic if and only if $(M_\phi)'\cap M=\mathbb{C}$.

Recall from \cite{Oz03} that a von Neumann algebra $M\subset \mathbb{B}(H)$ satisfies condition (AO) if there are $\sigma$-weakly dense $C^*$-subalgebras $A\subset M$ and $B\subset M'$ such that $A$ is locally reflexive and the map $A\otimes_{\rm alg}B\ni a\otimes b\mapsto ab\in \mathbb{B}(H)/\mathbb{K}(H)$ is bounded with respect to the minimal tensor norm. 
The following lemma is well known.
\begin{Lem}
Let $M$ be a von Neumann algebra. If $M$ is non-amenable and satisfies condition $\rm (AO)$ in $\mathbb{B}(L^2(M))$, then $C^*\{M,M'\}\cap \mathbb{K}(L^2(M))\neq0$.
\end{Lem}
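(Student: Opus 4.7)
I argue by contraposition: assume $C^*\{M,M'\}\cap\mathbb{K}(L^2(M))=0$ and show $M$ is injective (hence amenable). Let $A\subset M$ and $B\subset M'$ be the $\sigma$-weakly dense $C^*$-subalgebras provided by condition $\mathrm{(AO)}$, with $A$ locally reflexive. Condition $\mathrm{(AO)}$ says the multiplication $A\otimes_{\mathrm{alg}}B\to\mathbb{B}(L^2(M))/\mathbb{K}(L^2(M))$ is continuous for the minimal tensor norm, and since by hypothesis the Calkin quotient is injective on $C^*\{M,M'\}$, it lifts to a $*$-homomorphism $\pi:A\otimes_{\min}B\to\mathbb{B}(L^2(M))$ with $\pi(a\otimes b)=ab$.

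Fix a faithful normal state $\phi$ on $M$ with standard vector $\xi\in L^2(M)$, and set $\phi'(y):=\langle y\xi,\xi\rangle$ on $M'$. The vector state $\omega:=\omega_\xi\circ\pi$ is a state on $A\otimes_{\min}B$ satisfying $\omega(a\otimes b)=\langle ab\xi,\xi\rangle$, so $\omega|_A=\phi|_A$ and $\omega|_B=\phi'|_B$. Using local reflexivity of $A$ in its Effros--Haagerup form, namely that the canonical map $A^{**}\otimes_{\min}B\hookrightarrow (A\otimes_{\min}B)^{**}$ is an isometric $*$-inclusion, the bidual extension of $\omega$ descends along $A^{**}\twoheadrightarrow M$ to a state $\widetilde\omega$ on $M\otimes_{\min}B$ with $\widetilde\omega(x\otimes b)=\langle xb\xi,\xi\rangle$; in particular $\widetilde\omega|_M=\phi$. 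A symmetric step (extending in the $M'$-variable and exploiting $[M,M']=0$) produces a state on $M\otimes_{\min}M'$ whose associated slice against $M_*$ gives a unital completely positive conditional expectation $E:\mathbb{B}(L^2(M))\to M$, equivalently a Connes hypertrace. By Connes' theorem $M$ is injective, contradicting non-amenability.

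\textbf{Main obstacle.} The crux is the promotion of $\omega$ from the $C^*$-tensor product $A\otimes_{\min}B$ to a functional that restricts to the normal state $\phi$ on all of $M$ (not merely on $A$): a bare Hahn--Banach extension to $\mathbb{B}(L^2(M))\otimes_{\min}\mathbb{B}(L^2(M))$ only agrees with $\phi$ on the $C^*$-subalgebra $A$, and normality on $M$ is not automatic. Local reflexivity of $A$, via the Effros--Haagerup identification, is exactly the tool that forces the bidual extension to land on the normal-state side. Once normality is in hand, the passage from the resulting conditional expectation to injectivity is a standard consequence of Connes' characterization.
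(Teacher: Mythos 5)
Your first step coincides with the paper's: when $C^*\{M,M'\}\cap\mathbb{K}(L^2(M))=0$ the image of the multiplication map lies in $(C^*\{M,M'\}+\mathbb{K})/\mathbb{K}\simeq C^*\{M,M'\}$, so it lifts to a genuine $*$-homomorphism $\pi\colon A\otimes_{\rm min}B\to\mathbb{B}(L^2(M))$, and your first extension (of the vector state from $A\otimes_{\rm min}B$ to $M\otimes_{\rm min}B$ via $A^{**}$) is a legitimate use of local reflexivity of $A$ together with normality of $\omega|_A=\phi|_A$. The gap opens at the ``symmetric step.'' Condition (AO) assumes local reflexivity of $A$ only; nothing whatsoever is assumed about $B$. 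To pass from a state on $M\otimes_{\rm min}B$ to a state on $M\otimes_{\rm min}M'$ by the same mechanism you would need the isometric inclusion $M\otimes_{\rm min}B^{**}\hookrightarrow(M\otimes_{\rm min}B)^{**}$, i.e.\ local reflexivity of $B$, which is not part of the hypothesis (it happens to hold in many examples where $B$ is exact, but the lemma does not assume it). Moreover, even granting a state $\theta$ on $M\otimes_{\rm min}M'$ with $\theta(x\otimes y)=\langle xy\xi,\xi\rangle$, the final step is misstated: such a $\theta$ does not act on $\mathbb{B}(L^2(M))$, and ``slicing against $M_*$'' does not produce a map $\mathbb{B}(L^2(M))\to M$. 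What $\theta$ really gives is the weak containment of the identity $M$-$M$-bimodule in the coarse bimodule; that this forces injectivity is a correct and citable theorem (Connes, Effros--Lance), but it is the hard content of that theorem, not a formal slice.

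The paper avoids both problems by never discarding the ucp structure. It extends $\pi$ itself, not merely its vector state, to a ucp map $M\otimes_{\rm min}B\to\mathbb{B}(L^2(M))$ sending $x\otimes b$ to $xb$ (local reflexivity of $A$ plus normality on $A\otimes_{\rm min}\mathbb{C}$), Arveson-extends to $\Phi\colon\mathbb{B}(L^2(M))\otimes_{\rm min}B\to\mathbb{B}(L^2(M))$, and observes that $\mathbb{C}\otimes B$ lies in the multiplicative domain of $\Phi$, so that $\Phi(\,\cdot\otimes 1)$ takes values in $B'=M$ and restricts to the identity on $M$; this is the desired conditional expectation. On the $B$ side the only input is its $\sigma$-weak density in $M'$, a von Neumann algebra fact requiring no tensor-norm extension in the second variable. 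To repair your argument, either keep the ucp map and run the multiplicative-domain trick as above, or add the hypothesis that $B$ is locally reflexive and replace the ``slice'' by the bimodule weak-containment characterization of injectivity.
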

\begin{proof}
Write $\mathbb{K}:=\mathbb{K}(L^2(M))$. 
Let $A\subset M$ and $B\subset M'$ be  $\sigma$-weakly dense unital $C^*$-subalgebras satisfying that the map $\nu\colon A\otimes_{\rm alg} B \ni a\otimes b \mapsto ab\in \mathbb{B}(L^2(M))/\mathbb{K}$ is bounded on $A\otimes_{\rm min}B$. 
If $C^*\{M,M'\}\cap \mathbb{K}=0$, then the image of $\nu$ is contained in $(C^*\{M,M'\}+ \mathbb{K})/\mathbb{K}\simeq C^*\{M,M'\}/(C^*\{M,M'\}\cap \mathbb{K})\simeq C^*\{M,M'\}$. 
Hence $\nu$ is bounded without the quotient of $\mathbb{K}$. 
Since $A$ is locally reflexive, $A$ is unital, and $\nu$ is normal on $A\otimes_{\rm min}\mathbb{C}$, we can extend $\nu$ on $M\otimes_{\rm min}B$. We again extend $\nu$ on $\mathbb{B}(L^2(M))\otimes_{\rm min}B$ by Arveson's theorem and denote by $\Phi$. 
The restriction of $\Phi$ on $\mathbb{B}(L^2(M))\otimes_{\rm min}\mathbb{C}$ is a conditional expectation onto $B'=M$ (since $\mathbb{C}\otimes_{\rm min}B$ is contained in the multiplicative domain of $\Phi$). Thus $M$ is amenable.
\end{proof}

The following lemma is a general version of \cite[\textrm{Corollary 2.3}]{Co76}. We thank Cyril Houdayer for demonstrating the proof of the lemma. 

\begin{Lem}\label{fullness of tensor factors}
Let $M$ and $N$ be factors. 
If $C^*\{M,M'\}\cap \mathbb{K}(L^2(M))\neq 0$, then the map $N_\omega\ni (x_n)_n\mapsto (1\otimes x_n)_n\in (M\mathbin{\bar{\otimes}} N)_\omega$ is surjective. 
In particular, $M\mathbin{\bar{\otimes}} N$ is full if $N$ is full (possibly $N=\mathbb{C}$).
\end{Lem}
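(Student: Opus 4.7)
The plan is to lift any $(z_n)\in (M\mathbin{\bar{\otimes}} N)_\omega$ to an element of $N_\omega$ via a slice map by a cyclic-separating vector of $L^2(M)$, and to use the compact operator hypothesis to argue the lift agrees with $(z_n)$ modulo $J_\omega$. First, since $M$ is a factor, $C^*\{M,M'\}$ acts irreducibly on $L^2(M)$, and an irreducible $C^*$-subalgebra of $\mathbb{B}(H)$ containing a nonzero compact operator must contain all of $\mathbb{K}(H)$; so the hypothesis strengthens to $\mathbb{K}(L^2(M))\subset C^*\{M,M'\}$. I would fix a cyclic and separating unit vector $\xi\in L^2(M)$ (from a faithful normal state on $M$) and set $K:=|\xi\rangle\langle\xi|\in C^*\{M,M'\}$; then $K$ is a norm limit of finite sums $T_k=\sum_i a^{(k)}_i b^{(k)}_i$ with $a^{(k)}_i\in M$ and $b^{(k)}_i\in M'$. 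Given $(z_n)\in (M\mathbin{\bar{\otimes}} N)_\omega$, define $x_n:=(\omega_{\xi,\xi}\otimes\mathrm{id}_N)(z_n)\in N$: for $\psi\in N_*$, evaluating the defining condition of $(z_n)$ against $\phi:=\omega_{\xi,\xi}\otimes\psi$ at elements $1\otimes y$ gives $\|\psi(\cdot\hspace{0.1em} x_n)-\psi(x_n\hspace{0.1em}\cdot)\|_{N_*}\le\|\phi(\cdot\hspace{0.1em} z_n)-\phi(z_n\hspace{0.1em}\cdot)\|_{(M\mathbin{\bar{\otimes}} N)_*}\to 0$, so $(x_n)\in N_\omega$. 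It remains to show $(z_n-1\otimes x_n)\in J_\omega$.

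For this, since $(b\otimes 1)\in (M\mathbin{\bar{\otimes}} N)'$ for every $b\in M'$, a direct expansion yields
\begin{equation*}
[T_k\otimes 1,\, z_n]\;=\;\sum_i [a^{(k)}_i\otimes 1,\, z_n]\,(b^{(k)}_i\otimes 1).
\end{equation*}
Asymptotic centrality of $(z_n)$ implies $[c, z_n]\to 0$ strong-$*$ as $n\to\omega$ for every $c\in M\mathbin{\bar{\otimes}} N$ (a standard consequence for bounded sequences, via the Ando--Haagerup characterization), so $[T_k\otimes 1, z_n]\to 0$ strong-$*$ for each fixed $k$. Combined with the uniform bound $\|[(K-T_k)\otimes 1, z_n]\|\le 2\|z_n\|\cdot\|K-T_k\|$ and $\|T_k-K\|\to 0$, this yields $\|[K\otimes 1, z_n]v\|_2\to 0$ (and similarly for the adjoint) for every $v\in L^2(M\mathbin{\bar{\otimes}} N)$. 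Applying this to $v=\xi\otimes\zeta$, and using $(K\otimes 1)(\xi\otimes\zeta)=\xi\otimes\zeta$ together with the immediate identity $(K\otimes 1)z_n(\xi\otimes\zeta)=\xi\otimes x_n\zeta$, gives $\|(z_n-1\otimes x_n)(\xi\otimes\zeta)\|_2\to 0$. For a general vector $a\xi\otimes\zeta$ with $a\in M$, the identity
\begin{equation*}
(z_n-1\otimes x_n)(a\otimes 1)\;=\;(a\otimes 1)(z_n-1\otimes x_n)\;+\;[z_n,\,a\otimes 1]
\end{equation*}
(using $[1\otimes x_n, a\otimes 1]=0$) reduces the convergence to the case already handled plus a strongly vanishing commutator. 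Density of $M\xi\otimes L^2(N)$ in $L^2(M\mathbin{\bar{\otimes}} N)$ and uniform boundedness of $(z_n-1\otimes x_n)$ extend this to arbitrary $v$; the adjoint convergence follows by repeating the argument with $(z_n^*)$, whose slice is $x_n^*$. Thus $(z_n-1\otimes x_n)\to 0$ strong-$*$ along $\omega$, so $(1\otimes x_n)$ represents the same class as $(z_n)$ in $(M\mathbin{\bar{\otimes}} N)_\omega$, proving surjectivity. The ``in particular'' statement then follows: if $N_\omega=\mathbb{C}$ (or $N=\mathbb{C}$), surjectivity forces $(M\mathbin{\bar{\otimes}} N)_\omega=\mathbb{C}$.

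The main obstacle is the translation from the functional-norm definition of $A_\omega$ to strong-$*$ convergence of the commutators $[c, z_n]$ used above: this is standard for bounded sequences but needs a careful verification, as it underlies both the finite-sum manipulation with $T_k$ and the norm-limit passage $T_k\to K$ uniformly in $n$.
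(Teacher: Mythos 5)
Your proposal is correct and follows essentially the same route as the paper's proof: first upgrade the hypothesis to $\mathbb{K}(L^2(M))\subset C^*\{M,M'\}$ using irreducibility, then exploit that rank-one operators lie in $C^*\{M,M'\}$ (so their commutators with the centralizing sequence vanish $*$-strongly, via approximation by finite sums $\sum_i a_ib_i$ with $a_i\in M$, $b_i\in M'$) to identify $(z_n)$ with $(1\otimes(\phi\otimes\mathrm{id}_N)(z_n))$ modulo $J_\omega$. The only cosmetic difference is that you work with the single projection $|\xi\rangle\langle\xi|$ and a density argument over $M\xi\otimes L^2(N)$, while the paper uses the family of rank-one partial isometries $P_{a,1}$ directly.
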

\begin{proof}
Since $M$ is a factor, we have $C^*\{M,M'\}''=(M\cap M')'=\mathbb{B}(L^2(M))$. 
Let $x\in C^*\{M,M'\}\cap \mathbb{K}(L^2(M))$ and $y\in \mathbb{B}(L^2(M))$  be non-zero elements. Let $y_i\in C^*\{M,M'\}$ be a bounded net converging to $y$ strongly. 
Then the net $y_ix$ converges to $yx$ in the norm topology and hence we have $yx\in C^*\{M,M'\}$. This implies $\mathbb{B}(L^2(M))x\mathbb{B}(L^2(M))\subset C^*\{M,M'\}$ and hence $\mathbb{K}(L^2(M))\subset C^*\{M,M'\}$. 

Let $\phi$ and $\psi$ be faithful normal states on $M$ and $N$ respectively and write $H:=L^2(M,\phi)\otimes L^2(N,\psi)$, where the symbol $\otimes$ means the tensor product of Hilbert spaces. 
Let $(x_n)_n\in (M\mathbin{\bar{\otimes}} N)_\omega$, namely, $(x_n)_n$ be a bounded sequence satisfying $\lim_{n\rightarrow\omega}\|[x_n,\chi]\|_{(M\mathbin{\bar{\otimes}} N)_*}=0$ for any $\chi\in(M\mathbin{\bar{\otimes}} N)_*$.  
We will show $x_n-(\phi\otimes\mathrm{id}_N)(x_n)\rightarrow 0$ as $n\rightarrow\omega$ in the $*$-strong topology, which means $(x_n)_n=((\phi\otimes\mathrm{id}_N)(x_n))_n\in N_\omega$. 
Since $[x_n,ba]=b[x_n,a]\rightarrow 0$ $*$-strongly for $a\in M\otimes_{\rm min} \mathbb{C}$ and $b\in M'\otimes_{\rm min} \mathbb{C}$, we have $[x_n,a]\rightarrow0$ for any $a\in C^*\{M,M'\}\otimes_{\rm min}\mathbb{C}$. 
Let $P_{a,1}$ be the partial isometry from $\mathbb{C}\hat{1}$ to $\mathbb{C}\hat{a}$ for $a\in M$, which is contained in $\mathbb{K}(L^2(M))\subset C^*\{M,M'\}$. 
Then for any $a\in M$ and $b\in N$, we have 
\begin{eqnarray*}
(x_n-(\phi\otimes\mathrm{id}_N)(x_n))(\hat{a}\otimes \hat{b})
&=&x_n(P_{a,1}\otimes 1)(\hat{1}\otimes \hat{b})- (P_{a,1}\otimes 1)(\phi\otimes\mathrm{id}_N)(x_n))(\hat{1}\otimes \hat{b})\\
&=&x_n(P_{a,1}\otimes 1)(\hat{1}\otimes \hat{b})- (P_{a,1}\otimes 1)x_n(\hat{1}\otimes \hat{b})\\
&=&[x_n,(P_{a,1}\otimes 1)](\hat{1}\otimes \hat{b})\rightarrow 0
\end{eqnarray*}
in the norm topology of $H$. 
Since the same is true for $x_n^*$, we have $x_n-(\phi\otimes\mathrm{id}_N)(x_n)\rightarrow 0$ in the $*$-strong topology.
\end{proof}
\begin{Lem}\label{full tensor}
Let $M_i$ be full factors with $C^*\{M_i,M_i'\}\cap\mathbb{K}(L^2(M_i))\neq0$. Let $\phi_i$ be faithful normal semifinite weight on $M_i$ which is $\mathrm{Sd}(M_i)$-almost periodic. 
Then $\phi_1\otimes\cdots \otimes\phi_n$ is $\mathrm{Sd}(M_1\mathbin{\bar{\otimes}}\cdots\mathbin{\bar{\otimes}} M_n)$-almost periodic. In particular $\mathrm{Sd}(M_1\mathbin{\bar{\otimes}}\cdots\mathbin{\bar{\otimes}} M_n)=\mathrm{Sd}(M_1)\cdots\mathrm{Sd}(M_n)$ and  $(M_1\mathbin{\bar{\otimes}}\cdots\mathbin{\bar{\otimes}} M_n)_{\phi_1\otimes\cdots\otimes \phi_n}$ is a factor.
\end{Lem}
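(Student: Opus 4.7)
The plan is to induct on $n$, reducing to $n=2$; the substance at that case is to verify in turn that $\phi := \phi_1 \otimes \phi_2$ is almost periodic, that $M := M_1 \mathbin{\bar{\otimes}} M_2$ is full, and that $M_\phi' \cap M = \mathbb{C}$; the remaining assertions then follow mechanically. The key inputs are Lemma \ref{fullness of tensor factors} to propagate fullness and the characterization recalled just above: on a full factor, an almost periodic weight is $\mathrm{Sd}$-almost periodic precisely when its centralizer has trivial relative commutant.

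First, $\Delta_\phi = \Delta_{\phi_1} \otimes \Delta_{\phi_2}$ is diagonalizable, so $\phi$ is almost periodic with
\begin{equation*}
\mathrm{ptSp}(\Delta_\phi) = \mathrm{ptSp}(\Delta_{\phi_1}) \cdot \mathrm{ptSp}(\Delta_{\phi_2}) \subset \mathrm{Sd}(M_1)\,\mathrm{Sd}(M_2).
\end{equation*}
Lemma \ref{fullness of tensor factors}, applied with the compact intersection hypothesis on $M_1$ and fullness of $M_2$, yields that $M$ is full.

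The core step is $M_\phi' \cap M = \mathbb{C}$. Since $\sigma_t^\phi = \sigma_t^{\phi_1} \otimes \sigma_t^{\phi_2}$, we have $(M_1)_{\phi_1} \mathbin{\bar{\otimes}} (M_2)_{\phi_2} \subset M_\phi$, hence
\begin{equation*}
M_\phi' \cap M \subset \bigl((M_1)_{\phi_1} \mathbin{\bar{\otimes}} (M_2)_{\phi_2}\bigr)' \cap M = \bigl((M_1)_{\phi_1}' \cap M_1\bigr) \mathbin{\bar{\otimes}} \bigl((M_2)_{\phi_2}' \cap M_2\bigr),
\end{equation*}
where the equality is the standard tensor product commutant identity, derived from $(A \otimes 1)' \cap (M_1 \mathbin{\bar{\otimes}} M_2) = (A' \cap M_1) \mathbin{\bar{\otimes}} M_2$ by a slice-map argument, applied to each slot and intersected. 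Each factor on the right collapses to $\mathbb{C}$ by hypothesis, since each $M_i$ is full and $\phi_i$ is $\mathrm{Sd}(M_i)$-almost periodic, so the recalled characterization gives $(M_i)_{\phi_i}' \cap M_i = \mathbb{C}$.

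Applying the converse direction of the same characterization to the full factor $M$, triviality of $M_\phi' \cap M$ forces $\phi$ to be $\mathrm{Sd}(M)$-almost periodic, so $\mathrm{ptSp}(\Delta_\phi) \subset \mathrm{Sd}(M)$; combined with the reverse inclusion from the first step this gives $\mathrm{Sd}(M) = \mathrm{Sd}(M_1)\,\mathrm{Sd}(M_2)$, and $\mathcal{Z}(M_\phi) \subset M_\phi' \cap M = \mathbb{C}$ makes $M_\phi$ a factor. For general $n$ one inducts by writing $M = M_1 \mathbin{\bar{\otimes}} (M_2 \mathbin{\bar{\otimes}} \cdots \mathbin{\bar{\otimes}} M_n)$; the inductive hypothesis supplies fullness of the second factor together with $\mathrm{Sd}$-almost periodicity of its distinguished weight (hence trivial relative commutant of its centralizer via the characterization), and the $n=2$ argument then applies verbatim using the compact intersection property of $M_1$. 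The main obstacle I anticipate is the tensor product commutant identity: the slice-map proof is standard but must be written out carefully, and one should also verify that the inclusion $(M_1)_{\phi_1} \mathbin{\bar{\otimes}} (M_2)_{\phi_2} \subset M_\phi$ (rather than equality, which is not needed) is enough to drive the argument.
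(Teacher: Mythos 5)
Your proposal is correct and follows essentially the same route as the paper: fullness of the tensor product via Lemma \ref{fullness of tensor factors}, the inclusion of $(M_1)_{\phi_1}\mathbin{\bar{\otimes}}\cdots\mathbin{\bar{\otimes}}(M_n)_{\phi_n}$ into the centralizer combined with the tensor-product commutant identity to get $M_\phi'\cap M=\mathbb{C}$, and then the recalled characterization of $\mathrm{Sd}$-almost periodicity on full factors. The only cosmetic difference is that you induct on $n$ and spell out the final deductions, whereas the paper runs the commutant computation for all $n$ at once and leaves the last step implicit.
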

\begin{proof}
By the previous lemma, $M_1\mathbin{\bar{\otimes}}\cdots\mathbin{\bar{\otimes}} M_n$ is a full factor. We have 
\begin{eqnarray*}
&&(M_1\mathbin{\bar{\otimes}}\cdots\mathbin{\bar{\otimes}} M_n)_{\phi_1\otimes\cdots\otimes \phi_n}'\cap (M_1\mathbin{\bar{\otimes}}\cdots\mathbin{\bar{\otimes}} M_n) \\
&\subset& ((M_1)_{\phi_1}\mathbin{\bar{\otimes}}\cdots\mathbin{\bar{\otimes}} (M_n)_{\phi_n})'\cap (M_1\mathbin{\bar{\otimes}}\cdots\mathbin{\bar{\otimes}} M_n)\\
&=&(M_1\cap (M_1)_{\phi_1}')\mathbin{\bar{\otimes}}\cdots\mathbin{\bar{\otimes}} (M_n\cap (M_n)_{\phi_n})'=\mathbb{C}.
\end{eqnarray*}
\end{proof}

Next we recall discrete decompositions. Let $M$ be a von Neumann algebra and $\phi$ a $\Lambda$-almost periodic wight on $M$ for a countable subgroup $\Lambda\subset\mathbb{R}^*_+$. 
As a discrete group, take the Pontryagin dual of $\Lambda$ and denote by $K$. By definition, we have a map $\mathbb{R}\simeq\widehat{\mathbb{R}^*_+}\rightarrow K$, which is injective and has a dense image if $\Lambda\subset\mathbb{R}^*_+$ is dense, and which is surjective if $\Lambda\subset\mathbb{R}^*_+$ is periodic. 
The modular action of $\mathbb{R}$ extends to an action of $K$. Take the crossed product von Neumann algebra $M\rtimes K$ by the action. Then by Takesaki duality, we have $(M\rtimes K)\rtimes \Lambda\simeq M\mathbin{\bar{\otimes}}\mathbb{B}(\ell^2(\Lambda))$. 
The dual weight of $M\mathbin{\bar{\otimes}}\mathbb{B}(\ell^2(\Lambda))$ is of the form $\phi\otimes \omega$, where $\omega:=\mathrm{Tr}(\lambda \hspace{0.1em}\cdot)$ for a closed operator $\lambda$ on $\ell^2(\Lambda)$ given by $\lambda(a)=a$. 
Since $M\rtimes K$ is the fixed point algebra of the dual action on $(M\rtimes K)\rtimes \Lambda$, we have $M\rtimes K \simeq (M\mathbin{\bar{\otimes}}\mathbb{B}(\ell^2(\Lambda)))_{\phi\otimes \omega}=:D_\phi(M)$. 
When $M$ is of type III, we have $D_\phi(M)\rtimes\Lambda\simeq M$, which is called a \textit{discrete decomposition}. The subalgebra $D_\phi(M)$ is called a \textit{discrete core} of M. 
Since $\phi\otimes\omega$ is almost periodic (or $\Lambda$ is discrete), there exists a faithful normal conditional expectation from $M$ onto $D_\phi(M)$. 
When $M$ is a full factor, there always exists an $\mathrm{Sd}(M)$-almost periodic weight $\phi$ on $M$. 
In this case, since $\phi\otimes \omega$ is $\mathrm{Sd}(M)$-almost periodic on $M\mathbin{\bar{\otimes}}\mathbb{B}(\ell^2(\Lambda))\simeq M$, the discrete core $D_\phi(M)=(M\mathbin{\bar{\otimes}}\mathbb{B}(\ell^2(\Lambda)))_{\phi\otimes \omega}$ is a $\rm II_\infty$ factor, and hence is identified as $M_\phi\mathbin{\bar{\otimes}} \mathbb{B}(H)$ for some separable infinite Hilbert space $H$.

Finally $M$ is amenable if and only if $D_\phi(M)$ is amenable for any von Neumann algebra $M$ and its almost periodic weight $\phi$. 
Hence when $M$ is of type III and non-amenable, since $D_\phi(M)\simeq (M\mathbin{\bar{\otimes}}\mathbb{B}(\ell^2(\Lambda)))_{\phi\otimes \omega}$, there is an almost periodic weight on $M(\simeq M\mathbin{\bar{\otimes}}\mathbb{B}(\ell^2(\Lambda)))$ such that its centralizer algebra is non-amenable. 
By using a type decomposition, We have the same result for any von Neumann algebra with almost periodic weights. We write this observation as follows.
\begin{Lem}
Let $M$ be a non-amenable von Neumann algebra which admits an almost periodic wight. Then there exists an almost periodic weight $\phi$ such that $M_\phi$ is non-amenable.
\end{Lem}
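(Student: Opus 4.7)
The plan is to reduce to the two extremal cases---semifinite and type III---via the central type decomposition of $M$, and then handle each separately. The type III case is essentially the content of the paragraph immediately preceding the lemma, and the semifinite case is trivial.

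If $M$ is semifinite, I would simply take $\phi$ to be any faithful normal semifinite trace $\tau$ on $M$: then $\Delta_\tau=1$, so $\tau$ is trivially almost periodic with $\mathrm{ptSp}(\Delta_\tau)=\{1\}$, and $M_\tau=M$ is non-amenable by hypothesis. If $M$ is of type III, I would start from the given almost periodic weight $\psi$, let $\Lambda\subset\mathbb{R}^*_+$ be the countable subgroup generated by $\mathrm{ptSp}(\Delta_\psi)$, and invoke the discrete decomposition $D_\psi(M)\simeq(M\mathbin{\bar{\otimes}}\mathbb{B}(\ell^2(\Lambda)))_{\psi\otimes\omega}$ recalled just above. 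Since $M$ is non-amenable if and only if $D_\psi(M)$ is, this centralizer is non-amenable. Because $M$ has separable predual and is type III, hence properly infinite, there is an isomorphism $\theta\colon M\mathbin{\bar{\otimes}}\mathbb{B}(\ell^2(\Lambda))\to M$; transporting $\psi\otimes\omega$ along $\theta$ produces an almost periodic weight $\phi$ on $M$ whose centralizer $M_\phi=\theta(D_\psi(M))$ is non-amenable.

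For general $M$, I would split $M=z_sM\oplus z_3M$ using the central projection $z_s$ supporting the semifinite part, so that $z_3M$ is of type III. Amenability passes to direct summands, so at least one of the two pieces is non-amenable. Since any central projection lies in $M_\psi$, the modular action preserves each summand and the given almost periodic weight $\psi$ restricts to an almost periodic faithful normal semifinite weight on both $z_sM$ and $z_3M$ (the modular operator of the restriction being the spectral restriction of $\Delta_\psi$ to the invariant subspace, hence still diagonalizable). I would then apply the semifinite or type III case to the non-amenable summand, take any almost periodic weight on the other summand, and sum them; the resulting weight is almost periodic on $M$ and its centralizer, being the direct sum of the two summand centralizers, is non-amenable.

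The main point requiring care, beyond what the preceding paragraph already establishes, is the compatibility of the structures involved---almost periodicity, the explicit form of the centralizer, and the isomorphism $M\mathbin{\bar{\otimes}}\mathbb{B}(\ell^2(\Lambda))\simeq M$---with the central type decomposition and the transporting isomorphism $\theta$. This is routine bookkeeping of spectral data but is the only step not already made explicit in the preceding discussion.
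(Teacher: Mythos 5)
Your proof is correct and follows exactly the route the paper itself takes: the lemma is stated as a summary of the preceding paragraph, which handles the type III case via the discrete decomposition $D_\psi(M)\simeq(M\mathbin{\bar{\otimes}}\mathbb{B}(\ell^2(\Lambda)))_{\psi\otimes\omega}$ together with $M\simeq M\mathbin{\bar{\otimes}}\mathbb{B}(\ell^2(\Lambda))$, and then invokes a type decomposition for the general case. Your write-up merely makes explicit the routine details (the trivial semifinite case via a trace, and the compatibility of almost periodicity and centralizers with the central decomposition) that the paper leaves implicit.
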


\subsection{\bf Compact and discrete quantum groups}\label{CQG}

Let $\mathbb{G}$ be a compact quantum group. In the paper, we use the following notation, which are same as in our previous works \cite{Is12_2}\cite{Is13}. 
We denote the Haar state by $h$, the set of all equivalence classes of all irreducible unitary corepresentations by $\mathrm{Irred}(\mathbb{G})$, and right and left regular representations by $\rho$ and $\lambda$ respectively. 
For $x\in \mathrm{Irred}(\mathbb{G})$, $(u_{i,j}^x)_{i,j=1}^{n_x}$ are coefficients of $x$ and we frequently omit $n_x$. 
We regard $C_{\rm red}(\mathbb{G}):=\rho(C(\mathbb{G}))$ as our main object. 
The GNS representation of $h$ is written as $L^2(\mathbb{G})$. 
All dual objects are written with hat (e.g.\ $\hat{\mathbb{G}}$).

Let $F$ be a matrix in $\mathrm{GL}(n,\mathbb{C})$. The \textit{free unitary quantum group} (resp.\ \textit{free orthogonal quantum group}) of $F$ \cite{VW96}\cite{Wa95} is the $C^*$-algebra $C(A_u(F))$ (resp.\ $C(A_o(F))$) defined as the universal unital $C^*$-algebra generated by all the entries of a unitary $n$ by $n$ matrix $u=(u_{i,j})_{i,j}$ satisfying that $F(u_{i,j}^*)_{i,j}F^{-1}$ is unitary (resp.\ $F(u_{i,j}^*)_{i,j}F^{-1}=u$).

In \cite{Is13}, we observed for free quantum groups, there is a nuclear $C^*$-algebra 
\begin{equation*}
\mathcal{C}_l \subset C^*\{C_{\rm red}(\mathbb{G}), \hat{\lambda}(\ell^\infty(\hat{\mathbb{G}}))\}\subset \mathbb{B}(L^2(\mathbb{G}))
\end{equation*}
such that
\begin{itemize}
	\item[(a)] it contains $C_{\rm red}(\mathbb{G})$ and $\mathbb{K}(L^2(\mathbb{G}))$;
	\item[(b)] all commutators of $\mathcal{C}_l$ and $C_{\rm red}(\mathbb{G})^{\rm op}$ are contained in $\mathbb{K}(L^2(\mathbb{G}))$, where $C_{\rm red}(\mathbb{G})^{\rm op}$ acts on $L^2(\mathbb{G})$ canonically.
\end{itemize}
When we see it in the continuous core $L^\infty(\mathbb{G})\rtimes\mathbb{R}\subset\mathbb{B}(L^2(\mathbb{G})\otimes L^2(\mathbb{R}))$ (with respect to the Haar state $h$), it also satisfies:
\begin{itemize}
	\item[(c)] a family of maps $\mathrm{Ad}\Delta_h^{it}$ $(t\in\mathbb{R})$ gives a norm continuous action of $\mathbb{R}$ on $\mathcal{C}_l$;
	\item[(d)] all commutators of $\pi(\mathcal{C}_l)$ and $C_{\rm red}(\mathbb{G})^{\rm op} \otimes_{\rm min}1$ are contained in $\mathbb{K}(L^2(\mathbb{G}))\otimes_{\rm min}\mathbb{B}(L^2(\mathbb{R}))$.
\end{itemize}
Here  $\pi$ means the canonical $*$-homomorphism from $\mathbb{B}(L^2(\mathbb{G}))$ into $\mathbb{B}(L^2(\mathbb{G})\otimes L^2(\mathbb{R}))$ defined by $(\pi(x)\xi)(t):=\Delta_h^{-it}x\Delta_h^{it}\xi(t)$ for $x\in\mathbb{B}(L^2(\mathbb{G}))$, $t\in\mathbb{R}$, and $\xi\in L^2(\mathbb{G})\otimes L^2(\mathbb{R})$.

In the paper, our essential assumptions on quantum groups are these four conditions and we actually treat quantum subgroups at the same time. 
We are only interested in non-amenable von Neumann algebras, since amenable one is not prime. 
So we use the following terminology. 
\begin{Def}\label{class C}\upshape
Let $\hat{\mathbb{G}}$ be a discrete quantum group. We say $\hat{\mathbb{G}}$ is in $\mathcal{C}$ if $L^\infty(\mathbb{G})$ is non-amenable and there exists a discrete quantum group $\hat{\mathbb{H}}$ such that
\begin{itemize}
	\item the quantum group $\hat{\mathbb{G}}$ is a quantum subgroup of $\hat{\mathbb{H}}$;
	\item there exists a nuclear $C^*$-algebra $\mathcal{C}_l  \subset C^*\{C_{\rm red}(\mathbb{H}), \hat{\lambda}(\ell^\infty(\hat{\mathbb{H}}))\}\subset \mathbb{B}(L^2(\mathbb{H}))$ which satisfies conditions from (a) to (d) for $\hat{\mathbb{H}}$. 
\end{itemize}
\end{Def}

\bigskip

In our previous work, we already found following examples (see Subsection 3.2 and the proof of Theorem C in \cite{Is13}).
\begin{Pro}
Let $\mathbb{G}$ be one of the following quantum groups.
\begin{itemize}
	\item[$\rm (i)$] A co-amenable compact quantum group.
	\item[\rm (ii)] The free unitary quantum group $A_u(F)$ for any $F\in\mathrm{GL}(n,\mathbb{C})$.
	\item[\rm (iii)] The free orthogonal quantum group $A_o(F)$ for any $F\in\mathrm{GL}(n,\mathbb{C})$.
	\item[\rm (iv)] The quantum automorphism group $A_{\rm aut}(B, \phi)$ for any finite dimensional $C^*$-algebra $B$ and any faithful state $\phi$ on $B$.
	\item[\rm (v)] The dual of a bi-exact discrete group $\Gamma$.
	\item[\rm (vi)] The dual of a free product $\hat{\mathbb{G}}_1*\cdots *\hat{\mathbb{G}}_n$, where each $\mathbb{G}_i$ is as in from $\rm (i)$ to $\rm (v)$ above.
\end{itemize}
Then the dual $\hat{\mathbb{G}}$ is in $\mathcal{C}$ if $L^\infty(\mathbb{G})$ is non-amenable.
\end{Pro}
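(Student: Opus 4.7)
The plan is to produce, in each case, an explicit nuclear $C^*$-algebra $\mathcal{C}_l\subset\mathbb{B}(L^2(\mathbb{H}))$ verifying conditions (a)--(d) of Definition \ref{class C}. For cases (i)--(v) we take $\hat{\mathbb{H}}:=\hat{\mathbb{G}}$, so the quantum-subgroup requirement is automatic. For case (vi) we take $\hat{\mathbb{H}}$ to be the free product itself and assemble $\mathcal{C}_l$ from the algebras already constructed for each factor.

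For (i), co-amenability of $\mathbb{G}$ makes $C_{\rm red}(\mathbb{G})$ itself nuclear, so the choice $\mathcal{C}_l:=C^*\{C_{\rm red}(\mathbb{G}),\mathbb{K}(L^2(\mathbb{G}))\}$ works: (a) and (b) are immediate because left and right multiplications on $L^2(\mathbb{G})$ commute, while (c) and (d) follow from norm continuity of $\mathrm{Ad}\,\Delta_h^{it}$ on compacts together with the fact that the modular group preserves $C_{\rm red}(\mathbb{G})$ setwise. In (v), bi-exactness of $\Gamma$ supplies the Ozawa boundary algebra $C(\partial\Gamma)\rtimes_r\Gamma$, which is nuclear and quasi-commutes with $\mathbb{C}[\Gamma]^{\rm op}$ modulo $\mathbb{K}$; since the Haar state is tracial, (c) and (d) are automatic. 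Cases (ii)--(iv) are handled using the classification of irreducible corepresentations of $A_u(F)$, $A_o(F)$, and $A_{\rm aut}(B,\phi)$ in \cite{Ve05}, \cite{VV05}, \cite{VV08}, which yields a boundary-type nuclear algebra satisfying (a) and (b); the additional step of verifying that condition (AO) survives passage to the continuous core, namely (c) and (d), is the content of \cite{Is13} and proceeds by tracking the action of the scaling group on the generators.

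The main obstacle is case (vi). Taking $\hat{\mathbb{H}}:=\hat{\mathbb{G}}_1*\cdots*\hat{\mathbb{G}}_n$, I would define $\mathcal{C}_l$ to be the reduced free product of the individual $\mathcal{C}_l^{(i)}$ inside $\mathbb{B}(L^2(\mathbb{H}))$ (with respect to the Haar states), enlarged by $\mathbb{K}(L^2(\mathbb{H}))$. Nuclearity then follows from Dykema's theorem on nuclearity of reduced free products with GNS-faithful conditional expectations. The delicate step is verifying (b) and (d): for a reduced alternating word $a=a_{j_1}\cdots a_{j_k}$ with $a_{j_l}\in\mathcal{C}_l^{(j_l)}$ and any $b\in C_{\rm red}(\mathbb{H})^{\rm op}$, one telescopes
\begin{equation*}
[a_{j_1}\cdots a_{j_k},\, b]=\sum_{l=1}^{k}a_{j_1}\cdots a_{j_{l-1}}\,[a_{j_l},\, b]\,a_{j_{l+1}}\cdots a_{j_k},
\end{equation*}
but the subtlety is that $b$ acts through the right regular representation of the \emph{whole} free product, not of any single $\mathbb{G}_{j_l}$. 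One must therefore decompose $L^2(\mathbb{H})$ along the free product structure and use the orthogonality relations between alternating words to show that each cross-term lies in $\mathbb{K}(L^2(\mathbb{H}))$ rather than merely in the von Neumann algebra they generate. Upgrading this to the continuous-core statement (d) is then a formal tensoring argument once (c) has been established for each $\mathcal{C}_l^{(i)}$, using that the modular action on the free product factors through the componentwise modular actions.
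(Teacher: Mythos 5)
The paper does not actually prove this proposition; it is stated as a record of examples with a pointer to \cite[Subsection 3.2 and the proof of Theorem C]{Is13}, so your proposal has to be judged against what a correct construction of $\mathcal{C}_l$ would require. Your outline identifies most of the right ingredients, but there are concrete gaps. The most serious one is in case (vi): there is no ``Dykema theorem on nuclearity of reduced free products.'' Dykema's result gives \emph{exactness} of reduced (amalgamated) free products of exact $C^*$-algebras; reduced free products of nuclear $C^*$-algebras are in general very far from nuclear, and here your candidate $\mathcal{C}_l$ contains $C_{\rm red}(\mathbb{H})$, which is non-nuclear since $L^\infty(\mathbb{H})$ is assumed non-amenable. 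Nuclearity of the enveloping algebra is therefore exactly the non-trivial point. It is plausible precisely because each $\mathcal{C}_l^{(i)}$ contains $\mathbb{K}(L^2(\mathbb{G}_i))$ and the free product is taken with respect to the vector states at $\hat{1}_i$ --- one then checks that $\lambda(p_{\hat{1}_1})\rho(p_{\hat{1}_2})$ is the rank-one projection onto $\mathbb{C}\Omega$, so $\mathbb{K}(L^2(\mathbb{H}))\subset\mathcal{C}_l$ and the quotient by the compacts is a Toeplitz--Pimsner-type algebra --- but that argument has to be supplied; it cannot be cited. Likewise, the compactness of the cross-terms in your telescoped commutator is where the real content of \cite[Theorem C]{Is13} sits, and you correctly flag it as delicate without resolving it.

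Two further points. First, Definition \ref{class C} allows an auxiliary quantum group $\hat{\mathbb{H}}\supsetneq\hat{\mathbb{G}}$ precisely because the known boundary constructions for $A_u(F)$ proceed by realizing $\widehat{A_u(F)}$ as a discrete quantum subgroup of the dual of a free product of the form $\mathbb{Z}\ast A_o(\cdot)$; your blanket choice $\hat{\mathbb{H}}:=\hat{\mathbb{G}}$ in cases (i)--(v) would force you to build a boundary for $A_u(F)$ directly, which is not what \cite{VV08} or \cite{Is13} do and is not known to work. Second, in case (v) the algebra $C(\partial\Gamma)\rtimes_r\Gamma$ does not contain $\mathbb{K}(\ell^2(\Gamma))$ and so fails condition (a); you need the crossed product of a compactification of $\Gamma$ (a $\Gamma\times\Gamma$-invariant unital subalgebra of $\ell^\infty(\Gamma)$ containing $c_0(\Gamma)$, as in the definition of bi-exactness), after which $c_0(\Gamma)\rtimes_r\Gamma=\mathbb{K}(\ell^2(\Gamma))$ gives (a). Your treatment of (i) and of conditions (c) and (d) in the tracial cases is essentially fine, though for (c) you should invoke the explicit eigenvector decomposition $\sigma_t^h(u_{i,j}^x)=(\lambda_i^x\lambda_j^x)^{it}u_{i,j}^x$ rather than mere setwise invariance of $C_{\rm red}(\mathbb{G})$ under the modular group, which by itself does not yield norm continuity.
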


Since conditions (a) and (b) above implies bi-exactness \cite[\textrm{Lemmas 3.1.4 and 3.3.1}]{Is13}, which obviously implies condition (AO), we easily deduce the following lemma.
\begin{Lem}\label{C is full}
Let $\hat{\mathbb{G}}$ be in $\mathcal{C}$. Then we have $C^*\{L^\infty(\mathbb{G}),L^\infty(\mathbb{G})'\}\cap \mathbb{K}(L^2(\mathbb{G}))\neq0$. 
\end{Lem}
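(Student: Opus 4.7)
The plan is to chain together two facts already made available in the excerpt: (i) conditions (a) and (b) for $\hat{\mathbb{H}}$ force bi-exactness, and (ii) the unlabeled lemma just above (``$M$ non-amenable and satisfies condition (AO) implies $C^*\{M,M'\}\cap\mathbb{K}(L^2(M))\neq 0$''). Thus the whole task reduces to verifying condition (AO) for $L^\infty(\mathbb{G})$ on $L^2(\mathbb{G})$ together with non-amenability.

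First I would unfold Definition \ref{class C}: there is a discrete quantum group $\hat{\mathbb{H}}$ containing $\hat{\mathbb{G}}$ as a quantum subgroup and a nuclear $C^*$-algebra $\mathcal{C}_l\subset\mathbb{B}(L^2(\mathbb{H}))$ fulfilling (a)--(d). Applying \cite[\textrm{Lemmas 3.1.4 and 3.3.1}]{Is13} (invoked explicitly by the author just before the lemma), properties (a) and (b) for $\mathcal{C}_l$ give that $\hat{\mathbb{H}}$ is bi-exact.

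Next I would use the fact that bi-exactness descends to quantum subgroups. Since $\hat{\mathbb{G}}\subset\hat{\mathbb{H}}$, one has a Haar-state-preserving inclusion $L^\infty(\mathbb{G})\subset L^\infty(\mathbb{H})$ together with a faithful normal conditional expectation; the standard transfer argument (choosing $A=C_{\mathrm{red}}(\mathbb{G})$, $B=C_{\mathrm{red}}(\mathbb{G})^{\mathrm{op}}$ acting on $L^2(\mathbb{G})$ realized as a corner of $L^2(\mathbb{H})$) then yields condition (AO) for $L^\infty(\mathbb{G})$ in $\mathbb{B}(L^2(\mathbb{G}))$. Exactness (hence local reflexivity) of $C_{\mathrm{red}}(\mathbb{G})$ follows from the nuclearity of $\mathcal{C}_l$ restricted to the $\hat{\mathbb{G}}$-corner.

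Finally, by the very definition of $\mathcal{C}$, $L^\infty(\mathbb{G})$ is non-amenable. Therefore the hypotheses of the earlier lemma are met, and it delivers $C^*\{L^\infty(\mathbb{G}),L^\infty(\mathbb{G})'\}\cap\mathbb{K}(L^2(\mathbb{G}))\neq 0$.

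The only non-routine point is the passage from the (AO)-type data for $\hat{\mathbb{H}}$ to the same data for the quantum subgroup $\hat{\mathbb{G}}$. In the group case this is automatic, and for quantum subgroups it is handled in \cite{Is13} via the cited lemmas, so I would simply invoke them rather than redo the compression argument on $L^2(\mathbb{G})\subset L^2(\mathbb{H})$.
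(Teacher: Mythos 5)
Your argument is exactly the one the paper intends: the author disposes of this lemma in the single sentence preceding it, namely that conditions (a) and (b) give bi-exactness via \cite[Lemmas 3.1.4 and 3.3.1]{Is13} (which already handle the passage to the quantum subgroup $\hat{\mathbb{G}}\subset\hat{\mathbb{H}}$), hence condition (AO), and then the unlabeled lemma of Subsection \ref{discrete decomposition} applies since $L^\infty(\mathbb{G})$ is non-amenable by the definition of $\mathcal{C}$. Your fleshing out of the compression step $L^2(\mathbb{G})\subset L^2(\mathbb{H})$ is consistent with how the paper itself later treats the general case (Lemma \ref{general case}).
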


Let $\mathbb{G}$ be a compact quantum groups and $h$ its Haar state. 
Let $(u_{i,j}^x)_{i,j}^x$ be a fixed basis of the dense Hopf $*$-algebra $\mathcal{A}$ of $C_{\rm red}(\mathbb{G})$. Assume that they are orthogonal in $L^2(\mathbb{G})$. 
In this case, the modular action of the Haar state $h$ satisfies $\sigma_t^h(u_{i,j}^x)=(\lambda_i^x\lambda_j^x)^{it} u_{i,j}^x$ for some scalars $\lambda_i^x$. 
Let $E_h$ be the $h$-preserving conditional expectation from $L^\infty(\mathbb{G})$ onto $L^\infty(\mathbb{G})_h$ which extends to the projection $e_h$ from $L^2(\mathbb{G})$ onto $L^2(L^\infty(\mathbb{G})_h)$. 
Let $\mathcal{A}_h\subset L^\infty(\mathbb{G})_h$ be all the linear spans of $\{u_{i,j}^x\mid \lambda_i^x\lambda_j^x=1 \}$, which is a $*$-algebra, and $C_{\rm red}(\mathbb{G})_h$ be its norm closure. 
For $a\in L^\infty(\mathbb{G})_h$ with the Fourier expansion $a=\sum_{x\in\mathrm{Irred}(\mathbb{G}),i,j} a_{i,j}^xu_{i,j}^x$, 
we have $a=\sigma_t^h(a)=\sum_{x\in\mathrm{Irred}(\mathbb{G}),i,j} a_{i,j}^x (\lambda_i^x\lambda_j^x)^{it}u_{i,j}^x$ in $L^2(\mathbb{G})$ and hence $a_{i,j}^x=0$ if $\lambda_i^x\lambda_j^x\neq 1$. 
This means that $e_h$ is the projection onto the subspace spanned by $\mathcal{A}_h$, and hence we have $E_h(u_{i,j}^x)=u_{i,j}^x$ when $\lambda_i^x\lambda_j^x= 1$, and $E_h(u_{i,j}^x)=0$ when $\lambda_i^x\lambda_j^x\neq 1$. 
Hence $E_h$ is a map from $\mathcal{A}$ onto $\mathcal{A}_h$ which is the identity on $\mathcal{A}_h$. This implies that $\mathcal{A}_h$ is $\sigma$-weakly dense in $L^\infty(\mathbb{G})_h$ and $E_h$ restricts to a conditional expectation from $C_{\rm red}(\mathbb{G})$ onto $C_{\rm red}(\mathbb{G})_h$.

\subsection{\bf Popa's intertwining techniques}\label{Popa's intertwining techniques}

In the paper, we use Popa's intertwining techniques for type III subalgebras, although there is no useful equivalent conditions in this case. 
So our definition here is more general than usual one. 

\begin{Def}\label{Popa embed def}\upshape
Let $M$ be a von Neumann algebra, $p$ and $q$ projections in $M$, $A\subset pMp$ and $B\subset qMq$ von Neumann subalgebras. Assume that $B$ is finite or type $\rm III$. We say $A$ \textit{embeds in $B$ inside} $M$ and denote by $A\preceq_M B$ if there exist non-zero projections $e\in A$ and $f\in B$, a unital normal $*$-homomorphism $\theta \colon eAe \rightarrow fBf$, and a partial isometry $v\in M$ such that
\begin{itemize}
\item $vv^*\leq e$ and $v^*v\leq f$,
\item $v\theta(x)=xv$ for any $x\in eAe$.
\end{itemize}
\end{Def}

\bigskip

We first recall  characterizations of this condition for non-finite and semifinite von Neumann algebras (with finite subalgebras). 

\begin{Thm}[\textit{non-finite version, }{\cite{Po01}\cite{Po03}\cite{Ue12}\cite{HV12}}]\label{Popa embed}
Let $M,p,q,A,$ and $B$ be as in the definition above and let $E_{B}$ be a faithful normal conditional expectation from $qMq$ onto $B$. Assume that $B$ is finite with a trace $\tau_B$. 
Then the following conditions are equivalent.
\begin{itemize}
	\item[$\rm (i)$] We have $A\preceq_M B$.
	\item[$\rm (ii)$] There exists no sequence $(w_n)_n$ of unitaries in $A$ such that $\|E_{B}(b^*w_n a)\|_{2,\tau_B}\rightarrow 0$ for any $a,b\in pMq$.
	\item[$\rm (iii)$] There exists a non-zero $A$-$B$-submodule $H$ of $pL^2(M)q$ with $\dim_{(B,\tau_B)}H<\infty$.
\end{itemize}
\end{Thm}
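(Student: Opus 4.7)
The plan is to follow Popa's intertwining-by-bimodules method from \cite{Po01,Po03}, with the adaptations of \cite{Ue12,HV12} for non-tracial $M$. The main device is the Jones basic construction $B\subset qMq\subset\langle qMq,e_B\rangle$: since $B$ is tracial, this algebra carries a canonical faithful normal semifinite trace $\widehat{\mathrm{Tr}}$ characterized by $\widehat{\mathrm{Tr}}(xe_By)=\tau_B(E_B(yx))$ for $x,y\in qMq$. I extend $e_B$ to a projection on $L^2(M)$ by zero so that $ae_Bb^*\in\mathbb{B}(L^2(M))$ makes sense for $a,b\in pMq$.

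For the easy directions, (i) $\Rightarrow$ (iii): given intertwining data $(e,f,\theta,v)$, the closure $H_0:=\overline{vfL^2(B,\tau_B)}$ is a right $B$-module with $\dim_{(B,\tau_B)}H_0\le\tau_B(v^*v)<\infty$, and $v\theta(x)=xv$ together with $v=ev=vf$ makes $H:=\overline{AH_0B}$ a non-zero $A$-$B$-subbimodule of $pL^2(M)q$ of finite right $B$-dimension. For (iii) $\Rightarrow$ (ii), fix a Pimsner-Popa basis $(\eta_j)_j$ of $H$ with each $\eta_j$ approximable in $\|\cdot\|_2$ by elements of $pMq$; a sequence $(w_n)\subset\mathcal{U}(A)$ with $\|E_B(b^*w_na)\|_{2,\tau_B}\to0$ for all $a,b\in pMq$ would force $\|w_n\xi\|_2^2=\sum_j\|E_B(\eta_j^*w_n\xi)\|_{2,\tau_B}^2\to0$ for each $\xi\in H$, contradicting $\|w_n\xi\|_2=\|\xi\|_2$.

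The main work is (ii) $\Rightarrow$ (i). Assume the negation of (ii): extract a finite $F\subset pMq$ and $\delta>0$ with $\sum_{a,b\in F}\|E_B(b^*wa)\|_{2,\tau_B}^2\ge\delta$ for every $w\in\mathcal{U}(A)$, and set $T:=\sum_{a\in F}ae_Ba^*\in p\langle qMq,e_B\rangle p$. A direct calculation using $e_Bxe_B=E_B(x)e_B$ together with the trace property of $\widehat{\mathrm{Tr}}$ gives $\widehat{\mathrm{Tr}}(T\,wTw^*)=\sum_{a,b\in F}\|E_B(b^*wa)\|_{2,\tau_B}^2\ge\delta$ for all $w\in\mathcal{U}(A)$. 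The crucial identity is
\begin{equation*}
\widehat{\mathrm{Tr}}(wTw^*)=\sum_{a\in F}\tau_B(E_B(a^*w^*wa))=\sum_{a\in F}\tau_B(E_B(a^*a))=\widehat{\mathrm{Tr}}(T)<\infty,
\end{equation*}
which uses $w^*w=p$ and $pa=a$ and bypasses the non-traciality of $M$. Hence the $\sigma$-weak closure $K$ of $\mathrm{conv}\{wTw^*:w\in\mathcal{U}(A)\}$ is bounded in $L^2(\widehat{\mathrm{Tr}})$ and $\mathcal{U}(A)$-invariant. The unique element $T_0\in K$ of minimal $\|\cdot\|_{2,\widehat{\mathrm{Tr}}}$-norm lies in $A'\cap p\langle qMq,e_B\rangle p$ and is non-zero since $\widehat{\mathrm{Tr}}(TT_0)\ge\delta$. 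A suitable spectral projection $q_0$ of $T_0$ gives a non-zero finite projection in $A'\cap p\langle qMq,e_B\rangle p$; a standard polar decomposition argument on a vector in its range produces the partial isometry $v\in M$ and unital normal $*$-homomorphism $\theta\colon eAe\to fBf$ required by Definition \ref{Popa embed def}.

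The main obstacle is exactly the non-traciality of $M$ in the last step, which a priori invalidates the minimization argument because $\widehat{\mathrm{Tr}}$ is not preserved under general $M$-conjugation. The observation that saves the day is the identity $\widehat{\mathrm{Tr}}(wTw^*)=\widehat{\mathrm{Tr}}(T)$ for $w\in\mathcal{U}(A)$, which depends on the specific form $T=\sum_{a\in F}ae_Ba^*$ with $a\in pMq$, together with $w^*w=p$; once this is in place, Popa's original convex-hull minimization argument proceeds verbatim.
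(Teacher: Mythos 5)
The paper gives no proof of Theorem \ref{Popa embed}; it is quoted from \cite{Po01}\cite{Po03}\cite{Ue12}\cite{HV12}, so the only internal point of comparison is the paper's proof of the semifinite analogue, Proposition \ref{Popa embed2}. Your main implication (ii) $\Rightarrow$ (i) is exactly that argument transposed to the non-finite setting: your $T=\sum_{a\in F}ae_Ba^*$ is the paper's $d_0$, your computation $\widehat{\mathrm{Tr}}(TwTw^*)=\sum_{a,b\in F}\|E_B(a^*wb)\|_{2,\tau_B}^2$ is the paper's, and the circumcenter in $L^2(\widehat{\mathrm{Tr}})$ is the paper's step (ii) $\Rightarrow$ (iii). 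One small correction of emphasis: the identity $\widehat{\mathrm{Tr}}(wTw^*)=\widehat{\mathrm{Tr}}(T)$ does not depend on the special form of $T$; it holds for every positive $X=pXp$ in the commutant of the right $B$-action on $L^2(M)q$, simply because $\widehat{\mathrm{Tr}}$ is a trace on that commutant and $w^*w=ww^*=p$. (Your notation $p\langle qMq,e_B\rangle p$ should really be read as that commutant, since $ae_Ba^*$ with $a\in pMq$ does not live in the basic construction of $B\subset qMq$; your parenthetical about extending $e_B$ by zero shows you intend the right object.)

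The step that does not work as written is (i) $\Rightarrow$ (iii). The space $H_0=\overline{vfL^2(B,\tau_B)}$ is only an $eAe$-$B$-bimodule, and $H:=\overline{AH_0B}=\overline{AH_0}$ is the closed span of the $\mathcal{U}(A)$-orbit of $H_0$; its projection is a supremum of infinitely many projections of equal finite $\widehat{\mathrm{Tr}}$-trace, which can have infinite trace, so nothing in your argument bounds $\dim_{(B,\tau_B)}H$. (If $e$ satisfies $eAe=\mathbb{C}e$ and $1_A$ is not a finite sum of copies of projections subequivalent to $e$, the orbit genuinely has infinite right dimension.) Since your cycle is (i)$\Rightarrow$(iii)$\Rightarrow$(ii)$\Rightarrow$(i), this breaks the equivalence. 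The repair is the one the paper itself uses in the step (iii) $\Rightarrow$ (iv) of Proposition \ref{Popa embed2}: obtain (iii) not from the intertwiner directly but from a nonzero positive finite-trace element $d\in A'\cap pMp'$-commutant (your circumcenter $T_0$ already provides it), and cut $pL^2(M)q$ by a spectral projection of $d$; because that projection already commutes with $A$, the resulting subspace is automatically an $A$-$B$-bimodule of finite right dimension. To close the cycle you then need (i) $\Rightarrow$ (ii) (or (i) $\Rightarrow$ (iii)) directly from Definition \ref{Popa embed def}, and there the honest route is through the amplified formulation $\theta\colon A\to f(B\mathbin{\bar{\otimes}}\mathbb{M}_n)f$ (the unlabelled proposition of the paper following Corollary \ref{Popa embed4}, which needs the standing type $\rm II_1$/$\rm III$ assumptions on $A$ and $B$): there $\overline{v(L^2(B)^{\oplus n})f}$ is already an $A$-$B$-bimodule of dimension at most $n$, and the corner version $eAe\to fBf$ alone is too weak to run this implication in general.
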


\begin{Thm}[\textit{semifinite version, }{\cite{CH08}\cite{HR10}}]\label{popa embed2}
Let $M$ be a semifinite von Neumann algebra with a faithful normal semifinite trace $\mathrm{Tr}$, and $B\subset M$ be a von Neumann subalgebra with $\mathrm{Tr}_{B}:=\mathrm{Tr}|_{B}$ semifinite. Denote by $E_{B}$  the unique $\mathrm{Tr}$-preserving conditional expectation from $M$ onto $B$. 
Let $p$ be a $\mathrm{Tr}$-finite projection in $M$ and $A\subset pMp$ a von Neumann subalgebra. Then the following conditions are equivalent.
\begin{itemize}
	\item[$\rm (i)$]There exists a non-zero projection $p\in B$ with $\mathrm{Tr}_{B}(p)<\infty$ such that $A\preceq_{rMr}qBq$, where $r:=p\vee q$.
	\item[$\rm (ii)$]There exists no sequence $(w_n)_n$ in unitaries of $A$ such that $\|E_{B}(b^{\ast}w_n a)\|_{2,\mathrm{Tr}_B}\rightarrow 0$ for any $a,b\in pM$.
\end{itemize}
We use the same symbol $A\preceq_{M}B$ if one of these conditions holds.
\end{Thm}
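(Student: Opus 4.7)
My plan is to prove the equivalence in Theorem \ref{popa embed2} by reducing everything to the finite case already recorded in Theorem \ref{Popa embed}, the reduction being carried out by cutting $B$ with an increasing family of $\mathrm{Tr}_B$-finite projections.

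For the direction $\mathrm{(i)}\Rightarrow\mathrm{(ii)}$, suppose (i) holds with a projection $q\in B$ of finite trace and intertwiner data $(e,f,\theta,v)$ as in Definition \ref{Popa embed def}. The relation $v\theta(x)=xv$ for $x\in eAe$ implies $v^{\ast}w v\in qBq$ up to a small correction for any unitary $w\in A$, so $E_B(v^{\ast}w v)=\theta(ewe)v^{\ast}v$, and in particular $\|E_B(v^{\ast}wv)\|_{2,\mathrm{Tr}_B}\geq \|v\|_{2,\mathrm{Tr}}^{2}/C>0$ uniformly in $w$, once $v$ and some finite-trace cutoff are fixed. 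Taking $a=b=v\in pMq\subset pM$ immediately contradicts the convergence hypothesis in (ii). So (ii) forces (i) to fail only if it indeed fails, giving this implication.

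The main direction is $\mathrm{(ii)}\Rightarrow\mathrm{(i)}$. Assume (i) fails for every $\mathrm{Tr}_B$-finite projection of $B$. Using semifiniteness of $\mathrm{Tr}_B$, pick an increasing sequence $(q_n)$ of projections in $B$ with $\mathrm{Tr}_B(q_n)<\infty$ and $q_n\nearrow 1$, and set $r_n:=p\vee q_n$. Then $q_nBq_n$ is a finite von Neumann algebra sitting in $r_nMr_n$, so the finite version (Theorem \ref{Popa embed}) applies: for each $n$ we can choose a sequence $(w_{n,k})_{k}$ of unitaries of $A\subset pMp\subset r_nMr_n$ such that $\|E_{q_nBq_n}(b^{\ast}w_{n,k}a)\|_{2}\to 0$ as $k\to\infty$ for every $a,b$ in the corner $p(r_nMr_n)q_n=pMq_n$. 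The crucial compatibility is the identity $E_B(b^{\ast}wa)=E_{q_nBq_n}(b^{\ast}wa)$ whenever $a,b\in pMq_n$, valid because $q_n\in B$ and $E_{q_nBq_n}=q_n E_B(\cdot)q_n$.

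The remaining step is a standard diagonal and approximation argument. Fix a countable $\|\cdot\|_{2,\mathrm{Tr}}$-dense family $\{a_i,b_i\}$ in $pM$ (possible because $\mathrm{Tr}(p)<\infty$ so $pM\subset L^{2}(M,\mathrm{Tr})$), and observe that the cutdowns $a_iq_n,b_iq_n$ converge to $a_i,b_i$ in $L^{2}(M,\mathrm{Tr})$; this follows because $\omega(\cdot):=\mathrm{Tr}(\cdot\, a_i a_i^{\ast})$ is a normal finite positive functional on $M$, hence $\omega(1-q_n)\to 0$. Extract a diagonal subsequence $w_m:=w_{n_m,k_m}$ for which $\|E_{q_{n_m}Bq_{n_m}}(b_iq_{n_m})^{\ast}w_m(a_iq_{n_m})\|_{2}\to 0$ for each $i$; the compatibility above rewrites this as $\|E_B(q_{n_m}b_i^{\ast}w_ma_iq_{n_m})\|_{2,\mathrm{Tr}_B}\to 0$. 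Finally, because $E_B\colon L^{2}(M,\mathrm{Tr})\to L^{2}(B,\mathrm{Tr}_B)$ is a contraction, the $L^{2}$-approximation $a_iq_{n_m}\to a_i$ and $b_iq_{n_m}\to b_i$ combined with density of $\{a_i,b_i\}$ upgrade this to $\|E_B(b^{\ast}w_m a)\|_{2,\mathrm{Tr}_B}\to 0$ for every $a,b\in pM$, contradicting (ii).

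The principal obstacle is managing the transition between the local conditional expectations $E_{q_nBq_n}$ arising from the finite version and the global $E_B$ that appears in (ii); once the compatibility $E_B(q_n\cdot q_n)=E_{q_nBq_n}(q_n\cdot q_n)$ and the $L^{2}$-approximation of $pM$ by $pMq_n$ are established, the diagonal argument is routine, but the simultaneous control of infinitely many pairs $(a,b)$ together with the shift in ambient algebra from $r_nMr_n$ to $M$ requires some care.
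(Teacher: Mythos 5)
The paper does not actually prove this theorem --- it is quoted from \cite{CH08} and \cite{HR10} --- so your proposal has to stand on its own. Your reduction for $\mathrm{(ii)}\Rightarrow\mathrm{(i)}$ (in contrapositive form) is sound: the compatibility $E_{q_nBq_n}(x)=E_B(x)$ for $x\in q_nMq_n$, the $L^2$-approximation of $pM$ by $pMq_n$ using normality of $\mathrm{Tr}(\,\cdot\,a_ia_i^*)$, and the diagonal extraction all work, modulo the routine precaution of choosing the countable $\|\cdot\|_2$-dense family inside operator-norm balls, so that the final estimate $\|E_B(b^*w_ma)-E_B(b_i^*w_ma_i)\|_2\leq\|b-b_i\|_2\|a\|+\|b_i\|\,\|a-a_i\|_2$ does not lose control of $\|b_i\|$.

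The direction $\mathrm{(i)}\Rightarrow\mathrm{(ii)}$ as written has a genuine gap. From $v\theta(x)=xv$ on $eAe$ one gets $v^*wv=v^*(ewe)v=v^*v\,\theta(ewe)$, hence $E_B(v^*wv)=E_B(v^*v)\,\theta(ewe)$ (not $\theta(ewe)v^*v$), and the claimed uniform lower bound $\|E_B(v^*wv)\|_{2}\geq\|v\|_2^2/C$ over all $w\in\mathcal{U}(A)$ is false: for $w\in\mathcal{U}(A)$ the compression $ewe$ is merely a contraction in $eAe$, and $\theta(ewe)$ can have arbitrarily small, even zero, $\|\cdot\|_2$-norm (if $A$ is diffuse and $e$ has trace at most $1/2$ in $A$, there are unitaries $w$ with $ewe=0$). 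A single pair $a=b=v$ therefore cannot witness (ii); this is exactly why the intertwining criterion quantifies over all $a,b$. The standard repair is either to run the argument through the finite-trace element $d\in A'\cap p\langle M,B\rangle p$ (approximate $d$ in $L^2$ by $\sum_ix_ie_By_i$ and expand $\mathrm{Tr}_{\langle M,B\rangle}(w_ndw_n^*d)$ into terms $\mathrm{Tr}\bigl(E_B(x_j^*w_nx_i)E_B(y_iw_n^*y_j^*)\bigr)$, each of which vanishes by hypothesis, as in the proof of Proposition \ref{Popa embed2}), or --- simpler and entirely in the spirit of your other direction --- to deduce $\mathrm{(i)}\Rightarrow\mathrm{(ii)}$ directly from the finite version: if $A\preceq_{rMr}qBq$, then Theorem \ref{Popa embed} forbids any sequence of unitaries of $A$ with $\|E_{qBq}(b^*w_na)\|_2\to0$ for all $a,b\in pMq$, and since $E_{qBq}(b^*w_na)=E_B(b^*w_na)$ for such $a,b$, any sequence violating (ii) would restrict to one violating the finite criterion. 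With that replacement the proof is complete.
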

\begin{Rem}\upshape
By the proof, when $A\preceq_{M}B$ for semifinite $B\subset M$, for any increasing net $(q_i)_i\subset B$ of $\mathrm{Tr}_B$-finite projections, we can find some $q_i$ such that $A\preceq_{rMr} q_iBq_i$ where $r:=p\vee q_i$. 
So we can choose such a $q_i$ from any semifinite subalgebra of $B$ on which $\mathrm{Tr}_B$ is semifinite. 
\end{Rem}

Since we mainly study continuous cores in the paper, the theorem for semifinite algebras are important for us. 
Theorem \ref{popa embed2} is a generalization of the statement (ii) in Theorem \ref{Popa embed} and this formulation is, for example, useful in the next subsection. 
However, to study our factorization properties, the statement (iii) is also important since the proof of Ozawa and Popa for prime factorization result relied on them. 
Hence here we give corresponding statements in the semifinite setting. 
We include sketches of proofs, since we use similar strategies later. 
See \cite[\textrm{Theorem F.12}]{BO08} for the details. 
Recall that any opposite von Neumann algebra $M^{\rm op}$ has a canonical right action on $L^2(M)$, and we write its element as $a^{\rm op}\in M^{\rm op}$ for $a\in M$.

\begin{Pro}\label{Popa embed2}
Let $M,B,\mathrm{Tr},p,A$, and $E_B$ be as in the previous theorem. 
Let $q\in B$ be a non-zero projection with $\mathrm{Tr}_{B}(q)<\infty$ and write as $z_q$ the central support projection of $q$ in $B$. Let $e_B$ be the Jones projection of $B\subset M$ and let $\mathrm{Tr}_{\langle M, B\rangle}$ be the semifinite trace on $\langle M, B\rangle$ given by $Me_BM\ni xe_By\mapsto \mathrm{Tr}(xy)$. 
Then the following conditions are equivalent. 
\begin{itemize}
	\item[$\rm (i)$] We have $A\preceq_{rMr}qBq$ for $r:=p\vee q$.
	\item[$\rm (ii)$] There exists a non-zero positive element $d_0\in \langle M, B\rangle$ such that $d_0pz_q^{\rm op}=d_0$, $\mathrm{Tr}_{\langle M, B\rangle}(d_0)<\infty$, and $K^A_{d_0}\not\ni 0$. Here $K^A_{d_0}$ is the $\sigma$-weak closure of $\mathrm{co}\{wd_0w^*\mid w\in\mathcal{U}(A)\}$.
	\item[$\rm (iii)$] There exists a non-zero positive element $d\in \langle M, B\rangle\cap A'$ such that $dpz_q^{\rm op}=d$ and $\mathrm{Tr}_{\langle M, B\rangle}(d)<\infty$.
	\item[$\rm (iv)$] There exists a non-zero $pAp$-$qBq$-submodule $H$ of $pL^{2}(M,\mathrm{Tr})q$ with $\dim_{(qBq,\tau_{q})}H<\infty$, where $\tau_{q}:=\mathrm{Tr}_B(q\cdot q)/\mathrm{Tr}_B(q)$.
\end{itemize}
We use the same symbol $A\preceq_{M}B$ if one of these conditions holds.
\end{Pro}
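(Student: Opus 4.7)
The plan is to establish the four-way cycle $\text{(i)} \Rightarrow \text{(ii)} \Rightarrow \text{(iii)} \Rightarrow \text{(iv)} \Rightarrow \text{(i)}$, following the template of \cite[Theorem F.12]{BO08} but with modifications adapted to the semifinite setting, where the non-trivial ingredient is the passage from $z_q B z_q$-modules to $qBq$-modules.

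For $\text{(i)} \Rightarrow \text{(ii)}$, I would use the contrapositive of Theorem \ref{popa embed2}: there exist $a \in pMz_q$ and $\delta>0$ with $\|E_{B}(a^{\ast} w a q)\|_{2,\mathrm{Tr}_B} \geq \delta$ for every unitary $w \in A$ (after polarization). Setting $d_0 := a q e_B q a^{\ast}$ and using the defining property $\mathrm{Tr}_{\langle M,B\rangle}(x e_B y) = \mathrm{Tr}(xy)$, one computes $\mathrm{Tr}_{\langle M,B\rangle}(w d_0 w^{\ast} d_0) \geq \delta^{2} > 0$, which rules out $0$ from the $L^{2}$-closure of $K^{A}_{d_0}$; the support and trace conditions on $d_0$ hold by construction. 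For $\text{(ii)} \Rightarrow \text{(iii)}$, I would take $d$ to be the unique element of minimal $\|\cdot\|_{2,\mathrm{Tr}_{\langle M,B\rangle}}$-norm in the $L^{2}$-closed convex hull of $K^{A}_{d_0}$; uniqueness of the minimizer forces $w d w^{\ast} = d$ for every unitary $w \in A$, and positivity together with the trace bound and the support condition $d p z_q^{\rm op} = d$ pass through averaging and $L^{2}$-closure.

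For $\text{(iii)} \Rightarrow \text{(iv)}$, I would take a non-trivial spectral projection $P := 1_{[\varepsilon,\infty)}(d)$ for small $\varepsilon > 0$. Then $P \in \langle M, B\rangle \cap A'$ is $\mathrm{Tr}_{\langle M,B\rangle}$-finite, and since $d$ commutes with the projection $p z_q^{\rm op}$, one has $P \leq p z_q^{\rm op}$. The range $H_0 := P L^{2}(M)$ is a $pAp$-$z_q B z_q$-submodule of $p L^{2}(M) z_q$ of finite $z_q B z_q$-dimension, and I would set $H := H_0 q \subset p L^{2}(M) q$. Using the central support hypothesis to pick a family $\{v_i\} \subset z_q B q$ with $\sum_i v_i v_i^{\ast} = z_q$ and $v_i^{\ast} v_i \leq q$, the decomposition $H_0 = \bigoplus_i H v_i^{\ast}$ as right $B$-modules yields simultaneously $H \neq 0$ and a finite bound on $\dim_{(qBq,\tau_q)} H$.

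For $\text{(iv)} \Rightarrow \text{(i)}$, I would follow the classical Popa argument: extract a non-zero bounded vector $\xi \in H$, i.e.\ one for which $bq \mapsto \xi b$ extends to a bounded map $L^{2}(qBq,\tau_q) \to L^{2}(M)$, then use the polar decomposition of the associated operator in $\langle rMr, qBq\rangle$ to produce projections $e \in pAp$, $f \in qBq$, a partial isometry $v \in rMr$ with $vv^{\ast} \leq e$, $v^{\ast}v \leq f$, and a unital normal $*$-homomorphism $\theta \colon eAe \to f(qBq)f$ satisfying $v\theta(x) = xv$. The main obstacle I anticipate is step $\text{(iii)} \Rightarrow \text{(iv)}$: the reduction from a finite $z_q B z_q$-dimensional module to a finite $qBq$-dimensional one must carefully exploit the central support hypothesis together with comparison of projections inside the semifinite algebra $B$. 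The other implications essentially transcribe the classical arguments into the semifinite framework.
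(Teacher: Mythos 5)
Your proposal follows essentially the same route as the paper: the paper obtains $(\mathrm{i})\Leftrightarrow(\mathrm{iv})$ directly from Theorem \ref{Popa embed} and proves $(\mathrm{i})\Rightarrow(\mathrm{ii})\Rightarrow(\mathrm{iii})\Rightarrow(\mathrm{iv})$ exactly as you outline — an element of the form $\sum_{x} xe_Bx^*$, the minimal-norm (circumcenter) element of the closed convex hull, and a finite-trace spectral projection cut by $q^{\rm op}$ whose non-vanishing is guaranteed by $f\leq z_q^{\rm op}$. The one small imprecision is in $(\mathrm{i})\Rightarrow(\mathrm{ii})$: negating the non-intertwining condition of Theorem \ref{popa embed2} produces a finite subset $\mathcal{F}\subset pMq$ rather than a single element $a$, so $d_0$ should be $\sum_{x\in\mathcal{F}}xe_Bx^*$; the rest of your argument goes through unchanged.
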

\begin{proof}
The equivalence of (i) and (iv) follows from Theorem \ref{Popa embed}, since we canonically have $pL^{2}(M,\mathrm{Tr})q=pL^2(rMr,\mathrm{Tr}(r\cdot r))q$. We show implications $\rm (i)\Rightarrow (ii) \Rightarrow (iii) \Rightarrow (iv)$. 

Suppose condition (i). 
Then by the theorem above, we can find $\delta>0$ and a finite subset $\mathcal{F}\subset pMq$ such that $\|E_B(y^*wx)\|_{2, \tau_q}>\delta$ for any $x,y\in\mathcal{F}$ and $w\in\mathcal{U}(A)$, where $\tau_q:=\mathrm{Tr}(q\cdot q)/\mathrm{Tr}(q)$. 
Put $d_0:=\sum_{x\in \mathcal{F}}xe_Bx^*\in \langle M,B \rangle$. It obviously satisfies $d_0=d_0p$ and $\mathrm{Tr}_{\langle M, B\rangle}(d_0)<\infty$. 
We also have $d_0z_q^{\rm op}=d_0$ since $z_q^{\rm op}e_B=z_qe_B$. 
To see the condition on $K_{d_0}^A$, we calculate for any $w\in\mathcal{U(A)}$
\begin{equation*}
\sum_{x\in\mathcal{F}}\langle w^*d_0w \hat{x}|\hat{x}\rangle
=\sum_{x,y\in\mathcal{F}}\langle e_B y^*w \hat{x}|y^*w\hat{x}\rangle
=\sum_{x,y\in\mathcal{F}}\| e_B y^*w \hat{x}\|_{2, \tau_q}
=\sum_{x,y\in\mathcal{F}}\| E_B( y^*w x)\|_{2, \tau_q}>\delta.
\end{equation*}
Hence we get $K_{d_0}^A\not\ni 0$ and (ii) holds.

Next suppose condition (ii). We regard $K_{d_0}^A\subset \langle M,B\rangle$ as an $L^2$-norm bounded, convex, weak closed (and hence $L^2$-norm closed) subset of $L^2(\langle M, B\rangle,\mathrm{Tr}_{\langle M, B\rangle})$. Then we can find the circumcenter $d$ of $K_{d_0}^A$, which is non-zero by assumption. By the uniqueness of the circumcenter, we have $wdw^*=d$ and hence $d$ is contained in $\langle M,B\rangle\cap A'$. 
Since $d\in K_{d_0}^A$, we have $\mathrm{Tr}_{\langle M, B\rangle}(d)<\infty$ by the normality of $\mathrm{Tr}_{\langle M, B\rangle}$. Thus condition (iii) holds.

Suppose condition (iii). Take a non-zero spectral projection $f$ of $d$ such that $\mathrm{Tr}_{\langle M, B\rangle}(f)<\infty$. 
Since $f$ is also contained in $\langle M,B\rangle \cap A'$ with $f=fpz_q^{\rm op}$, the closed subspace $H:=fq^{\rm op}L^2(M)$, which is nonzero since $f\leq z_q^{\rm op}$, has a $pAp$-$qBq$-bimodule structure as a submodule of $L^2(M)$. 
Now the dimension of $H$ with respect to $(qBq, \tau_q)$ is smaller than $\mathrm{Tr}_{\langle M, B\rangle}(f)/\mathrm{Tr}_{\langle M, B\rangle}(q)$ and hence $H$ has a finite dimension. We get condition (iv).
\end{proof}

\begin{Cor}\label{Popa embed3}
Keep the setting in the previous proposition. Then $A\preceq_{M}B$ if and only if  there exists a non-zero positive element $d\in p\langle M, B\rangle p\cap A'$ such that $\mathrm{Tr}_{\langle M, B\rangle}(d)<\infty$. 
Also $A\preceq_M B$ if and only if $eAe\preceq_{rMr} fBf$ for any (some) projections $e\in A$ and $f\in B$ with $\mathrm{Tr}_B(f)<\infty$ and with central supports $1_A$ and $1_B$ respectively. 
\end{Cor}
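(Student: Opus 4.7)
The plan is to derive both equivalences from Proposition~\ref{Popa embed2}, first reformulating condition (iii) and then using the bimodule characterization (iv). For the first statement, if $A\preceq_M B$ then Proposition~\ref{Popa embed2}(iii) gives $d_0\in\langle M,B\rangle\cap A'$ positive, nonzero, $\mathrm{Tr}$-finite with $d_0=d_0\, pz_q^{\rm op}$ for some $\mathrm{Tr}_B$-finite projection $q\in B$. Since $p$ and $z_q^{\rm op}$ commute, $pz_q^{\rm op}$ is a projection, and positivity of $d_0$ combined with $d_0=d_0(pz_q^{\rm op})$ forces $d_0=(pz_q^{\rm op})d_0(pz_q^{\rm op})$; hence $pd_0p=d_0$ and $d_0\in p\langle M,B\rangle p$. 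Conversely, given $d\in p\langle M,B\rangle p\cap A'$ positive, nonzero, and $\mathrm{Tr}$-finite, the key observation is $d\cdot 1_B^{\rm op}=d$: since $L^2(B)\subset L^2(M)\cdot 1_B$ we have $e_B\leq 1_B^{\rm op}$, so $e_B\cdot 1_B^{\rm op}=e_B$, and because $1_B^{\rm op}$ commutes with $M$ we conclude $\langle M,B\rangle\cdot 1_B^{\rm op}=\langle M,B\rangle$. Taking an increasing net $(q_\alpha)$ of $\mathrm{Tr}_B$-finite projections in $B$ with $q_\alpha\uparrow 1_B$ gives $z_{q_\alpha}^{\rm op}\uparrow 1_B^{\rm op}$, and since $z_{q_\alpha}^{\rm op}\in JBJ\subset\langle M,B\rangle'$ commutes with $d$, we have $dz_{q_\alpha}^{\rm op}\uparrow d$ in the strong topology. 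Pick $\alpha$ with $d':=dz_{q_\alpha}^{\rm op}\neq 0$; then $d'$ satisfies the hypotheses of Proposition~\ref{Popa embed2}(iii), so $A\preceq_M B$.

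For the second statement, first note $eAe\preceq_{rMr}fBf\Leftrightarrow eAe\preceq_M fBf$: the witnessing partial isometry in Definition~\ref{Popa embed def} satisfies $v=e'vf'\in eMf\subset rMr$. The backward direction of the corollary is then immediate in both the ``some'' and ``any'' senses: data $(e'\in eAe\subset A,\ f'\in fBf\subset B,\ \theta\colon e'Ae'\to f'Bf',\ v\in M)$ witnessing $eAe\preceq_M fBf$ directly witnesses $A\preceq_M B$. For the forward direction, the Remark after Theorem~\ref{popa embed2} lets us choose the $\mathrm{Tr}_B$-finite projection in Proposition~\ref{Popa embed2}(i) inside the finite subalgebra $fBf$; thus we may assume $q\leq f$. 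Applying Proposition~\ref{Popa embed2}(iv) yields a nonzero $A$-$qBq$-submodule $H\subset pL^2(M)q$ with $\dim_{qBq}H<\infty$.

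The candidate submodule for $eAe\preceq_{rMr}fBf$ is $\overline{eH}\subset eL^2(M)q$. Nonvanishing: since $e$ has central support $1_A$ in $A$, decompose $1_A=\sum_\alpha v_\alpha v_\alpha^*$ orthogonally with $v_\alpha\in A$ partial isometries satisfying $v_\alpha^*v_\alpha\leq e$; for $\xi\in H$, $v_\alpha^*\xi=(v_\alpha^*v_\alpha)v_\alpha^*\xi\in eH$, so if $\overline{eH}=0$ then $\xi=\sum_\alpha v_\alpha(v_\alpha^*\xi)=0$, forcing $H=0$. Bimodule structure: using $xe=ex=x$ for $x\in eAe$, one has $(eAe)(eH)\subset e(AH)\subset eH$ and $(eH)(qBq)\subset e(H\cdot qBq)\subset eH$, and these extend to $\overline{eH}$ by continuity. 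Finite dimension: the bounded right-$qBq$-linear map $H\to\overline{eH}$, $\xi\mapsto e\xi$, has dense image, so the standard Murray--von Neumann dimension theory over the finite algebra $qBq$ yields $\dim_{qBq}\overline{eH}\leq\dim_{qBq}H<\infty$. Since $e,q\leq r$, $\overline{eH}\subset eL^2(M)q=e(rL^2(M)r)q=eL^2(rMr)q$, and $qBq=q(fBf)q$. Hence Proposition~\ref{Popa embed2}(iv) applied inside $rMr$ (with $A\mapsto eAe$ and $B\mapsto fBf$) gives $eAe\preceq_{rMr}fBf$. I expect the main technical subtlety to be the closure and the dimension count for $\overline{eH}$, which rest on elementary facts about Hilbert modules over finite von Neumann algebras.
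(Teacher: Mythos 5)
Your proof of the first equivalence is correct and coincides with the paper's: both directions reduce to condition (iii) of Proposition \ref{Popa embed2}, the ``if'' direction by cutting $d$ with $z_{q_\alpha}^{\rm op}$ for an increasing net $q_\alpha\uparrow 1_B$ of $\mathrm{Tr}_B$-finite projections and picking $\alpha$ with $dz_{q_\alpha}^{\rm op}\neq 0$. For the second equivalence your route is genuinely different: you pass to the bimodule characterization (iv) and compress a finite-dimensional module by $e$, whereas the paper stays with the positive element $d$ from the first statement, noting that $de\neq 0$ because $d\in A'$ and $z_A(e)=1_A$, and that $de=de\,z_f^{\rm op}$ holds automatically since $z_f=1_B$, so that $de$ verifies condition (iii) for the pair $(e,f)$. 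Your handling of $\overline{eH}$ (nonvanishing via the partial isometries $v_\alpha$, the bimodule structure, the dimension estimate) is sound, and you also record the easy backward direction that the paper leaves implicit.

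There is, however, a gap at the point where you claim one ``may assume $q\leq f$.'' The Remark following Theorem \ref{popa embed2} concerns increasing nets of $\mathrm{Tr}_B$-finite projections converging to $1_B$ (equivalently, subalgebras of $B$ containing such a net); the corner $fBf$ is not of this kind, since every projection in it is dominated by $f$, and the approximation $q_iE_B(\cdot)q_i\to E_B(\cdot)$ on which the Remark rests fails when $q_i\uparrow f\neq 1_B$. Indeed, the assertion ``$A\preceq_M B$ implies $A\preceq_{rMr} qBq$ for some $q\leq f$'' is essentially the $e=1_A$ case of the very statement being proved, and it is precisely here that the hypothesis $z_B(f)=1_B$ must be used on the $B$-side; your argument invokes the central support of $e$ but never that of $f$. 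The repair is short: condition (iii) of Proposition \ref{Popa embed2} produces $d$ with $d=dpz_q^{\rm op}$ for some $\mathrm{Tr}_B$-finite $q$, and since $z_f=1_B$ one has $z_f^{\rm op}=1_B^{\rm op}$, hence $d=dpz_f^{\rm op}$ as well; thus (iii) holds with $q$ replaced by $f$, and (iii)$\Rightarrow$(iv) yields a nonzero finite-dimensional $pAp$-$fBf$-submodule of $pL^2(M)f$ directly, after which your compression by $e$ goes through unchanged.
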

\begin{proof}
For the first half, the only if direction is trivial from the previous theorem. Let $(q_i)\subset B$ be an increasing net of $\mathrm{Tr}_B$-finite projections converging to $1_B$. Let $d$ be in the statement. Then there is some $i$ such that $\tilde{d}:=dz_{q_i}^{\rm op}\neq0$. 
Then $\tilde{d}$ satisfies $\mathrm{Tr}_{\langle M, B\rangle}(\tilde{d})<\infty$ and $\tilde{d}=\tilde{d}z_{q_i}^{\rm op}p$. Since $z_{q_i}^{\rm op}\in \mathcal{Z}(\langle M, B\rangle)\subset \langle M, B\rangle\cap A'$, we have $\tilde{d}\in \langle M, B\rangle\cap A'$. 

For the second half, suppose $A\preceq_M B$ and take $d$ in the first statement. If $de=0$, then $dz_A(e)=d$ is also zero since $d\in A'$. So $de=dez_f^{\rm op}$ is non-zero and we have $eAe\preceq_{rMr} fBf$.
\end{proof}

\begin{Cor}\label{Popa embed4}
Keep the setting in the previous proposition and assume that $A$ is of type $\rm II$. 
Then the following condition is also equivalent to $A\preceq_{rMr} qBq$. 
\begin{itemize}
	\item[$\rm (v)$] There exist non-zero projections $e\in A$ and $f\in qBq$, and a partial isometry $V\in \langle M, B\rangle$ such that $V^*V\in eA'$ and $VV^*=fe_B$, where $e_B$ is the Jones projection for $B\subset M$.
\end{itemize}
In the case, $e\in A$ is taken from any type $\rm II$ subalgebra of $A$.
\end{Cor}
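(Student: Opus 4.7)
For the direction $(\mathrm{v}) \Rightarrow A \preceq_{rMr} qBq$, I would set $d := V^*V$ and verify the hypotheses of Proposition~\ref{Popa embed2}(iii) (possibly after invoking Corollary~\ref{Popa embed3} to pass between $A$ and $eAe$). The element $d$ is a positive non-zero projection lying in $p\langle M, B\rangle p$ since $V^*V \leq e \leq p$, it satisfies the appropriate commutant condition coming from $V^*V \in eA'$, and it has finite trace because $\mathrm{Tr}_{\langle M, B\rangle}(V^*V) = \mathrm{Tr}_{\langle M, B\rangle}(VV^*) = \mathrm{Tr}_B(f)$. The key technical point is the right-support condition $dz_q^{\mathrm{op}} = d$: since $z_q^{\mathrm{op}} \in JBJ = \langle M, B\rangle'$ commutes with $V$ and since $f \leq q \leq z_q$ with $z_q \in Z(B)$, one computes $fe_B z_q^{\mathrm{op}} = fe_B$, hence $VV^* z_q^{\mathrm{op}} = VV^*$, which forces $Vz_q^{\mathrm{op}} = V$ and therefore $V^*V z_q^{\mathrm{op}} = V^*V$.

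For the converse, my plan is to produce the appropriate positive element in $A' \cap \langle M, B\rangle$ and then use Murray--von Neumann comparison in the semifinite algebra $\langle M, B\rangle$ to realize it as the left support of a partial isometry landing on a multiple of the Jones projection. By Proposition~\ref{Popa embed2}(iii) I obtain a non-zero positive $d \in p\langle M, B\rangle p \cap A'$ with $dz_q^{\mathrm{op}} = d$ and finite trace, and I take a non-zero spectral projection $P := 1_{[\alpha, \|d\|]}(d)$ for some small $\alpha > 0$. This gives a finite-trace projection $P \in A' \cap p\langle M, B\rangle p$ with $Pz_q^{\mathrm{op}} = P$. Given the type II subalgebra $A_0 \subset A$, I pick a non-zero projection $e \in A_0$ with $eP \neq 0$ and $\mathrm{Tr}_{\langle M, B\rangle}(eP)$ small; this is available because $A_0$ is type II, possibly after a central cut aligning the unit of $A_0$ with the support of $P$. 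Since $e \in A$ and $P \in A'$ commute, $eP = ePe$ is a projection in $e\langle M, B\rangle e$ that automatically commutes with $eAe$, which is exactly the commutant condition required for the source of the partial isometry.

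The core step is a Murray--von Neumann comparison inside the semifinite algebra $\langle M, B\rangle$, based on three structural facts: $Z(\langle M, B\rangle) = J Z(B) J$; the central support of $qe_B$ equals $z_q^{\mathrm{op}}$ and dominates the support of $eP$; and any projection in $\langle M, B\rangle$ dominated by $qe_B$ lies in $qe_B \langle M, B\rangle qe_B = qBq \cdot e_B$, so is automatically of the form $fe_B$ for a projection $f \in qBq$. After possibly cutting $e$ by a central projection $y^{\mathrm{op}} \in Z(B)^{\mathrm{op}}$ to align central supports with $qye_B$ and after shrinking $e$ so that $\mathrm{Tr}_{\langle M, B\rangle}(eP) \leq \mathrm{Tr}_B(qy)$, the comparison theorem produces a partial isometry $V \in \langle M, B\rangle$ with $V^*V = eP$ and $VV^* = fe_B$. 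The freedom in choosing $e$ inside any type II $A_0 \subset A$ then yields the final clause of the corollary.

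The main obstacle I anticipate is the joint matching of trace and central support in this comparison step. Insisting on $V^*V = eP$ (so that it automatically commutes with $eAe$) rather than an arbitrary sub-projection of $P$ removes the freedom to shrink $V^*V$ after the fact, so the choice of $e \in A_0$ together with the central cut $y^{\mathrm{op}}$ must be coordinated before invoking comparison. Once that bookkeeping is arranged, the remainder is a routine partial-isometry manipulation in $\langle M, B\rangle$.
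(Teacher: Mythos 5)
Your direction $\rm (v)\Rightarrow A\preceq_{rMr}qBq$ is sound: $d:=V^*V$ is a non-zero finite-trace projection in $e\langle M,B\rangle e\cap(eAe)'$, your computation $fe_Bz_q^{\rm op}=fz_qe_B=fe_B$ correctly forces $Vz_q^{\rm op}=V$ and hence $dz_q^{\rm op}=d$, so Proposition \ref{Popa embed2} applied to $eAe\subset eMe$ gives $eAe\preceq qBq$, which yields $A\preceq_{rMr}qBq$ directly from Definition \ref{Popa embed def}. The paper instead constructs the intertwiner by hand, setting $\theta(x)=VxV^*$ and taking the polar decomposition of $\xi:=V^*\hat f$; your detour through condition (iii) is an acceptable variant.

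The converse, however, has a genuine gap at the comparison step, which is where the actual content of the corollary (and the role of the type II hypothesis) lies. In the semifinite algebra $z_q^{\rm op}\langle M,B\rangle$ the center is $(\mathcal{Z}(B)z_q)^{\rm op}$ and is in general nontrivial, so the scalar inequality $\mathrm{Tr}_{\langle M,B\rangle}(eP)\leq\mathrm{Tr}_{\langle M,B\rangle}(qye_B)$ together with equality of central supports does \emph{not} imply $eP\prec qye_B$: Murray--von Neumann subequivalence in a non-factor is governed by the center-valued trace, and a scalar trace inequality is only the integral of the pointwise inequality you actually need. (If $\mathcal{Z}(B)\simeq L^\infty[0,1]$ and the center-valued trace of $eP$ is very large on a very small set, the scalar trace of $eP$ can be made arbitrarily small while comparison with $qe_B$ still fails.) This is exactly the ``bookkeeping'' you defer at the end, and it cannot be arranged with the scalar trace. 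The paper resolves it by introducing an extended center-valued trace $T$ on $z_q^{\rm op}\langle M,B\rangle$ normalized so that $T(qe_B)=z_q^{\rm op}$, first cutting $d$ by a central projection so that $T(d)\leq nz_q^{\rm op}$ for some integer $n$, and then using the type II assumption to write $1_A=\sum_{i=1}^{n}e_i$ with the $e_i$ mutually equivalent in $A$ (or in the prescribed type II subalgebra): since $d\in A'$, the projections $de_i$ are mutually equivalent in $\langle M,B\rangle$, so $nT(de_1)=T(d)\leq nz_q^{\rm op}$ gives $T(de_1)\leq z_q^{\rm op}=T(qe_B)$ and the comparison $de_1\prec qe_B$ is then legitimate. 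Your idea of halving $e$ inside $A_0$ is the right mechanism, but it must be run against $T$ rather than against $\mathrm{Tr}_{\langle M,B\rangle}$, and the preliminary central cut must bound $T(d)$ by an integer multiple of $z_q^{\rm op}$, not merely align central supports.
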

\begin{proof} 
Before the proof, we observe that there is an extended center valued trace $T$ on $z_q^{\rm op}\langle M, B\rangle$ such that $T(qe_B)=z_q^{\rm op}$ (for extended center valued traces, see the final part of \cite[\textrm{Subsection V.2}]{Ta1}). Note that $z_q^{\rm op}$ is the central support of $q^{\rm op}$ in $B^{\rm op}$ and hence is contained in $\mathcal{Z}(\langle M, B\rangle)$. 

Since the central support of $qe_B$ in $\langle M, B\rangle$ is $z_q^{\rm op}$, we have an isomorphism 
\begin{equation*}
\pi \colon z_q^{\rm op}\mathcal{Z}(\langle M, B\rangle)=(z_q\mathcal{Z}(B))^{\rm op}\ni (z_qa)^{\rm op}\mapsto qe_B(z_qa)^{\rm op}=qe_Ba\in qe_B\mathcal{Z}(B).
\end{equation*} 
Put $\phi:=\mathrm{Tr}_{\langle M, B\rangle}\circ\pi$. 
Then $\phi$ is a faithful normal positive functional on $z_q^{\rm op}\mathcal{Z}(\langle M, B\rangle)$ and hence there is an extended center valued trace $T$ on $z_q^{\rm op}\langle M, B\rangle$ defined by the equation $\mathrm{Tr}_{\langle M, B\rangle}(xa)=\phi(T(x)a)$ for all $x\in z_q^{\rm op}\langle M, B\rangle^+$ and $a\in z_q^{\rm op}\mathcal{Z}(\langle M, B\rangle)^+$. 
It holds that $T(qe_B)=z_q^{\rm op}$. 

Suppose condition (iii) in the previous proposition. Taking a spectral projection, we assume $d$ is a projection. 
Let $T$ be the extended center valued trace on $z_q^{\rm op}\langle M, B\rangle$ constructed above. 
Since $\mathrm{Tr}_{\langle M, B\rangle}(d)<\infty$, it is a finite projection and so it takes finite values almost everywhere as a function on the spectrum of $\mathcal{Z}(\langle M, B\rangle)$ (e.g.\ \cite[\textrm{Proposition V.2.35}]{Ta1}). Hence there exists a projection $z\in \mathcal{Z}(\langle M, B\rangle)$ such that $T(zd)\leq nz_q^{\rm op}$ for some $n\in\mathbb{N}$. 
Replacing $d$ with $zd$, we may assume $T(d)\leq nz_q^{\rm op}$. 
Since $A$ is of type II, there exist mutually orthogonal and equivalent projections $e_i$ such that $1_A=\sum_{i=1}^ne_i$. 
Then it is easy to show that $0\neq T(de_1)\leq z_q^{\rm op}$. Write $e:=e_1$. 
Then $T(de)\leq z_q^{\rm op}$ implies $de\prec qe_B$. 
There is a partial isometry $V\in z_q^{\rm op}\langle M, B\rangle$ such that $V^*V=de$ and $VV^*\leq qe_B$. 
Writing $VV^*=fe_B$ for some $f\in qBq$, we are done. 
Note that $e_i$ above are taken from any type II subalgebra of $A$ and hence the final assertion also holds. 

Next suppose condition (v). 
Since $VeAeV^*\subset fe_B\langle M, B\rangle fe_B=fBfe_B\simeq fBf$ and $V^*V$ commutes with $eAe$, we can define a unital normal $*$-homomorphism $\theta$ from $eAe$ into $fBf$ by $\theta(x)=VxV^*$. 
Put $\xi:=V^*\hat{f}\in L^2(eMf)\subset L^2(rMr)$. It holds that $x\xi=\xi\theta(x)$ for any $x\in eAe$. Take a polar decomposition of $\xi$ in $L^2(rMr)$ and denote its pole by $v\in rMr$. Then we have $v=ev=vf$ and $xv=v\theta(x)$ for any $x\in eAe$. 
This means $A\preceq_{rMr} qBq$. 
\end{proof}

We next consider the case that the subalgebra $B$ is of type III. In the case, equivalent conditions above no longer hold, but still there are some fundamental properties. 
For simplicity, from now on, we assume that subalgebras $A$ and $B$ are of type $\rm II_1$ or III. 
We write as $1_A$ and $1_B$ the units of $A$ and $B$.

\begin{Pro}
The following conditions are equivalent.
\begin{itemize}
	\item[$\rm (i)$] We have $A\preceq_MB$.
	\item[$\rm (ii)$] There are a nonzero normal $*$-homomorphism $\pi\colon A\mathbin{\bar{\otimes}} \mathbb{C}e_{1,1}\rightarrow B\mathbin{\bar{\otimes}}\mathbb{M}_n$ for some $n\in \mathbb{N}$ and a nonzero partial isometry $w\in (1_A\otimes e_{1,1})(M\mathbin{\bar{\otimes}}\mathbb{M}_{n})$ such that $w\pi(x)=(x\otimes e_{1,1})w$ for all $x\in A$, where $(e_{i,j})_{i,j}$ is a fixed matrix unit in $\mathbb{M}_n$. 
\end{itemize}
\end{Pro}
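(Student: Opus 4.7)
The plan is to prove the two directions separately via direct constructions; both are matrix-amplification statements for the intertwining relation in Definition \ref{Popa embed def}, and the type $\mathrm{II}_1$/III hypothesis is used in each direction to produce matrix units — in $A$ for $(i) \Rightarrow (ii)$ and in $B \mathbin{\bar{\otimes}} \mathbb{M}_n$ for $(ii) \Rightarrow (i)$.

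For $(i) \Rightarrow (ii)$: starting with a quadruple $(e, f, \theta, v)$, the type $\mathrm{II}_1$/III hypothesis on $A$ allows me to choose partial isometries $u_1, \ldots, u_n \in A$ with $u_i^* u_i \leq e$, pairwise orthogonal ranges $u_i u_i^*$, and $\sum_{i=1}^n u_i u_i^* = z_A(e)$, the central support of $e$ in $A$ (one takes $n = 1$ in the type III case). I then define
\[
\pi(x \otimes e_{1,1}) := \sum_{i,j=1}^n \theta(u_i^* x u_j) \otimes e_{i,j} \in fBf \mathbin{\bar{\otimes}} \mathbb{M}_n, \qquad w_0 := \sum_{j=1}^n u_j v \otimes e_{1,j}.
\]
The map $\pi$ is a normal $*$-homomorphism on $A \mathbin{\bar{\otimes}} \mathbb{C} e_{1,1}$ because $\sum_i u_i u_i^* = z_A(e)$ is central in $A$, and $w_0 \pi(x \otimes e_{1,1}) = (x \otimes e_{1,1}) w_0$ is a direct computation, both sides reducing to $\sum_j x u_j v \otimes e_{1,j}$ via $v\theta(a) = av$ for $a \in eAe$. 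A short calculation shows $w_0^* w_0 \in \pi(A)'$, so the partial isometry $w$ in the polar decomposition $w_0 = w |w_0|$ still lies in $(1_A \otimes e_{1,1})(M \mathbin{\bar{\otimes}} \mathbb{M}_n)$, is nonzero, and satisfies the same intertwining relation.

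For $(ii) \Rightarrow (i)$: I choose a nonzero projection $e \in A$ such that $P := \pi(e \otimes e_{1,1})$ is Murray--von Neumann equivalent in $B \mathbin{\bar{\otimes}} \mathbb{M}_n$ to a projection of the form $f \otimes e_{1,1}$ with $f \in B$ nonzero, and such that $w \pi(e \otimes e_{1,1}) \neq 0$. Fixing a partial isometry $U \in B \mathbin{\bar{\otimes}} \mathbb{M}_n$ with $U^* U = P$ and $UU^* = f \otimes e_{1,1}$, I define
\[
\theta(x) \otimes e_{1,1} := U \pi(x \otimes e_{1,1}) U^* \quad (x \in eAe), \qquad v \otimes e_{1,1} := wU^*,
\]
obtaining a unital normal $*$-homomorphism $\theta \colon eAe \to fBf$ and an element $v \in eMf$. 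The intertwining $v\theta(x) = xv$ follows from $w\pi(x \otimes e_{1,1}) = (x \otimes e_{1,1}) w$ together with $U^* U = P$. The partial-isometry property of $v$ comes from the fact that $w^* w \in \pi(A)'$ commutes with $U^* U \in \pi(A)$, which makes $v^* v \otimes e_{1,1} = U w^* w U^*$ a projection. The containments $vv^* \leq e$ and $v^* v \leq f$, as well as $v \neq 0$, are then immediate from $ww^* \leq 1_A \otimes e_{1,1}$, $UU^* = f \otimes e_{1,1}$, and $wU^* U = wP = (e \otimes e_{1,1}) w \neq 0$.

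The main obstacle is the choice of $e$ in $(ii) \Rightarrow (i)$. In the type III case, $B \mathbin{\bar{\otimes}} \mathbb{M}_n$ is again of type III, so every nonzero projection is Murray--von Neumann equivalent to some $f \otimes e_{1,1}$, and we may take $e = 1_A$. In the type $\mathrm{II}_1$ case, however, $P \sim f \otimes e_{1,1}$ forces the trace of $P$ to be at most $1/n$, so one must cut $A$ down by a sufficiently small projection $e$; simultaneously one needs $w \pi(e \otimes e_{1,1}) \neq 0$. This is achieved using $w^* w \in \pi(A)'$ together with a pigeonhole argument on sufficiently fine partitions of $1_A$ in $A$: writing $1_A = \sum_{i=1}^k e_i$ with $k$ large, the inequality $\sum_i \mathrm{Tr}(\pi(e_i)) \leq 1$ forces some $e_i$ to have small trace image under $\pi$, and one can further subdivide those pieces on which $w^* w$ is concentrated to produce a single $e$ with both properties. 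The non-factor case is handled via the central decomposition.
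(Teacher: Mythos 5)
Your two constructions are, up to packaging, exactly the paper's: for $\rm (i)\Rightarrow(ii)$ the paper moves $e$ onto its central support and composes with $\theta$ (with $n=1$ sufficing when $A$ is of type III, and the matrix amplification you describe being precisely the argument of Ueda's Proposition 3.1, which the paper cites for the type $\rm II_1$ case); for $\rm (ii)\Rightarrow(i)$ the paper compares $\pi(1_A)$ with $z_B\otimes e_{1,1}$ when $B\mathbin{\bar{\otimes}}\mathbb{M}_n$ is of type III, which is your $e=1_A$ case, and again defers the finite case (your trace/pigeonhole argument, run with the center-valued trace in the non-factor situation) to Ueda. So this is the same route, and the intertwining computations, the polar-decomposition trick, and the use of $w^*w\in\pi(A)'$ are all correct.

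There are, however, two points in your $\rm (i)\Rightarrow(ii)$ that fail as literally written. First, nothing in the conditions $u_i^*u_i\leq e$, $\sum_i u_iu_i^*=z_A(e)$ forces $w_0=\sum_j u_jv\otimes e_{1,j}$ to be nonzero: since $v=(vv^*)v$ and $vv^*$ lies in $(eAe)'\cap eMe$, one has $u_jv=u_j\,(u_j^*u_j)(vv^*)\,v$, so $u_jv=0$ whenever the commuting projections $u_j^*u_j$ and $vv^*$ are orthogonal, and your constraints allow every $u_j^*u_j$ to sit under a subprojection of $e$ orthogonal to $vv^*$. The assertion that $w$ "is nonzero" therefore needs an extra choice — take $u_1=e$ itself, so that the term $v\otimes e_{1,1}$ survives; this is exactly what the paper's type III argument does implicitly, since there $uu^*=e$. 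Second, when $A$ is a type $\rm II_1$ non-factor, $z_A(e)$ need not be a \emph{finite} sum of projections subequivalent to $e$ (the center-valued trace of $e$ can fail to be bounded below on $z_A(e)$); one must first replace $z_A(e)$ by a nonzero central subprojection $z$ on which $T_A(e)\geq\delta z$ and which still satisfies $zvv^*\neq 0$ (possible since such $z$ increase to $z_A(e)$ as $\delta\to 0$). Your closing remark about central decomposition is placed only in the $\rm (ii)\Rightarrow(i)$ discussion, so this direction is left uncovered. Both repairs are routine and bring your argument in line with the paper's.
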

\begin{proof}
The case that $B$ is of type $\rm II_1$ was already discussed in \cite[\textrm{Proposition 3.1}]{Ue12}. So we assume that $B$ is of type III.  

Suppose $A\preceq_M B$ and take $e,f,v$ and $\theta$ as in the definition. 
If $A$ is of type III, then $e\sim z_A(e)$ in $A$ with a partial isometry $u\in A$ such that $uu^*=e$ and $u^*u=z_A(e)$, where $z_A(e)$ is the central support projection of $e$ in $A$.  
The composite map 
\begin{equation*}
A\xrightarrow{\times z_A(e)} Az_A(e)\xrightarrow{\ \mathrm{Ad}u\ } eAe\xrightarrow{\hspace{0.5em}\theta\hspace{0.5em}} fBf
\end{equation*}
and the partial isometry $u^*v$ satisfy condition (ii) (for $n=1$). 
If $A$ is of type $\rm II_1$, then the same argument as in \cite[\textrm{Proposition 3.1}]{Ue12} works.

Suppose next condition (ii). 
The central support of $\pi(1_A)$ in $B\mathbin{\bar{\otimes}} \mathbb{M}_n$ is of the form $z_B\otimes 1$ for some central projection $z_B\in B$. 
Then since central supports of $\pi(1_A)$ and $z_B\otimes e_{1,1}$ are same in the type III algebra $B\mathbin{\bar{\otimes}}\mathbb{M}_n$, we have $\pi(1_A)\sim z_B\otimes e_{1,1}$ with a partial isometry $u\in B\mathbin{\bar{\otimes}} \mathbb{M}_n$ such that $u^*u=\pi(1_A)$ and $uu^*=z_B\otimes e_{1,1}$. 
Then the composite map 
\begin{equation*}
A\mathbin{\bar{\otimes}} \mathbb{C}e_{1,1}\xrightarrow{\hspace{0.3em}\pi\hspace{0.3em}} \pi(1_A)(B\mathbin{\bar{\otimes}} \mathbb{M}_n)\pi(1_A)\xrightarrow{\mathrm{Ad}u} (z_B\otimes e_{1,1})(B\mathbin{\bar{\otimes}} \mathbb{M}_n)(z_B\otimes e_{1,1})= Bz_B\mathbin{\bar{\otimes}} \mathbb{C}e_{1,1}
\end{equation*}
and the partial isometry $wu^*\in (1_A\otimes e_{1,1})(M\mathbin{\bar{\otimes}}\mathbb{M}_n)(z_B\otimes e_{1,1})\subset M\mathbin{\bar{\otimes}} \mathbb{C}e_{1,1}$ work by identifying $M\mathbin{\bar{\otimes}} \mathbb{C}e_{1,1}\simeq M$.
\end{proof}

From the characterization, it is easy to deduce the following properties.

\begin{Cor}
The following statements are true.
\begin{itemize}
	\item[$\rm (i)$] If $pAp\preceq_M B$ for some $p\in A$ or $A'\cap M$, then $A\preceq_M B$.
	\item[$\rm (ii)$] If $A\preceq_M qBq$ for some $q\in B$ or $B'\cap M$, then $A\preceq_M B$.
	\item[$\rm (iii)$] If $A\preceq_M B$, then $D\preceq_M B$ for any type $\rm II_1$ or $\rm III$ unital subalgebra $D\subset A$.
\end{itemize}
\end{Cor}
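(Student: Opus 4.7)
The plan is to reduce each part to the characterization established in the proposition just above: $A \preceq_M B$ holds if and only if there exist a nonzero normal $*$-homomorphism $\pi \colon A \otimes \mathbb{C}e_{1,1} \to B \otimes \mathbb{M}_n$ and a nonzero partial isometry $w \in (1_A \otimes e_{1,1})(M \otimes \mathbb{M}_n)$ with $w\pi(x) = (x \otimes e_{1,1})w$ for all $x \in A \otimes \mathbb{C}e_{1,1}$. Statement (iii) follows at once: restricting $\pi$ to $D \otimes \mathbb{C}e_{1,1}$ yields a nonzero $*$-homomorphism (the image of the unit is preserved since $1_D = 1_A$ by the unitality hypothesis), and the same $w$ intertwines since the relation on $A$ holds a fortiori on $D$.

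For part (i) with $p \in A$, I return to Definition 2.1.8: the quadruple $(e, f, \theta, v)$ witnessing $pAp \preceq_M B$ automatically has $e \in pAp \subseteq A$ with $e \leq p$, and the equality $eAe = e(pAp)e$ (which follows from $pe = e = ep$) means the same quadruple witnesses $A \preceq_M B$. For $p \in A' \cap M$, I use the $\pi$-characterization. Since $p$ commutes with $A$, one has $pAp = Ap$, so the formula $\pi'(a \otimes e_{1,1}) := \pi(ap \otimes e_{1,1})$ defines a nonzero normal $*$-homomorphism on $A \otimes \mathbb{C}e_{1,1}$ (multiplicativity uses $(ap)(a'p) = aa'p$ in $pAp$). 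The same $w$ serves as intertwiner: from $w = (p \otimes e_{1,1})w$ one gets $(x \otimes e_{1,1})w = (xp \otimes e_{1,1})w = w\pi(xp \otimes e_{1,1}) = w\pi'(x \otimes e_{1,1})$.

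For part (ii) with $q \in B$, the argument mirrors the first case of (i): Definition 2.1.8 gives $f \in qBq \subseteq B$, and since $fq = f$ we have $f(qBq)f = fBf$, so the same data witness $A \preceq_M B$. The substantive case is $q \in B' \cap M$. Since $q$ commutes with $B$, left multiplication $\alpha \colon b \mapsto bq$ is a surjective normal $*$-homomorphism $B \to qBq$ whose kernel is of the form $Bz_0$ for some central projection $z_0 \in \mathcal{Z}(B)$, giving an isomorphism $B(1-z_0) \xrightarrow{\sim} qBq$. Lifting the projection $f \in qBq$ through $\alpha$ produces a projection $f' \in B(1-z_0) \subseteq B$ with $f = f'q$; then $\theta' := \alpha^{-1} \circ \theta \colon eAe \to f'Bf'$ is a unital normal $*$-homomorphism, and the partial isometry $v$ (which satisfies $vq = v$ because $v^*v \leq f \leq q$) still intertwines: $xv = v\theta(x) = v\theta'(x)q = vq\theta'(x) = v\theta'(x)$, using that $q$ commutes with $\theta'(x) \in B$. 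Since $v^*v \leq f'$ and $f' \in B$, the tuple $(e, f', \theta', v)$ witnesses $A \preceq_M B$.

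The main obstacle is this last case, where one must identify $qBq$ with a central corner of $B$; the hypothesis $q \in B' \cap M$ is precisely what makes left multiplication by $q$ a homomorphism, and is essential throughout the construction. The remaining cases all reduce more or less directly to unwinding Definition 2.1.8 or transferring data through the $\pi$-form of the characterization.
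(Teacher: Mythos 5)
Your proof is correct and follows essentially the route the paper intends: the paper offers no argument beyond the remark that the corollary ``is easy to deduce from the characterization,'' and your verifications --- unwinding the definition of $\preceq_M$ directly for the cases $p\in A$ and $q\in B$, and passing through the $\pi$-form of the preceding proposition (together with the central-corner identification $B(1-z_0)\simeq qBq$ for $q\in B'\cap M$, and the observation $1_D=1_A$ for part (iii)) for the remaining cases --- are exactly the intended ones. I see no gaps.
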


The next lemma is a special case of \cite[\textrm{Lemma 3.5}]{Va08}, but it treats type III subalgebras. 

\begin{Lem}\label{relative commutant}
Assume that $M$ and $B$ are of the form $M=M_1\mathbin{\bar{\otimes}} M_2$ and $B=M_1$ for von Neumann algebras $M_1$ and $M_2$. 
Assume that $M_2$ is of type $\rm III$ and $A'\cap 1_AM1_A$ is of type $\rm II_1$ or $\rm III$. If $A\preceq_M M_1$, then $M_2\preceq_M A'\cap 1_AM1_A$.
\end{Lem}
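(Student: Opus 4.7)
The plan is to apply the Proposition preceding this lemma (the characterization of $\preceq_M$ for type $\rm II_1$ or $\rm III$ subalgebras) to the hypothesis $A\preceq_M M_1$, extract an intertwiner $w$, and then conjugate $1\otimes M_2$ by $w$ to manufacture the reverse embedding witnessing $M_2\preceq_M A'\cap 1_A M1_A$. Concretely, I would obtain $n\in\mathbb{N}$, a nonzero normal $*$-homomorphism $\pi\colon A\to M_1\mathbin{\bar{\otimes}}\mathbb{M}_n$, and a partial isometry $w\in(1_A\otimes e_{1,1})(M\mathbin{\bar{\otimes}}\mathbb{M}_n)$ with $w\pi(x)=(x\otimes e_{1,1})w$ for every $x\in A$.

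The first substantive step is to arrange $w^*w\in M_1\mathbin{\bar{\otimes}}\mathbb{M}_n$. The commutant identity
\begin{equation*}
\pi(A)'\cap(M\mathbin{\bar{\otimes}}\mathbb{M}_n)=\bigl(\pi(A)'\cap(M_1\mathbin{\bar{\otimes}}\mathbb{M}_n)\bigr)\mathbin{\bar{\otimes}} M_2
\end{equation*}
exhibits this relative commutant as a type $\rm III$ algebra, since $M_2$ is a type $\rm III$ factor. Hence the projection $w^*w$ is equivalent in it, via a partial isometry $u\in\pi(A)'$, to a projection in $\pi(A)'\cap(M_1\mathbin{\bar{\otimes}}\mathbb{M}_n)$. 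Because $u$ commutes with $\pi(A)$, replacing $w$ by $wu$ preserves the intertwining relation and the condition $ww^*\neq 0$, and now $w^*w\in M_1\mathbin{\bar{\otimes}}\mathbb{M}_n$. Writing $w=\sum_j w_j\otimes e_{1,j}$ with $w_j\in 1_AM$, the matrix entries $(w^*w)_{jk}=w_j^*w_k$ therefore lie in $M_1$ and commute with $1\otimes M_2$.

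I would then define $\tilde{\rho}\colon M_2\to 1_A M 1_A$ by $\tilde{\rho}(y):=\sum_j w_j(1\otimes y)w_j^*$. Pushing each $w_j^*w_k$ past $1\otimes y$ and using the identity $\sum_k w_j^*w_k w_k^*=w_j^*$ (read off from $w^*ww^*=w^*$) gives $\tilde{\rho}(y_1)\tilde{\rho}(y_2)=\tilde{\rho}(y_1y_2)$, so $\tilde{\rho}$ is a normal $*$-homomorphism. Combining $w\pi(x)=(x\otimes e_{1,1})w$ (whence $\pi(x)w^*=w^*x$) with $[\pi(A),1\otimes M_2]=0$ shows $\tilde{\rho}(M_2)\subset A'\cap 1_A M 1_A=:\bar{A}$, and $W:=\tilde{\rho}(1)=\sum_j w_jw_j^*$ is a nonzero projection since $ww^*=W\otimes e_{1,1}$. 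Thus $\tilde{\rho}$ is unital into $W\bar{A}W$.

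The identity $ww^*w=w$ forces $Ww_k=w_k$ for each $k$, whence
\begin{equation*}
\tilde{\rho}(y)w_k=\sum_j w_j(w_j^*w_k)(1\otimes y)=Ww_k(1\otimes y)=w_k(1\otimes y),
\end{equation*}
and taking adjoints with $y$ replaced by $y^*$ yields $w_k^*\tilde{\rho}(y)=(1\otimes y)w_k^*$ for every $y\in M_2$. Choosing $k$ with $w_k\neq 0$ and setting $e=1_{M_2}$, $f=W$, $\theta=\tilde{\rho}$, $v=w_k^*$ in Definition \ref{Popa embed def} verifies $M_2\preceq_M\bar{A}$, since $vv^*=w_k^*w_k\leq 1$ and $v^*v=w_kw_k^*\leq W=f$. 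The main obstacle I expect is the preliminary reduction $w^*w\in M_1\mathbin{\bar{\otimes}}\mathbb{M}_n$: without it, the entries $w_j^*w_k$ would live in $(\pi(A)'\cap M_1\mathbin{\bar{\otimes}}\mathbb{M}_n)\mathbin{\bar{\otimes}} M_2$ and would typically fail to commute with $1\otimes M_2$, breaking the $*$-homomorphism property of $\tilde{\rho}$; this is precisely where the hypothesis that $M_2$ is of type $\rm III$ is used.
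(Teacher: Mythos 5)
Your argument is essentially correct and takes a genuinely different route from the paper's. The paper stays with the raw data $e,f,\theta,v$ of Definition \ref{Popa embed def}: writing $p=vv^*$ and $q=v^*v$, it takes relative commutants in $qMq$ of the inclusion $v^*(eAe)v\subset q\theta(eAe)$ to get $q(\theta(eAe)'\cap fMf)q\preceq_M p((eAe)'\cap eMe)p$ with intertwiner $v$, observes that $\theta(eAe)'\cap fMf=(\theta(eAe)'\cap fM_1f)\mathbin{\bar{\otimes}} M_2$ is of type $\rm III$, and then uses the hereditary corollaries of the preceding Proposition to pass to the subalgebra $fM_2$ and to strip off the corners $e$ and $f$; the type $\rm III$ hypothesis on $M_2$ enters only to make those corollaries applicable. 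You instead go through the matrix-amplified characterization, spend the type $\rm III$ hypothesis on normalizing $w^*w$ into $M_1\mathbin{\bar{\otimes}}\mathbb{M}_n$ (the same equivalence-of-projections device the paper uses in Lemma \ref{hariawase2}), and then assemble the embedding $\tilde{\rho}\colon M_2\to A'\cap 1_AM1_A$ explicitly from the matrix entries of $w$. Your route is longer but self-contained --- it manufactures the homomorphism and the intertwiner by hand instead of quoting the cutting-by-projections corollaries --- and your diagnosis of where type $\rm III$ is indispensable (making the entries $w_j^*w_k$ commute with $1\otimes M_2$) is accurate.

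One step needs repair: $v:=w_k^*$ is not a partial isometry in general. The entries of a partial isometry are only contractions --- $w_k^*w_k=(w^*w)_{kk}$ is a positive element of $M_1$, not a projection --- so your ``$vv^*=w_k^*w_k\leq 1$ and $v^*v=w_kw_k^*\leq W$'' are operator inequalities, not the projection conditions required by Definition \ref{Popa embed def}. The repair is standard: from $w_k^*\tilde{\rho}(y)=(1\otimes y)w_k^*$ one gets that $w_kw_k^*$, hence $|w_k^*|=(w_kw_k^*)^{1/2}$ and its support projection, commute with $\tilde{\rho}(M_2)$; the polar part $u$ of $w_k^*$ then still satisfies $u\tilde{\rho}(y)=(1\otimes y)u$, and $u^*u$ equals the support projection of $w_kw_k^*$, which is dominated by the projection $W$. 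This is the same polar-decomposition step the paper performs at the end of the proof of Corollary \ref{Popa embed4}. With that substitution your proof is complete.
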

\begin{proof}
Take $e\in A$, $f\in M_1$, $\theta$, and $v\in eMf$ as in the definition of $A\preceq_MM_1$. Write $p=vv^*$ and $q:=v^*v$. 
We have an inclusion $v^*p(eAe)pv\subset q\theta(eAe)$ as subalgebras of $qMq$. 
Taking relative commutants in $qMq$, we have 
\begin{eqnarray*}
q(\theta(eAe)'\cap fMf)q=(q\theta(eAe))'\cap qMq \subset (v^*eAev)' \cap qMq 
=v^*p((eAe)'\cap eMe)pv.
\end{eqnarray*}
This implies $q(\theta(eAe)'\cap fMf)q\preceq_{M}p((eAe)'\cap eMe)p$ with the partial isometry $v$. 
Note that $\theta(eAe)'\cap fMf$ is of the form $(\theta(eAe)'\cap fM_1f)\mathbin{\bar{\otimes}} M_2$ and hence is of type III. 
So by the previous lemma, we have $\theta(eAe)'\cap fMf\preceq_{M}(eAe)'\cap eMe=e(A'\cap 1_AM1_A)e$ and hence $fM_2\preceq_{M}e(A'\cap 1_AM1_A)e$, since $fM_2\subset \theta(eAe)'\cap fMf$. 
Again by the lemma, we have $M_2\preceq_M A'\cap 1_AM1_A$.
\end{proof}

\subsection{\bf Approximately containment in continuous cores}\label{appro in core}

In the subsection, we introduce a notion of approximately containment formulated by Vaes \cite[\textrm{Section 2}]{Va10}. Our notion here is a slightly generalized but essentially the same one. 
Since we follow the same strategy as in \cite[\textrm{Section 2}]{Va10}, we basically omit the proofs. 

Let $\mathbb{G}$ be a compact quantum group and $h$ its Haar state.
During the subsection, we consider following two cases at the same time.
\begin{itemize}
	\item {\bf Case 1.} Our target is the continuous core $L^\infty(\mathbb{G})\rtimes_{\sigma^h}\mathbb{R}$ and its canonical trace, denoted by $M$ and Tr.
	\item {\bf Case 2.} The quantum group $\mathbb{G}$ is of Kac type and the dual $\hat{\mathbb{G}}$ acts on a semifinite tracial von Neumann algebra $(N,\mathrm{Tr}_N)$ as a trace preserving action. Our target is the crossed product von Neumann algebra $\hat{\mathbb{G}}\ltimes N$ and its canonical trace, which is also denoted by $M$ and Tr.
\end{itemize}
For a discrete quantum subgroup $\hat{\mathbb{H}}\subset \hat{\mathbb{G}}$, we write $M_{\hat{\mathbb{H}}}:=L^\infty(\mathbb{H})\rtimes_{\sigma^h}\mathbb{R}$ or $\hat{\mathbb{H}}\ltimes N$. 
For $\mathcal{E},\mathcal{F}\subset \mathrm{Irred}(\mathbb{G})$ and $x,y\in\mathrm{Irred}(\mathbb{G})$, we use the notation
\begin{eqnarray*}
\mathcal{E}\mathcal{F}&:=&\{z\in\mathrm{Irred}(\mathbb{G})\mid z\in e\otimes f\textrm{ for some }e\in\mathcal{E}, f\in\mathcal{F}\},\\
x\mathcal{E}y&:=&\{z\in\mathrm{Irred}(\mathbb{G})\mid z\in x\otimes e\otimes y\textrm{ for some }e\in\mathcal{E}\}.
\end{eqnarray*}
Let $\mathcal{S}$ be a family of discrete quantum subgroups of $\hat{\mathbb{G}}$. 
We say a subset $\mathcal{F}\subset \mathrm{Irred}(\mathbb{G})$ is \textit{small relative to} $\mathcal{S}$ if it is contained in a finite union of subsets of the form $x\mathrm{Irred}(\mathbb{H})y$ for some $x,y\in\mathrm{Irred}(\mathbb{G})$ and  $\hat{\mathbb{H}}\in\mathcal{S}$. 
For any subset $\mathcal{F}\subset \mathrm{Irred}(\mathbb{G})$, we write the orthogonal projection from $L^2(\mathbb{G})\otimes L^2(\mathbb{R})$ onto $L^2(\mathcal{F})\otimes L^2(\mathbb{R})$ (or $L^2(\mathbb{G})\otimes L^2(N)$ onto $L^2(\mathcal{F})\otimes L^2(N)$) as $P_\mathcal{F}$, where $L^2(\mathcal{F})$ is the closed subspace spanned by all $u_{i,j}^x$ for $x\in\mathcal{F}$. 
For a subgroup $\hat{\mathbb{H}}$, the restriction of $P_{\mathrm{Irred}(\mathbb{H})}$ on $M$ is the trace preserving conditional expectation onto $M_{\hat{\mathbb{H}}}$. 
We write $\mathcal{N}_{\mathrm{Tr}}:=\{a\in M\mid \mathrm{Tr}(a^*a)<\infty\}$ and note that any element $a\in \mathcal{N}_{\mathrm{Tr}}$ has a Fourier expansion in $L^2(M,\mathrm{Tr})$ written as $a=\sum_{x\in\mathrm{Irred}(\mathbb{G}),i,j}u_{i,j}^xa_{i,j}^x$ for $a_{i,j}^x=E(u_{i,j}^{x*}a)$, where we take $(u_{i,j}^x)$ as an orthogonal family in $L^2(\mathbb{G})$ and $E$ is the Tr-preserving conditional expectation onto  $L\mathbb{R}$ or $N$.

\begin{Def}\upshape
Let $\mathcal{V}$ be a norm bounded subset in $\mathcal{N}_{\mathrm{Tr}}\subset M$. 
We say $\mathcal{V}$ is \textit{approximately contained in} $M^\mathcal{S}$, and denote by $\mathcal{V}\subset_{\rm approx} M^\mathcal{S}$, if for any $\epsilon>0$, there is $\mathcal{F}\subset \mathrm{Irred}(\mathbb{G})$ which is small relative to $\mathcal{S}$ such that 
\begin{equation*}
\|b-P_\mathcal{F}(b)\|_2=\|P_{\mathcal{F}^c}(b)\|_2<\epsilon
\end{equation*}
for all $b\in\mathcal{V}$.
\end{Def}

We also use the notation 
$B\subset_{\rm approx} M^\mathcal{S}$ when $(B)_1\subset_{\rm approx} M^\mathcal{S}$ for a subalgebra $B\subset M$, 
and $\mathcal{V}\subset_{\rm approx} L^\infty(\mathbb{H})\rtimes\mathbb{R}$ or $\hat{\mathbb{H}}\ltimes N$ when $\mathcal{S}=\{\hat{\mathbb{H}}\}$.

We start with a simple lemma.

\begin{Lem}\label{nice ineq}
For any $\mathcal{F}\subset\mathrm{Irred}(\mathbb{G})$, $u^a\in \mathrm{span}\{u_{i,j}^a\mid i,j=1,\ldots,n_a \}$ for $a=x,y\in\mathrm{Irred}(\mathbb{G})$, and $b\in \mathcal{N}_{\mathrm{Tr}}$, we have
\begin{eqnarray*}
\|P_\mathcal{F}(u^{x*} b u^y) \|_2\leq \|u^{x*}\| \|P_{x\mathcal{F}\bar{y}}(b)\|_2\|u^y\|.
\end{eqnarray*}
\end{Lem}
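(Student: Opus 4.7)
The plan is to reduce the bound to the standard $L^2$-operator-norm estimate $\|a v b\|_2 \le \|a\|\,\|v\|_2\,\|b\|$ once one identifies precisely which Fourier modes of $b$ can survive the projection $P_\mathcal{F}$ after being sandwiched by $u^{x*}$ on the left and $u^y$ on the right.

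First, I would decompose $b$ by its Peter--Weyl / Fourier expansion as $b = \sum_{z \in \mathrm{Irred}(\mathbb{G})} P_{\{z\}}(b)$ in $L^2(M,\mathrm{Tr})$. Since $u^{x*}$ is a linear combination of coefficients of $\bar x$ and $u^y$ a linear combination of coefficients of $y$, for each $z$ the product $u^{x*}\, P_{\{z\}}(b)\, u^y$ is supported, in the $\mathrm{Irred}(\mathbb{G})$-grading of $L^2(M,\mathrm{Tr})$, on those irreducible classes $w$ appearing in the decomposition of $\bar x \otimes z \otimes y$.

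Next comes the structural step, which is a standard Frobenius reciprocity observation: a class $w$ appears in $\bar x \otimes z \otimes y$ if and only if $z$ appears in $x \otimes w \otimes \bar y$. Hence, for the $w$-component of $u^{x*} b u^y$ to be nonzero for some $w \in \mathcal{F}$, the Fourier support of $b$ must intersect $x\mathcal{F}\bar y$. This yields the identity
\begin{equation*}
P_\mathcal{F}(u^{x*}\, b\, u^y) \;=\; P_\mathcal{F}\!\left(u^{x*}\, P_{x\mathcal{F}\bar y}(b)\, u^y\right),
\end{equation*}
after which the lemma follows immediately: $P_\mathcal{F}$ is a contraction on $L^2(M,\mathrm{Tr})$, and the bound $\|u^{x*} v u^y\|_2 \le \|u^{x*}\|\,\|v\|_2\,\|u^y\|$ applied with $v = P_{x\mathcal{F}\bar y}(b)$ gives exactly what is wanted.

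The only point requiring care is justifying the displayed identity. A naive manipulation of the Fourier expansion $\sum_z u^z_{i,j} b^z_{i,j}$ is blocked because in Case 1 the coefficients $b^z_{i,j} \in L\mathbb{R}$ do not commute with $u^y$ (the modular action $\sigma^h$ twists them), and analogously in Case 2 for coefficients in $N$. The remedy is to argue intrinsically in the Peter--Weyl decomposition of $L^2(M,\mathrm{Tr})$: this grading by $\mathrm{Irred}(\mathbb{G})$ is determined by the compact quantum group structure alone, and left (resp.\ right) multiplication by an element of the coefficient space of $\bar x$ (resp.\ $y$) gives a bounded operator on $L^2(M,\mathrm{Tr})$ that shifts the grading according to the tensor product rules stated above, regardless of the noncommutativity between the $\mathbb{G}$-side and the core / crossed-product side. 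With this viewpoint the identity is transparent and the rest is routine.
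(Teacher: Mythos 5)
Your proof is correct and follows essentially the same route as the paper: both rest on the Fourier (Peter--Weyl) decomposition of $b$, the Frobenius reciprocity observation that $w\subset \bar{x}\otimes z\otimes y$ forces $z\in x\{w\}\bar{y}$, and the estimate $\|a v b\|_2\leq\|a\|\,\|v\|_2\,\|b\|$. The only cosmetic difference is that the paper handles the left and right multiplications in two separate one-sided steps (first $y=\epsilon$, then iterating), whereas you do both sides at once via the identity $P_\mathcal{F}(u^{x*}bu^y)=P_\mathcal{F}(u^{x*}P_{x\mathcal{F}\bar{y}}(b)u^y)$; your explicit remark about the noncommutativity of the coefficients in $L\mathbb{R}$ (or $N$) with $u^y$ addresses a point the paper passes over silently with ``by the same manner.''
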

\begin{proof}
We first assume $y=\epsilon$, the trivial corepresentation. 
Write $b=\sum_{z\in\mathrm{Irred}(\mathbb{G}), p,q}u_{p,q}^zb_{p,q}^z$ for some $b_{p,q}^z\in L\mathbb{R}$ (or $N$). 
For $z\in\mathrm{Irred}(\mathbb{G})$, $(\bar{x}\otimes z)\cap \mathcal{F}\neq\emptyset$ is equivalent to $z\in x\mathcal{F}$.
Hence we have
\begin{eqnarray*}
\|P_\mathcal{F}(u^{x*} b) \|_2
&=&\|\sum_{z\in\mathrm{Irred}(\mathbb{G}), p,q}P_\mathcal{F}(u^{x*} u_{p,q}^zb_{p,q}^z) \|_2\\
&=&\|\sum_{z\in x\mathcal{F}, p,q}P_\mathcal{F}(u^{x*} u_{p,q}^zb_{p,q}^z) \|_2\\
&\leq&\|u^{x*}\|\|\sum_{z\in x\mathcal{F}, p,q}u_{p,q}^zb_{p,q}^z \|_2 =\|u^{x*}\|\|P_{x\mathcal{F}}(b)\|_2.
\end{eqnarray*}
By the same manner, we also have $\|P_\mathcal{F}( bu^y) \|_2\leq \|P_{\mathcal{F}\bar{y}}(b) \|_2\|u^y\|$. 
Hence 
\begin{equation*}
\|P_\mathcal{F}(u^{x*} bu^y) \|_2\leq \|u^{x*}\|\|P_{x\mathcal{F}}(bu^y) \|_2\leq \|u^{x*}\|\|P_{x\mathcal{F}\bar{y}}(b) \|_2\|u^{y}\|.
\end{equation*} 
\end{proof}

By the lemma, we can prove the following lemma, which corresponds to \cite[\textrm{Lemma 2.3}]{Va10}.

\begin{Lem}\label{2.3}
Let $\mathcal{V}\subset \mathcal{N}_{\mathrm{Tr}}$ be a norm bounded subset. If $\mathcal{V}\subset_{\rm approx} M^\mathcal{S}$, then $x\mathcal{V}y\subset_{\rm approx} M^\mathcal{S}$ for any $x,y\in\mathcal{N}_{\mathrm{Tr}}$. 

Let $p\in M$ be a $\mathrm{Tr}$-finite projection and $(v_\lambda)_\lambda$ be a bounded net in $pMp$. If $\|P_\mathcal{F}(v_\lambda)\|_2\rightarrow 0$ for all $\mathcal{F}$ which is small relative to $\mathcal{S}$, then 
$\|P_{\mathrm{Irred}(\mathbb{H})}(b^*v_\lambda a)\|_2\rightarrow 0$ for all $a,b\in pM$ and all $\hat{\mathbb{H}}\in\mathcal{S}$.
\end{Lem}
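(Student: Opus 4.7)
The plan is to handle both statements by first establishing them when the ``outer'' elements are single matrix coefficients of irreducible corepresentations, then extending to general elements of $\mathcal{N}_{\mathrm{Tr}}$ by a bilinear 2-norm density argument. The key geometric observation is that if $v$ has Fourier support in $\mathcal{F}_0$, then $u^x v u^y$ has Fourier support in the fusion set $x \mathcal{F}_0 y$, and the latter remains small relative to $\mathcal{S}$ whenever $\mathcal{F}_0$ is (a finite union of sets of the form $\alpha\,\mathrm{Irred}(\mathbb{H})\,\beta$ is stable under multiplication by single irreps on either side, up to finite refinement).

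For Part 1, fix $\epsilon > 0$ and choose $\mathcal{F}_0$ small relative to $\mathcal{S}$ so that $\|P_{\mathcal{F}_0^c}(v)\|_2 < \epsilon/(\|x\|_\infty \|y\|_\infty)$ for all $v \in \mathcal{V}$. In the case $x = u^x$, $y = u^y$, set $\mathcal{F}' := x\mathcal{F}_0 y$, split $v = P_{\mathcal{F}_0}(v) + P_{\mathcal{F}_0^c}(v)$, and use that $u^x P_{\mathcal{F}_0}(v) u^y$ contributes no Fourier mass outside $\mathcal{F}'$ to bound
\[
\|P_{\mathcal{F}'^c}(u^x v u^y)\|_2 = \|P_{\mathcal{F}'^c}(u^x P_{\mathcal{F}_0^c}(v) u^y)\|_2 \leq \|u^x\|_\infty \, \|P_{\mathcal{F}_0^c}(v)\|_2 \, \|u^y\|_\infty < \epsilon.
\]
Linearity extends this to finite Fourier sums. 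For general $x, y \in \mathcal{N}_{\mathrm{Tr}}$, I approximate by finite-Fourier-support elements $x_0, y_0$ in 2-norm with $\|x_0\|_\infty \leq \|x\|_\infty$ and $\|y_0\|_\infty \leq \|y\|_\infty$, then transfer the estimate using the bimodule inequality $\|abc\|_2 \leq \|a\|_2 \|b\|_\infty \|c\|_\infty$ together with its symmetric variant.

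For Part 2, the $\mathrm{Tr}$-finiteness of $p$ forces $pM \subset \mathcal{N}_{\mathrm{Tr}}$. In the case $a = u^a$, $b = u^b$, Lemma \ref{nice ineq} applies directly to $b^* v_\lambda a = u^{b*} v_\lambda u^a$:
\[
\|P_{\mathrm{Irred}(\mathbb{H})}(u^{b*} v_\lambda u^a)\|_2 \leq \|u^{b*}\| \, \|P_{b\,\mathrm{Irred}(\mathbb{H})\,\bar{a}}(v_\lambda)\|_2 \, \|u^a\| \longrightarrow 0,
\]
since $b\,\mathrm{Irred}(\mathbb{H})\,\bar{a}$ is small relative to $\mathcal{S}$ and the hypothesis is in force. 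Extension to general $a, b \in pM$ uses the same 2-norm approximation, combined with $\|b^* v_\lambda a - b_0^* v_\lambda a_0\|_2 \leq \|b - b_0\|_2 \|v_\lambda\|_\infty \|a\|_\infty + \|b_0\|_\infty \|v_\lambda\|_\infty \|a - a_0\|_2$ and the uniform bound on $\|v_\lambda\|_\infty$.

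The main technical obstacle I anticipate is the density step: producing, for $x \in \mathcal{N}_{\mathrm{Tr}}$, a finite-Fourier-support approximation $x_0$ close in 2-norm with $\|x_0\|_\infty \leq \|x\|_\infty$. Naive truncation by $P_E$ converges in 2-norm but gives no $\infty$-norm control. The remedy is Kaplansky's density theorem applied to the $\sigma$-weakly dense $*$-subalgebra of finite Fourier sums, combined in the semifinite Case 1 with a preliminary right-multiplication of $x$ by a trace-finite spectral projection of $x^*x$, so that the SOT approximation yielded by Kaplansky can be upgraded to a 2-norm approximation on the trace-finite support.
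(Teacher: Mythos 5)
Your strategy is the intended one: the paper deliberately omits this proof and points to \cite[Lemma 2.3]{Va10}, whose argument is exactly your scheme --- reduce to $x,y$ of finite Fourier support, observe that $x_0P_{\mathcal{F}_0}(v)y_0$ has Fourier support in the fusion set $E_1\mathcal{F}_0E_2$ (which stays small relative to $\mathcal{S}$), apply Lemma \ref{nice ineq} for the second statement, and close with an $\varepsilon/3$ estimate using the uniform bound on $\|v\|_\infty$ over $\mathcal{V}$ and $\sup_\lambda\|v_\lambda\|_\infty$. Both halves of your argument are correct as written. (One small point to keep in mind: in Cases 1 and 2 the Fourier coefficients lie in $L\mathbb{R}$, resp.\ $N$, rather than being scalars, but since $P_{\mathcal{F}}$ is a bimodule map over that subalgebra this does not disturb the support computation.)

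The one thing to fix is your final paragraph: the ``main technical obstacle'' you anticipate is not actually there, and the remedy you propose would not work as stated. You never need $\|x_0\|_\infty\leq\|x\|_\infty$; in the $\varepsilon/3$ bookkeeping the quantity $\|x_0\|_\infty$ only enters through the subsequent choices of $y_0$ and of the small set $\mathcal{F}_0$ for $\mathcal{V}$, and both choices are made \emph{after} $x_0$ is fixed, so any finite bound on $\|x_0\|_\infty$ suffices. The naive truncation $x_0=P_E(x)$ over a finite $E\subset\mathrm{Irred}(\mathbb{G})$ is the finite sum $\sum_{z\in E,i,j}u^z_{ij}\,E(u^{z*}_{ij}x)/\|u^z_{ij}\|_2^2$ of bounded elements, hence lies in $M$, and it converges to $x$ in $\|\cdot\|_2$ because $\hat{x}\in L^2(M,\mathrm{Tr})$ (this is exactly where the hypothesis $x,y\in\mathcal{N}_{\mathrm{Tr}}$ is used). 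By contrast, the Kaplansky detour is flawed: once you right-multiply the Kaplansky approximants by a spectral projection $q$ of $x^*x$ --- which in general lies in neither $L\mathbb{R}$ nor $N$ --- the resulting elements no longer have finite Fourier support, so they cannot be fed into the fusion-set argument. Simply delete that step and use $P_E(x)$, $P_{E'}(y)$ directly; the rest of your proof then goes through verbatim.
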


To continue our argument, we need one more assumption which is an opposite phenomena to the second statement in the last lemma. 
More precisely we need the following condition:
\begin{itemize}
	\item For any $\hat{\mathbb{H}}\in\mathcal{S}$, $\mathrm{Tr}$-finite projection $p\in M$, and any net $(w_\lambda)_\lambda$ in $\mathcal{U}(pMp)$, 
if $\|P_{\mathrm{Irred}(\mathbb{H})}(b^*w_\lambda a)\|_2$ converges to 0 for all $a,b\in pM$, then $\|P_{x\mathrm{Irred}(\mathbb{H})\bar{y}}(w_\lambda)\|_2$ also converges to 0 for all $x,y\in\mathrm{Irred}(\mathbb{G})$.
\end{itemize}
When we treat a discrete group, since the inequality in Lemma \ref{nice ineq} becomes the equality (because $P_\mathcal{F}(\lambda_x^*b\lambda_y)=P_{x\mathcal{F}y^{-1}}(b)$), this condition trivially holds. 
However, in the quantum situation, this equality is no longer true and hence we have to assume that our target $\hat{\mathbb{G}}$ and $\mathcal{S}$ satisfy this condition. 
Fortunately our main target, direct product quantum groups, always satisfy the condition.
\begin{Lem}
Assume either that $\rm (i)$ $\hat{\mathbb{G}}$ is a group or $\rm (ii)$ $\hat{\mathbb{G}}$ is a direct product quantum group $\hat{\mathbb{G}}_1\times\cdots \times\hat{\mathbb{G}}_m$ and $\mathcal{S}$ consists of subgroups generated by some of $\hat{\mathbb{G}}_i$ for $i=1,\ldots,m$. 
Then the condition above holds.
\end{Lem}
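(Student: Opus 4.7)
The plan is to construct, in each case, explicit $a, b \in pM$ built from matrix coefficients of $\mathbb{G}$ and apply the hypothesis to them to derive a lower bound for $\|P_{x\,\mathrm{Irred}(\mathbb{H})\,\bar y}(w_\lambda)\|_2$. The group case works because multiplication by group unitaries exactly shifts the Fourier support; the direct-product case will require the tensor factorization of matrix coefficients together with Peter--Weyl orthogonality.

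For Case (i), with $\hat{\mathbb{G}} = \Gamma$ a group, I would take $a = p\lambda_y$ and $b = p\lambda_x$ in $pM$, so that $b^* w_\lambda a = \lambda_x^* w_\lambda \lambda_y$. A direct Fourier computation (in Case 1 the Haar state is a trace, so the modular action is trivial and $M = L\Gamma \mathbin{\bar{\otimes}} L\mathbb{R}$; in Case 2 one uses $\lambda_x^* \lambda_g w_g \lambda_y = \lambda_{x^{-1}gy}\,\sigma_{y^{-1}}(w_g)$ and trace-preservation of $\sigma$) yields
\begin{equation*}
\|P_{\mathrm{Irred}(\mathbb{H})}(\lambda_x^* w_\lambda \lambda_y)\|_2 \,=\, \|P_{x\mathbb{H}y^{-1}}(w_\lambda)\|_2,
\end{equation*}
so the hypothesis gives the conclusion immediately.

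For Case (ii), after reordering I may assume $\hat{\mathbb{H}} = \hat{\mathbb{G}}_1 \times \cdots \times \hat{\mathbb{G}}_k$; set $\mathbb{H}_2 := \mathbb{G}_{k+1} \times \cdots \times \mathbb{G}_m$, so $\mathbb{G} = \mathbb{H}_1 \times \mathbb{H}_2$ with $\mathbb{H}_1$ the dual of $\hat{\mathbb{H}}$. Splitting $x = x_\mathbb{H} \otimes x''$ and $y = y_\mathbb{H} \otimes y''$ along this decomposition and using $x_\mathbb{H}\,\mathrm{Irred}(\mathbb{H})\,\bar y_\mathbb{H} = \mathrm{Irred}(\mathbb{H})$, one checks that
\begin{equation*}
x\,\mathrm{Irred}(\mathbb{H})\,\bar y \,=\, \mathrm{Irred}(\mathbb{H})\otimes\bigl\{\text{irreducible summands of }x''\otimes\bar{y''}\bigr\},
\end{equation*}
so I may reduce to $x = \epsilon \otimes x''$ and $y = \epsilon \otimes y''$. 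Then, for each quadruple of indices $(I,J,I',J')$, I take $a = p u^{y''}_{I',J'}$ and $b = p u^{x''}_{I,J}$ in $pM$, viewing $L^\infty(\mathbb{H}_2) \subset L^\infty(\mathbb{G})$ via $1 \otimes L^\infty(\mathbb{H}_2)$.

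The key step will be to establish
\begin{equation*}
\sum_{I,J,I',J'}\|P_{\mathrm{Irred}(\mathbb{H})}(u^{x''*}_{I,J}\,w_\lambda\,u^{y''}_{I',J'})\|_2^2 \,\geq\, C_{x'',y''}\,\|P_{x\,\mathrm{Irred}(\mathbb{H})\,\bar y}(w_\lambda)\|_2^2
\end{equation*}
with $C_{x'',y''} > 0$. Using the factorization $L^\infty(\mathbb{G}) = L^\infty(\mathbb{H}_1) \mathbin{\bar{\otimes}} L^\infty(\mathbb{H}_2)$ and the corresponding factorization $u^{z_1\otimes z_2}_{(i_1,i_2),(j_1,j_2)} = u^{z_1}_{i_1,j_1}u^{z_2}_{i_2,j_2}$, the operation $w_\lambda \mapsto u^{x''*}_{I,J}w_\lambda u^{y''}_{I',J'}$ leaves the $\mathbb{H}_1$-Fourier support untouched and only modifies the $\mathbb{H}_2$-factor (up to, in Case 1, a scalar modular twist of the $L\mathbb{R}$-coefficients by $(\lambda^{y''}_{I'}\lambda^{y''}_{J'})^{it}$, which is isometric and so does not affect $L^2$-norms). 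Applying $P_{\mathrm{Irred}(\mathbb{H})}$ then reduces to applying $h_2$ to the $\mathbb{H}_2$-factor $u^{x''*}_{I,J}u^{z_2}_{i_2,j_2}u^{y''}_{I',J'}$ in each Fourier component; summing the squared $L^2$-norms over $(I,J,I',J')$ and invoking Peter--Weyl orthogonality for $\mathbb{H}_2$-matrix coefficients yields the claimed lower bound, with $C_{x'',y''}>0$ because $x''\otimes\bar{y''}$ decomposes into a nonzero sum of irreducibles. Once this is in hand, the hypothesis (applied to finitely many pairs $a,b$) forces each term on the left-hand side to tend to $0$, so the right-hand side tends to $0$ as well.

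The hard part will be the bookkeeping for the key inequality in Case (ii): correctly tracking the scalar modular twist in Case 1 when commuting $L^\infty(\mathbb{H}_2)$-elements past $L\mathbb{R}$-coefficients, and extracting positivity of $C_{x'',y''}$ from the Peter--Weyl orthogonality relations (which in the non-Kac setting involve the quantum-dimension matrices of $\mathbb{H}_2$).
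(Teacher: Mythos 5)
Your Case (i) and your reduction in Case (ii) to $x=\epsilon\otimes x''$, $y=\epsilon\otimes y''$ (equivalently, to target sets of the form $\mathrm{Irred}(\mathbb{H})\times\{z''\}$ for single irreducibles $z''$ of the complementary factor $\mathbb{H}_2$) agree with what the paper does. The problem is the core of your Case (ii): the two-sided estimate $\sum_{I,J,I',J'}\|P_{\mathrm{Irred}(\mathbb{H})}(u^{x''*}_{I,J}w_\lambda u^{y''}_{I',J'})\|_2^2\geq C_{x'',y''}\|P_{x\,\mathrm{Irred}(\mathbb{H})\bar{y}}(w_\lambda)\|_2^2$ is only asserted, and the two points you defer as ``the hard part'' are exactly where the content lies. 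The positivity of $C_{x'',y''}$ amounts to the injectivity of the finite-dimensional linear map sending an element $a$ supported on the irreducible summands of $x''\otimes\overline{y''}$ to the matrix $\bigl(h(u^{x''*}_{I,J}\,a\,u^{y''}_{I',J'})\bigr)_{I,J,I',J'}$. This is true --- it follows from the fact that the products $u^{x''}_{I,J}u^{y''*}_{I',J'}$ span all of $\bigoplus_{z\subset x''\otimes\overline{y''}}\mathrm{span}\{u^z_{k,l}\}$ --- but ``because $x''\otimes\overline{y''}$ decomposes into a nonzero sum of irreducibles'' is not an argument for it, and in the non-Kac case the three-coefficient pairing carries modular factors that must be tracked. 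More seriously, in Case 2 (the crossed product $\hat{\mathbb{G}}\ltimes N$) moving $u^{y''}_{I',J'}$ past the $N$-valued Fourier coefficients is governed by the coaction, $n\,u^{y''}_{I',J'}=\sum_{K}u^{y''}_{I',K}\alpha^{y''}_{K,J'}(n)$, which mixes matrix indices and whose individual components are not $L^2$-isometries; your ``scalar modular twist'' remark only covers Case 1, so the key inequality as stated is unjustified there.

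All of this is avoidable, and the paper's proof avoids it: after your reduction it suffices to show $\|P_{\mathrm{Irred}(\mathbb{H})\times\{z''\}}(w_\lambda)\|_2\rightarrow 0$ for a single $z''\in\mathrm{Irred}(\mathbb{H}_2)$, and for that one tests with $b=p\,u^{z''}_{k,l}$ and $a=p$ only, so nothing has to be commuted past the coefficients. Expanding $w_\lambda=\sum_{z,k,l}u^{z}_{k,l}(w_\lambda)^{z}_{k,l}$ along the complementary factor, with $(w_\lambda)^{z}_{k,l}\in M_{\hat{\mathbb{H}}}$, and using that $P_{\mathrm{Irred}(\mathbb{H})}$ is a right $M_{\hat{\mathbb{H}}}$-module map together with the two-coefficient orthogonality $h(u^{z''*}_{k,l}u^{z}_{k',l'})=\delta_{z'',z}\delta_{k,k'}\delta_{l,l'}\|u^{z''}_{k,l}\|_2^2$, one gets exactly $P_{\mathrm{Irred}(\mathbb{H})}(u^{z''*}_{k,l}w_\lambda)=\|u^{z''}_{k,l}\|_2^2\,(w_\lambda)^{z''}_{k,l}$; the hypothesis then kills each $(w_\lambda)^{z''}_{k,l}$ in $\|\cdot\|_2$ and a finite triangle inequality finishes. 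I recommend replacing the two-sided scheme with this one-sided, purely diagonal computation.
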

\begin{proof}
The assumption (i) was already mentioned just before the lemma. So we assume (ii). 
Take any $\hat{\mathbb{H}}\in\mathcal{S}$ which is generated by some $\hat{\mathbb{G}}_i$. 
Exchanging their indices and writing $\hat{\mathbb{G}}=\hat{\mathbb{H}}_1\times \hat{\mathbb{H}}_2$, where $\hat{\mathbb{H}}_1:=\hat{\mathbb{G}}_1\times\cdots\times\hat{\mathbb{G}}_n$ and $\hat{\mathbb{H}}_2:=\hat{\mathbb{G}}_{n+1}\times\cdots\times\hat{\mathbb{G}}_m$ for some $n$, 
we may assume $\hat{\mathbb{H}}=\{\epsilon_1\} \times\hat{\mathbb{H}}_2$. Here $\epsilon_1$ is the unit of $\hat{\mathbb{H}}_1$. 
In the case, for any $x,y\in\mathrm{Irred}(\mathbb{G})$ there is a finite subset $\mathcal{F}\subset \mathrm{Irred}(\mathbb{H}_1)$ such that $x\mathrm{Irred}(\mathbb{H})y\subset \mathcal{F}\times \mathrm{Irred}(\mathbb{H}_2)$. 
Since $P_{\mathcal{F}\times \mathrm{Irred}(\mathbb{H}_2)}=\sum_{z\in\mathcal{F}}P_{\{z\}\times\mathrm{Irred}(\mathbb{H}_2)}$, we may assume $x\in \mathrm{Irred}(\mathbb{H}_1)$ and $y$ is trivial. 

Take a Fourier expansion of $w_\lambda$ along $\hat{\mathbb{H}}_1$, namely, decompose $w_\lambda=\sum_{z\in\mathrm{Irred}(\mathbb{H}_1),k,l}u^z_{k,l} (w_\lambda)^z_{k,l}$ for  $(w_\lambda)^z_{k,l}=P_{\mathrm{Irred}(\mathbb{H}_2)}(u_{k,l}^{z*}w_\lambda)\in M_{\hat{\mathbb{H}}_2}$, 
where $(u_{k,l}^z)_{k,l}^z$ is taken as an orthogonal system. 
Then for any $u_{i,j}^x$ we have
\begin{eqnarray*}
0\leftarrow \|u_{i,j}^x\|_2^{-2}\|P_{\mathrm{Irred}(\mathbb{H})}(u^{x*}_{i,j}w_\lambda)\|_2
=\|u_{i,j}^x\|_2^{-2}\|\sum_{y,k,l}h(u^{x*}_{i,j}u_{k,l}^y) (w_\lambda)_{k,l}^y\|_2
=\|(w_\lambda)_{i,j}^x\|_2,
\end{eqnarray*}
and hence
\begin{eqnarray*}
\|P_{\{x\}\times\mathrm{Irred}(\mathbb{H}_2)}(\omega_\lambda)\|_2
=\|\sum_{i,j} u_{i,j}^x (w_\lambda)_{i,j}^x \|_2
\leq \sum_{i,j}\|(w_\lambda)_{i,j}^x \|_2\rightarrow0.
\end{eqnarray*}
\end{proof}

From now on, we assume that $\mathbb{G}$ and $\mathcal{S}$ satisfy the condition above. 
Then we can follow all the proofs in \cite[\textrm{Section 2}]{Va10} and get the following four statements.
\begin{Lem}\label{2.4}
Let $p$ be a $\mathrm{Tr}$-finite projection in $M$ and $B\subset pMp$ be a von Neumann subalgebra generated by a group of unitaries $\mathcal{G}\subset \mathcal{U}(B)$. The following statements are equivalent.
\begin{itemize}
	\item For every $\hat{\mathbb{H}}\in\mathcal{S}$, we have $B\not\preceq_{M} M_{\hat{\mathbb{H}}}$.
	\item There exists a net of unitaries $(w_i)$ in $\mathcal{G}$ such that $\|P_\mathcal{F}(w_i)\|_2\rightarrow 0$ for every subset $\mathcal{F}\subset \mathrm{Irred}(\mathbb{G})$ which is small relative to $\mathcal{S}$.
\end{itemize}
\end{Lem}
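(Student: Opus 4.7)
For the $(\Leftarrow)$ direction, I combine Lemma \ref{2.3} with Theorem \ref{popa embed2}. Given a net $(w_i)\subset\mathcal{G}\subset\mathcal{U}(B)$ with $\|P_\mathcal{F}(w_i)\|_2\to 0$ for every $\mathcal{F}$ small relative to $\mathcal{S}$, the second conclusion of Lemma \ref{2.3} yields $\|P_{\mathrm{Irred}(\mathbb{H})}(b^*w_i a)\|_2\to 0$ for every $\hat{\mathbb{H}}\in\mathcal{S}$ and every $a,b\in pM$. Since $P_{\mathrm{Irred}(\mathbb{H})}$ coincides with the trace preserving conditional expectation $E_{M_{\hat{\mathbb{H}}}}$ on $M$, Theorem \ref{popa embed2}(ii) forbids $B\preceq_M M_{\hat{\mathbb{H}}}$.

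For $(\Rightarrow)$ I would adapt the template of \cite[Section 2]{Va10}. Let $I$ be the directed set of pairs $i=(\mathcal{F},\epsilon)$, with $\mathcal{F}$ small relative to $\mathcal{S}$ and $\epsilon>0$, ordered by inclusion in the first coordinate and the reverse order in the second. Selecting $w_i\in\mathcal{G}$ with $\|P_\mathcal{F}(w_i)\|_2<\epsilon$ for each $i$ produces the desired net, because for any fixed small $\mathcal{F}_0$ and any $\delta>0$, every $i=(\mathcal{F},\epsilon)\geq(\mathcal{F}_0,\delta)$ satisfies $\|P_{\mathcal{F}_0}(w_i)\|_2\leq\|P_\mathcal{F}(w_i)\|_2<\epsilon\leq\delta$. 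Thus the whole task reduces to proving that for every $\mathcal{F}=\bigcup_{k=1}^n x_k\mathrm{Irred}(\mathbb{H}_k)\bar{y}_k$ small relative to $\mathcal{S}$, $\inf_{w\in\mathcal{G}}\|P_\mathcal{F}(w)\|_2=0$.

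Assume for contradiction this infimum is $\epsilon_0>0$. Disjointify the pieces of $\mathcal{F}$ into mutually disjoint $\mathcal{F}_k'\subset x_k\mathrm{Irred}(\mathbb{H}_k)\bar{y}_k$ with orthogonal range projections and $P_\mathcal{F}=\sum_k P_{\mathcal{F}_k'}$; this yields $\sum_{k=1}^n\|P_{x_k\mathrm{Irred}(\mathbb{H}_k)\bar{y}_k}(w)\|_2^2\geq\epsilon_0^2$ for every $w\in\mathcal{G}$. The plan is now to combine the hypotheses $B\not\preceq_M M_{\hat{\mathbb{H}}_k}$ for $k=1,\ldots,n$ into a single net $(w_\lambda)\subset\mathcal{G}$ along which $\|E_{M_{\hat{\mathbb{H}}_k}}(b^*w_\lambda a)\|_2\to 0$ \emph{simultaneously} for every $k$ and every $a,b$ in any prescribed finite subset of $pM$; the opposite phenomenon then upgrades this to $\|P_{x_k\mathrm{Irred}(\mathbb{H}_k)\bar{y}_k}(w_\lambda)\|_2\to 0$ for every $k$, flatly contradicting the uniform lower bound. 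The fact that the witnessing net can be taken inside the generating group $\mathcal{G}$ rather than the full $\mathcal{U}(B)$ is handled by Kaplansky density combined with the $K^B_{d_0}$ characterization in Proposition \ref{Popa embed2}.

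The principal obstacle is the simultaneous intertwining step above, namely producing one net in $\mathcal{G}$ that witnesses $B\not\preceq_M M_{\hat{\mathbb{H}}_k}$ across all the finitely many $k=1,\ldots,n$ at once. I would handle this by working inside the direct sum $\bigoplus_{k=1}^n\langle M,M_{\hat{\mathbb{H}}_k}\rangle$: fix positive finite-trace elements $d_{0,k}\in\langle M,M_{\hat{\mathbb{H}}_k}\rangle$ encoding the prescribed test elements of $pM$, form $d_0:=\bigoplus_k d_{0,k}$, and consider the $\sigma$-weakly closed convex hull $K^{\mathcal{G}}_{d_0}:=\overline{\mathrm{co}}^{\sigma}\{w d_0 w^*:w\in\mathcal{G}\}$. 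If the desired simultaneous net failed to exist, then $K^{\mathcal{G}}_{d_0}\not\ni 0$, and the unique $L^2$-circumcenter of $K^{\mathcal{G}}_{d_0}$ would be a non-zero $B$-central positive element of finite trace in $\bigoplus_k\langle M,M_{\hat{\mathbb{H}}_k}\rangle$; at least one of its summands would then give a non-zero $B$-central positive element of finite trace inside some $\langle M,M_{\hat{\mathbb{H}}_k}\rangle$, contradicting $B\not\preceq_M M_{\hat{\mathbb{H}}_k}$ by Corollary \ref{Popa embed3}.
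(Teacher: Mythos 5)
Your argument is correct and is essentially the proof the paper intends: the paper defers Lemma \ref{2.4} to \cite[Section 2]{Va10}, and your reconstruction follows that template exactly --- Lemma \ref{2.3} plus Theorem \ref{popa embed2} for one direction, and for the other the reduction to $\inf_{w\in\mathcal{G}}\|P_{\mathcal{F}}(w)\|_2=0$, handled by the direct-sum/circumcenter trick to witness all finitely many non-embeddings $B\not\preceq_M M_{\hat{\mathbb{H}}_k}$ simultaneously, followed by the standing ``opposite phenomenon'' assumption. The only cosmetic remark is that the appeal to Kaplansky density is superfluous: the reduction from $\mathcal{U}(B)$ to the generating group $\mathcal{G}$ is already fully carried by your observation that the circumcenter of $K^{\mathcal{G}}_{d_0}$ is $\mathcal{G}$-invariant and hence lies in $\mathcal{G}'=B'$.
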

\begin{Lem}\label{2.5}
Let $p$ be a $\mathrm{Tr}$-finite projection in $M$ and $B\subset pMp$ be a von Neumann subalgebra. The following statements are equivalent.
\begin{itemize}
	\item[1.] There exists an $\hat{\mathbb{H}}\in \mathcal{S}$ such that $B \preceq_{M}M_{\hat{\mathbb{H}}}$.
	\item[2.] There exists a nonzero projection $q\in B'\cap pMp$ such that $(Bq)_1\subset_{\rm approx} M^\mathcal{S}$.
\end{itemize}
Also the following two statements are equivalent.
\begin{itemize}
	\item[a.] For every nonzero projection $q\in B'\cap pMp$, there exists an $\hat{\mathbb{H}}\in\mathcal{S}$ such that $Bq \preceq_{M}M_{\hat{\mathbb{H}}}$.
	\item[b.] We have $(B)_1\subset_{\rm approx} M^\mathcal{S}$.
\end{itemize}
\end{Lem}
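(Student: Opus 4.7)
The plan is to prove the first equivalence $(1) \Leftrightarrow (2)$ directly and then derive the second one $(a) \Leftrightarrow (b)$ from it via a maximality argument. For $(2) \Rightarrow (1)$, I would argue by contradiction: assume $(Bq)_1 \subset_{\rm approx} M^{\mathcal{S}}$ for some nonzero $q \in B' \cap pMp$ while $B \not\preceq_M M_{\hat{\mathbb{H}}}$ for every $\hat{\mathbb{H}} \in \mathcal{S}$. By heredity of $\preceq_M$, one also has $Bq \not\preceq_M M_{\hat{\mathbb{H}}}$, so Lemma \ref{2.4} applied to $Bq \subset qMq$ yields a net of unitaries $(w_i) \subset \mathcal{U}(Bq)$ with $\|P_{\mathcal{F}}(w_i)\|_2 \to 0$ for every $\mathcal{F}$ small relative to $\mathcal{S}$. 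Combining this with the uniform bound $\|w_i - P_{\mathcal{F}}(w_i)\|_2 < \epsilon$ coming from the approximate containment forces $\limsup_i \|w_i\|_2 \leq \epsilon$, contradicting $\|w_i\|_2^2 = \mathrm{Tr}(q) > 0$.

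For $(1) \Rightarrow (2)$, I would manufacture the projection $q$ from a finite right-dimensional bimodule. Using Proposition \ref{Popa embed2} extract a nonzero $B$-$q_0 M_{\hat{\mathbb{H}}} q_0$-sub-bimodule $H \subset pL^2(M,\mathrm{Tr})q_0$ of finite right dimension, together with right-bounded generators $\xi_1,\ldots,\xi_n \in pMq_0$ and the induced $*$-homomorphism $\pi\colon B \to q_0 M_{\hat{\mathbb{H}}} q_0 \otimes \mathbb{M}_n$ satisfying $b\xi_i = \sum_j \xi_j \pi_{j,i}(b)$. Set $\Xi := \sum_i \xi_i \xi_i^* \in (pMp)^+$ and let $s$ be its range projection. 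The key claim is $s \in B' \cap pMp$: the identity $b\Xi = \sum_{i,j} \xi_j \pi_{j,i}(b)\xi_i^*$ places $b\Xi$ in $sMs$ and then forces $sbs = bs$, and adjointly $sb = sbs$, so $bs = sb$. For each $\delta > 0$ put $s_\delta := \chi_{[\delta,\infty)}(\Xi) \leq s$ and $\eta_i := \xi_i^* \Xi^{-1} s_\delta \in \mathcal{N}_{\mathrm{Tr}}$; a direct computation gives $bs_\delta = \sum_{i,j} \xi_j \pi_{j,i}(b)\eta_i$ for all $b \in B$. Since $\pi_{j,i}((B)_1)$ is a bounded subset of $q_0 M_{\hat{\mathbb{H}}} q_0$ and $\xi_j, \eta_i \in \mathcal{N}_{\mathrm{Tr}}$, Lemma \ref{2.3} yields $\{bs_\delta : b \in (B)_1\} \subset_{\rm approx} M^{\{\hat{\mathbb{H}}\}}$. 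Letting $\delta \to 0$ and using $\|s - s_\delta\|_2 \to 0$, a triangle-inequality upgrade produces $(Bs)_1 \subset_{\rm approx} M^{\mathcal{S}}$, so $q := s$ does the job. This extraction step is the main technical obstacle; the unboundedness of $\Xi^{-1}$ is precisely what is absorbed by the truncation $s_\delta \uparrow s$.

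For $(b) \Rightarrow (a)$, given a nonzero $q \in B' \cap pMp$, the $*$-homomorphism $B \to Bq,\ x \mapsto xq$, is a C$^*$-quotient map, so $(Bq)_1 = (B)_1 q$; Lemma \ref{2.3} then yields $(Bq)_1 \subset_{\rm approx} M^{\mathcal{S}}$, and applying $(2) \Rightarrow (1)$ to $Bq \subset qMq$ (with $q$ playing the role of both $p$ and the relative-commutant projection) gives $Bq \preceq_M M_{\hat{\mathbb{H}}}$ for some $\hat{\mathbb{H}}$. For $(a) \Rightarrow (b)$, set $\mathcal{Q} := \{q \in B' \cap pMp : (Bq)_1 \subset_{\rm approx} M^{\mathcal{S}}\}$; this family is stable under orthogonal sums (triangle inequality in $\|\cdot\|_2$) and under suprema of increasing nets (since $q_i \uparrow q_\infty$ gives $\|q_\infty - q_i\|_2 \to 0$), so Zorn supplies a maximal $q_0 \in \mathcal{Q}$. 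If $q_0 < p$, then $p - q_0 \in B' \cap pMp$ is nonzero; applying (a) yields $B(p-q_0) \preceq_M M_{\hat{\mathbb{H}}}$, and then $(1) \Rightarrow (2)$ applied inside $(p-q_0)M(p-q_0)$ produces a nonzero $q'' \leq p - q_0$ in $B' \cap pMp$ with $(Bq'')_1 \subset_{\rm approx} M^{\mathcal{S}}$. Since $q'' \perp q_0$, the orthogonal sum $q_0 + q'' \in \mathcal{Q}$ strictly exceeds $q_0$, a contradiction. Hence $q_0 = p$ and $(B)_1 \subset_{\rm approx} M^{\mathcal{S}}$.
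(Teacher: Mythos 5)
The paper itself omits the proof of this lemma, deferring to \cite[Section 2]{Va10}, so I am comparing your argument with Vaes's. Your $(2)\Rightarrow(1)$ (unitaries from Lemma \ref{2.4} versus the uniform cut-off from approximate containment), your $(b)\Rightarrow(a)$, and your Zorn/orthogonal-sum argument for $(a)\Rightarrow(b)$ are all correct and essentially the standard route. The problem is in $(1)\Rightarrow(2)$, at the step ``together with right-bounded generators $\xi_1,\ldots,\xi_n\in pMq_0$''. A sub-bimodule $H\subset pL^2(M,\mathrm{Tr})q_0$ of finite right $q_0M_{\hat{\mathbb{H}}}q_0$-dimension is generated by finitely many right-bounded vectors, but right-boundedness here is relative to the small algebra $q_0M_{\hat{\mathbb{H}}}q_0$ and does not force these vectors to lie in $pMq_0$: for instance in $M=M_{\hat{\mathbb{H}}}\mathbin{\bar{\otimes}} Q$ every vector $\hat{x}\otimes\eta$ with $x\in M_{\hat{\mathbb{H}}}$ and $\eta\in L^2(Q)$ arbitrary is right-$M_{\hat{\mathbb{H}}}$-bounded without being a bounded vector of $M$. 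Since your identity $b\Xi=\sum_{i,j}\xi_j\pi_{j,i}(b)\xi_i^*$ and the elements $\eta_i=\xi_i^*\Xi^{-1}s_\delta$ require the $\xi_i$ to be honest elements of the algebra, this step fails as written.

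The standard fix closes the gap and simultaneously makes your truncation machinery unnecessary: from the finite-dimensional bimodule one gets a normal $*$-homomorphism $\pi\colon B\to q_1\bigl(\mathbb{M}_n\otimes q_0M_{\hat{\mathbb{H}}}q_0\bigr)q_1$ and a row vector $\xi$ of right-bounded vectors with $b\xi=\xi\pi(b)$ for all $b\in B$; taking the polar decomposition of $\xi$, viewed as a closed operator affiliated with $\mathbb{M}_{1,n}\otimes M$, produces a nonzero partial isometry $V$ whose entries do lie in $pMq_0$ and which still satisfies $bV=V\pi(b)$. Then $\Xi:=VV^*$ is already a projection in $B'\cap pMp$, the identity $bVV^*=V\pi(b)V^*$ together with Lemma \ref{2.3} gives $(B\,VV^*)_1\subset_{\rm approx}M^{\mathcal{S}}$ directly, and one sets $q:=VV^*$ --- which is exactly how \cite[Lemma 2.5]{Va10} argues. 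In short: your architecture and the remaining three implications are sound, the verification that the range projection $s$ commutes with $B$ is correct, but the $\Xi^{-1}$-truncation is solving a difficulty that only arises because the generators were not first replaced by a partial isometry, and without that replacement the assumption $\xi_i\in pMq_0$ is unjustified.
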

\begin{Pro}\label{2.6}
The set of projections
\begin{equation*}
\mathcal{P}:=\{q_0\in B'\cap pMp\mid (Bq_0)_1\subset_{\rm approx}M^\mathcal{S}\}
\end{equation*}
attains its maximum in a unique projection $q\in\mathcal{P}$. This projection belongs to $\mathcal{Z}(\mathcal{N}_{pMp}(B)'')$.
\end{Pro}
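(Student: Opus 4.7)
My plan is to show that $\mathcal{P}$ is closed under binary and directed suprema and then construct the maximum $q$ as the supremum of an increasing sequence in $\mathcal{P}$ with trace tending to $s:=\sup\{\mathrm{Tr}(q_0)\mid q_0\in\mathcal{P}\}$, which is finite since each $q_0\leq p$ has finite trace. I will use Lemma \ref{2.5} applied to $Bq_0\subset q_0Mq_0$ throughout: a projection $q_0\in B'\cap pMp$ lies in $\mathcal{P}$ if and only if every nonzero subprojection $q'\leq q_0$ in $B'\cap pMp$ satisfies $Bq'\preceq_M M_{\hat{\mathbb{H}}}$ for some $\hat{\mathbb{H}}\in\mathcal{S}$, since the projections of $(Bq_0)'\cap q_0Mq_0$ are exactly those of $B'\cap pMp$ dominated by $q_0$ and $Bq_0q'=Bq'$.

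The technical heart will be a polar decomposition trick transferring a Popa embedding between commuting subprojections. Given $q_1\in\mathcal{P}$ and $q'\in B'\cap pMp$ with $q_1q'\neq 0$, I will polar-decompose $q_1q'=w|q_1q'|$ in $M$; the partial isometry $w$ then satisfies $ww^*\leq q_1$, $w^*w\leq q'$, and $w\in B'\cap M$ (because $B$ commutes with $q_1q'$ hence with $|q_1q'|$). Applying the characterization to $ww^*\leq q_1$ gives $\hat{\mathbb{H}}\in\mathcal{S}$ and an intertwiner $(v_0,e,f,\theta)$ for $B(ww^*)\preceq_M M_{\hat{\mathbb{H}}}$; I will check that $V:=w^*v_0$, $e':=w^*ew\in B(w^*w)$, and $\tilde\theta(x):=\theta(wxw^*)$ together form an intertwiner realizing $B(w^*w)\preceq_M M_{\hat{\mathbb{H}}}$. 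Since $w^*w\in (Bq')'\cap M$ with $(w^*w)(Bq')(w^*w)=B(w^*w)$, part (i) of the corollary preceding Lemma \ref{relative commutant} will then upgrade this to $Bq'\preceq_M M_{\hat{\mathbb{H}}}$.

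With the transfer in hand, closure under binary suprema follows: if $q_1,q_2\in\mathcal{P}$ and $q'\leq q_1\vee q_2$ is nonzero in $B'\cap pMp$, the support of $q_1+q_2$ equals $q_1\vee q_2$, so at least one of $q_iq'$ is nonzero and the transfer applies. Closure under directed suprema follows likewise: if $q_\alpha\nearrow q$ in $\mathcal{P}$ and $q'\leq q$ is nonzero, then $\mathrm{Tr}(q'q_\alpha q')\to\mathrm{Tr}(q')>0$ forces $q_\alpha q'\neq 0$ for some $\alpha$. I will then pick $q_n\in\mathcal{P}$ with $\mathrm{Tr}(q_n)\to s$, replace each by $\bigvee_{k\leq n}q_k\in\mathcal{P}$, and take $q:=\sup_n \bigvee_{k\leq n} q_k\in\mathcal{P}$ with $\mathrm{Tr}(q)=s$. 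Any other $q_0\in\mathcal{P}$ satisfies $q\vee q_0\in\mathcal{P}$ of trace bounded above by $s$ and below by $\mathrm{Tr}(q)=s$, forcing $q\vee q_0=q$ and hence $q_0\leq q$, which yields uniqueness.

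For the normalizer invariance, given $u\in\mathcal{N}_{pMp}(B)$ I will observe $u(Bq)u^*=B(uqu^*)$ with $uqu^*\in B'\cap pMp$, and since $u\in\mathcal{N}_{\mathrm{Tr}}$, Lemma \ref{2.3} yields $(B(uqu^*))_1=u(Bq)_1u^*\subset_{\rm approx}M^{\mathcal{S}}$, whence $uqu^*\in\mathcal{P}$; by uniqueness $uqu^*=q$, so $q$ commutes with $\mathcal{N}_{pMp}(B)''$. Combined with $B'\cap pMp\subset \mathcal{N}_{pMp}(B)''$ (since unitaries of $B'\cap pMp$ normalize $B$), this gives $q\in\mathcal{Z}(\mathcal{N}_{pMp}(B)'')$. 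The main subtlety I anticipate is the bookkeeping in the transfer step: verifying the size relations $VV^*\leq e'$ and $V^*V\leq f$, confirming $e'\in B(w^*w)$, and checking the corner identity $e'Bq'e'=e'Be_1 e'$ so that part (i) of the corner corollary does apply.
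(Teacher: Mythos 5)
Your argument is correct, but it takes a genuinely different route from the one the paper intends (the proof is omitted in the paper, which defers to the corresponding statement in \cite[\textrm{Section 2}]{Va10}). The standard proof works directly with the definition of $\subset_{\rm approx}$: one first notes that $\mathcal{P}$ is stable under passing to subprojections in $B'\cap pMp$ and under conjugation by partial isometries $w\in B'\cap pMp$, because $(B(w^*w))_1\subset w^*(Bq_1)_1w$ and Lemma \ref{2.3} applies; then $q_1\vee q_2=q_1+(q_1\vee q_2-q_1)$ with $q_1\vee q_2-q_1$ equivalent in $B'\cap pMp$ to a subprojection of $q_2$, orthogonal sums stay in $\mathcal{P}$ by the triangle inequality for $\|P_{\mathcal{F}^c}(\cdot)\|_2$ together with the fact that a union of two small sets is small, and increasing limits are handled by the uniform estimate $\|b(q-q_\alpha)\|_2\le\|q-q_\alpha\|_2$. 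You instead route everything through the equivalence of Lemma \ref{2.5}, reducing membership in $\mathcal{P}$ to the family of Popa embeddings $Bq'\preceq_M M_{\hat{\mathbb{H}}}$ over all nonzero subprojections $q'$, and you transfer those embeddings along the polar part $w$ of $q_1q'$ by explicitly rebuilding the intertwiner $(V,e',f,\tilde\theta)$. This works --- the bookkeeping you flag ($VV^*\le e'$, $V^*V=v_0^*v_0\le f$, $e'=b_0w^*w\in B(w^*w)$, and the intertwining relation) all checks out --- but it is heavier than necessary: the same transfer follows in one line from Lemma \ref{2.3} applied to $w^*(Bq_1)_1w$, which shows directly that $w^*w\in\mathcal{P}$ and bypasses intertwiners entirely. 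Your closure arguments, the construction of $q$ as a trace-maximizing supremum, and the normalizer/centre step are all fine.

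One citation needs repair. The corollary you invoke for the corner upgrade, from $(w^*w)(Bq')(w^*w)\preceq_M M_{\hat{\mathbb{H}}}$ to $Bq'\preceq_M M_{\hat{\mathbb{H}}}$, is stated under the standing hypothesis that the target algebra is of type $\rm II_1$ or $\rm III$, whereas $M_{\hat{\mathbb{H}}}$ is semifinite. The correct reference is the first half of Corollary \ref{Popa embed3}: the finite-trace positive element $d=(w^*w)d(w^*w)\in (B(w^*w))'\cap\langle M,M_{\hat{\mathbb{H}}}\rangle$ witnessing the corner embedding already commutes with $Bq'$, since $(w^*w)(bq')=b(w^*w)=(bq')(w^*w)$, and hence witnesses $Bq'\preceq_M M_{\hat{\mathbb{H}}}$ as well. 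With that substitution your proof is complete.
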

\begin{Lem}\label{approx direct}\label{2.7}
Let $\mathcal{S}_1$, $\mathcal{S}_2$ and $\mathcal{S}$ be families of discrete quantum subgroups of $\hat{\mathbb{G}}$. 
Assume that for any $\mathcal{F}_i$ which is small relative to $\mathcal{S}_i$, $\mathcal{F}_1\cap\mathcal{F}_2$ is small relative to $\mathcal{S}$. 
If $(B)_1\subset_{\rm approx}M^{\mathcal{S}_i}$ for $i=1,2$, then $(B)_1\subset_{\rm approx}M^\mathcal{S}$.
\end{Lem}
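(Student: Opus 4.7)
The plan is to prove this directly from the definition of approximate containment, using the orthogonality of the spectral subspaces $L^2(\{x\})$ for distinct $x \in \mathrm{Irred}(\mathbb{G})$.

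First, given any $\epsilon > 0$, I would apply the hypothesis $(B)_1 \subset_{\rm approx} M^{\mathcal{S}_i}$ with threshold $\epsilon/\sqrt{2}$ for each $i = 1, 2$. This produces subsets $\mathcal{F}_i \subset \mathrm{Irred}(\mathbb{G})$, each small relative to $\mathcal{S}_i$, such that $\|P_{\mathcal{F}_i^c}(b)\|_2 < \epsilon/\sqrt{2}$ for every $b \in (B)_1$. Then I would set $\mathcal{F} := \mathcal{F}_1 \cap \mathcal{F}_2$; by the combinatorial assumption relating $\mathcal{S}_1, \mathcal{S}_2, \mathcal{S}$, this $\mathcal{F}$ is small relative to $\mathcal{S}$, so it is a candidate witness for $(B)_1 \subset_{\rm approx} M^{\mathcal{S}}$.

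The remaining step is the $L^2$-estimate. The key observation is that the Fourier decomposition along $\mathrm{Irred}(\mathbb{G})$ is orthogonal, so for any subset $\mathcal{E} \subset \mathrm{Irred}(\mathbb{G})$ and $b \in \mathcal{N}_{\mathrm{Tr}}$ one has the Pythagorean identity
\begin{equation*}
\|P_{\mathcal{E}}(b)\|_2^2 = \sum_{x \in \mathcal{E}} \|P_{\{x\}}(b)\|_2^2.
\end{equation*}
Since $\mathcal{F}^c = \mathcal{F}_1^c \cup \mathcal{F}_2^c$, summing over $x \in \mathcal{F}^c$ and using the pointwise bound on the number of covering sets gives
\begin{equation*}
\|P_{\mathcal{F}^c}(b)\|_2^2 \leq \|P_{\mathcal{F}_1^c}(b)\|_2^2 + \|P_{\mathcal{F}_2^c}(b)\|_2^2 < \frac{\epsilon^2}{2} + \frac{\epsilon^2}{2} = \epsilon^2,
\end{equation*}
uniformly in $b \in (B)_1$. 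This yields $\|P_{\mathcal{F}^c}(b)\|_2 < \epsilon$, which is exactly the defining inequality for $(B)_1 \subset_{\rm approx} M^{\mathcal{S}}$.

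There is really no substantial obstacle here; the proof is essentially a Pythagorean two-line argument together with the combinatorial hypothesis on small sets. The only point worth being careful about is that $P_{\mathcal{F}^c}$ is not literally the sum $P_{\mathcal{F}_1^c} + P_{\mathcal{F}_2^c}$ (the two may overlap on $\mathcal{F}_1^c \cap \mathcal{F}_2^c$), but the $L^2$-norm comparison above bypasses this by working one spectral piece at a time.
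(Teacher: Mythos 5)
Your argument is correct and is essentially the same elementary Pythagorean argument that the paper invokes by reference to \cite[Section 2]{Va10} (the paper omits the proof of this lemma entirely). The orthogonality of the spectral subspaces $L^2(\{x\})$ for distinct $x\in\mathrm{Irred}(\mathbb{G})$, together with $\mathcal{F}^c=\mathcal{F}_1^c\cup\mathcal{F}_2^c$ and the combinatorial hypothesis on small sets, is exactly what is needed, and your handling of the overlap $\mathcal{F}_1^c\cap\mathcal{F}_2^c$ is correct.
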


\begin{Rem}\upshape\label{2.8}
As mentioned in \cite[\textrm{Lemma 2.7}]{Va10}, when $\hat{\mathbb{G}}$ is a discrete group $\Gamma$, we can put $\mathcal{S}:=\{\Sigma_1\cap g\Sigma_2g^{-1} \mid \Sigma_i\in\mathcal{S}_i \textrm{ for $i=1,2$ and } g\in\Gamma \}$. 
For our main target, direct product quantum groups, we can put $\mathcal{S}:=\{\hat{\mathbb{H}}_1\cap \hat{\mathbb{H}}_2 \mid \hat{\mathbb{H}}_i\in\mathcal{S}_i \textrm{ for $i=1,2$}\}$.
\end{Rem}

In the proof of main theorems, we often use the following lemma, which is an easy consequence of lemmas above.
\begin{Lem}\label{2.9}
Let $p$ be a $\mathrm{Tr}$-finite projection in $M$ and $B\subset pMp$ be a von Neumann subalgebra. 
Assume that $B'\cap pMp$ is a factor. Then for any $\mathcal{S}$, $(B)_1\subset_{\rm approx} M^\mathcal{S}$ if and only if there exists an $\hat{\mathbb{H}}\in \mathcal{S}$ such that $B \preceq_{M}M_{\hat{\mathbb{H}}}$.
\end{Lem}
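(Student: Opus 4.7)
My plan is to derive the lemma by combining Lemma~\ref{2.5} and Proposition~\ref{2.6}, using the factoriality of $B'\cap pMp$ to collapse the ``cutting projection'' that appears in both statements. For the direction $(\Rightarrow)$ I would simply apply the implication $\mathrm{b}\Rightarrow\mathrm{a}$ of Lemma~\ref{2.5} to the projection $q=p\in B'\cap pMp$, which yields some $\hat{\mathbb{H}}\in\mathcal{S}$ with $B=Bp\preceq_M M_{\hat{\mathbb{H}}}$; no further work is needed.

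For the direction $(\Leftarrow)$, I would first invoke the implication $1\Rightarrow 2$ of Lemma~\ref{2.5} to find a nonzero element in the set
\begin{equation*}
\mathcal{P}:=\{q_0\in B'\cap pMp \mid (Bq_0)_1\subset_{\rm approx}M^{\mathcal{S}}\},
\end{equation*}
and then apply Proposition~\ref{2.6} to obtain its maximum projection $q$, which satisfies $q\in \mathcal{Z}(\mathcal{N}_{pMp}(B)'')$. The remaining task is to promote $q$ to $p$, after which $(B)_1=(Bp)_1\subset_{\rm approx}M^{\mathcal{S}}$ follows from the definition of $\mathcal{P}$.

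The key (and essentially only) step is to verify the inclusion $\mathcal{Z}(\mathcal{N}_{pMp}(B)'')\subset \mathcal{Z}(B'\cap pMp)$. On one hand, since $B\subset \mathcal{N}_{pMp}(B)''$, every element of $\mathcal{Z}(\mathcal{N}_{pMp}(B)'')$ commutes with $B$ and therefore lies in $B'\cap pMp$. On the other hand, every unitary $u\in B'\cap pMp$ satisfies $uBu^*=B$ trivially and so belongs to $\mathcal{N}_{pMp}(B)$; since $B'\cap pMp$ is generated by its unitaries, we get $B'\cap pMp\subset \mathcal{N}_{pMp}(B)''$, and hence $q$ also commutes with $B'\cap pMp$. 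Together these place $q$ in $\mathcal{Z}(B'\cap pMp)$, which by the factor hypothesis equals $\mathbb{C}p$, forcing $q=p$ since $q\neq 0$. I do not anticipate any real obstacle beyond spotting this two-sided inclusion; once it is in place, factoriality of $B'\cap pMp$ does the rest and no further analytic estimates are required.
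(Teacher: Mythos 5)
Your proof is correct and follows essentially the same route as the paper: the forward direction via Lemma \ref{2.5}, and the converse by taking the maximal projection $q$ from Proposition \ref{2.6} and using $q\in B'\cap pMp\cap\mathcal{Z}(\mathcal{N}_{pMp}(B)'')\subset\mathcal{Z}(B'\cap pMp)=\mathbb{C}p$ to force $q=p$. The two-sided inclusion you spell out is exactly the (implicit) justification in the paper's one-line argument.
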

\begin{proof}
The only if direction is trivial. 
Assume $B \preceq_{M}M_{\hat{\mathbb{H}}}$ for some $\hat{\mathbb{H}}\in \mathcal{S}$. Then by Lemma \ref{2.5}, there is a nonzero projection $q_0\in B'\cap pMp$ such that $(q_0B)_1\subset_{\rm approx} M^\mathcal{S}$. 
Take the maximum projection $q\in B'\cap pMp$ in Proposition \ref{2.6} such that $(qB)_1\subset_{\rm approx} M^\mathcal{S}$. 
Since $q$ is non-zero and is contained in $B'\cap pMp \cap \mathcal{Z}(\mathcal{N}_{pMp}(B)'')\subset \mathcal{Z}(B'\cap pMp)=\mathbb{C}$, $q=1$ and we get $(B)_1\subset_{\rm approx} M^\mathcal{S}$.
\end{proof}

\section{\bf Prime factorization results for type III factors}

In the section, we prove the main theorem. As we mentioned in Introduction, we begin our work by generalizing condition (AO) to the tensor product setting. 
We then study locations of subalgebras and intertwiners inside cores of tensor products by Popa's intertwining method. 
We obtain factorization results for cores first, and then deduce factorization results for the original algebras.

\subsection{\bf Condition (AO) for tensor product algebras}\label{AO tensor}

As we mentioned, a discrete quantum group in $\mathcal{C}$ is bi-exact, and hence the associated von Neumann algebra satisfies condition (AO). 
In the subsection, we study some appropriate conditions on direct products of quantum groups in $\mathcal{C}$. 
Such an observation was first given in \cite{OP03} for direct products and then generalized in \cite[\textrm{Section 15}]{BO08} with the notion of relative bi-exactness. 
Our approach here is close to the first one. 
In the subsection, we use the notation 
\begin{eqnarray*}
\mathbb{B}_{\mathbb{G}}:=\mathbb{B}(L^2(\mathbb{G})),\  \mathbb{K}_{\mathbb{G}}:=\mathbb{K}(L^2(\mathbb{G})), \quad
\mathbb{B}_{\mathbb{R}}:=\mathbb{B}(L^2(\mathbb{R})),\  \mathbb{K}_{\mathbb{R}}:=\mathbb{K}(L^2(\mathbb{R})),
\end{eqnarray*}
for any compact quantum group $\mathbb{G}$ and the real numbers $\mathbb{R}$. 

Let $\hat{\mathbb{G}}_i$ $(i=1,\ldots,m)$ be discrete quantum groups in $\mathcal{C}$. By definition there are $\hat{\mathbb{H}}_i$ containing $\hat{\mathbb{G}}_i$ with nuclear $C^*$-algebras $\mathcal{C}_l^i$. 
For simplicity, we first assume $\hat{\mathbb{G}}_i=\hat{\mathbb{H}}_i$. We will get the same conclusions in the general case (Lemma \ref{general case}).  
We use the notation
\begin{eqnarray*}
&&\mathcal{C}_l:=\mathcal{C}_l^1\otimes_{\rm min} \cdots \otimes_{\rm min} \mathcal{C}_l^m, \quad
\hat{\mathbb{G}}:=\hat{\mathbb{G}}_1\times \cdots \times \hat{\mathbb{G}}_m,\\
&&\hat{\mathbb{G}}_i':=\hat{\mathbb{G}}_1\times \cdots\times \hat{\mathbb{G}}_{i-1}\times \{\epsilon_i\}\times\hat{\mathbb{G}}_{i+1}\times\cdots \times \hat{\mathbb{G}}_m,
\end{eqnarray*}
where $\epsilon_i$ is the unit of $\hat{\mathbb{G}}_i$. 
Note that $\mathcal{C}_l$ is also nuclear and  contains $C_{\rm red}(\mathbb{G})$. 
Consider a multiplication map
\begin{eqnarray*}
\nu\colon \mathcal{C}_l\otimes_{\rm alg} C_{\rm red}(\mathbb{G})^{\rm op}\ni a\otimes b^{\rm op} \mapsto ab^{\rm op}\in \mathbb{B}(L^2(\mathbb{G})),
\end{eqnarray*}
This is a well defined linear map but not a $*$-homomorphism in general. 
In fact, $\nu$ is a $*$-homomorphism if and only if commutators $[\nu(\mathcal{C}_l),\nu(C_{\rm red}(\mathbb{G})^{\rm op})]$ are zero. 
If there is a non-unital $C^*$-algebra $J\subset \mathbb{B}(L^2(\mathbb{G}))$ which contains $[\nu(\mathcal{C}_l),\nu(C_{\rm red}(\mathbb{G})^{\rm op})]$ and whose multiplier algebra $M(J)$ contains $\mathrm{ran}\nu$, 
then exchanging the range of $\nu$ from $\mathbb{B}(L^2(\mathbb{G}))$ with $M(J)/J$, $\nu$ becomes a $*$-homomorphism. 
In our situation, we can find an appropriate $J$ as follows.

\begin{Lem}\label{lemma J}
Denote
\begin{eqnarray*}
\mathcal{K}_i:=\mathbb{B}_{\mathbb{G}_1}\otimes_{\rm min}\cdots\otimes_{\rm min}\mathbb{B}_{\mathbb{G}_{i-1}}\otimes_{\rm min} \mathbb{K}_{\mathbb{G}_i}\otimes_{\rm min}\mathbb{B}_{\mathbb{G}_{i+1}}\otimes_{\rm min}\cdots\otimes_{\rm min}\mathbb{B}_{\mathbb{G}_n}.
\end{eqnarray*}
Put $J:=\sum_i\mathcal{K}_i$. 
Then $J$ is a $C^*$-algebra containing $[\nu(\mathcal{C}_l),\nu(C_{\rm red}(\mathbb{G})^{\rm op})]$ and the multiplier algebra $M(J)$ contains $\mathrm{ran}\nu$.
\end{Lem}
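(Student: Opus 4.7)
The plan has three parts corresponding to the three claims. First, to see that $J$ is a $C^*$-algebra, I would work inside the minimal tensor product
\begin{equation*}
\mathcal{B}:=\mathbb{B}_{\mathbb{G}_1}\otimes_{\rm min}\cdots\otimes_{\rm min}\mathbb{B}_{\mathbb{G}_m},
\end{equation*}
viewed as a $C^*$-subalgebra of $\mathbb{B}(L^2(\mathbb{G}))$. Since $\mathbb{K}_{\mathbb{G}_i}$ is a closed two-sided ideal of $\mathbb{B}_{\mathbb{G}_i}$, the closure of the algebraic tensor product with $\mathbb{K}_{\mathbb{G}_i}$ inserted in the $i$-th slot is a closed two-sided ideal of $\mathcal{B}$; this identifies $\mathcal{K}_i$ as a closed two-sided ideal of $\mathcal{B}$. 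Then $J=\sum_{i=1}^m \mathcal{K}_i$ is a finite sum of closed ideals, hence again a closed ideal, using that the sum of two closed ideals in a $C^*$-algebra is always closed.

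Second, for the commutator containment, by linearity it suffices to treat elementary tensors $a=a_1\otimes\cdots\otimes a_m\in\mathcal{C}_l$ and $b^{\rm op}=b_1^{\rm op}\otimes\cdots\otimes b_m^{\rm op}\in C_{\rm red}(\mathbb{G})^{\rm op}$. Here I use the factorization $C_{\rm red}(\mathbb{G})\simeq C_{\rm red}(\mathbb{G}_1)\otimes_{\rm min}\cdots\otimes_{\rm min} C_{\rm red}(\mathbb{G}_m)$ coming from the direct product structure, together with the fact that the modular conjugation $J_h$ for $h=h_1\otimes\cdots\otimes h_m$ splits as $J_{h_1}\otimes\cdots\otimes J_{h_m}$, so that the right action of $C_{\rm red}(\mathbb{G})$ on $L^2(\mathbb{G})$ also factors slotwise. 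A telescoping identity then gives
\begin{equation*}
[a,b^{\rm op}]=\sum_{i=1}^m (b_1^{\rm op}a_1)\otimes\cdots\otimes(b_{i-1}^{\rm op}a_{i-1})\otimes [a_i,b_i^{\rm op}]\otimes (a_{i+1}b_{i+1}^{\rm op})\otimes\cdots\otimes(a_mb_m^{\rm op}),
\end{equation*}
and condition (b) for $\hat{\mathbb{G}}_i$ puts $[a_i,b_i^{\rm op}]\in\mathbb{K}_{\mathbb{G}_i}$, so the $i$-th summand lies in $\mathcal{K}_i\subset J$.

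Third, for $\mathrm{ran}\,\nu\subset M(J)$, I would note that $\mathcal{C}_l\subset\mathcal{B}$ by construction and $C_{\rm red}(\mathbb{G})^{\rm op}\subset\mathcal{B}$ via the same factorization of $J_h$; hence $\nu(\mathcal{C}_l\otimes_{\rm alg} C_{\rm red}(\mathbb{G})^{\rm op})\subset\mathcal{B}$, and because $J$ is a two-sided ideal of $\mathcal{B}$, every element of $\mathcal{B}$ acts as a double centralizer of $J$, yielding the inclusion into $M(J)$.

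The main bookkeeping hurdle is the first step: confirming that each $\mathcal{K}_i$ is really a closed two-sided ideal of the spatial tensor product $\mathcal{B}$ (rather than merely of $\mathbb{B}(L^2(\mathbb{G}))$) and that the finite sum $\sum_i\mathcal{K}_i$ is norm-closed. Once that is in place, the other two parts reduce to condition (b) and the telescoping calculation above.
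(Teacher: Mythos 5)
Your proposal is correct and follows essentially the same route as the paper: each $\mathcal{K}_i$ is a closed ideal of $\mathbb{B}_{\mathbb{G}_1}\otimes_{\rm min}\cdots\otimes_{\rm min}\mathbb{B}_{\mathbb{G}_m}$, hence the finite sum $J$ is a $C^*$-algebra containing $\mathrm{ran}\,\nu$ in its multiplier algebra, and the commutator containment reduces slotwise to condition (b). The only cosmetic difference is that you use an explicit telescoping identity on elementary tensors where the paper reduces to commutators of the single-slot generators $[\nu(\mathcal{C}_l^i),\nu(C_{\rm red}(\mathbb{G}_j)^{\rm op})]$ via the ideal property of $J$ in $M(J)$; both come to the same thing.
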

\begin{proof}
Since each $\mathcal{K}_i$ is an ideal in $\mathbb{B}_{\mathbb{G}_1}\otimes_{\rm min}\cdots\otimes_{\rm min}\mathbb{B}_{\mathbb{G}_n}$, $J$ is a $C^*$-algebra. 
Since $\mathrm{ran}\nu$ is contained in $\mathbb{B}_{\mathbb{G}_1}\otimes_{\rm min}\cdots\otimes_{\rm min}\mathbb{B}_{\mathbb{G}_n}$, it is contained in $M(J)$. 
To show $[\nu(\mathcal{C}_l),\nu(C_{\rm red}(\mathbb{G})^{\rm op})] \subset J$, we have only to check $[\nu(\mathcal{C}_l^i),\nu(C_{\rm red}(\mathbb{G}_j)^{\rm op})]\subset J$ for any $i,j$, because $J$ is an ideal in $M(J)$. 
The commutators are zero when $i\neq j$, and are contained in $\mathcal{K}_i$ when $i=j$.
\end{proof}

We keep $J$ in the lemma. Then we get a $*$-homomorphism $\nu$ from $\mathcal{C}_l\otimes_{\rm alg} C_{\rm red}(\mathbb{G})^{\rm op}$ into $M(J)/J$, which is bounded with respect to the max tensor product norm. Since $\mathcal{C}_l$ is nuclear, the max norm coincides with the minimal tensor norm. 
Finally restricting the map on $C_{\rm red}(\mathbb{G})\otimes_{\rm alg} C_{\rm red}(\mathbb{G})^{\rm op}$, we get the following proposition which is an analogue of condition (AO) on tensor product algebras.

\begin{Pro}\label{AO for tensor algebras}
The $C^*$-algebra $C_{\rm red}(\mathbb{G})$ is exact and the multiplication map 
\begin{equation*}
C_{\rm red}(\mathbb{G})\otimes_{\rm alg} C_{\rm red}(\mathbb{G})^{\rm op}\ni a\otimes b^{\rm op}\mapsto ab^{\rm op} \in M(J)/J
\end{equation*}
is bounded with respect to the minimal tensor norm.
\end{Pro}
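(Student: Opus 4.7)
The statement splits into two essentially independent assertions, and both are nearly done by the preceding discussion together with Lemma \ref{lemma J}.

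For exactness, I would simply observe that by definition of $\mathcal{C}$ and condition (a), each $\mathcal{C}_l^i$ is a nuclear $C^*$-algebra containing $C_{\rm red}(\mathbb{G}_i)$. Hence $\mathcal{C}_l := \mathcal{C}_l^1 \otimes_{\min} \cdots \otimes_{\min} \mathcal{C}_l^m$ is nuclear and contains $C_{\rm red}(\mathbb{G}_1) \otimes_{\min} \cdots \otimes_{\min} C_{\rm red}(\mathbb{G}_m) = C_{\rm red}(\mathbb{G})$. Since $C^*$-subalgebras of nuclear (indeed, exact) $C^*$-algebras are exact, $C_{\rm red}(\mathbb{G})$ is exact.

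For the second assertion, the plan is to upgrade the linear map $\nu$ to a $*$-homomorphism after passing to the quotient $M(J)/J$, exactly as indicated in the preamble to the proposition. More precisely, by Lemma \ref{lemma J} we have $[\nu(\mathcal{C}_l), \nu(C_{\rm red}(\mathbb{G})^{\rm op})] \subset J$ and $\mathrm{ran}\,\nu \subset M(J)$, so the composition with $M(J) \twoheadrightarrow M(J)/J$ sends $\nu(\mathcal{C}_l)$ and $\nu(C_{\rm red}(\mathbb{G})^{\rm op})$ into commuting $C^*$-subalgebras of $M(J)/J$. Consequently the multiplication map
\begin{equation*}
\tilde{\nu}\colon \mathcal{C}_l \otimes_{\rm alg} C_{\rm red}(\mathbb{G})^{\rm op} \longrightarrow M(J)/J, \qquad a \otimes b^{\rm op} \mapsto ab^{\rm op} + J
\end{equation*}
becomes a well-defined $*$-homomorphism, and every $*$-homomorphism out of an algebraic tensor product is contractive with respect to the maximal tensor norm.

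The final ingredient is nuclearity of $\mathcal{C}_l$: on $\mathcal{C}_l \otimes_{\rm alg} C_{\rm red}(\mathbb{G})^{\rm op}$ the maximal and minimal tensor norms coincide, so $\tilde{\nu}$ is bounded on $\mathcal{C}_l \otimes_{\min} C_{\rm red}(\mathbb{G})^{\rm op}$. Restricting to the subalgebra $C_{\rm red}(\mathbb{G}) \otimes_{\rm alg} C_{\rm red}(\mathbb{G})^{\rm op}$, injectivity of the minimal tensor norm gives that this restriction is bounded with respect to the norm inherited from $C_{\rm red}(\mathbb{G}) \otimes_{\min} C_{\rm red}(\mathbb{G})^{\rm op}$, which is exactly the conclusion.

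There is no genuine obstacle beyond what Lemma \ref{lemma J} already provides; the only mildly delicate point is making sure that the two ingredients being merged — commutators landing in the ideal $J$ and the range landing in $M(J)$ — simultaneously hold for the quantum group factors (rather than only for group factors, where this is classical), and this is precisely the content of conditions (a)--(b) in the definition of $\mathcal{C}$ combined with the observation in the lemma that the tensor-product ideal $\sum_i \mathcal{K}_i$ absorbs the cross-commutators on each leg.
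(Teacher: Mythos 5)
Your argument is correct and follows essentially the same route as the paper: Lemma \ref{lemma J} upgrades $\nu$ to a $*$-homomorphism into $M(J)/J$ (hence max-norm bounded), nuclearity of $\mathcal{C}_l$ collapses max to min, and restriction to $C_{\rm red}(\mathbb{G})\otimes_{\rm alg}C_{\rm red}(\mathbb{G})^{\rm op}$ together with injectivity of the minimal tensor norm gives the statement, with exactness coming from $C_{\rm red}(\mathbb{G})\subset\mathcal{C}_l$ nuclear. No gaps.
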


We next investigate a similar property on continuous cores. 
Recall that commutants of continuous cores are of the form 
\begin{eqnarray*}
&&L^\infty(\mathbb{G})\rtimes\mathbb{R}=W^*\{ \pi(L^\infty(\mathbb{G})),\ 1\otimes \lambda_t \ (t\in \mathbb{R})  \},\\
&&(L^\infty(\mathbb{G})\rtimes\mathbb{R})'=(L^\infty(\mathbb{G})\rtimes\mathbb{R})^{\rm op}=W^*\{ L^\infty(\mathbb{G})^{\rm op}\mathbin{\bar{\otimes}} 1,\ \Delta^{it}\otimes \rho_t  \ (t\in \mathbb{R})  \}.
\end{eqnarray*}
where $\pi$ is the canonical $*$-homomorphism into $\mathbb{B}(L^2(\mathbb{G})\otimes L^2(\mathbb{R}))$, $\Delta$ is the modular operator of the Haar state of $\mathbb{G}$, and $\rho_t$ is the right regular representation of $\mathbb{R}$. 
We go along a similar line to above by exchanging $C_{\rm red}(\mathbb{G})$ with $C_{\rm red}(\mathbb{G})\rtimes_{\rm r} \mathbb{R}$, the reduced norm continuous crossed product, which is a dense subalgebra in the core. 

Keep the notation above and let us first consider a multiplication map
\begin{eqnarray*}
\mu\colon \mathcal{C}_l\otimes_{\rm alg} C_{\rm red}(\mathbb{G})^{\rm op}\ni a\otimes b \mapsto \pi(a) (b\otimes 1)\in \mathbb{B}(L^2(\mathbb{G})\otimes L^2(\mathbb{R})).
\end{eqnarray*}
In (the proof of) \cite[\textrm{Proposition 3.2.3}]{Is12_2}, we verified that $M(\mathbb{K}_{\mathbb{G}}\otimes_{\rm min} \mathbb{B}_{\mathbb{R}})$ contains the $C^*$-algebra $D$ which is generated by
\begin{itemize}
	\item the image of $\mu$; 
	\item $1\otimes\lambda_t$, $\Delta^{it}\otimes\rho_{t}$ $(t\in\mathbb{R})$;
	\item $\int_\mathbb{R} f(s)(1\otimes\lambda_s)\cdot ds$, $\int_\mathbb{R} f(s)(\Delta^{is}\otimes\rho_{s})\cdot ds$ $(f\in L^1(\mathbb{R}))$
\end{itemize}
(for the fact, we do not need any assumption on $\mathbb{G}$). 
In our situation, since $[\mu(\mathcal{C}_l),\mu(C_{\rm red}(\mathbb{G})^{\rm op})]$ is not contained in $\mathbb{K}_{\mathbb{G}}\otimes_{\rm min} \mathbb{B}_{\mathbb{R}}$, the algebra $M(\mathbb{K}_{\mathbb{G}}\otimes_{\rm min} \mathbb{B}_{\mathbb{R}})$ should be exchanged by $M(\tilde{J})$ for some appropriate $\tilde{J}$. 

\begin{Lem}
Put $\tilde{J}:=J\otimes_{\rm min} \mathbb{B}_{\mathbb{R}}=\sum_i\mathcal{K}_i\otimes_{\rm min} \mathbb{B}_{\mathbb{R}}$. 
Then $\tilde{J}$ is a $C^*$-algebra containing $[\mu(\mathcal{C}_l),\mu(C_{\rm red}(\mathbb{G})^{\rm op})]\subset \tilde{J}$ and the multiplier algebra $M(\tilde{J})$ contains $D$.
\end{Lem}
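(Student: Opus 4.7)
The plan is to establish the three claims in turn: (i) $\tilde{J}$ is a $C^*$-algebra; (ii) $[\mu(\mathcal{C}_l),\mu(C_{\rm red}(\mathbb{G})^{\rm op})]\subset\tilde{J}$; (iii) $D\subset M(\tilde{J})$. Claim (i) will be immediate: each $\mathcal{K}_i\otimes_{\rm min}\mathbb{B}_{\mathbb{R}}$ is a closed two-sided ideal in the $C^*$-algebra $\mathbb{B}_{\mathbb{G}_1}\otimes_{\rm min}\cdots\otimes_{\rm min}\mathbb{B}_{\mathbb{G}_m}\otimes_{\rm min}\mathbb{B}_{\mathbb{R}}$ (since $\mathbb{K}_{\mathbb{G}_i}$ is an ideal in $\mathbb{B}_{\mathbb{G}_i}$), and a finite sum of closed ideals in a $C^*$-algebra is closed.

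For (ii), I would first reduce to elementary tensors $a=a_1\otimes\cdots\otimes a_m$ and $b=b_1\otimes\cdots\otimes b_m$ by bilinearity and norm-continuity of the commutator. The key algebraic input is the factorization $\pi(a)=\tilde{\pi}_1(a_1)\cdots\tilde{\pi}_m(a_m)$, where $\tilde{\pi}_i(a_i)\in\mathbb{B}(L^2(\mathbb{G})\otimes L^2(\mathbb{R}))$ denotes the canonical extension of the single-factor representation $\pi_i$ acting trivially on $L^2(\mathbb{G}_j)$ for $j\neq i$; these inflated representations pairwise commute. Applying the Leibniz rule,
\begin{equation*}
[\pi(a),b^{\rm op}\otimes 1]=\sum_{i=1}^m\Big(\prod_{j<i}\tilde{\pi}_j(a_j)\Big)[\tilde{\pi}_i(a_i),b^{\rm op}\otimes 1]\Big(\prod_{j>i}\tilde{\pi}_j(a_j)\Big).
\end{equation*}
A direct computation shows that $[\tilde{\pi}_i(a_i),b^{\rm op}\otimes 1]$ equals the inflation of $[\pi_i(a_i),b_i^{\rm op}\otimes 1]$ multiplied at positions $j\neq i$ by the fixed operators $b_j^{\rm op}$, and condition (d) for $\hat{\mathbb{G}}_i$ places $[\pi_i(a_i),b_i^{\rm op}\otimes 1]$ in $\mathbb{K}_{\mathbb{G}_i}\otimes_{\rm min}\mathbb{B}_{\mathbb{R}}$. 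Consequently each summand lies in $\mathcal{K}_i\otimes_{\rm min}\mathbb{B}_{\mathbb{R}}\subset\tilde{J}$.

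For (iii), several generators pose no difficulty: $1\otimes\lambda_t$ and $\int f(s)(1\otimes\lambda_s)ds$ lie in $1\otimes\mathbb{B}_{\mathbb{R}}\subset M(\tilde{J})$, while $\Delta^{it}\otimes\rho_t=(\Delta_1^{it}\otimes\cdots\otimes\Delta_m^{it})\otimes\rho_t$ is an elementary tensor in the min tensor product $\mathbb{B}_{\mathbb{G}_1}\otimes_{\rm min}\cdots\otimes_{\rm min}\mathbb{B}_{\mathbb{G}_m}\otimes_{\rm min}\mathbb{B}_{\mathbb{R}}$, of which $\tilde{J}$ is an ideal. I would treat the corresponding integral by repeating the pointwise estimate from the proof of \cite[Proposition 3.2.3]{Is12_2} with $\mathbb{K}_{\mathbb{G}}$ replaced by $\mathcal{K}_i$, using that $\mathbb{K}_{\mathbb{G}_i}$ is still an ideal in $\mathbb{B}_{\mathbb{G}_i}$. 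Finally, for $\mu(a\otimes b^{\rm op})=\pi(a)(b^{\rm op}\otimes 1)$, the factor $b^{\rm op}\otimes 1$ sits in the min tensor product and hence in $M(\tilde{J})$, while I would handle $\pi(a)=\prod_i\tilde{\pi}_i(a_i)$ one $\tilde{\pi}_i(a_i)$ at a time: for $j\neq i$, $\tilde{\pi}_i(a_i)$ acts as the identity on $L^2(\mathbb{G}_j)$ and so multiplies $\mathcal{K}_j\otimes_{\rm min}\mathbb{B}_{\mathbb{R}}$ into itself; for $j=i$, the same property will follow after reordering tensor factors from the single-factor statement $\pi_i(a_i)\in M(\mathbb{K}_{\mathbb{G}_i}\otimes_{\rm min}\mathbb{B}_{\mathbb{R}})$ given by \cite[Proposition 3.2.3]{Is12_2}. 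The hard part is precisely this last check: verifying that the modular-twist operators $\tilde{\pi}_i(a_i)$ — which do not lie in any minimal tensor product of $\mathbb{B}(L^2(\mathbb{G}_k))$'s with $\mathbb{B}_{\mathbb{R}}$ — nevertheless act as multipliers on every ideal $\mathcal{K}_j\otimes_{\rm min}\mathbb{B}_{\mathbb{R}}$, by careful bookkeeping of which tensor factor each piece of the operator touches.
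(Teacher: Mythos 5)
Your overall architecture (finite sum of ideals is a $C^*$-algebra; Leibniz expansion reducing the commutator claim to condition (d) slot by slot; generators visibly lying in $\mathbb{B}_{\mathbb{G}_1}\otimes_{\rm min}\cdots\otimes_{\rm min}\mathbb{B}_{\mathbb{G}_m}\otimes_{\rm min}\mathbb{B}_{\mathbb{R}}$ being automatic multipliers) matches the paper. But the step you yourself flag as ``the hard part'' is a genuine gap, and the partial reasoning you offer for it is not valid. You assert that for $j\neq i$ the operator $\tilde{\pi}_i(a_i)$ multiplies $\mathcal{K}_j\otimes_{\rm min}\mathbb{B}_{\mathbb{R}}$ into itself ``because it acts as the identity on $L^2(\mathbb{G}_j)$.'' Acting as the identity on the slot carrying the compacts is not sufficient: if $T=1_{\mathbb{G}_j}\otimes T'$ with $T'\in\mathbb{B}\bigl(\bigotimes_{k\neq j}L^2(\mathbb{G}_k)\otimes L^2(\mathbb{R})\bigr)$, then compressing by a rank-one projection $p\in\mathbb{K}_{\mathbb{G}_j}$ (so that $p\otimes 1\in\mathcal{K}_j\otimes_{\rm min}\mathbb{B}_{\mathbb{R}}$) shows that $T$ multiplies $\mathcal{K}_j\otimes_{\rm min}\mathbb{B}_{\mathbb{R}}$ into itself only if $T'$ lies in the \emph{minimal} tensor product of the remaining $\mathbb{B}$'s, which is exactly what is in doubt for the modular twist $\pi_i(a_i)\in\mathbb{B}(L^2(\mathbb{G}_i)\otimes L^2(\mathbb{R}))$. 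The same unproved multiplier property is silently used in your Leibniz expansion for claim (ii), so ``careful bookkeeping of which tensor factor each piece touches'' cannot close the argument: you need a positive reason why $\pi(a)$ is a multiplier, and none is given.

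The paper supplies two ideas you are missing. First, for the dense Hopf $*$-algebra one has $\sigma^h_t(u^x_{k,l})=(\lambda^x_{k,l})^{it}u^x_{k,l}$, whence $\pi(u^x_{k,l})=u^x_{k,l}\otimes f^x_{k,l}$ with $f^x_{k,l}\in L^\infty(\mathbb{R})$ given by $f^x_{k,l}(t)=(\lambda^x_{k,l})^{-it}$: the matrix coefficients are eigenvectors of the modular flow, so $\pi(C_{\rm red}(\mathbb{G}))$ genuinely lands in $\mathbb{B}_{\mathbb{G}_1}\otimes_{\rm min}\cdots\otimes_{\rm min}\mathbb{B}_{\mathbb{G}_m}\otimes_{\rm min}\mathbb{B}_{\mathbb{R}}$ and hence in $M(\tilde{J})$. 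Second, for $\mu(\hat{\lambda}(\ell^\infty(\hat{\mathbb{G}})))$ and for the integrated generators $\int_{\mathbb{R}}f(s)(1\otimes\lambda_s)\,ds$ and $\int_{\mathbb{R}}f(s)(\Delta^{is}\otimes\rho_s)\,ds$ (the latter is not obviously in the minimal tensor product, the integrand being only strongly continuous), the paper uses an increasing net of central projections $p^i_j\in\hat{\lambda}(c_0(\hat{\mathbb{G}}_i))\subset\mathbb{K}_{\mathbb{G}_i}$ converging strongly to $1$: these elements commute with $1_{\mathbb{G}_i'}p^i_j\otimes 1_{\mathbb{R}}\in\mathcal{K}_i\otimes_{\rm min}\mathbb{B}_{\mathbb{R}}$, which places them in $M(\mathcal{K}_i\otimes_{\rm min}\mathbb{B}_{\mathbb{R}})$ for every $i$, hence in $M(\tilde{J})$. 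Since $\mathcal{C}_l\subset C^*\{C_{\rm red}(\mathbb{G}),\hat{\lambda}(\ell^\infty(\hat{\mathbb{G}}))\}$ and $\pi$ is a $*$-homomorphism, these two facts combine to give $\mu(\mathcal{C}_l)\subset M(\tilde{J})$, which is the statement your commutator computation and your multiplier claims both ultimately rest on.
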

\begin{proof}
Since each $\mathcal{K}_i\otimes_{\rm min} \mathbb{B}_{\mathbb{R}}$ is an ideal in $\mathbb{B}_{\mathbb{G}_1}\otimes_{\rm min}\cdots\otimes_{\rm min}\mathbb{B}_{\mathbb{G}_n}\otimes_{\rm min} \mathbb{B}_{\mathbb{R}}$, $\tilde{J}$ is a $C^*$-algebra. 
Obviously $1\otimes\lambda_t$ and $\Delta^{it}\otimes\rho_{t}$ $(t\in\mathbb{R})$, and $\mu(C_{\rm red}(\mathbb{G})^{\rm op})$ are contained in $\mathbb{B}_{\mathbb{G}_1}\otimes_{\rm min}\cdots\otimes_{\rm min}\mathbb{B}_{\mathbb{G}_n}\otimes_{\rm min} \mathbb{B}_{\mathbb{R}}$ and hence in $M(\tilde{J})$. 
Let $(p^i_j)_j$ be an increasing net of central projections in $\hat{\lambda}(c_0(\hat{\mathbb{G}}_i)) \subset \mathbb{K}_{\mathbb{G}_i}$ converging to $1$ strongly, which automatically commute with $\Delta^{it}$ $(t\in \mathbb{R})$ and $\hat{\lambda}(\ell^\infty(\hat{\mathbb{G}}_i))$. 
Then $\int_\mathbb{R} f(s)(1\otimes\lambda_s)\cdot ds$ and $\int_\mathbb{R} f(s)(\Delta^{is}\otimes\rho_{s})\cdot ds$ $(f\in L^1(\mathbb{R}))$, and $\mu(\ell^\infty(\hat{\mathbb{G}}))$ commute with $1_{\mathbb{G}_i'}p_j^i\otimes 1_\mathbb{R}\in \mathcal{K}_i\otimes_{\rm min}\mathbb{B}_\mathbb{R}$. 
So they are contained in $M(\mathcal{K}_i\otimes_{\rm min}\mathbb{B}_\mathbb{R})$ for all $i$ and hence in $M(\tilde{J})$. 
Let $(u_{k,l}^x)$ be a basis of the dense Hopf $*$-algebra of $C_{\rm red}(\mathbb{G})$, which is orthogonal in $L^2(\mathbb{G})$. 
Then since $\sigma_t^h(u_{k,l}^x)=(\lambda_{k,l}^x)^{it}u_{k,l}^x$ $(t\in \mathbb{R})$ for some $\lambda_{k,l}^x>0$, $\pi(u_{k,l}^x)$ is of the form $u_{k,l}^x\otimes f_{k,l}^x$, where $f_{k,l}^x\in L^\infty(\mathbb{R})$ is given by $f_{k,l}^x(t)=(\lambda_{k,l}^x)^{-it}$. 
This is contained in $M(\tilde{J})$ and hence $\mu(\mathcal{C}_l)$ is contained in $M(\tilde{J})$. 

To prove $[\mu(\mathcal{C}_l),\mu(C_{\rm red}(\mathbb{G})^{\rm op})]\subset \tilde{J}$, it suffice to show $[\mu(\mathcal{C}_l^i),\mu(C_{\rm red}(\mathbb{G}_j)^{\rm op})]\subset \tilde{J}$ for any $i,j$. 
The commutators are zero when $i\neq j$, and are contained in $\mathcal{K}_i\otimes_{\rm min} \mathbb{B}_{\mathbb{R}}$ when $i=j$. 
\end{proof}

We keep $\tilde{J}$ in the lemma. 
Exchange the range of $\mu$ with $M(\tilde{J})/\tilde{J}$ and get a $*$-homomorphism. 
It is bounded with respect to the minimal tensor norm since $\mathcal{C}_l$ is nuclear. 
Then consider $(\mathbb{R}\times\mathbb{R}$)-actions on $\mathcal{C}_l\otimes_{\rm min} C_{\rm red}(\mathbb{G})^{\rm op}$ and $M(\tilde{J})/\tilde{J}$ given by $(s,t)\mapsto \mathrm{Ad}(\Delta^{is}\otimes\Delta^{it}) $ and $(s,t)\mapsto \mathrm{Ad}([1\otimes\lambda_s][\Delta^{it}\otimes \rho_t])$ respectively.  
It is easy to verify that $\mu$ is $(\mathbb{R}\times\mathbb{R})$-equivariant. Since $M(\tilde{J})$ contains $D$ and $\mathbb{R}\times\mathbb{R}$ is amenable, we get the following map
\begin{equation*}
(\mathcal{C}_l\rtimes_{\rm r}\mathbb{R})\otimes_{\rm min} (C_{\rm red}(\mathbb{G})\rtimes_{\rm r}\mathbb{R})^{\rm op}\simeq(\mathcal{C}_l\otimes_{\rm min} C_{\rm red}(\mathbb{G})^{\rm op})\rtimes_{\rm r}(\mathbb{R}\times\mathbb{R})\rightarrow M(\tilde{J})/\tilde{J}.
\end{equation*}
Restricting the map, we get the following proposition, which is also an analogue of condition (AO) on tensor products. 

\begin{Pro}\label{AO for tensor cores}
The $C^*$-algebra $C_{\rm red}(\mathbb{G})\rtimes_{\rm r}\mathbb{R}$ is exact and the multiplication map 
\begin{equation*}
(C_{\rm red}(\mathbb{G})\rtimes_{\rm r}\mathbb{R})\otimes_{\rm alg} (C_{\rm red}(\mathbb{G})\rtimes_{\rm r}\mathbb{R})^{\rm op}\ni a\otimes b^{\rm op}\mapsto ab^{\rm op} \in M(\tilde{J})/\tilde{J}
\end{equation*}
is bounded with respect to the minimal tensor norm.
\end{Pro}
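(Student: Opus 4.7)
The plan is to read off both assertions from the $*$-homomorphism on a reduced crossed product that was constructed in the paragraph immediately preceding the statement. For exactness, I would invoke property (a) of Definition \ref{class C}, which gives an embedding $C_{\rm red}(\mathbb{G})\subset \mathcal{C}_l$ compatible with the modular action of $\mathbb{R}$. Since $\mathcal{C}_l$ is nuclear and $\mathbb{R}$ is amenable, the reduced crossed product $\mathcal{C}_l\rtimes_{\rm r}\mathbb{R}$ is again nuclear. The restricted modular action yields an inclusion of $C^*$-algebras $C_{\rm red}(\mathbb{G})\rtimes_{\rm r}\mathbb{R}\subset \mathcal{C}_l\rtimes_{\rm r}\mathbb{R}$, and exactness passes to $C^*$-subalgebras, giving exactness of $C_{\rm red}(\mathbb{G})\rtimes_{\rm r}\mathbb{R}$.

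For the bounded multiplication map, I would simply restrict the $*$-homomorphism
\[
(\mathcal{C}_l\rtimes_{\rm r}\mathbb{R})\otimes_{\rm min}(C_{\rm red}(\mathbb{G})\rtimes_{\rm r}\mathbb{R})^{\rm op}\longrightarrow M(\tilde{J})/\tilde{J}
\]
just obtained to the $C^*$-subalgebra $(C_{\rm red}(\mathbb{G})\rtimes_{\rm r}\mathbb{R})\otimes_{\rm min}(C_{\rm red}(\mathbb{G})\rtimes_{\rm r}\mathbb{R})^{\rm op}$, which sits isometrically inside $(\mathcal{C}_l\rtimes_{\rm r}\mathbb{R})\otimes_{\rm min}(C_{\rm red}(\mathbb{G})\rtimes_{\rm r}\mathbb{R})^{\rm op}$ by the injectivity of the minimal tensor norm. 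Since the larger $*$-homomorphism is produced from $\mu$ by $(\mathbb{R}\times\mathbb{R})$-equivariance, it still sends a simple tensor $a\otimes b^{\rm op}$ to $ab^{\rm op}$ modulo $\tilde{J}$ on the algebraic tensor product, which is exactly the multiplication map in the statement.

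The substantive work---the construction of $\mu$, the verification that the commutators $[\mu(\mathcal{C}_l),\mu(C_{\rm red}(\mathbb{G})^{\rm op})]$ are absorbed by $\tilde{J}$, and the placement of the implementing unitaries $1\otimes\lambda_t$ and $\Delta^{it}\otimes\rho_t$ inside $M(\tilde{J})$---has already been carried out in the preceding lemma and the discussion after it. Granted that scaffolding, only the formal restriction argument remains, so I do not anticipate any substantive obstacle; the proposition is a direct corollary of the nuclearity of $\mathcal{C}_l$ together with the amenability of $\mathbb{R}\times\mathbb{R}$.
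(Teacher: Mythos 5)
Your proposal is correct and follows essentially the same route as the paper, which likewise obtains the proposition by restricting the $(\mathbb{R}\times\mathbb{R})$-equivariant extension of $\mu$ to the subalgebra $(C_{\rm red}(\mathbb{G})\rtimes_{\rm r}\mathbb{R})\otimes_{\rm min}(C_{\rm red}(\mathbb{G})\rtimes_{\rm r}\mathbb{R})^{\rm op}$, with exactness coming from nuclearity of $\mathcal{C}_l\rtimes_{\rm r}\mathbb{R}$ and the hereditariness of exactness. The only nitpick is that the containment $C_{\rm red}(\mathbb{G})\subset\mathcal{C}_l$ is condition (a) while the compatibility with the modular $\mathbb{R}$-action is condition (c); both are needed for the inclusion of reduced crossed products, and both hold.
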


Recall that we assumed $\hat{\mathbb{G}}_i=\hat{\mathbb{H}}_i$ at the first stage in the subsection. 
In the general case, cutting by $e_{\mathbb{G}_i}$, which is the projection from $L^2(\mathbb{H}_i)$ onto $L^2(\mathbb{G}_i)$, we can easily deduce the following lemma (e.g.\ \cite[\textrm{Lemma 3.3.1}]{Is13}). 

\begin{Lem}\label{general case}
Propositions \ref{AO for tensor algebras} and \ref{AO for tensor cores} are true for general $\hat{\mathbb{G}}_i$ in $\mathcal{C}$.
\end{Lem}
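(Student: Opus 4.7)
The plan is to reduce the general case to the already-treated case $\hat{\mathbb{G}}_i = \hat{\mathbb{H}}_i$ by cutting everything down with the projection $e_{\mathbb{G}} := e_{\mathbb{G}_1} \otimes \cdots \otimes e_{\mathbb{G}_m} \in \mathbb{B}(L^2(\mathbb{H}))$, where $e_{\mathbb{G}_i} \in \mathbb{B}(L^2(\mathbb{H}_i))$ is the orthogonal projection onto $L^2(\mathbb{G}_i)$ coming from the quantum subgroup inclusion $\hat{\mathbb{G}}_i \subset \hat{\mathbb{H}}_i$. First I would apply Propositions \ref{AO for tensor algebras} and \ref{AO for tensor cores} to $\hat{\mathbb{H}} := \hat{\mathbb{H}}_1 \times \cdots \times \hat{\mathbb{H}}_m$, producing the bounded multiplication maps $\nu_{\mathbb{H}}\colon C_{\rm red}(\mathbb{H}) \otimes_{\rm min} C_{\rm red}(\mathbb{H})^{\rm op} \to M(J_{\mathbb{H}})/J_{\mathbb{H}}$ and its core analogue $\mu_{\mathbb{H}}$ landing in $M(\tilde{J}_{\mathbb{H}})/\tilde{J}_{\mathbb{H}}$.

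Next I would record the compatibility properties of $e_{\mathbb{G}}$ needed to descend. Since $L^{\infty}(\mathbb{G}_i) \subset L^{\infty}(\mathbb{H}_i)$ is a von Neumann subalgebra and $L^2(\mathbb{G}_i)$ is invariant under both its left and its right action, $e_{\mathbb{G}_i}$ commutes with $C_{\rm red}(\mathbb{G}_i)$ and with $C_{\rm red}(\mathbb{G}_i)^{\rm op}$. Since $h_{\mathbb{H}_i}|_{L^{\infty}(\mathbb{G}_i)} = h_{\mathbb{G}_i}$, it also commutes with $\Delta_{h_{\mathbb{H}_i}}^{it}$, and hence commutes with $1\otimes\lambda_t$, $\Delta^{it}\otimes\rho_t$, and with $\pi(a)$ for $a \in C_{\rm red}(\mathbb{G}_i)$ on the core side. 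Under the identification $e_{\mathbb{G}_i}\mathbb{B}_{\mathbb{H}_i}e_{\mathbb{G}_i} \simeq \mathbb{B}_{\mathbb{G}_i}$ we have $e_{\mathbb{G}_i}\mathbb{K}_{\mathbb{H}_i}e_{\mathbb{G}_i} \subset \mathbb{K}_{\mathbb{G}_i}$, so each summand $\mathcal{K}_i^{\mathbb{H}}$ cuts down into $\mathcal{K}_i^{\mathbb{G}}$, giving $e_{\mathbb{G}} J_{\mathbb{H}} e_{\mathbb{G}} \subset J_{\mathbb{G}}$ and, with $e_{\mathbb{G}} \otimes 1_{\mathbb{R}}$ in place of $e_{\mathbb{G}}$, the analogous $\tilde{J}_{\mathbb{H}}$-into-$\tilde{J}_{\mathbb{G}}$ containment.

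The last step is the cutting itself. The c.c.p.\ map $\Phi\colon X \mapsto e_{\mathbb{G}} X e_{\mathbb{G}}$ sends $M(J_{\mathbb{H}})$ into $M(J_{\mathbb{G}})$ (check on generators using that $e_{\mathbb{G}}$ commutes with $\pi$ of $C_{\rm red}(\mathbb{G})$, with $C_{\rm red}(\mathbb{G})^{\rm op}$, and with the translation operators) and descends to the quotient thanks to $e_{\mathbb{G}} J_{\mathbb{H}} e_{\mathbb{G}} \subset J_{\mathbb{G}}$. Because $e_{\mathbb{G}}$ commutes with both $C_{\rm red}(\mathbb{G})$ and $C_{\rm red}(\mathbb{G})^{\rm op}$, we have the key identity $\nu_{\mathbb{G}}(a \otimes b^{\rm op}) = e_{\mathbb{G}}\,\nu_{\mathbb{H}}(a \otimes b^{\rm op})\,e_{\mathbb{G}}$ on the algebraic tensor product $C_{\rm red}(\mathbb{G}) \otimes_{\rm alg} C_{\rm red}(\mathbb{G})^{\rm op}$, so $\nu_{\mathbb{G}} = \bar{\Phi} \circ \nu_{\mathbb{H}}|_{C_{\rm red}(\mathbb{G}) \otimes C_{\rm red}(\mathbb{G})^{\rm op}}$ and inherits boundedness from the minimal tensor norm. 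The identical reasoning with $e_{\mathbb{G}}\otimes 1_{\mathbb{R}}$ handles $\mu_{\mathbb{G}}$. Exactness of $C_{\rm red}(\mathbb{G})$ and of $C_{\rm red}(\mathbb{G}) \rtimes_{\rm r} \mathbb{R}$ is automatic, as each embeds as a $C^*$-subalgebra of the already-exact $\mathbb{H}$-level algebra.

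I expect the only delicate technical point to be the verification that cutting descends to a well-defined contractive map on the quotient multiplier algebras; once the containments $e_{\mathbb{G}} J_{\mathbb{H}} e_{\mathbb{G}} \subset J_{\mathbb{G}}$ and $(e_{\mathbb{G}}\otimes 1_{\mathbb{R}})\tilde{J}_{\mathbb{H}}(e_{\mathbb{G}}\otimes 1_{\mathbb{R}}) \subset \tilde{J}_{\mathbb{G}}$ are established, the remaining manipulations are formal. The subtlety worth pausing on is that $M(J)$ is strictly smaller than $\mathbb{B}(L^2)$, so one must check on each of the families used in the previous subsection (the $\pi$-image, the translation groups, the absorbed $L^1(\mathbb{R})$-integrals, and $C_{\rm red}(\mathbb{G})^{\rm op}$) that cutting by $e_{\mathbb{G}} \otimes 1_{\mathbb{R}}$ stays inside $M(\tilde{J}_{\mathbb{G}})$, which is straightforward from the commutation properties above.
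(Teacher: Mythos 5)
Your argument is correct and is exactly the route the paper takes: the paper's entire proof is the one-line remark that one deduces the general case by ``cutting by $e_{\mathbb{G}_i}$'' (with a pointer to \cite[Lemma 3.3.1]{Is13}), and your write-up simply supplies the details of that compression argument — the commutation of $e_{\mathbb{G}}$ with $C_{\rm red}(\mathbb{G})$, $C_{\rm red}(\mathbb{G})^{\rm op}$ and the modular flow, the containments $e_{\mathbb{G}}J_{\mathbb{H}}e_{\mathbb{G}}\subset J_{\mathbb{G}}$ and its core analogue, and the resulting contractive descent to the quotient multiplier algebras. No gaps; this matches the intended proof.
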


\begin{Rem}\upshape\label{AO for centralizers}
In the end of Subsection \ref{CQG}, we observed that the Haar state preserving conditional expectation $E_h$ onto $L^\infty(\mathbb{G})_h$ gives a conditional expectation from $C_{\rm red}(\mathbb{G})$ onto $C_{\rm red}(\mathbb{G})_h$. 
By the same argument above, we can also exchange all objects of $\hat{\mathbb{G}}_i$ in Proposition \ref{AO for tensor algebras} with $C_{\rm red}(\mathbb{G}_i)_h$, $L^\infty(\mathbb{G}_i)_h$ and $L^2(L^\infty(\mathbb{G}_i)_h)$. 
Hence by Ozawa--Popa's method, we can prove prime factorization results for $L^\infty(\mathbb{G}_1)_{h_1}\mathbin{\bar{\otimes}}\cdots\mathbin{\bar{\otimes}} L^\infty(\mathbb{G}_n)_{h_n}$ if each tensor component is a non-amenable $\rm II_1$ factor.
\end{Rem}

\subsection{\bf Location of subalgebras}

In this subsection, we give a key observation for our factorization results. Our condition (AO) phenomena is used only to prove the following proposition. 
It is a generalization of \cite[\textrm{Theorem C}]{Is12_2} and its origin is \cite[\rm Theorem\ 4.6]{Oz04} (see also \cite[\rm Theorem\ 5.3.3]{Is12_1}). 

Let $\hat{\mathbb{G}}_i$ $(i=1,\ldots,m)$ be discrete quantum groups in $\mathcal{C}$ and  $h_i$ be Haar states of $\mathbb{G}_i$. 
Write $h:=h_1\otimes \cdots\otimes h_n$. 
We use the notation in the previous section such as $\hat{\mathbb{G}}$, $\hat{\mathbb{G}}_i'$ and $\tilde{J}$. 

\begin{Pro}\label{location core}
Let $p$ be a $\mathrm{Tr}$-finite projection in $C_h(L^\infty(\mathbb{G}))(=:C_h)$, where $\mathrm{Tr}$ is the canonical trace on $C_h$, and $N\subset pC_hp$ a von Neumann subalgebra. 
Then we have either one of the following statements:
\begin{itemize}
	\item[$\rm (i)$] The relative commutant $N'\cap pC_hp$ is amenable.
	\item[$\rm (ii)$] We have $N\preceq_{C_h} L^\infty(\mathbb{G}_i')\rtimes \mathbb{R}$ for some $i$.
\end{itemize}
\end{Pro}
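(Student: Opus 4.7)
The plan is to generalize the proof of \cite[Theorem C]{Is12_2}, using the tensor product condition (AO) established in Proposition \ref{AO for tensor cores} in place of the single-factor version, and feeding in the asymptotic vanishing provided by the approximately-containment framework of Subsection \ref{appro in core}. First, I assume (ii) fails, so that $N\not\preceq_{C_h} L^\infty(\mathbb{G}_i')\rtimes\mathbb{R} = M_{\hat{\mathbb{G}}_i'}$ for every $i$. Applying Lemma \ref{2.4} with the family $\mathcal{S}:=\{\hat{\mathbb{G}}_1',\ldots,\hat{\mathbb{G}}_m'\}$, I obtain a net of unitaries $(w_\lambda)_\lambda\subset\mathcal{U}(N)$ satisfying $\|P_{\mathcal{F}}(w_\lambda)\|_2\to 0$ for every subset $\mathcal{F}\subset\mathrm{Irred}(\mathbb{G})$ that is small relative to $\mathcal{S}$. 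The point of this step is that "small relative to $\mathcal{S}$" is designed to match exactly the ideal-theoretic structure of $\tilde{J}=\sum_i \mathcal{K}_i\otimes_{\rm min}\mathbb{B}_{\mathbb{R}}$ appearing in Proposition \ref{AO for tensor cores}.

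Next, I turn the bounded multiplicative map of Proposition \ref{AO for tensor cores} into a ucp conditional-expectation-style map onto the relative commutant $P:=N'\cap pC_h p$. Setting $H:=pL^2(C_h,\mathrm{Tr})$, I view $pC_hp\subset\mathbb{B}(H)$ in standard form so that $P^{\rm op}\subset\mathbb{B}(H)$ is generated by the right action of $C_h$. Using Arveson's extension theorem together with the nuclearity of $\mathcal{C}_l\rtimes_r\mathbb{R}$, the $*$-homomorphism from Proposition \ref{AO for tensor cores} extends to a ucp map
\begin{equation*}
\Theta\colon \mathbb{B}(L^2(\mathbb{G})\otimes L^2(\mathbb{R}))\otimes_{\rm min}(C_{\rm red}(\mathbb{G})\rtimes_r\mathbb{R})^{\rm op}\longrightarrow M(\tilde{J})/\tilde{J},
\end{equation*}
which is multiplicative on each tensor factor. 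Slicing $\Theta$ by a state on $M(\tilde{J})/\tilde{J}$ constructed from the vector states $\omega_{w_\lambda\xi}$ along a free ultrafilter (with $\xi\in H$ a fixed tracial vector for $p$), I obtain a ucp map $\Phi\colon\mathbb{B}(H)\to P$ which restricts to the identity on $P$. Since $P$ acts standardly on $H$, the existence of such a conditional expectation forces $P$ to be injective, hence amenable.

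The main obstacle is verifying the step that the state obtained by the ultralimit $\lim_\omega\omega_{w_\lambda\xi}\circ\Theta$ actually factors through the quotient by $\tilde{J}$, i.e.\ that $\omega_{w_\lambda\xi}(T)\to 0$ for every $T\in\tilde{J}$. Concretely, for $T\in\mathcal{K}_i\otimes_{\rm min}\mathbb{B}_{\mathbb{R}}$ one has to show $\langle Tw_\lambda \xi, w_\lambda\xi\rangle\to 0$, and this is where Lemma \ref{nice ineq} and Lemma \ref{2.3} are used: operators in $\mathcal{K}_i$ can be approximated by finite-rank combinations supported on rectangles $x\cdot\mathrm{Irred}(\mathbb{G}_i')\cdot y$ (which are exactly the small subsets relative to $\hat{\mathbb{G}}_i'$), and the asymptotic vanishing $\|P_{\mathcal{F}}(w_\lambda)\|_2\to 0$ extends to $\|P_{\mathcal{F}}(b^*w_\lambda a)\|_2\to 0$ for any $a,b\in pC_h$, yielding the desired annihilation. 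Once this is in hand, $\Phi$ is well-defined and, because $\Phi$ is multiplicative on $P^{\rm op}$ by construction and because $\omega_{w_\lambda\xi}$ is $N$-central in the limit (since $w_\lambda\in N$ and $N$ commutes with $P$), $\Phi$ lands inside $P'=P$ and fixes $P$ pointwise, completing the dichotomy.
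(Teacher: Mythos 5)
Your route for killing the ideal $\tilde{J}$ is a legitimate alternative to the paper's, and the observation at its core is sound: the sets $x\,\mathrm{Irred}(\mathbb{H})\,y$ with $\hat{\mathbb{H}}=\hat{\mathbb{G}}_i'$ that are small relative to $\mathcal{S}=\{\hat{\mathbb{G}}_1',\ldots,\hat{\mathbb{G}}_m'\}$ do match the rank-one-in-the-$i$-th-leg generators of $\mathcal{K}_i\otimes_{\rm min}\mathbb{B}_{\mathbb{R}}$, so a net $(w_\lambda)$ supplied by Lemma \ref{2.4} satisfies $\langle Tw_\lambda\xi,w_\lambda\xi\rangle\to 0$ for all $T\in\tilde{J}$, by Cauchy--Schwarz against the projections $P_{\{y\}\times\mathrm{Irred}(\mathbb{G}_i')}$ and a density argument using the uniform bound $\|w_\lambda\xi\|^2=\mathrm{Tr}(p)$. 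The paper does this step differently: it first reduces to the case where $N$ is amenable via \cite[\textrm{Proposition 5.2.4}]{Is12_1}, takes a conditional expectation $\Psi_N\colon\mathbb{B}(L^2(C_h))\to N'$, and shows $\tilde{J}\subset\ker\Psi_N$ by feeding the finite-trace positive element $z_q^{\rm op}\Psi_N(e_{\mathbb{G}_i}\otimes 1_{\mathbb{G}_i'}\otimes 1_{\mathbb{R}})z_q^{\rm op}$ of $N'\cap((L^\infty(\mathbb{G}_i')\rtimes\mathbb{R})^{\rm op})'$ into the criterion of Proposition \ref{Popa embed2}. Your version avoids the reduction to amenable $N$; the paper's hands off directly to the hypertrace formalism of \cite[\textrm{Theorem 4.6}]{Oz04}.

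The genuine gap is in your final paragraph, where the annihilating state is converted into a conditional expectation onto $P:=N'\cap pC_hp$. First, ``$\Phi$ lands inside $P'=P$'' is not correct as written: $P'$ is the commutant of $P$ in $\mathbb{B}(H)$ and is far larger than $P$. What the multiplicative-domain argument actually gives is that the image of $\Phi$ commutes with the image of $(C_{\rm red}(\mathbb{G})\rtimes_{\rm r}\mathbb{R})^{\rm op}$, hence lies in $((C_h)^{\rm op})'=C_h$; landing in $N'$ needs a separate argument. Second, that argument cannot be ``$\omega_{w_\lambda\xi}$ is $N$-central in the limit,'' which is false: for $u\in\mathcal{U}(P)$ one has $uw_\lambda=w_\lambda u$, hence $uw_\lambda\xi=Ju^*Jw_\lambda\xi$ and $\omega_{w_\lambda\xi}(u^*Tu)=\omega_{w_\lambda\xi}(JuJ\,T\,Ju^*J)$ --- conjugation on the left leg is traded for conjugation by $JuJ$ on the right leg, and it is exactly the min-continuity of Proposition \ref{AO for tensor cores} that lets you absorb $Ju^*J$ into the $(C_{\rm red}(\mathbb{G})\rtimes_{\rm r}\mathbb{R})^{\rm op}$ tensor factor. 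Without spelling this out, the claims that $\Phi$ is $P$-valued and fixes $P$ are unsupported. Third, the unitaries of $P$ and the elements of $C_h$ you must handle live in the von Neumann algebra, not in the norm-dense $C^*$-algebra on which Proposition \ref{AO for tensor cores} is stated; bridging this requires the local reflexivity/exactness argument that is the actual content of ``the same method as in \cite[\textrm{Theorem 4.6}]{Oz04},'' and your proposal does not engage with it.
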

\begin{proof}
We give only a sketch of the proof. 
By \cite[\textrm{Proposition 5.2.4}]{Is12_1}, we may assume that $N$ is amenable. 
Suppose by contradiction that $N\not\preceq_{rC_hr} q(L^\infty(\mathbb{G}_i')\rtimes \mathbb{R}) q$ for any $i$ and any $\mathrm{Tr}$-finite projection $q\in L^\infty(\mathbb{G}_i')\rtimes \mathbb{R}$ and $r:=p\vee q$. 

Define a proper conditional expectation $\Psi_N\colon \mathbb{B}(L^2(C_h)) \rightarrow N'$. 
For any $i$, we will show $\Psi_N(e_{\mathbb{G}_i} \otimes 1_{\mathbb{G}_i'}\otimes 1_\mathbb{R})=0$, where $e_{\mathbb{G}_i}$ is the projection from $L^2(\mathbb{G}_i)$ onto $\mathbb{C}\hat{1}_{\mathbb{G}_i}$. 
This implies $\mathbb{K}(L^2(\mathbb{G}_i))\otimes_{\rm min} \mathbb{B}(L^2(\mathbb{G}_i')\otimes L^2(\mathbb{R})) \subset \ker\Psi_N$ and hence  $\tilde{J}\subset \ker\Psi_N$. 

Let $q\in L^\infty(\mathbb{G}_i')\rtimes \mathbb{R}$ be any projection with $\mathrm{Tr}(q)<\infty$ and $z_q$ be the central support of $q$ in $L^\infty(\mathbb{G}_i')\rtimes\mathbb{R}$. Write $s:=z_q^{\rm op}\Psi_N(e_{\mathbb{G}_i} \otimes 1_{\mathbb{G}_i'}\otimes 1_\mathbb{R})z_q^{\rm op}$. 
Then $s$ is contained in $N'\cap ((L^\infty(\mathbb{G}_i')\rtimes\mathbb{R})^{\rm op})'$ and satisfies $s=sz_q^{\rm op}p$. 
Since $N\not\preceq_{rC_hr} q(L^\infty(\mathbb{G}_i')\rtimes \mathbb{R}) q$, by Proposition \ref{Popa embed2}, $\mathrm{Tr}_i(s)$ is zero or infinite, where $\mathrm{Tr}_i$ is the canonical trace on the basic construction of $L^\infty(\mathbb{G}_i')\rtimes \mathbb{R}\subset C_h$. 
This actually has a finite value because 
\begin{eqnarray*}
\mathrm{Tr}_i(s) &\leq& \mathrm{Tr}_i(\Psi_N(e_{\mathbb{G}_i} \otimes 1_{\mathbb{G}_i'}\otimes 1_\mathbb{R}))\\
&\leq& \mathrm{Tr}_i(p(e_{\mathbb{G}_i} \otimes 1_{\mathbb{G}_i'}\otimes 1_\mathbb{R})p)\\
&\leq& \mathrm{Tr}_i((e_{\mathbb{G}_i} \otimes 1_{\mathbb{G}_i'}\otimes 1_\mathbb{R})E_i(p))\\
&=&\mathrm{Tr}(E_i(p))=\mathrm{Tr}(p)<\infty,
\end{eqnarray*}
where $E_i$ is the Tr-preserving conditional expectation from $C_h$ onto $L^\infty(\mathbb{G}_i')\rtimes \mathbb{R}$. 
Thus we get $\mathrm{Tr}_i(s)=0$ and this means $\Psi_N(e_{\mathbb{G}_i} \otimes 1_{\mathbb{G}_i'}\otimes 1_\mathbb{R})=0$. 

Now considering the composite map
\begin{equation*}
(C_{\rm red}(\mathbb{G})\rtimes_{\rm r}\mathbb{R})\otimes_{\rm min} (C_{\rm red}(\mathbb{G})\rtimes_{\rm r}\mathbb{R})^{\rm op} \xrightarrow{\mu} M(\tilde{J})/\tilde{J} \xrightarrow{\Psi} N',
\end{equation*}
where $\mu$ is as in Proposition \ref{AO for tensor cores}, 
we can follow the same method as in \cite[\rm Theorem\ 4.6]{Oz04} (or \cite[\rm Theorem\ 5.3.3]{Is12_1}).
\end{proof}

\subsection{\bf Intertwiners inside type III algebras and continuous cores}

In the work of Ozawa and Popa on prime factorizations, they first found an intertwiner between tensor components of both sides and then constructed a unitary element which moves tensor components from one side to the other. 
In the subsection we study corresponding statements for type III factors. 
We recall the statement on $\rm II_1$ factors and then prove a similar one on type $\rm III$ factors.

\begin{Lem}[{\cite[\textrm{Proposition 12}]{OP03}}]\label{hariawase1}
Let $M_i$ and $N_i$ be $\rm II_1$ factors with $M_1\mathbin{\bar{\otimes}} M_2= N_1\mathbin{\bar{\otimes}} N_2$ $(=:M)$. Assume $N_1\preceq_M M_1$. 
Then there is a unitary element $u\in M$ and a decomposition $M\simeq M_1^t\mathbin{\bar{\otimes}} M_2^{1/t}$ for some $t>0$  such that $uN_1u^*\subset M_1^t$. 
\end{Lem}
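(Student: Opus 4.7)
The strategy is to upgrade the abstract intertwining $N_1 \preceq_M M_1$ to an honest unitary conjugation $u N_1 u^* \subset M_1^t$ by exploiting the $\rm II_1$ factor structure on both sides: one first stacks the partial isometry witnessing the intertwining to achieve a unital embedding into an amplification, and then fixes the amplification parameter $t$ so that the intertwiner becomes a genuine unitary in $M$.

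First, I would unpack $N_1 \preceq_M M_1$ via Definition \ref{Popa embed def}: obtain non-zero projections $e \in N_1$, $f \in M_1$, a unital normal $*$-homomorphism $\theta\colon eN_1e \to fM_1f$, and a partial isometry $v \in M$ with $vv^* \leq e$, $v^*v \leq f$, and $v\theta(x) = xv$ for all $x \in eN_1e$. Because $N_1$ and $M_1$ are $\rm II_1$ factors, the projection $e$ (resp.\ $f$) is equivalent to $1$ inside $N_1\mathbin{\bar{\otimes}}\mathbb{M}_n$ (resp.\ $M_1\mathbin{\bar{\otimes}}\mathbb{M}_n$) after suitable ampliation. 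Stacking $v$ along a matrix unit of partial isometries from $N_1$ and rewriting $M = M_1\mathbin{\bar{\otimes}} M_2 \simeq M_1^t\mathbin{\bar{\otimes}} M_2^{1/t}$ for an appropriate $t>0$ (valid by the amplification discussion in the Introduction), I would obtain a unital normal embedding $\tilde\theta\colon N_1 \to M_1^t$ together with a partial isometry $\tilde v \in M$ satisfying $\tilde v \tilde v^* = 1_M$ and $\tilde v \tilde\theta(x) = x \tilde v$ for all $x \in N_1$.

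Next, I would turn $\tilde v$ into a unitary. The right support $q := \tilde v^* \tilde v$ lies in $\tilde\theta(N_1)' \cap M$, which contains $M_2^{1/t}$, and the trace identity $\mathrm{Tr}_M(q) = \mathrm{Tr}_M(\tilde v \tilde v^*) = 1$ pins down its trace to $1$. Since $M_2^{1/t}$ commutes with $M_1^t \supset \tilde\theta(N_1)$, by choosing $t$ so that the trace matching makes $q$ equivalent to $1_M$ inside $\tilde\theta(N_1)' \cap M$ and then multiplying $\tilde v$ on the right by a suitable unitary drawn from $M_2^{1/t}$, one obtains a unitary $u \in M$ satisfying $u \tilde\theta(x) u^* = x$ for all $x \in N_1$; equivalently $u^* N_1 u \subset M_1^t$, as required.

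The main technical obstacle is the last step: promoting the partial isometry to a unitary. The freedom in the amplification parameter $t$ is precisely what is needed to balance the trace of $\tilde v \tilde v^*$ on the $N_2$-side against the trace of $\tilde v^* \tilde v$ on the $M_2$-side so that both left and right supports become full. This is the content of Ozawa--Popa \cite[Proposition 12]{OP03}, and relies crucially on factoriality of $N_2$ and $M_2$ together with the uniqueness of $\rm II_1$ factor tensor decompositions up to stable isomorphism.
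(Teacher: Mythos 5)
The paper does not actually prove this lemma: it is imported verbatim from \cite[Proposition 12]{OP03}, and the only internal guide to the mechanism is the paper's proof of the type III analogue, Lemma \ref{hariawase2}. Your overall plan --- unpack Definition \ref{Popa embed def}, amplify, and adjust the two supports of the intertwiner using factoriality of $N_2$ and $M_2$ --- is indeed the Ozawa--Popa strategy, so the architecture is right.

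There is, however, a genuine gap in the execution of the second half. If you really produce $\tilde v\in M$ with $\tilde v\tilde v^*=1_M$, then in the finite von Neumann algebra $M$ the trace identity forces $\tilde v^*\tilde v=1_M$ as well, so $\tilde v$ is already a unitary and your third paragraph is vacuous; this signals that the true difficulty has been absorbed into the unjustified claim of the second paragraph. The crux --- clearly visible in the proof of Lemma \ref{hariawase2} --- is that the right support $v^*v$ lies in $\theta(eN_1e)'\cap fMf=(\theta(eN_1e)'\cap fM_1f)\mathbin{\bar{\otimes}} M_2$, and in order for the compression by $v^*v$ to be identified with a copy of $M_1^t$ sitting inside $M_1^t\mathbin{\bar{\otimes}} M_2^{1/t}$, one must first bring $v^*v$ to an elementary tensor $q_1\otimes q_2$. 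In the type III case this is free (there one gets $v^*v\sim z\otimes 1_{M_2}$ outright); in the $\rm II_1$ case the first leg $\theta(eN_1e)'\cap fM_1f$ is in general \emph{not} a factor, so equivalence of projections inside the relative commutant is governed by the central trace, and ``having the right trace'' does not suffice --- nor can right multiplication by a unitary of $M_2^{1/t}$, as you propose, repair a right support that fails to split, since such unitaries act trivially on the first leg. The missing step is to cut $v$ on the right by an elementary tensor projection $q_1\otimes q_2$ dominated by $v^*v$ (available because the relative commutant splits off the $\rm II_1$ factor $M_2$), after which $\mathrm{Ad}(v^*)$ is an isomorphism of the corner $vv^*Mvv^*$ onto $q_1 M_1^{(n)}q_1\mathbin{\bar{\otimes}} q_2M_2q_2$ carrying $(eN_1e)vv^*$ into the first leg; only then does one un-amplify over the factor $N_2\ni vv^*$, which rescales the $M_2$-leg alone and is where $t$ is determined. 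With that reordering your trace-balancing remarks become correct.
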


\begin{Lem}\label{hariawase2}
Let $M_i$ and $N_i$ be factors with $M_1\mathbin{\bar{\otimes}} M_2= N_1\mathbin{\bar{\otimes}} N_2(=:M)$. Assume that $M_2$ and $N_2$ are type $\rm III$ factors and $N_1\preceq_M M_1$. 
Then there is a partial isometry $u\in M$ such that $u^*u\in N_1$, $uu^*\in M_1$, and $uN_1u^*\subset uu^*M_1uu^*$. 
If moreover $M_1$ and $N_1$ are also type $\rm III$ factors, the element $u$ can be taken as a unitary element. 
\end{Lem}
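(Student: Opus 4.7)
The plan is to extract the partial isometry from the definition of $N_1 \preceq_M M_1$, observe that it already conjugates all of $N_1$ into $M_1$ (after compression by suitable corners), and then upgrade it to a unitary in the type III case via Murray--von Neumann equivalence.

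First, I unpack $N_1 \preceq_M M_1$ via Definition \ref{Popa embed def} to obtain non-zero projections $e \in N_1$ and $f \in M_1$, a unital normal $*$-homomorphism $\theta \colon e N_1 e \to f M_1 f$, and a partial isometry $v \in M$ satisfying $v v^* \le e$, $v^* v \le f$, and $v\theta(x) = xv$ for all $x \in e N_1 e$. Write $p_1 := v v^* \in N_1$ and $p_2 := v^* v \in M_1$. The key observation is that in fact $v^* N_1 v \subset p_2 M_1 p_2$, even though the intertwining relation is only given on the corner $eN_1e$. Indeed, for any $y \in N_1$, using $v = p_1 v = v p_2$ one writes $v^* y v = v^*(p_1 y p_1) v$; since $p_1 \le e$ gives $p_1 y p_1 \in e N_1 e$, the intertwining relation applies and yields
\begin{equation*}
v^* y v = v^* v \, \theta(p_1 y p_1) = p_2 \, \theta(p_1 y p_1) \in p_2 M_1 p_2.
\end{equation*}
Setting $u := v^*$ then gives $u^* u = p_1 \in N_1$, $u u^* = p_2 \in M_1$, and $u N_1 u^* \subset p_2 M_1 p_2 = u u^* M_1 u u^*$, establishing the first assertion.

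For the unitary refinement when $M_1$ and $N_1$ are both type III, I would use that in a type III factor every non-zero projection is Murray--von Neumann equivalent to the identity. I choose partial isometries $w_1 \in N_1$ with $w_1^* w_1 = p_1$ and $w_1 w_1^* = 1$, and $w_2 \in M_1$ with $w_2^* w_2 = p_2$ and $w_2 w_2^* = 1$, and set $U := w_2 v^* w_1^* = w_2 u w_1^*$. A direct computation using $w_1^* w_1 = vv^*$ and $w_2^* w_2 = v^* v$ gives $U^* U = U U^* = 1$, so $U$ is unitary. Moreover $w_1 \in N_1$ and $w_1 w_1^* = 1$ imply $w_1^* N_1 w_1 = p_1 N_1 p_1$, while $w_2 \in M_1$ and $w_2 w_2^* = 1$ imply $w_2 (p_2 M_1 p_2) w_2^* = M_1$; combining,
\begin{equation*}
U N_1 U^* = w_2 v^* (p_1 N_1 p_1) v w_2^* \subset w_2 (p_2 M_1 p_2) w_2^* = M_1,
\end{equation*}
as required.

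The only step that demands real attention is the derivation of $v^* N_1 v \subset M_1$ from the intertwining relation, which is a priori only stated on $e N_1 e$; the trick is that $v = p_1 v$ automatically compresses any $y \in N_1$ into the corner to which the intertwining applies. After that, the passage from partial isometry to unitary in the type III case is routine, since neither a trace constraint nor a Murray--von Neumann dimension obstruction can prevent enlarging the support and range projections to $1$.
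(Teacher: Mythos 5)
There is a genuine gap, and it sits exactly at the step you flag as the ``key observation.'' Definition \ref{Popa embed def} only gives $vv^*\leq e$ and $v^*v\leq f$; it does \emph{not} give $vv^*\in N_1$ or $v^*v\in M_1$, so your assertions $p_1:=vv^*\in N_1$ and $p_2:=v^*v\in M_1$ are unjustified. In fact, the intertwining relation forces $vv^*\in (eN_1e)'\cap eMe=eN_2$ and $v^*v\in \theta(eN_1e)'\cap fMf=(\theta(eN_1e)'\cap fM_1f)\mathbin{\bar{\otimes}} M_2$, and these relative commutants are typically much larger than $\mathbb{C}e$ and $\mathbb{C}f$ (e.g.\ take $N_i=M_i$, $\theta=\mathrm{id}$, $e=f=1$ and $v=1\otimes p$ for a projection $p\in M_2$: the intertwining relation holds but $vv^*\notin N_1$). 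While your computation does show $v^*yv=v^*v\,\theta(eye)\,v^*v$, the conclusion of the lemma requires the source and range projections of $u$ to lie in $N_1$ and $M_1$ respectively --- this membership is what gets used later (to split $uu^*Muu^*$ as a tensor product in the proof of Theorem \ref{A}) --- and your $u=v^*$ does not deliver it. A telltale sign is that your argument for the first assertion never uses the hypothesis that $M_2$ and $N_2$ are type $\rm III$; if the statement held without it, the $\rm II_1$ analogue (Lemma \ref{hariawase1}) would not need amplifications.

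The paper's proof uses the type $\rm III$ hypotheses precisely to repair this: since $vv^*$ lies in the type $\rm III$ factor $eN_2$, it is Murray--von Neumann equivalent there to $e$, so after multiplying $v$ on the left by a partial isometry from $eN_2$ (which commutes with $N_1$ and hence preserves the intertwining relation) one may assume $vv^*=e\in N_1$; similarly, $v^*v$ lies in the type $\rm III$ algebra $(\theta(eN_1e)'\cap fM_1f)\mathbin{\bar{\otimes}} M_2$ and is equivalent there to a projection of the form $z\otimes 1_{M_2}$ with $z\in M_1$, so after a second replacement one may assume $v^*v\in M_1$. Once the first assertion is established in this corrected form, your passage to a unitary via $w_1\in N_1$, $w_2\in M_1$ with full range projections is fine and matches the paper's argument; but as written it also depends on $p_1\in N_1$ and $p_2\in M_1$, so it inherits the same gap.
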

\begin{proof}
By assumption, there are $e\in N_1$, $f\in M_1$, $v\in M$, and $\theta\colon eN_1e\rightarrow fM_1f$. 
Since $vv^*\in N_1'\cap eMe=eN_2$ and $eN_2$ is a type III factor, we have $vv^*\sim e$ with a partial isometry $u\in eN_2$ such that $uu^*=e$ and $u^*u=vv^*$. Replacing $v$ with $u^*v$, we may assume that $e=vv^*$. 
Since $v^*v\in \theta(eAe)'\cap fMf= (\theta(eAe)'\cap fM_1f)\mathbin{\bar{\otimes}} M_2$ and $(\theta(eAe)'\cap fM_1f)\mathbin{\bar{\otimes}} M_2$ is of type III, $v^*v\sim z\otimes 1_{M_2}$ for some $z\in \mathcal{Z}(\theta(eAe)'\cap fM_1f)$ with a partial isometry $u\in (\theta(eAe)'\cap fM_1f)\mathbin{\bar{\otimes}} M_2$ such that $uu^*=z\otimes 1_{M_2}$ and $u^*u=v^*v$. 
Replacing $\theta$ and $v$ with $(z\otimes 1_{M_2})\theta(x)$ for $x\in eN_1e$ and $vu^*$, we may assume $f=v^*v$. 
So the first statement holds. 

We next assume $M_1$ and $N_1$ are of type $\rm III$. 
Then since $e\sim 1_{N_1}$ in $N_1$ and $f\sim 1_{M_1}$ in $M_1$, replacing $v$, we can assume $vv^*=v^*v=1$.
\end{proof}

\subsection{\bf Proof of Theorem \ref{A}}

We start with two simple lemmas. 

\begin{Lem}
Let $M$ be a von Neumann algebra and $N\subset M$ a subfactor with a faithful normal conditional expectation $E_N$. Let $\phi$ be a faithful normal state on $M$ with $\phi\circ E_N=\phi$. If $M=N\vee (N'\cap M)$, then we have an isomorphism $N\mathbin{\bar{\otimes}} (N'\cap M)\ni a\otimes b\mapsto ab\in M$.
\end{Lem}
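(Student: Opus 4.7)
The plan is to realize the multiplication map $\pi\colon N\otimes_{\rm alg}(N'\cap M)\to M$, $a\otimes b\mapsto ab$, as the spatial image of the left regular representation of $N\mathbin{\bar{\otimes}}(N'\cap M)$ under a suitable unitary identification of GNS spaces. The map $\pi$ is well-defined and is a $*$-homomorphism because $N$ and $N'\cap M$ commute; the only content of the lemma is that it extends to a normal isomorphism onto $M$.

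First I would construct the appropriate state on $N'\cap M$. Since $N$ is a factor, $E_N(N'\cap M)\subset \mathcal{Z}(N)=\mathbb{C}$, so there is a normal positive functional $\psi\colon N'\cap M\to \mathbb{C}$ determined by $\psi(b)1=E_N(b)$. The assumption $\phi\circ E_N=\phi$ forces $E_N$ to be faithful (if $E_N(x^*x)=0$ then $\phi(x^*x)=0$, hence $x=0$), so $\psi$ is a faithful normal state. Using $\phi\circ E_N=\phi$ and $ab=ba$ for $a\in N$, $b\in N'\cap M$, a direct calculation yields
\begin{equation*}
\phi(b^*a^*ab)=\phi(a^*a\,E_N(b^*b))=\phi|_N(a^*a)\,\psi(b^*b),
\end{equation*}
so $\phi\circ\pi=\phi|_N\otimes\psi$ on $N\otimes_{\rm alg}(N'\cap M)$.

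Next I would define $U\colon L^2(N\mathbin{\bar{\otimes}}(N'\cap M),\phi|_N\otimes\psi)\to L^2(M,\phi)$ on the dense subspace spanned by elementary tensors by $U((a\otimes b)\xi_{\phi|_N\otimes\psi})=ab\,\xi_\phi$. The displayed identity shows $U$ is isometric on this subspace, hence extends to an isometry. Its range contains $(N\vee(N'\cap M))\xi_\phi=M\xi_\phi$, which is dense in $L^2(M,\phi)$, so $U$ is unitary. Transporting the left regular representation through $U$ gives a normal $*$-representation $\tilde\pi\colon N\mathbin{\bar{\otimes}}(N'\cap M)\to \mathbb{B}(L^2(M,\phi))$ that agrees with $\pi$ on elementary tensors, and injectivity of $\tilde\pi$ is automatic since $\phi|_N\otimes\psi$ is faithful.

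Finally I would check that $\tilde\pi$ lands in $M$ and hits all of $M$. Normality of $\tilde\pi$ plus $\sigma$-weak density of $N\otimes_{\rm alg}(N'\cap M)$ in $N\mathbin{\bar{\otimes}}(N'\cap M)$ forces $\tilde\pi(N\mathbin{\bar{\otimes}}(N'\cap M))\subset M$, since the algebraic image lies in $M$ and $M$ is $\sigma$-weakly closed. The image is then a von Neumann subalgebra of $M$ containing $N$ and $N'\cap M$, hence equals $N\vee(N'\cap M)=M$ by hypothesis. The main obstacle is the extension step: the algebraic tensor product is only weakly dense, so one must use the spatial implementation via $U$ rather than a C$^*$-algebraic continuity argument; this is precisely why the compatibility $\phi\circ E_N=\phi$ is needed, as it makes the GNS vector of $\phi$ behave as the product of the GNS vectors for $\phi|_N$ and $\psi$.
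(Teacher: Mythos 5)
Your argument is correct. The paper gives no proof of its own here but simply refers to \cite[\textrm{Lemma XIV.2.5}]{Ta3}, whose proof is essentially the same GNS-splitting argument you give: since $N$ is a factor, $E_N$ restricts to a scalar-valued state $\psi$ on $N'\cap M$, the compatibility $\phi\circ E_N=\phi$ makes $\xi_\phi$ implement $\phi|_N\otimes\psi$, and the resulting unitary spatially identifies the standard representation of $N\mathbin{\bar{\otimes}}(N'\cap M)$ with the multiplication representation on $L^2(M,\phi)$.
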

\begin{proof}
See the proof of \cite[\textrm{Lemma XIV.2.5}]{Ta3}.
\end{proof}

\begin{Lem}\label{relative commutant in cores}
Let $M_i$ be von Neumann algebras and $\phi_i$ faithful normal states on $M_i$. 
If $M_1\cap (M_1)_{\phi_1}'=\mathbb{C}$, then $C_{\phi_1\otimes \phi_2}(M_1\mathbin{\bar{\otimes}} M_2)\cap (M_1)_{\phi_1}'=C_{\phi_2}(M_2)$. 
If moreover $M_1$ is a $\rm III_1$ factor, then $C_{\phi_1\otimes \phi_2}(M_1\mathbin{\bar{\otimes}} M_2)\cap C_{\phi_1}(M_1)'=(M_2)_{\phi_2}$.
\end{Lem}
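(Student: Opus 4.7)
The plan is to prove each inclusion of each equality separately, exploiting the factorization $\sigma_t^{\phi_1\otimes\phi_2}=\sigma_t^{\phi_1}\otimes\sigma_t^{\phi_2}$ so that inside $C_\phi(N)$ (with $N:=M_1\mathbin{\bar{\otimes}} M_2$ and $\phi:=\phi_1\otimes\phi_2$) there are canonical embedded copies of $C_{\phi_1}(M_1)$ and $C_{\phi_2}(M_2)$ sharing the same one-parameter unitary group $\lambda$. I would realize the core standardly in $N\mathbin{\bar{\otimes}}\mathbb{B}(L^2(\mathbb{R}))$, so that $\pi(a\otimes b)$ is the decomposable operator $t\mapsto\sigma_{-t}^{\phi_1}(a)\otimes\sigma_{-t}^{\phi_2}(b)$ and $\lambda_s$ is translation by $s$ on the last tensor factor. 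For the first statement the inclusion $\supset$ is immediate: $1\otimes M_2$ commutes with $(M_1)_{\phi_1}\otimes 1$ by the tensor structure, and $\lambda_s$ commutes with $(M_1)_{\phi_1}\otimes 1$ because $\sigma_s^\phi$ fixes $(M_1)_{\phi_1}\otimes 1$ pointwise. For $\subset$, given $y\in C_\phi(N)\cap(M_1)_{\phi_1}'$, I would apply normal slice maps $\mathrm{id}_{M_1}\otimes\omega$ with $\omega\in(M_2\mathbin{\bar{\otimes}}\mathbb{B}(L^2(\mathbb{R})))_*$ to the identity $[x\otimes 1\otimes 1,y]=0$ for $x\in(M_1)_{\phi_1}$: each slice lands in $M_1\cap(M_1)_{\phi_1}'=\mathbb{C}$, whence $y\in 1\otimes M_2\mathbin{\bar{\otimes}}\mathbb{B}(L^2(\mathbb{R}))$ by the slice-map characterization. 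Then a direct check on generators shows that $E_{\phi_1}\otimes\mathrm{id}$ restricts to a normal conditional expectation $C_\phi(N)\to C_{\phi_2}(M_2)$ (using $\phi_1\circ\sigma_t^{\phi_1}=\phi_1$, so that $\pi(a\otimes b)\mapsto\phi_1(a)\pi_2(b)$ and $\lambda_s\mapsto\lambda_s$); since this map fixes $y$, one concludes $y\in C_{\phi_2}(M_2)$.

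For the second statement $\supset$ is immediate: for $b\in(M_2)_{\phi_2}$ the element $\pi(1\otimes b)=1\otimes b$ commutes with $\pi(M_1\otimes 1)$ (tensor structure) and with $\lambda_s$ (since $\sigma_s^{\phi_2}(b)=b$). For $\subset$, take $y\in C_\phi(N)\cap C_{\phi_1}(M_1)'$. Since $(M_1)_{\phi_1}\subset C_{\phi_1}(M_1)$, the first statement gives $y\in C_{\phi_2}(M_2)$. Since $y$ commutes additionally with $\lambda(\mathbb{R})$, a standard crossed-product commutant computation inside $C_{\phi_2}(M_2)=M_2\rtimes_{\sigma^{\phi_2}}\mathbb{R}$ (fixed points of $\mathrm{Ad}\,\lambda$) forces $y\in(M_2)_{\phi_2}\mathbin{\bar{\otimes}} L(\mathbb{R})$. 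The remaining constraint is commutation with $\pi(M_1\otimes 1)$; slicing this in the $(M_2)_{\phi_2}$-variable by $\omega\in((M_2)_{\phi_2})_*$ reduces the problem to showing $L(\mathbb{R})\cap\pi_1(M_1)'=\mathbb{C}$, since each slice is an element of $L(\mathbb{R})$ commuting with $\pi_1(M_1)$. Once this is known, another slice-map application gives $y\in(M_2)_{\phi_2}\otimes 1=(M_2)_{\phi_2}$.

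The main obstacle is the identity $L(\mathbb{R})\cap\pi_1(M_1)'=\mathbb{C}$ inside $\mathbb{B}(L^2(M_1)\otimes L^2(\mathbb{R}))$, which is precisely where the $\mathrm{III}_1$ hypothesis is used. Any $z$ in this intersection automatically commutes with $\lambda(\mathbb{R})\subset L(\mathbb{R})$ (since $L(\mathbb{R})$ is abelian) and with $\pi_1(M_1)$ by assumption, hence with all of $C_{\phi_1}(M_1)=\pi_1(M_1)\vee\lambda(\mathbb{R})$; since $z\in L(\mathbb{R})\subset C_{\phi_1}(M_1)$, this forces $z\in\mathcal{Z}(C_{\phi_1}(M_1))$. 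Because $M_1$ is $\mathrm{III}_1$, the continuous core $C_{\phi_1}(M_1)$ is a $\mathrm{II}_\infty$ factor, so $\mathcal{Z}(C_{\phi_1}(M_1))=\mathbb{C}\cdot 1$, and $z\in\mathbb{C}$.
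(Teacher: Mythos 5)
Your proposal is correct and follows essentially the same route as the paper's proof: reduce to $\mathbb{C}\mathbin{\bar{\otimes}} M_2\mathbin{\bar{\otimes}}\mathbb{B}(L^2(\mathbb{R}))$ via the tensor commutation theorem (which you prove by slice maps), land in $C_{\phi_2}(M_2)$ via the $\phi_1$-preserving expectation, use the relative commutant of $L\mathbb{R}$ in the crossed product to get $(M_2)_{\phi_2}\mathbin{\bar{\otimes}} L\mathbb{R}$, and finally invoke $\mathcal{Z}(C_{\phi_1}(M_1))=\mathbb{C}$ for the $\rm III_1$ core. Your write-up simply makes explicit the slice-map and conditional-expectation steps that the paper leaves implicit (and for the $L\mathbb{R}$-commutant step the paper cites \cite[Proposition 2.4]{HR10}).
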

\begin{proof}
Write $N:=C_{\phi_1\otimes \phi_2}(M_1\mathbin{\bar{\otimes}} M_2)$. 
Since $N$ is contained in $M_1\mathbin{\bar{\otimes}} M_2 \mathbin{\bar{\otimes}} \mathbb{B}(L^2(\mathbb{R)})$, 
$N\cap (M_1)_{\phi_1}'$ is contained in $M_1\mathbin{\bar{\otimes}} M_2 \mathbin{\bar{\otimes}} \mathbb{B}(L^2(\mathbb{R)})\cap ((M_1)_{\phi_1}\mathbin{\bar{\otimes}} \mathbb{C}\mathbin{\bar{\otimes}} \mathbb{C})'=\mathbb{C}\mathbin{\bar{\otimes}} M_2 \mathbin{\bar{\otimes}} \mathbb{B}(L^2(\mathbb{R)})$. 
Hence it is contained in $\mathbb{C}\mathbin{\bar{\otimes}} C_{\phi_2}(M_2)$ and the first statement holds. 
We then have 
\begin{equation*}
N\cap C_{\phi_1}(M_1)' \subset \mathbb{C}\mathbin{\bar{\otimes}} C_{\phi_2}(M_2)\cap (\mathbb{C}\mathbin{\bar{\otimes}}\mathbb{C}\mathbin{\bar{\otimes}} L\mathbb{R})' =\mathbb{C}\mathbin{\bar{\otimes}} (M_2)_{\phi_2}\mathbin{\bar{\otimes}} L\mathbb{R}
\end{equation*}
(use \cite[Proposition 2.4]{HR10} if necessary). 
If $M_1$ is a $\rm III_1$ factor, exchanging the first and the second tensor components, we have
\begin{equation*} 
((M_2)_{\phi_2}\mathbin{\bar{\otimes}} \mathbb{C}\mathbin{\bar{\otimes}} L\mathbb{R})\cap (\mathbb{C}\mathbin{\bar{\otimes}} C_{\phi_1}(M_1)')\subset (M_2)_{\phi_2} \mathbin{\bar{\otimes}} (C_{\phi_1}(M_1) \cap C_{\phi_1}(M_1)')=(M_2)_{\phi_2} \mathbin{\bar{\otimes}} \mathbb{C}.
\end{equation*}
\end{proof}

By the first lemma, we identify $M$ as $N\mathbin{\bar{\otimes}} (N'\cap M)$ if $N\subset M$ satisfy the assumption of the lemma. 

Let $\mathbb{G}_i$ and $N_j$ be as in the first statement in Theorem \ref{A}. 
For simplicity, we use the notation $M_i:=L^\infty(\mathbb{G}_i)$, $M:=M_1\mathbin{\bar{\otimes}}\cdots\mathbin{\bar{\otimes}} M_m$, $N:=N_1\mathbin{\bar{\otimes}} \cdots\mathbin{\bar{\otimes}} N_n$, $M_X:=\mathbin{\bar{\otimes}}_{i\in X}M_i$ ($M_\emptyset:=\mathbb{C}$) and $N_Y:=\mathbin{\bar{\otimes}}_{j\in Y}N_j$ ($N_\emptyset :=\mathbb{C}$) for any subsets $X\subset \{1,\ldots,m\}$ and $Y\subset \{1,\ldots,n\}$. 
Since the inclusion $N\subset M$ is with expectation, we have an inclusion $C(N)\subset C(M)$ for some continuous cores. 
Since each $N_i$ has an almost periodic state, there exist almost periodic weights $\phi_i$ such that $(N_i)_{\phi_i}$ is non-amenable and $\phi_i$ is semifinite on $(N_i)_{\phi_i}$. 
Write $\phi:=\phi_1\otimes\cdots \otimes\phi_n$. 
We fix an inclusion 
\begin{eqnarray*}
C_\phi(N) \simeq C(N)\subset C(M)\simeq C_h(M),
\end{eqnarray*}
which preserves canonical traces. We denote this trace by Tr. 
Since the inclusion does not preserve $L\mathbb{R}$, we write $L\mathbb{R}$ inside $C_\phi(N)$ and $C_h(M)$ as $L\mathbb{R}_N$ and $L\mathbb{R}_M$ respectively. 
Identifying $M_X=M_X\mathbin{\bar{\otimes}} 1_{M_{X^c}}$ and $N_Y=N_Y\mathbin{\bar{\otimes}} 1_{N_{Y^c}}$, we often write $C_h(M_X)$, $C_\phi(N_Y)$, $(M_X)_h$, and $(N_Y)_\phi$ (instead of $C_{h_X}(M_X)$ for example, where $h_X:=\otimes_{i\in X}h_i$). 

\begin{Lem}\label{reduction lemma}
For any subset $Y\subset \{1,\ldots,n\}$ with $|Y^c|\leq m$ and any $\mathrm{Tr}$-finite projection $p\in L\mathbb{R}_N$, there are $X\subset \{1,\ldots,m\}$ and  $\phi_i$-finite projections $p_i\in (N_i)_{\phi_i}$ such that $|X^c|=|Y^c|$ and $qC_\phi(N_Y)q\preceq_{C(M)} C_h(M_X)$, where $q:=p_1\otimes \cdots \otimes p_n \otimes p$.
\end{Lem}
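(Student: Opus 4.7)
The plan is to proceed by induction on $k := |Y^c|$, using Proposition~\ref{location core} to strip off one quantum group factor per step. The base case $k = 0$ is trivial: take $X := \{1, \ldots, m\}$ and any $\phi_i$-finite projections $p_i \in (N_i)_{\phi_i}$; then the inclusion $qC_\phi(N)q \subset C_h(M) = C_h(M_X)$ itself witnesses $qC_\phi(N)q \preceq_{C_h(M)} C_h(M_X)$.

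For the inductive step, fix $Y$ with $|Y^c| = k \ge 1$, pick $j_0 \in Y^c$, and set $\tilde Y := Y \cup \{j_0\}$, so that $|\tilde Y^c| = k-1 < m$. The inductive hypothesis applied to $\tilde Y$ with the given $p$ yields $\tilde X$ with $|\tilde X^c| = k-1$, projections $\tilde p_i \in (N_i)_{\phi_i}$ (which we may enlarge via Corollary~\ref{Popa embed3} so that $\tilde p_i (N_i)_{\phi_i} \tilde p_i$ is non-amenable for each $i$, exploiting semifiniteness of $\phi_i$ on the non-amenable $(N_i)_{\phi_i}$), and an embedding $\tilde q C_\phi(N_{\tilde Y}) \tilde q \preceq_{C_h(M)} C_h(M_{\tilde X})$. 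Unpack this embedding by Definition~\ref{Popa embed def} together with Theorem~\ref{popa embed2} to obtain intertwining data $(e, f, \tilde\theta, v)$. Since $\tilde q C_\phi(N_{\tilde Y}) \tilde q$ is of type $\rm II_1$ and contains the non-amenable type II subalgebra $\tilde q (N_{j_0})_{\phi_{j_0}} \tilde q$ (which, up to tensoring by scalar projections at other positions, coincides with $\tilde p_{j_0}(N_{j_0})_{\phi_{j_0}}\tilde p_{j_0}$), Corollary~\ref{Popa embed4} lets us choose $e$ inside this centralizer subalgebra; after further cutting $e$ by a central projection in the kernel of $\tilde\theta$, arrange that $\tilde\theta$ is injective on $e \tilde q (N_{j_0})_{\phi_{j_0}} \tilde q e$.

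Now $(N_{j_0})_{\phi_{j_0}}$ commutes with $C_\phi(N_Y)$ inside $C_\phi(N_{\tilde Y})$: its elements commute with $N_Y$ (disjoint tensor positions) and with $L\mathbb{R}_N$ (defining centralizer property). An elementary computation using that $e \in \tilde q(N_{j_0})_{\phi_{j_0}}\tilde q$ commutes with $\tilde q C_\phi(N_Y) \tilde q$ shows that $e \tilde q (N_{j_0})_{\phi_{j_0}} \tilde q e$ commutes with $e \tilde q C_\phi(N_Y) \tilde q e$, whence $\tilde\theta(e \tilde q (N_{j_0})_{\phi_{j_0}} \tilde q e)$ is a non-amenable subalgebra of the relative commutant of $\tilde\theta(e\tilde q C_\phi(N_Y)\tilde q e)$ in $f C_h(M_{\tilde X}) f$. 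Proposition~\ref{location core}, applied inside $C_h(M_{\tilde X})$ (valid since $\hat{\mathbb{G}}_{\tilde X}$ is a direct product of quantum groups in $\mathcal{C}$), therefore rules out alternative (i) and yields some $i_k \in \tilde X$ with $\tilde\theta(e\tilde qC_\phi(N_Y)\tilde q e) \preceq_{C_h(M_{\tilde X})} C_h(M_{\tilde X \setminus \{i_k\}})$. Set $X := \tilde X \setminus \{i_k\}$, so $|X^c| = k$.

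Finally, concatenate $v$ with the intertwining partial isometry produced by this second application of Proposition~\ref{location core} (using $C_h(M_{\tilde X}) \subset C_h(M)$ with trace-preserving conditional expectation, and subalgebra monotonicity of $\preceq$ from the corollary following Definition~\ref{Popa embed def}) to obtain a Popa intertwiner in $C_h(M)$ witnessing $\tilde q C_\phi(N_Y) \tilde q \preceq_{C_h(M)} C_h(M_X)$. Setting $p_i := \tilde p_i$ completes the induction. The main technical obstacle I expect is the coordinated choice of $e$ in the unpacking step: one must simultaneously use the selection freedom of Corollary~\ref{Popa embed4} to localize $e$ inside $\tilde q(N_{j_0})_{\phi_{j_0}}\tilde q$ and then reduce $e$ to the support of $\tilde\theta$ on that corner, so that both non-amenability of the image and a bona fide Popa embedding for the subalgebra $\tilde qC_\phi(N_Y)\tilde q$ are retained. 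A secondary bookkeeping issue is tracking finite-trace projections across the nested ambient algebras $C_h(M_{\tilde X}) \subset C_h(M)$ to remain within the semifinite intertwining framework of Theorem~\ref{popa embed2}.
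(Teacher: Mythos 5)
Your skeleton is the same as the paper's: strip off one tensor component of $N$ at a time, use the non-amenability of a centralizer corner sitting in the relative commutant to exclude alternative (i) of Proposition~\ref{location core}, and compose the resulting intertwiners. But the step you describe as ``concatenate $v$ with the intertwining partial isometry produced by this second application'' is exactly where the real work lies, and as written it can fail. If you witness the first embedding by data $(e,f,\tilde\theta,v)$ in the sense of Definition~\ref{Popa embed def} and the second by $(e',f',\theta',v')$ inside $C_h(M_{\tilde X})$, the composed element $vv'$ has no reason to be non-zero: $v^*v$ lies in $\tilde\theta(e\tilde qC_\phi(N_{\tilde Y})\tilde qe)'\cap fC_h(M_{\tilde X})f$ and can be orthogonal to $e'$ when the algebras involved are not factors (which they need not be in the first half of Theorem~\ref{A}). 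The paper avoids this by invoking Corollary~\ref{Popa embed4} in its basic-construction form: the first intertwiner is a partial isometry $V_1\in\langle C_h(M),C_h(M_{X_1})\rangle$ with $V_1V_1^*=f_1e_{X_1}$, so that $V_1e_1C_\phi(\cdots)e_1V_1^*$ is an honest spatial subalgebra of $\mathbb{C}e_{M_1}\mathbin{\bar{\otimes}} f_1C_h(M_{X_1})f_1$; the next partial isometry $V_2$ is then produced with $V_2^*V_2\leq V_1V_1^*$ by construction, so $V_2V_1(V_2V_1)^*=V_2V_2^*\neq0$ and the composition is automatically a non-zero intertwiner realizing condition (v) of Corollary~\ref{Popa embed4} for $C_h(M_{X_2})$. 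You need this (or an equivalent device) to close the inductive step.

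A secondary issue: you arrange the $\tilde p_i$ so that $\tilde p_i(N_i)_{\phi_i}\tilde p_i$ is merely \emph{non-amenable}, but after cutting by the projection $e$ supplied in the unpacking step, the corner $e(N_{j_0})_{\phi_{j_0}}e$ could still be amenable if $(N_{j_0})_{\phi_{j_0}}$ has an amenable direct summand, and then alternative (i) of Proposition~\ref{location core} is not excluded. The paper first cuts by central projections $z_i\in\mathcal{Z}((N_i)_{\phi_i})$ so that $z_i(N_i)_{\phi_i}$ has \emph{no amenable summand}; this guarantees both that every corner stays non-amenable and that the algebra is of type II, as required to invoke the ``$e$ taken from any type II subalgebra'' clause of Corollary~\ref{Popa embed4}. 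With these two repairs your induction coincides with the paper's iteration.
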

\begin{proof}
Let $z_i\in \mathcal{Z}((N_i)_{\phi_i})$ be projections such that $z_i(N_i)_{\phi_i}$ has no amenable summand. 
Since $(N_i)_{\phi_i}$ are semifinite, there are $\phi_i$-finite projections $p_i\in(N_i)_{\phi_i}$ such that $p_iz_i\neq 0$. Put $\tilde{p}_i:=p_iz_i$, $\tilde{p}:=\tilde{p}_1\otimes \cdots\otimes\tilde{p}_n$,  $\tilde{N}_i:=\tilde{p}_iN_i\tilde{p}_i$, 
 and $\tilde{N}:=\tilde{N}_1\mathbin{\bar{\otimes}}\cdots\mathbin{\bar{\otimes}} \tilde{N}_n$. 
For a $\mathrm{Tr}$-finite projection $p\in L\mathbb{R}$, we have $pC_\phi(\tilde{N})p= p\tilde{p}C_\phi(N)\tilde{p}p\subset C_h(M)$. 
We apply Proposition \ref{location core} to $pC_\phi(\tilde{N}_2\mathbin{\bar{\otimes}} \cdots\mathbin{\bar{\otimes}} \tilde{N}_{n})p\subset C_h(M)$ 
and get $pC_\phi(\tilde{N}_2\mathbin{\bar{\otimes}} \cdots\mathbin{\bar{\otimes}} \tilde{N}_{n})p\preceq_{C_h(M)} C_h(M_{X_1})$, where $X_1=\{1,\ldots,m\}\setminus i$ for some $i$, by non-amenability of $(\tilde{N}_1)_{\phi_1}$. For simplicity we assume $i=1$. 
Note that $(\tilde{N}_n)_\phi=\tilde{p}_n(N_n)_\phi \tilde{p}_n$ has no amenable summand and hence is of type II. 
By Corollary \ref{Popa embed4}, there exist nonzero projections $e_1\in p(\tilde{N}_n)_\phi p$, $f_1\in C_h(M_{X_1})$, and a partial isometry $V_1\in \langle C_h(M),C_h(M_{X_1}) \rangle$ 
such that $\mathrm{Tr}(f_1)<\infty$, $V_1^*V_1\in e_1C_\phi(\tilde{N}_2\mathbin{\bar{\otimes}} \cdots\mathbin{\bar{\otimes}} \tilde{N}_{n})'$, and $V_1V_1^*= f_1e_{X_1}$, where $e_{X_1}$ is the Jones projection for $C_h(M_{X_1})\subset C_h(M)$. 
We have 
\begin{eqnarray*}
V_1 e_1C_\phi(\tilde{N}_2\mathbin{\bar{\otimes}} \cdots\mathbin{\bar{\otimes}} \tilde{N}_{n})e_1 V_1^*
&\subset& f_1e_{X_1}\langle C_h(M),C_h(M_{X_1}) \rangle f_1e_{X_1}\\
&=& \mathbb{C}e_{M_1}\mathbin{\bar{\otimes}} f_1C_h(M_{X_1})f_1, 
\end{eqnarray*}
where $e_{M_1}$ is the projection from $L^2(M_1)$ onto $\mathbb{C}\hat{1}_{M_1}$. 
We again apply Proposition \ref{location core} to the inclusion and get $V_1 e_1C_\phi(\tilde{N}_3\mathbin{\bar{\otimes}} \cdots\mathbin{\bar{\otimes}} \tilde{N}_{n})e_1 V_1^*\preceq_{\mathbb{C}e_{M_1}\mathbin{\bar{\otimes}} C_h(M_{X_1})}\mathbb{C}e_{M_1}\mathbin{\bar{\otimes}} C_h(M_{X_2})$ for some $X_2$ (since $(\tilde{N}_2)_\phi$ has no amenable summand). We assume $X_2=X_1\setminus \{2\}$. 
Then there exist nonzero projections $e_2\in e_1(\tilde{N}_{n})_\phi e_1$, $f_2\in  C_h(M_{X_2})$, and a partial isometry $V_2\in \mathbb{C}e_{M_1}\mathbin{\bar{\otimes}} \langle C_h(M_{X_1}),C_h(M_{X_2}) \rangle$ 
such that $\mathrm{Tr}(f_2)<\infty$, $V_2^*V_2\in V_1e_2V_1^* (V_1C_\phi(\tilde{N}_3\mathbin{\bar{\otimes}} \cdots\mathbin{\bar{\otimes}} \tilde{N}_{n})V_1^*)'$, $V_2V_2^*= \mathbb{C}e_{M_1}\mathbin{\bar{\otimes}} f_2e_{X_2}^{X_1}$, where $e_{X_2}^{X_1}$ is the Jones projection for $C_h(M_{X_2})\subset C_h(M_{X_1})$. 
We get 
\begin{eqnarray*}
V_2V_1 e_2C_\phi(\tilde{N}_3\mathbin{\bar{\otimes}} \cdots\mathbin{\bar{\otimes}} \tilde{N}_{n})e_2 V_1^*V_2^*
\subset \mathbb{C}e_{M_1}\mathbin{\bar{\otimes}} \mathbb{C}e_{M_2} \mathbin{\bar{\otimes}} f_2C_h(M_{X_2})f_2, 
\end{eqnarray*}
where $e_{M_2}$ is the projection from $L^2(M_2)$ onto $\mathbb{C}\hat{1}_{M_2}$. 
Since $V_2V_1\in \langle C_h(M),C_h(M_{X_2}) \rangle$, $(V_2V_1)^*V_2V_1\in e_2 C_\phi(\tilde{N}_3\mathbin{\bar{\otimes}} \cdots\mathbin{\bar{\otimes}} \tilde{N}_{n})'$, and $V_2V_1(V_2V_1)^*=V_2V_2^*=e_{X_2}f_2$, where $e_{X_2}$ is the Jones projection for $C_h(M)\subset C_h(M_{X_1})$, we have $pC_\phi(\tilde{N}_3\mathbin{\bar{\otimes}} \cdots\mathbin{\bar{\otimes}} \tilde{N}_{n})p\preceq_{C_h(M)} C_h(M_{X_2})$. 
Hence $qC_\phi({N}_3\mathbin{\bar{\otimes}} \cdots\mathbin{\bar{\otimes}} {N}_{n})q\preceq_{C_h(M)} C_h(M_{X_2})$. 
Repeating this operation, we can prove the lemma.
\end{proof}

\begin{Rem}\upshape\label{reduction rem}
In the proof of the lemma, we in fact proved the following statement: 
if $qC_\phi(N_Y)q\preceq_{C(M)} C_h(M_X)$ (resp.\ $q(N_Y)_\phi q\preceq_{C(M)} C_h(M_X)$) for some $X,Y$ and $q=p_1\otimes \cdots \otimes p_n \otimes p$, where $p\in L\mathbb{R}_N$ is a Tr-finite projection and $p_i\in (N_i)_\phi$ are $\phi$-finite projections with $p_i(N_i)_\phi p_i$ non-amenable, then for any $i\in Y$ there is some $j\in X$ such that $qC_\phi(N_{Y\setminus\{i\}})q\preceq_{C(M)} C_h(M_{X\setminus\{j\}})$ (resp.\ $q(N_{Y\setminus\{i\}})_\phi q\preceq_{C(M)} C_h(M_{X\setminus\{j\}})$).
\end{Rem}

\begin{Cor}\label{first half A}
If $q(N_Y)_\phi q\preceq_{C(M)} C_h(M_X)$ for some $X,Y$, where $q$ is as in the previous remark, then $|Y|\leq|X|$.
\end{Cor}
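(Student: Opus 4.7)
The plan is a direct contradiction argument that iterates the reduction from Remark~\ref{reduction rem} until the right-hand side is exhausted. Suppose for contradiction that $|Y|>|X|$. Starting from the intertwining $q(N_Y)_\phi q\preceq_{C(M)} C_h(M_X)$, I apply the remark $|X|$ times: at each step I fix any index $i$ in the current $Y$-side and extract an index $j$ in the current $X$-side such that both may be dropped while preserving the intertwining. After $|X|$ iterations the right-hand side becomes $C_h(M_\emptyset)=L\mathbb{R}_M$, and there remains a subset $Y'\subset Y$ with $|Y'|=|Y|-|X|\geq 1$ such that
\[
q(N_{Y'})_\phi q\preceq_{C(M)} L\mathbb{R}_M.
\]

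Now I would contradict this by an amenability argument. By Theorem~\ref{popa embed2} together with Definition~\ref{Popa embed def}, the displayed intertwining yields a non-zero projection $e$ in $A:=q(N_{Y'})_\phi q$, a $\mathrm{Tr}$-finite projection $q_1\in L\mathbb{R}_M$, and a unital normal $*$-homomorphism $\theta$ from $eAe$ into a non-zero corner $fq_1L\mathbb{R}_M q_1f$, which is abelian. Factoring out $\ker\theta$, which corresponds to a central projection of $eAe$, yields a non-zero direct summand of $eAe$ that embeds unitally into an abelian algebra, so in particular is amenable. On the other hand, for any $i\in Y'$ the tensor factor $p_i(N_i)_{\phi_i}p_i$ of $A$ has no amenable direct summand by the choice of the $p_i$ recorded in Remark~\ref{reduction rem} (cf.\ the passage to non-amenable central summands in the proof of Lemma~\ref{reduction lemma}). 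Hence $A$ itself has no amenable direct summand, and the same is true of every corner $eAe$ (since $eAe$ and $Az_A(e)$ are stably isomorphic and share the same center). This contradicts the previous sentence.

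The only real subtlety is the final non-amenability step, namely that a non-zero direct summand of a corner of a tensor product one of whose factors has no amenable direct summand is itself non-amenable. This is standard, following from the stable isomorphism $eAe\sim Az_A(e)$ together with the fact that amenability is invariant under stable isomorphism and passes to direct summands. Everything else in the argument is formal bookkeeping on top of Remark~\ref{reduction rem} and the intertwining definitions.
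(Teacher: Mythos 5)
Your proposal is correct and follows the paper's own (very brief) argument: iterate Remark \ref{reduction rem} until the target collapses to $L\mathbb{R}$ and then derive a contradiction from the fact that $q(N_{Y'})_\phi q$ has no amenable direct summand. The extra detail you supply for the final step (extracting a unital normal $*$-homomorphism from a corner into the abelian algebra $L\mathbb{R}$, passing to the central summand on which it is injective, and using stability of non-amenability under corners with full central support) is exactly what the paper leaves implicit, so no further comment is needed.
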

\begin{proof}
Suppose $|Y|>|X|$. By the remark above, we get $q(N_{\tilde{Y}})_\phi q\preceq_{C_h(M)} L\mathbb{R}$ for some $\tilde{Y}\neq\emptyset$. This is a contradiction since $q(N_{\tilde{Y}})_\phi q$ has no amenable summand. 
\end{proof}

It is now easy to prove the first half of Theorem \ref{A}, since $qC_\phi(N_Y)q \preceq_{C(M)} C_h(M_X)$ implies $q(N_Y)_\phi q\preceq_{C(M)} C_h(M_X)$.

We next assume $M_i=L^\infty(\mathbb{G}_i)$ and $N_j$ satisfy conditions in the second statement in Theorem \ref{A}. 
In the case, $M$ is full from Lemmas \ref{fullness of tensor factors} and \ref{C is full}, and hence each $N_i$ is also full. 
So $N_i=(N_i)_{\phi_i}$ is a non-amenable $\rm II_1$ factor if $N_i$ is a $\rm II_1$ factor. 
If $N_i$ is a $\rm III_1$ factor, as we mentioned in Subsection \ref{discrete decomposition}, $\phi_i$ is $\mathrm{Sd}(N_i)$-almost periodic and the discrete core $D_{\phi_i}(N_i)$ is a $\rm II_\infty$ factor and is isomorphic to $(N_i)_{\phi_i}\mathbin{\bar{\otimes}} \mathbb{B}(H)$ for some $H$. Hence $(N_i)_{\phi_i}$ is a non-amenable $\rm II_1$ factor. 
Thus we can apply Lemma \ref{reduction lemma} and get that for any Tr-finite projection $p\in L\mathbb{R}_N$ and any $i$ there is some $j$ such that $pC_\phi(N_i)p\preceq_{C_h(M)}C_h(M_j)$. 
We first prove this correspondence is one to one.
For simplicity, from now on we assume that there is at least one $\rm III_1$ factor among $M_i$. 

\begin{Lem}
Let $X,Y$ be subsets in $\{1,\ldots,n\}$ and $p\in L\mathbb{R}_N$ a $\mathrm{Tr}$-finite projection.  
\begin{itemize}
	\item[$\rm (i)$] If $N_Y$ is a $\rm II_1$ factor, then $(N_Y)_\phi p\preceq_{C_h(M)}C_h(M_X)$ implies $(N_Y)_\phi p\subset_{\rm approx}C_h(M_X)$. 
	\item[$\rm (ii)$] If $N_Y$ is a $\rm III_1$ factor, then $pC_\phi(N_Y)p\preceq_{C_h(M)}C_h(M_X)$ implies $pC_\phi(N_Y)p\subset_{\rm approx}C_h(M_X)$. 
\end{itemize}
\end{Lem}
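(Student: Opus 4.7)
The plan is to invoke Lemma \ref{2.9} with $\mathcal{S}:=\{\hat{\mathbb{G}}_X\}$, where $\hat{\mathbb{G}}_X$ denotes the product of the factors indexed by $X$, so that $M_{\hat{\mathbb{G}}_X}=C_h(M_X)$. Writing $B:=(N_Y)_\phi p$ in case $\rm (i)$ and $B:=pC_\phi(N_Y)p$ in case $\rm (ii)$, the entire task reduces to verifying the hypothesis of Lemma \ref{2.9}, namely that the relative commutant $B'\cap pC_h(M)p$ is a factor; once this is done, the intertwining $B\preceq_{C_h(M)}C_h(M_X)$ automatically upgrades to $(B)_1\subset_{\rm approx} C_h(M_X)$.

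In case $\rm (i)$, the assumption that $N_Y$ is a $\rm II_1$ factor forces every $N_i$ with $i\in Y$ to be $\rm II_1$, hence $(N_Y)_\phi=N_Y$ and $\sigma^\phi|_{N_Y}$ is trivial. In particular $p\in L\mathbb{R}_N$ commutes with $N_Y$ inside $C_\phi(N)$, so $B=N_Yp$ and
\begin{equation*}
B'\cap pC_h(M)p = p\bigl(N_Y'\cap C_\phi(N)\bigr)p = pC_{\phi_{Y^c}}(N_{Y^c})p
\end{equation*}
by Lemma \ref{relative commutant in cores}. Since some $M_i$ is of type $\rm III_1$ and $N=M$, the algebra $N$ is $\rm III_1$, forcing at least one $N_j$ to be $\rm III_1$; as $N_Y$ is $\rm II_1$, this component lies in $N_{Y^c}$. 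Splitting $N_{Y^c}=N^{\rm II}\mathbin{\bar{\otimes}} N^{\rm III}$ according to type, the triviality of $\sigma^\phi$ on $N^{\rm II}$ yields $C_{\phi_{Y^c}}(N_{Y^c})=N^{\rm II}\mathbin{\bar{\otimes}} C(N^{\rm III})$, a tensor product of a $\rm II_1$ factor and a $\rm II_\infty$ factor, hence a $\rm II_\infty$ factor. So $pC_{\phi_{Y^c}}(N_{Y^c})p$ is a $\rm II_1$ factor.

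In case $\rm (ii)$, the canonical modular-equivariant embedding $C_{\phi_Y}(N_Y)\hookrightarrow C_\phi(N)$ (induced by $\mathrm{id}\otimes\phi_{Y^c}$) identifies $L\mathbb{R}_{N_Y}$ with $L\mathbb{R}_N$, because $\sigma^\phi|_{N_Y}=\sigma^{\phi_Y}$. Therefore $p$ actually sits inside the $\rm II_\infty$ factor $C_\phi(N_Y)=C_{\phi_Y}(N_Y)$ with central support $1$, so $B=pC_\phi(N_Y)p$ is a $\rm II_1$ factor. The second part of Lemma \ref{relative commutant in cores} gives $C_\phi(N_Y)'\cap C_\phi(N)=(N_{Y^c})_{\phi_{Y^c}}$, and since $L\mathbb{R}_N$ commutes with the centralizer $(N_{Y^c})_{\phi_{Y^c}}$, we obtain $B'\cap pC_h(M)p = p(N_{Y^c})_{\phi_{Y^c}}\simeq (N_{Y^c})_{\phi_{Y^c}}$. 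Factorness of $(N_{Y^c})_{\phi_{Y^c}}$ follows from the tensor-product computation in the proof of Lemma \ref{full tensor}, which only uses the standing hypothesis $(N_i)_{\phi_i}'\cap N_i=\mathbb{C}$.

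The main obstacle is the clean identification of the relative commutant in case $\rm (ii)$: one must verify both that $p\in L\mathbb{R}_N$ really lies in the smaller algebra $C_\phi(N_Y)$ under the identification $C_\phi(N)=C_h(M)$, and that the resulting relative commutant collapses to a corner of the centralizer $(N_{Y^c})_{\phi_{Y^c}}$ whose factor property is controlled by the assumption $(N_i)_{\phi_i}'\cap N_i=\mathbb{C}$. Once these identifications are in place, the factor property of $B'\cap pC_h(M)p$ is immediate in both cases and Lemma \ref{2.9} delivers the conclusion without any further use of the internal structure of the intertwiner.
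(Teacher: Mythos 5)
Your proposal is correct and is essentially the paper's own proof: the paper's argument for this lemma is literally ``Use Lemmas \ref{full tensor}, \ref{2.9}, and \ref{relative commutant in cores},'' i.e.\ apply Lemma \ref{2.9} with $\mathcal{S}=\{\hat{\mathbb{G}}_X\}$ after checking that the relative commutant of $(N_Y)_\phi p$ (resp.\ $pC_\phi(N_Y)p$) in $pC_h(M)p$ is a factor via Lemma \ref{relative commutant in cores} and the factoriality of centralizers from Lemma \ref{full tensor}. Your verification of the factor property in both cases, including the use of the standing assumption that some $M_i$ is of type $\rm III_1$ in case (i), fills in exactly the details the paper leaves implicit.
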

\begin{proof}
Use Lemmas \ref{full tensor}, \ref{2.9}, and \ref{relative commutant in cores}.
\end{proof}

\begin{Lem}\label{uniqueness1}
Let $X_1,X_2$, and $Y$ be subsets in $\{1,\ldots,n\}$ with $|X_1|=|X_2|=|Y|$ and let $p_1,p_2\in L\mathbb{R}_N$ be $\mathrm{Tr}$-finite projections. 
Assume that $p_i(N_Y)_\phi p_i\preceq_{C_h(M)}C_h(M_{X_i})$ for $i=1,2$ when $N_Y$ is a $\rm II_1$ factor, or $p_iC_\phi(N_{Y})p_i\preceq_{C_h(M)}C_h(M_{X_i})$ for $i=1,2$ when $N_Y$ is a $\rm III_1$ factor. 
Then $X_1=X_2$. 
\end{Lem}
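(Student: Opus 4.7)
The plan is to assume toward a contradiction that $X_1 \neq X_2$, so that $|X_1 \cap X_2| < |Y|$, and to combine the two embedding hypotheses into a single embedding of $(N_Y)_\phi$ (after an appropriate cut) into $C_h(M_{X_1\cap X_2})$, contradicting Corollary \ref{first half A}.

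First I would unify the two cuts. Set $p := p_1 \vee p_2 \in L\mathbb{R}_N$, which is Tr-finite. In the III$_1$ case, each $p_i$ is a projection in the II$_1$ factor $pC_\phi(N_Y)p$ with central support $p$, so Corollary \ref{Popa embed3} promotes $p_i C_\phi(N_Y)p_i \preceq_{C_h(M)} C_h(M_{X_i})$ to $pC_\phi(N_Y)p \preceq_{C_h(M)} C_h(M_{X_i})$. In the II$_1$ case, since $L\mathbb{R}_N$ commutes with $N_Y=(N_Y)_\phi$, each $p_i$ lies in $(N_Y p)' \cap C_h(M)$; a direct application of Theorem \ref{popa embed2}(ii) gives the lift $N_Y p \preceq_{C_h(M)} C_h(M_{X_i})$ (any sequence $(w_n)\subset\mathcal{U}(N_Y p)$ defeating the embedding yields $(w_n p_i)\subset \mathcal{U}(N_Y p_i)$ doing the same).

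Next I would convert to approximate containments and combine. By Lemma \ref{relative commutant in cores}, the relative commutant of $pC_\phi(N_Y)p$ (resp.\ $N_Y p$) inside $pC_h(M)p$ is $p(N_{Y^c})_\phi$ (resp.\ $C_\phi(N_{Y^c})p$), which is a factor by Lemma \ref{full tensor} together with the hypotheses on the $N_i$. Applying Lemma \ref{2.9} to the singleton families $\mathcal{S}_i := \{\hat{\mathbb{G}}_{X_i'}\}$, the hypothesis upgrades to $(pC_\phi(N_Y)p)_1 \subset_{\rm approx} C_h(M)^{\mathcal{S}_i}$ for $i=1,2$. Since $\hat{\mathbb{G}}_{X_1'} \cap \hat{\mathbb{G}}_{X_2'} = \hat{\mathbb{G}}_{(X_1 \cap X_2)'}$, Lemma \ref{approx direct} together with Remark \ref{2.8} yields approximate containment relative to $\mathcal{S} := \{\hat{\mathbb{G}}_{(X_1\cap X_2)'}\}$, and a second use of Lemma \ref{2.9} gives $pC_\phi(N_Y)p \preceq_{C_h(M)} C_h(M_{X_1\cap X_2})$ (resp.\ $N_Y p \preceq_{C_h(M)} C_h(M_{X_1\cap X_2})$).

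Finally I would refine the cut and invoke Corollary \ref{first half A}. Choose $\phi$-finite projections $p_i' \in (N_i)_\phi$ with $p_i'(N_i)_\phi p_i'$ non-amenable for every $i$, taking $p_i'=1$ when $N_i$ is II$_1$, and using the Lemma at the end of Subsection \ref{discrete decomposition} when $N_i$ is III$_1$ to see that the non-amenable centralizer $(N_i)_\phi$ admits such a projection. Put $q := p\prod_i p_i' = p_1'\otimes\cdots\otimes p_n'\otimes p$. In the III$_1$ case, cutting $pC_\phi(N_Y)p$ by $q_Y := \prod_{i\in Y}p_i'$ (which has central support $p$ in the II$_1$ factor $pC_\phi(N_Y)p$) and then descending to its unital type-II$_1$ subalgebra $q_Y(N_Y)_\phi q_Y\cdot p$ via Corollary \ref{Popa embed3} and the unital-subalgebra property listed after it, then multiplying by the commuting projection $\prod_{i\notin Y}p_i'$, produces $q(N_Y)_\phi q \preceq_{C_h(M)} C_h(M_{X_1\cap X_2})$; the II$_1$ case is immediate. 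Corollary \ref{first half A} then forces $|Y|\leq|X_1\cap X_2| < |Y|$, the desired contradiction.

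The hardest part will be the bookkeeping in the last step: verifying that the $\preceq$ relation transfers cleanly between the different cuts ($p_i$, $p$, $q$, $q_Y p$) and between $C_\phi(N_Y)$ and its centralizer $(N_Y)_\phi$ requires repeated use of the factor property of relative commutants together with the various forms of Corollary \ref{Popa embed3} and its sequels, and one must be careful because $C_h(M_{X_i})$ is semifinite rather than of type II$_1$ or III.
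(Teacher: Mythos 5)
Your proposal is correct and follows essentially the same route as the paper: replace $p_1,p_2$ by $p_1\vee p_2$, upgrade the two embeddings to approximate containments via the factoriality of the relative commutant (Lemmas \ref{full tensor}, \ref{relative commutant in cores}, \ref{2.9}), intersect them with Lemma \ref{approx direct} and Remark \ref{2.8} to land in $C_h(M_{X_1\cap X_2})$, and contradict Corollary \ref{first half A}. The extra bookkeeping in your last step is harmless but unnecessary here, since in the setting of the second half of Theorem \ref{A} each $(N_i)_{\phi_i}$ is already a non-amenable $\rm II_1$ factor, so one may take all the cutting projections $p_i'$ equal to $1$.
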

\begin{proof}
Replacing $p_1$ and $p_2$ with $p_1\vee p_2$, we may assume $p_1=p_2(=:p)$. 
By the previous lemma, we have $(N_Y)_\phi p \subset_{\rm approx}C_h(M_{X_a})$ for $a=1,2$. 
Lemma \ref{2.7} (and Remark \ref{2.8}) then implies $(N_Y)_\phi p\subset_{\rm approx} C_h(M_{X_1\cap X_2})$. Thus we get $(N_Y)_\phi p\preceq_{C_h(M)}C_h(M_{X_1\cap X_2})$. 
This contradicts to Corollary \ref{first half A} when $X_1\neq X_2$, since $|X_1\cap X_2|<|Y|$.
\end{proof}

\begin{Lem}\label{uniqueness2}
Let $X,Y_1$, and $Y_2$ be subsets in $\{1,\ldots,n\}$ with $|X|=|Y_1|=|Y_2|$ and let $p_1,p_2\in L\mathbb{R}_N$ be $\mathrm{Tr}$-finite projections. 
If $p_1C_\phi(N_{Y_1})p_1\preceq_{C_h(M)}C_h(M_X)$ and $p_2C_\phi(N_{Y_2})p_2\preceq_{C_h(M)}C_h(M_X)$, then $Y_1=Y_2$. 
\end{Lem}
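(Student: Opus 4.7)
The plan is to mirror Lemma~\ref{uniqueness1} with the roles of source and target exchanged: instead of intersecting two candidate targets, I construct an intertwining of the \emph{union} $(N_{Y_1\cup Y_2})_\phi p$ into $C_h(M_X)$ and invoke Corollary~\ref{first half A} to force $|Y_1\cup Y_2|\le|X|$. First I put $p:=p_1\vee p_2\in L\mathbb{R}_N$, still $\mathrm{Tr}$-finite; since $p_i\in L\mathbb{R}_N\subset C_\phi(N_{Y_i})$ is a projection inside $pC_\phi(N_{Y_i})p$ with $p_i(pC_\phi(N_{Y_i})p)p_i=p_iC_\phi(N_{Y_i})p_i$, Corollary~\ref{Popa embed3} upgrades the hypothesis to $pC_\phi(N_{Y_i})p\preceq_{C_h(M)}C_h(M_X)$ for both $i$. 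With $\mathcal{S}_X:=\{\hat{\mathbb{G}}_X\}$ the singleton containing the direct product quantum subgroup indexed by $X$, the lemma immediately preceding Lemma~\ref{uniqueness1} then gives
\[
(N_{Y_i})_\phi p\subset_{\rm approx} C_h(M_X)^{\mathcal{S}_X}\quad(i=1,2),
\]
using $(N_{Y_i})_\phi p\subset pC_\phi(N_{Y_i})p$ in the type $\mathrm{III}_1$ case and the fact that approximate containment passes to subsets of bounded sets.

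The crux is to deduce the analogous approximate containment for the join. Write $Y:=Y_1\cup Y_2$ and decompose
\[
(N_Y)_\phi=(N_{Y_1\cap Y_2})_\phi\,\bar\otimes\,(N_{Y_1\setminus Y_2})_\phi\,\bar\otimes\,(N_{Y_2\setminus Y_1})_\phi
\]
as a tensor product of three mutually commuting centralizers, each contained in some $(N_{Y_i})_\phi$ and hence already approximately contained in $C_h(M_X)^{\mathcal{S}_X}$. The structural fact I would exploit is special to the direct product setting: the class of $\mathcal{S}_X$-small subsets of $\mathrm{Irred}(\hat{\mathbb{G}})$ is closed under corepresentation products, since the product $\mathcal{F}_1\cdot\mathcal{F}_2$ of two finite unions of sets $x\,\mathrm{Irred}(\hat{\mathbb{G}}_X)\,y$ is again a finite union of the same form. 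Truncating each commuting factor via $P_{\mathcal{F}_i}$, multiplying, and bounding the $2$-norm of the residues by iterated use of Lemma~\ref{nice ineq}, combined with a Kaplansky-style density argument to pass from elementary products to the full unit ball of the generated algebra, should yield $(N_Y)_\phi p\subset_{\rm approx} C_h(M_X)^{\mathcal{S}_X}$. Lemma~\ref{2.5} (Statement~2 $\Rightarrow$ Statement~1, taken with $q=p$) then immediately promotes this to $(N_Y)_\phi p\preceq_{C_h(M)}C_h(M_X)$; crucially, no factor-commutant hypothesis enters here, which matters when $Y^c$ is empty.

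To finish, I pick $\phi_j$-finite projections $r_j\in(N_j)_{\phi_j}$ with $r_j(N_j)_{\phi_j}r_j$ non-amenable and set $q:=r_1\otimes\cdots\otimes r_n\otimes p$; Corollary~\ref{Popa embed3} descends the intertwining to $q(N_Y)_\phi q\preceq_{C_h(M)}C_h(M_X)$, and Corollary~\ref{first half A} then gives $|Y|\le|X|$, which combined with the automatic bound $|Y|\ge\max(|Y_1|,|Y_2|)=|X|$ forces $Y_1=Y_2$. The hard part will be the combining step: nothing in the machinery developed so far automatically promotes approximate containment from commuting subalgebras to the von Neumann algebra they generate, and the density step cannot assume uniform operator-norm control on the constituents, so the argument must genuinely exploit the convolution behavior of Fourier supports under tensor products of corepresentations, a feature available precisely because $\hat{\mathbb{G}}$ is a direct product.
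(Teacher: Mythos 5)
Your overall strategy---show that $N_{Y_1}$ and $N_{Y_2}$ both landing in $C_h(M_X)$ forces the \emph{join} $(N_{Y_1\cup Y_2})_\phi$ to land in $C_h(M_X)$, then count with Corollary \ref{first half A}---is not the route the paper takes, and it founders on exactly the step you flag as ``the hard part.'' The machinery of Subsection \ref{appro in core} only provides combination in one direction: Lemma \ref{2.7} intersects two \emph{target} families for a \emph{fixed} source algebra $B$. What you need is the opposite: two commuting \emph{source} algebras, each with unit ball approximately supported on $\mathcal{S}_X$-small sets, should have their generated von Neumann algebra approximately supported there too. Your truncate-multiply-Kaplansky sketch does not establish this: an element of the unit ball of $B_1\vee B_2$ is a $\sigma$-weak limit of sums $\sum_k b_1^{(k)}b_2^{(k)}$ with no uniform operator-norm control on the individual factors $b_i^{(k)}$ or on the number of summands, so the estimate $\|P_{\mathcal{F}^c}(b_1b_2)\|_2\leq \|P_{\mathcal{F}_1^c}(b_1)\|_2\|b_2\|+\cdots$ cannot be summed or passed to the limit. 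This is not a removable technicality: it is well known that $P_1\preceq_M Q$ and $P_2\preceq_M Q$ for commuting $P_1,P_2$ does \emph{not} in general imply $P_1\vee P_2\preceq_M Q$, and the paper's entire design of this lemma is built to avoid ever needing such a statement. (A secondary issue: your decomposition $(N_Y)_\phi=(N_{Y_1\cap Y_2})_\phi\mathbin{\bar{\otimes}}(N_{Y_1\setminus Y_2})_\phi\mathbin{\bar{\otimes}}(N_{Y_2\setminus Y_1})_\phi$ is also not automatic, since the centralizer of a tensor product state can strictly contain the tensor product of the centralizers.)

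For comparison, the paper argues by duality rather than by joining sources: assuming $Y_1\neq Y_2$, it applies Lemma \ref{reduction lemma} to the complement $N_{Y_1^c}$ to get $pC_\phi(N_{Y_1^c})p\preceq C_h(M_{X_2})$, uses an index $i\in Y_1^c\cap Y_2$ together with Lemma \ref{uniqueness1} to force $X\cap X_2\neq\emptyset$, and then passes to relative commutants via \cite[Lemma 3.5]{Va08} and Lemma \ref{relative commutant in cores} to obtain $(M_{X^c})_h q\preceq C_h(M_{X_2})$. Combining this with the trivial containment $(M_{X^c})_hq\subset_{\rm approx}C_h(M_{X^c})$ through Lemma \ref{2.7} (which is legitimate there, since it is one fixed source algebra $(M_{X^c})_h$ with two targets) lands $(M_{X^c})_h$ in $C_h(M_{X^c\cap X_2})$ with $|X^c\cap X_2|<|X^c|$, contradicting Corollary \ref{first half A}. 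If you want to salvage your approach you would have to prove the commuting-join lemma separately, which is a substantial additional result, not a density argument.
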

\begin{proof}
Replacing $p_1$ and $p_2$ with $p_1\vee p_2$, we may assume $p_1=p_2(=:p)$. 
By Lemma \ref{reduction lemma}, there is some $X_2$ with $|X_2|=|Y_1^c|$ such that $pC_\phi(N_{Y_1^c}) p\preceq_{C_h(M)}C_h(M_{X_2})$. 
We claim that $X\cap X_2\neq\emptyset$ when $Y_1\neq Y_2$. 
Take $i\in Y_1^c\cap Y_2$. Since $i\in Y_1^c$, by Remark \ref{relative commutant in cores}, we have $pC_\phi(N_i) p\preceq_{C_h(M)}C_h(M_j)$ for some $j\in X_2$. 
Since $i\in Y_2$, we also have $pC_\phi(N_i) p\preceq_{C_h(M)}C_h(M_l)$ for some $l\in X$ by Remark \ref{relative commutant in cores} and assumption. By the previous lemma, $j= l\in X\cap X_2$.

By assumption, there is $q\in L\mathbb{R}_M$ such that $pC_\phi(N_{Y_1})p\preceq_{rC_h(M)r}qC_h(M_X)q$ for $r:=p\vee q$. 
By \cite[\textrm{Lemma 3.5}]{Va08} and Lemma \ref{relative commutant in cores}, we have 
\begin{equation*}
qC_h(M_{X})'q\cap qC_h(M)q\preceq_{rC_h(M)r}pC_\phi(N_{Y_1})' p\cap pC_h(M)p
\end{equation*}
and 
\begin{eqnarray*}
(M_{X^c})_hq\subset qC_h(M_{X})'q\preceq pC_\phi(N_{Y_1})' p \subset pC_\phi(N_{Y_1^c})p\preceq qC_h(M_{X_2})q.
\end{eqnarray*}
When $N_{Y_1^c}$ is a $\rm III_1$ factor, the final embedding becomes $\subset_{\rm approx}$. 
When $N_{Y_1^c}$ is a $\rm II_1$ factor, since $N_{Y_1}$ is a $\rm III_1$ factor, we have $C_\phi(N_{Y_1})'=(N_{Y_1^c})_\phi$ and hence replacing the final embedding with $(N_{Y_1^c})_\phi p\preceq qC_h(M_{X_2})q$, we again get $\subset_{\rm approx}$. 
In any case, by \cite[\textrm{Lemma 3.8}]{Va08}, we get $(M_{X^c})_hq\preceq_{C_h(M)} C_h(M_{X_2})$. By Lemma \ref{2.5}, there is some $s\in (M_{X^c})_h'\cap qC_h(M)q$ such that $(M_{X^c})_hqs\subset_{\rm approx} C_h(M_{X_2})$. 
Since we trivially have $(M_{X^c})_hq\subset_{\rm approx} C_h(M_{X^c})$, it also holds that $(M_{X^c})_hqs\subset_{\rm approx} C_h(M_{X^c})$ by Lemma \ref{2.3}. 
Finally from Remark \ref{2.9}, we get $(M_{X^c})_hqs\subset_{\rm approx} C_h(M_{X^c\cap X_2})$. Since $|X^c\cap X_2|<|X^c|$ by the claim above, we get a contradiction from Corollary \ref{first half A}.
\end{proof}

Thanks for previous two lemmas, there is a unique $\sigma\in  \mathfrak{S}_n$ such that
\begin{equation*}
pC_\phi(N_i)p \preceq_{C_h(M)}  C_h(M_{\sigma(i)}) \quad (i=1,\ldots,n)
\end{equation*}
for any Tr-finite projection $p\in L\mathbb{R}_N$. 
In this case, we in fact have
\begin{equation*}
pC_\phi(N_Y)p \preceq_{C_h(M)}  C_h(M_{\sigma(Y)}) \quad (Y\subset \{1,\ldots,n\}).
\end{equation*}
To see this, fix any $Y\subset \{1,\ldots,n\}$. We use Lemma \ref{reduction lemma} and find some $X\subset \{1,\ldots,n\}$, which is unique by Lemma \ref{uniqueness1}, such that $pC_\phi(N_Y)p \preceq_{C_h(M)}  C_h(M_X)$ and $|X|=|Y|$. By Remark \ref{reduction rem}, for any $i\in Y$ there is $j\in X$ such that $pC_\phi(N_i)p \preceq_{C_h(M)}  C_h(M_j)$. This implies $\sigma(i)=j$ and, since $\sigma$ is bijective and $|X|=|Y|$, we get $\sigma(Y)=X$.

We next take the relative commutants of the embedding (with respect to $Y^c$) and get 
\begin{equation*}
(M_{\sigma(Y)})_h q\subset qC_h(M_{\sigma(Y^c)})'q\preceq_{C_h(M)} C_\phi(N_{Y^c})'\subset (N_Y)_\phi\mathbin{\bar{\otimes}} L\mathbb{R}
\end{equation*}
for some Tr-finite projection $q\in L\mathbb{R}_M$. 
Then by (the proof of) \cite[\textrm{Proposition 2.10}]{BHR12}, we finally get 
\begin{equation*}
(M_{\sigma(Y)})_h\preceq_{M} (N_Y)_\phi  \quad (Y\subset \{1,\ldots,n\}).
\end{equation*}

\begin{Lem}\label{II_1 correspondence}
In the setting, $M_{\sigma(i)}$ is a $\rm II_1$ factor if and only if so is $N_i$.
\end{Lem}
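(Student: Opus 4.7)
The plan is to argue by contradiction using Lemma~\ref{relative commutant} applied with the complementary index $Y=\{i\}^c$, together with the standard obstruction that no nonzero unital normal $*$-homomorphism from a corner of a type III factor into a finite factor can exist. I first reduce to the case where $M$ is of type $\rm III_1$: otherwise $M=N$ is $\rm II_1$, and since a tensor product of factors of types $\rm II_1$ or $\rm III_1$ is $\rm II_1$ exactly when each component is, every $M_j$ and every $N_j$ is then $\rm II_1$ and the lemma is vacuous. By the symmetric roles of the two sides (exchanging $(M_j,h_j,\sigma)$ with $(N_i,\phi_i,\sigma^{-1})$ preserves every hypothesis and yields the analogous intertwinings by uniqueness of the permutation), it suffices to prove that $M_{\sigma(i)}$ being $\rm II_1$ forces $N_i$ to be $\rm II_1$.

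Suppose, for contradiction, $M_{\sigma(i)}$ is $\rm II_1$ and $N_i$ is $\rm III_1$, and set $A:=(M_{\sigma(i)^c})_h$, which factors as the $\mathbin{\bar{\otimes}}$-tensor product of $(M_j)_{h_j}$ over $j\neq\sigma(i)$. Applying the established intertwining $(M_{\sigma(Y)})_h\preceq_M (N_Y)_\phi$ with $Y=\{i\}^c$ and using the inclusion $(N_{\{i\}^c})_\phi\subset N_{\{i\}^c}$, I obtain $A\preceq_M N_{\{i\}^c}$. A direct computation then yields $A'\cap M=M_{\sigma(i)}$: for each $j\neq\sigma(i)$ one has $(M_j)_{h_j}'\cap M_j=\mathbb{C}$ (trivially when $M_j$ is $\rm II_1$ since then $h_j$ is the trace, and by the remark following Theorem~\ref{A} when $M_j$ is $\rm III_1$), so the commutant of $A$ inside $M_{\sigma(i)^c}$ reduces to $\mathbb{C}$ and $A'\cap M=M_{\sigma(i)}\mathbin{\bar{\otimes}}\mathbb{C}=M_{\sigma(i)}$. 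I now invoke Lemma~\ref{relative commutant} with the decomposition $M=N=N_{\{i\}^c}\mathbin{\bar{\otimes}} N_i$, $M_1=N_{\{i\}^c}$, and $M_2=N_i$: the type III condition on $M_2=N_i$ holds, $A'\cap M=M_{\sigma(i)}$ is of type $\rm II_1$, and $A\preceq_M M_1$ has just been established. The conclusion is $N_i\preceq_M M_{\sigma(i)}$.

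This is the desired contradiction: such an embedding provides a nonzero unital normal $*$-homomorphism $\theta:eN_ie\to fM_{\sigma(i)}f$, which must be injective (being nonzero on a factor), and pulling back the normalized trace of the $\rm II_1$ corner $fM_{\sigma(i)}f$ through $\theta$ produces a faithful normal tracial state on the type III corner $eN_ie$, which is absurd. The one delicate point is the computation $A'\cap M=M_{\sigma(i)}$, which relies essentially on the irreducibility of the centralizers built into the hypotheses of the theorem; everything else is a routine combination of Lemma~\ref{relative commutant} with the type III obstruction.
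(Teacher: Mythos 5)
Your argument for the implication ``$M_{\sigma(i)}$ of type $\rm II_1$ $\Rightarrow$ $N_i$ of type $\rm II_1$'' is correct, and it is in fact a mild streamlining of the paper's: by starting from $(M_{\sigma(\{i\}^c)})_h\preceq_M (N_{\{i\}^c})_\phi$ and computing $((M_{\sigma(i)^c})_h)'\cap M=M_{\sigma(i)}$ you need only one application of Lemma~\ref{relative commutant}, whereas the paper starts from $(N_i)_\phi\preceq_M(M_{\sigma(i)})_h$ and applies that lemma twice to reach the same conclusion $N_i\preceq_M M_{\sigma(i)}$. The final type obstruction is also fine. The genuine gap is the converse direction, which you dismiss by ``the symmetric roles of the two sides.'' The roles are not symmetric: the $M_j=L^\infty(\mathbb{G}_j)$ carry the class-$\mathcal{C}$ hypotheses that the $N_i$ do not, and, more to the point, the intertwining relations available at this stage are one-directional. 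What has been established is $pC_\phi(N_Y)p\preceq_{C_h(M)}C_h(M_{\sigma(Y)})$ (cores, $N$-side into $M$-side) and $(M_{\sigma(Y)})_h\preceq_M(N_Y)_\phi$ (centralizers, $M$-side into $N$-side); the mirror statement $(N_{\{i\}^c})_\phi\preceq_M M_{\sigma(i)^c}$ that your ``symmetric'' run would need has \emph{not} been established, and it does not follow from the core-level embedding because the reduction via \cite[Proposition 2.10]{BHR12} uses $C_h(M_{\sigma(i)})=(M_{\sigma(i)})_h\mathbin{\bar{\otimes}} L\mathbb{R}$, which is only valid when the target component is finite.

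Concretely, try to run your template in the converse case ($N_i$ of type $\rm II_1$, $M_{\sigma(i)}$ of type $\rm III_1$ for contradiction): taking $A=(M_{\sigma(i)^c})_h\preceq_M N_{\{i\}^c}$ as before, Lemma~\ref{relative commutant} now requires $M_2=N_i$ to be of type $\rm III$, which is exactly what fails; and a single application with $A=(M_{\sigma(i)})_h\preceq_M N_i$ only yields $N_{\{i\}^c}\preceq_M M_{\sigma(i)^c}$, not yet a contradiction. The converse therefore needs its own argument; the paper's route is to use $(M_{\sigma(i)})_h\preceq_M(N_i)_\phi=N_i$ and apply Lemma~\ref{relative commutant} twice (first obtaining $N_{\{i\}^c}\preceq_M M_{\sigma(i)^c}$, noting that $N_{\{i\}^c}$ is of type $\rm III$ since $M$ is of type $\rm III$ while $N_i$ is finite, and then $M_{\sigma(i)}\preceq_M N_i$), after which your type obstruction applies verbatim. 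With that direction supplied, your proof is complete.
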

\begin{proof}
Suppose that $M_{\sigma(i)}$ is a $\rm II_1$ factor. Then since 
\begin{equation*}
(N_i)_\phi \mathbin{\bar{\otimes}} L\mathbb{R}p\subset pC_\phi(N_i)p \preceq_{C_h(M)}  C_h(M_{\sigma(i)})=(M_{\sigma(i)})_h\mathbin{\bar{\otimes}} L\mathbb{R}
\end{equation*} 
for any Tr-finite projection $p\in L\mathbb{R}_N$, 
we have $(N_i)_\phi \preceq_{M} (M_{\sigma(i)})_h$ by \cite[\textrm{Proposition 2.10}]{BHR12}. 
If $N_i$ is of type III, we can apply Lemma \ref{relative commutant} two times and get 
\begin{equation*}
N_i=((N_i)_\phi'\cap M)'\cap M \preceq_{M} ((M_{\sigma(i)})_h'\cap M)'\cap M=M_{\sigma(i)}.
\end{equation*}
Hence $N_i$ must contains a finite direct summand and hence a contradiction. 
Since $(M_{\sigma(i)})_h\preceq_{M} (N_i)_\phi$, the converse holds by the same argument. 
\end{proof}

\begin{proof}[\bf Proof of Theorem \ref{A}]
The first half statement was already proved by Corollary \ref{first half A}. So we see the second half. 

\begin{itemize}
	\item {\bf  Case 1: all $\bf M_i$ are $\bf \rm II_1$ factors.} 
\end{itemize}

In the case, we can apply the prime factorization result of Ozawa and Popa. 
So there are $u\in \mathcal{U}(M)$, $\sigma\in \mathfrak{S}_n$, and $t_i>0$ such that $uN_iu^*=M_{\sigma(i)}^{t_i}$ (which implies $N_i\preceq_M M_{\sigma(i)}$). 
By the same method as in Lemmas \ref{uniqueness1} and \ref{uniqueness2}, we can prove that $\sigma\in \mathfrak{S}_n$ is unique with the condition $N_i\preceq_M M_{\sigma(i)}$ for all $i$. 
On the other hand, by Lemma \ref{reduction lemma}, for any $i$ there is some $j$ such that $N_i\mathbin{\bar{\otimes}} L\mathbb{R}p=pC_\phi(N_i)p \preceq_{C_h(M)}  C_h(M_j)= M_j\mathbin{\bar{\otimes}} L\mathbb{R}$ for any Tr-finite projection $p\in L\mathbb{R}_N$. 
By \cite[\textrm{Proposition 2.10}]{BHR12}, this implies $N_i\preceq_{M} M_j$. 
So by the uniqueness, we get $\sigma(i)=j$ and $\sigma$ is determined from $pC_\phi(N_i)p \preceq_{C_h(M)}  C_h(M_{\sigma(i)})$. 

\bigskip

We next consider the case that there is at least one $\rm III_1$ factor among $M_i$. 
As we mentioned before Lemma \ref{II_1 correspondence}, there is a unique $\sigma\in  \mathfrak{S}_n$ with $pC_\phi(N_i)p \preceq_{C_h(M)}  C_h(M_{\sigma(i)})$, 
and it then holds that $(M_{\sigma(i)})_h\preceq_{M} (N_i)_\phi$.

\begin{itemize}
	\item {\bf  Case 2: there are more than two $\bf \rm III_1$ factors among $\bf M_i$.} 
\end{itemize}

By Lemma \ref{II_1 correspondence}, the same is true for $N_i$. For simplicity we assume $\sigma=\mathrm{id}$. 
When $M_{i}$ and $N_i$ are $\rm III_1$ factors, since so are $M_{\{i\}^c}$ and $N_{\{i\}^c}$, we can apply Lemma \ref{relative commutant} to $(M_{i})_h\preceq_{M} (N_i)_\phi$ two times and get $M_{i}\preceq_{M} N_i$. 
Also $M_{i}\preceq_{M} N_i$ holds for $\rm II_1$ factors $M_i$ and $N_i$. 
We fix $i$ and apply Lemma \ref{hariawase2} and get a partial isometry or a unitary $u\in M$ such that $uM_{i}u^*\subset uu^*N_iuu^*$. Write $p=uu^*\in N_i$ and $q=u^*u\in M_i$. 
We have 
\begin{equation*}
uM_iu^*\mathbin{\bar{\otimes}} uM_{\{i\}^c}u^*= uMu^*=pMp=pN_ip\mathbin{\bar{\otimes}} N_{\{i\}^c},
\end{equation*}
and hence
\begin{equation*}
uM_{\{i\}^c}u^*=(uM_iu^*)'\cap pMp= ((uM_iu^*)'\cap pN_ip)\mathbin{\bar{\otimes}} N_{\{i\}^c}.
\end{equation*}
Since $M_i\simeq uM_iu^*$, we can apply the first half of Theorem \ref{A}, and then get amenability of $((uM_iu^*)'\cap pN_ip)$. 
Since $M_{\{i\}^c}$ is full from Lemmas \ref{fullness of tensor factors} and \ref{C is full}, $((uM_iu^*)'\cap pN_ip)$ must be a type I factor, say $\mathbb{B}(K)$. 
Since $uMu^*=uM_iu^*\mathbin{\bar{\otimes}} uM_{\{i\}^c}u^*= uM_iu^*\mathbin{\bar{\otimes}} \mathbb{B}(K)\mathbin{\bar{\otimes}} N_{\{i\}^c}$, we get $pN_ip=(N_{\{i\}^c})'\cap pMp=uM_iu^*\mathbin{\bar{\otimes}} \mathbb{B}(K)$. 
This means $N_i$ and $M_i$ are isomorphic when they are $\rm III_1$ factors, and are stably isomorphic when they are $\rm II_1$ factors. 

\begin{itemize}
	\item {\bf  Case 3: there is only one $\bf\rm III_1$ factor among $\bf M_i$.} 
\end{itemize}

The same is true for $N_i$. Assume for simplicity that $\sigma=\mathrm{id}$ and $M_1$ and $N_1$ are $\rm III_1$ factors. 
For $i\neq 1$, since $M_i\preceq_M N_i$, $M_i$ and $N_i$ are stably isomorphic by the same manner as above. 
So we see the case $i=1$. Put $X:=\{1\}^c$. 
Since $N_X$ and $M_X$ are also $\rm II_1$ factors, we have  $M_X=(M_X)_h\preceq_{M} (N_X)_\phi=N_X$. 
So there is a partial isometry $u\in M$ such that $uM_Xu^*\subset pN_Xp$, where $p:=uu^*$. 
We have $uM_1u^*=((uM_Xu^*)'\cap pN_Xp)\mathbin{\bar{\otimes}} N_1$. The algebra $(uM_Xu^*)'\cap pN_Xp$ must be a type I factor and hence $M_1$ and $N_1$ are isomorphic. 
\end{proof}

\section{\bf Prime factorization results for crossed product algebras}

In the proof of Theorem \ref{B}, we go along a similar line to the proof of Theorem \ref{A}. 
We first prove a key proposition with condition (AO) and then study intertwiners on crossed products. 
We finally prove some one-to-one correspondence between each component of direct product groups. 

\subsection{\bf Location of subalgebras}

Let $\Gamma$ and $\Lambda$ be discrete groups acting on semifinite tracial von Neumann algebras $(A,\mathrm{Tr}_A)$ and $(B,\tau_B)$ with $\tau_B(1)=1$ as trace preserving actions. 
Assume that we have an inclusion $B\rtimes \Lambda\subset p(A\rtimes \Gamma)p$  with $B\subset pAp$ (not necessarily $B=pAp$) for some $\mathrm{Tr}_A$-finite projection $p\in \mathcal{Z}(A)$. 
Write $M:=A\rtimes\Gamma$.

\begin{Lem}\label{bi-exact lemma}
Let $\Lambda_0\subset \Lambda$ and $\Gamma_0\subset \Gamma$ be subgroups and $q\in \mathcal{Z}(A)$ a $\mathrm{Tr}_A$-finite projection with $p\leq q$. 
If $L\Lambda_0\preceq_{qMq} q(A\rtimes \Gamma_0)q$, then we have $B\rtimes\Lambda_0\preceq_{qMq}q(A\rtimes \Gamma_0)q$.
\end{Lem}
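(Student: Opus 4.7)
The plan is to translate the intertwining hypothesis into the language of the basic construction via Corollary \ref{Popa embed3} and then upgrade the resulting positive element by an averaging argument over $\mathcal{U}(B)$. Write $L := q(A\rtimes\Gamma_0)q$. Since $q\in\mathcal{Z}(A)$ is $\mathrm{Tr}_A$-finite, the corner $qMq$ is a finite von Neumann algebra, and the hypothesis $L\Lambda_0 \preceq_{qMq} L$ combined with Corollary \ref{Popa embed3} produces a non-zero positive $d \in \langle qMq, L\rangle \cap (L\Lambda_0)'$ with $\mathrm{Tr}_{\langle qMq, L\rangle}(d) < \infty$.

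Next, since $\tau_B(1)=1$, the subalgebra $B \subset L$ is finite, and $\mathrm{Tr}_{\langle qMq, L\rangle}$ is a (semifinite) trace, whose modular group is trivial. Takesaki's theorem therefore provides a $\mathrm{Tr}_{\langle qMq, L\rangle}$-preserving conditional expectation
\[
E_{B'} \colon \langle qMq, L\rangle \longrightarrow B' \cap \langle qMq, L\rangle.
\]
Put $\tilde d := E_{B'}(d)$. Trace preservation yields $\mathrm{Tr}_{\langle qMq, L\rangle}(\tilde d) = \mathrm{Tr}_{\langle qMq, L\rangle}(d) \in (0,\infty)$, so $\tilde d$ is positive, non-zero and of finite trace, and by construction $\tilde d$ commutes with $B$.

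The decisive step is to show that $\tilde d$ still commutes with $L\Lambda_0$. Since $\Lambda_0$ acts on $B$ via $\beta|_{\Lambda_0}$, every canonical unitary $u_\lambda$ ($\lambda \in \Lambda_0$) normalizes $B$; consequently $\mathrm{Ad}(u_\lambda)$ permutes $\mathcal{U}(B)$ and therefore commutes with the averaging map $E_{B'}$. Combined with $u_\lambda d u_\lambda^* = d$ (which comes from $d \in (L\Lambda_0)'$), this gives
\[
u_\lambda \tilde d u_\lambda^* \;=\; E_{B'}(u_\lambda d u_\lambda^*) \;=\; E_{B'}(d) \;=\; \tilde d
\]
for every $\lambda \in \Lambda_0$, hence $\tilde d \in (L\Lambda_0)'$. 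Thus $\tilde d \in \langle qMq, L\rangle \cap (B\rtimes\Lambda_0)'$ is a non-zero positive element of finite basic-construction trace, and a final appeal to Corollary \ref{Popa embed3} in the opposite direction delivers $B\rtimes\Lambda_0 \preceq_{qMq} L$.

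The main obstacle I expect is making the existence and trace-preservation of $E_{B'}$ rigorous in the semifinite setting of the basic construction; once that is in hand, the averaging argument runs smoothly because two structural facts conspire: $B \subset L$ (so $B$ commutes with the Jones projection and averaging by $\mathcal{U}(B)$ keeps $\tilde d$ inside $\langle qMq, L\rangle$) and $L\Lambda_0 \subset \mathcal{N}(B)$ (so the $L\Lambda_0$-symmetry of $d$ survives the averaging). Both features use the crossed product structure in an essential way.
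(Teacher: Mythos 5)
Your strategy is sound but genuinely different from the paper's, and it contains one step that is not yet a proof. The paper proceeds in the opposite order: using condition (ii) of Theorem \ref{popa embed2}, it first chooses the witnesses of $L\Lambda_0\preceq_{qMq}q(A\rtimes\Gamma_0)q$ to be of the special form $p\lambda_s q$ $(s\in\Gamma)$, so that the finite-trace element $d_0=\sum_{x\in\mathcal{F}}xe_{A\rtimes\Gamma_0}x^*$ \emph{already} commutes with $B\subset pAp$ (since $\lambda_s^*b\lambda_s=\alpha_s^{-1}(b)\in A$ commutes with $e_{A\rtimes\Gamma_0}$); it then runs the circumcenter argument of Proposition \ref{Popa embed2} over the unitaries $w=\lambda_s b$ $(s\in\Lambda_0,\ b\in\mathcal{U}(B))$, for which $\mathrm{co}\{wd_0w^*\}$ collapses to $\mathrm{co}\{\lambda_s d_0\lambda_s^*\}$ and hence stays away from $0$ by the original hypothesis. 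This ordering avoids any question of whether a second averaging kills the element. You instead take the abstract $d\in(L\Lambda_0)'$ and post-average over $\mathcal{U}(B)$, and then the non-vanishing of $\tilde d$ rests \emph{entirely} on $E_{B'}$ being a $\mathrm{Tr}_{\langle qMq,L\rangle}$-preserving conditional expectation: a bare circumcenter over $\mathcal{U}(B)$ could perfectly well be $0$, because the trace is only lower semicontinuous on the weak closure of the convex hull.

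The gap is exactly the one you flag. Takesaki's theorem produces the expectation onto $B'\cap\langle qMq,L\rangle$ only if $\mathrm{Tr}_{\langle qMq,L\rangle}$ restricts to a \emph{semifinite} weight there; triviality of the modular group alone does not grant this (restrictions of semifinite traces to relative commutants can fail to be semifinite, e.g.\ for a type III subfactor of $\mathbb{B}(H)$). Here semifiniteness does hold, but verifying it uses the crossed-product structure: under the identification $\langle M,A\rtimes\Gamma_0\rangle\simeq(A\rtimes\Gamma_0)\mathbin{\bar{\otimes}}\mathbb{B}(\ell^2(\Gamma/\Gamma_0))$ of Subsection \ref{Intertwiners inside crossed product von Neumann algebras}, $B\subset pAp$ sits diagonally as $b\mapsto\sum_s\alpha_{\tilde{s}}^{-1}(b)\otimes e_{s,s}$, so the projections $\alpha_{\tilde{s}}^{-1}(q)\otimes e_{s,s}$ lie in $B'\cap q\langle M,A\rtimes\Gamma_0\rangle q$, each has trace $\mathrm{Tr}_A(q)<\infty$, and their finite partial sums increase to the unit; compressing an arbitrary positive element by these partial sums then gives semifiniteness. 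With that supplied your argument closes; for the commutation of $E_{B'}$ with $\mathrm{Ad}(u_\lambda)$ it is cleanest to invoke uniqueness of the trace-preserving expectation onto $B'=(u_\lambda Bu_\lambda^*)'$ rather than a permutation of averages. In short: your route is workable and conceptually clean, but it trades the paper's elementary choice of special intertwiners for a nontrivial structural fact about the basic construction that must be proved, not assumed.
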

\begin{proof}
By assumption, there exist $\delta>0$ and a finite subset $\mathcal{F}\subset pMq$ such that $\sum_{x,y\in\mathcal{F}}\|E_{A\rtimes \Gamma_0}(y^*wx)\|^2_2> \delta$ for all $w\in\mathcal{U}(L\Lambda_0)$. 
We may assume that $\mathcal{F}$ consists of elements of the form $p\lambda_s q$ for some $s\in\Gamma$. 
Put $d_0:=\sum_{x\in\mathcal{F}}xe_{A\rtimes \Gamma_0}x^*$. By the proof of (i) $\Rightarrow$ (ii) in Proposition \ref{Popa embed2}, $d_0$ satisfies condition (ii) in the proposition. 
Since $d_0$ commutes with $B\subset pAp$, $d_0$ actually satisfies that the $\sigma$-weak closure of 
\begin{equation*}
\mathrm{co}\{ wd_0w^*\mid w=\lambda_s b \textrm{ for some } s\in\Lambda_0, b\in \mathcal{U}(B)\}
\end{equation*}
does not contain zero.  
Then by the same manner as in the proof of (ii) $\Rightarrow$ (iii) in Proposition \ref{Popa embed2}, we get $d\in \langle M, A\rtimes \Gamma_0\rangle\cap (L\Lambda_0\cup B)'$ satisfying condition (iii). 
Since $(L\Lambda_0\cup B)'=(B\rtimes \Lambda_0)'$, we get  $B\rtimes\Lambda_0\preceq_{qMq}q(A\rtimes \Gamma_0)q$.
\end{proof}

\begin{Pro}\label{bi-exact theorem}
Assume that $\mathrm{Tr}_A|_{\mathcal{Z}(A)}$ is semifinite and $\Gamma$ is bi-exact relative to $\mathcal{G}$, where $\mathcal{G}$ is a countable family of subgroups of $\Gamma$. Assume either $A$ is amenable or $\Gamma$ is weakly amenable. 
Then for any subgroup $\Lambda_0\subset \Lambda$, we have either one of the following conditions: 
\begin{itemize}
	\item[$(\rm i)$] There is a conditional expectation from $p\langle M,A\rangle p$ onto $L\Lambda_0'\cap pMp$, which restricts to the trace preserving expectation on $pMp$.
	\item[$(\rm ii)$] There exists $\Gamma_0 \in \mathcal{G}$ and a $\mathrm{Tr}_A$-finite projection $q\in \mathcal{Z}(A)$ with $p\leq q$ 
such that  $B\rtimes\Lambda_0\preceq_{qMq}q(A\rtimes\Gamma_0)q$.
\end{itemize}
\end{Pro}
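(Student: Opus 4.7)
The plan is to mimic the proof of Proposition \ref{location core}, but in the $\Gamma$-setting, using the relative bi-exactness of $\Gamma$ in place of the quantum AO-type condition. The dichotomy (i)/(ii) will fall out of evaluating a proper $\mathcal{U}(L\Lambda_0)$-invariant conditional expectation on the Jones projections $e_{A\rtimes\Gamma_0}$ as $\Gamma_0$ ranges over $\mathcal{G}$.

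First I would set up an analogue of condition (AO). Relative bi-exactness of $\Gamma$ with respect to $\mathcal{G}$ (in the sense of \cite[Section 15]{BO08}) yields a min-bounded $*$-homomorphism
\begin{equation*}
\nu \colon C^*_r(\Gamma) \otimes_{\rm alg} C^*_r(\Gamma)^{\rm op} \to \mathbb{B}(\ell^2(\Gamma))/\mathcal{K}_{\mathcal{G}},
\end{equation*}
where $\mathcal{K}_{\mathcal{G}} \subset \mathbb{B}(\ell^2(\Gamma))$ is the $C^*$-ideal of operators compact relative to $\mathcal{G}$. Inflating with the representation of $A$ on $\ell^2(\Gamma) \otimes L^2(A, \mathrm{Tr}_A)$ and combining with the natural $A$-$A$-bimodule structure, I would upgrade this (along the lines of Subsection \ref{AO tensor}) to a min-bounded multiplication map $\mu$ from $C^*_r(M)\otimes_{\rm alg}C^*_r(M)^{\rm op}$ into $M(\widetilde J)/\widetilde J$, where $\widetilde J$ is built from $\mathcal{K}_{\mathcal{G}} \otimes B(L^2(A))$.

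Second, I would extend $\mu$ to a ucp map on the full von Neumann algebras. If $A$ is amenable, then $M \otimes_{\min} M^{\rm op}$ is nuclear in the relevant spot, and an Arveson-extension + multiplicative-domain argument (as in the proof of Lemma 2.1.1) produces a ucp map $\Phi$ from $\mathbb{B}(\ell^2(\Gamma)\otimes L^2(A))$ into $(M)^{\omega}$ or directly into a corner, normal on $M$ and $M^{\rm op}$. If instead $\Gamma$ is weakly amenable (with $\mathrm{Tr}_A$ finite and $p=1$), I would use Ozawa's combination of relative bi-exactness with CMAP-multipliers to get the same conclusion, approximating by finite-rank cutoffs on $\ell^2(\Gamma)$. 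In either case, restricting to $L\Lambda_0$ and applying the standard averaging in the $(pMp)$-weak closure of unitary conjugates, I obtain a proper conditional expectation $\Psi \colon p\langle M, A\rangle p \to L\Lambda_0' \cap pMp$ whose restriction to $pMp$ is the trace-preserving expectation onto $L\Lambda_0' \cap pMp$.

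Third, I would run the dichotomy. Fix any $\mathrm{Tr}_A$-finite projection $q \in \mathcal{Z}(A)$ with $p \leq q$ and any $\Gamma_0 \in \mathcal{G}$, and set $s := z_q^{\rm op}\,\Psi(e_{A\rtimes\Gamma_0})\,z_q^{\rm op}$, viewed in $\langle qMq, q(A\rtimes\Gamma_0)q\rangle \cap L\Lambda_0'$. A calculation identical to the one in Proposition \ref{location core} shows $\mathrm{Tr}_{\Gamma_0}(s) \leq \mathrm{Tr}(E_{A\rtimes\Gamma_0}(q)) = \mathrm{Tr}(q) < \infty$. If $s$ is nonzero for some choice of $q$ and $\Gamma_0$, Proposition \ref{Popa embed2}(iii) gives $L\Lambda_0 \preceq_{qMq} q(A\rtimes\Gamma_0)q$, and Lemma \ref{bi-exact lemma} upgrades this to (ii). If instead $s = 0$ for every such pair $(q, \Gamma_0)$, then $\Psi$ kills every Jones projection associated with subgroups in $\mathcal{G}$, and the remaining part of $\langle M, A\rangle$ (which, modulo the $\mathcal{G}$-ideal, is identified with $M$ itself by relative bi-exactness) is supported on $pMp$. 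This forces $\Psi$ to be honest on $p\langle M,A\rangle p$ with range in $L\Lambda_0' \cap pMp$, giving (i).

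The main obstacle I anticipate is the weakly amenable, non-amenable-$A$ case of the second step: here nuclearity is unavailable, and the extension of $\mu$ to a ucp map demands the delicate Ozawa-style argument through completely bounded Haagerup multipliers, combined with the finiteness of $\mathrm{Tr}_A$ to control the cutoffs; this is why the statement restricts to $\mathrm{Tr}_A$ finite and $p=1_A$ in that subcase. The rest is straightforward bookkeeping parallel to Proposition \ref{location core}.
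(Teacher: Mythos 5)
Your overall plan (redo the AO-type machinery and run an Ozawa-style dichotomy) is in the right spirit, but the logical order of your second and third steps is circular. The map $\Psi\colon p\langle M,A\rangle p\to L\Lambda_0'\cap pMp$ that you propose to ``obtain'' in step two \emph{is} condition (i); it cannot exist unconditionally (take $\Lambda_0$ trivial: then $L\Lambda_0'\cap pMp=pMp$, which is in general non-amenable, while $p\langle M,A\rangle p$ is amenable whenever $A$ is, so no norm-one projection onto it can exist). In the Ozawa/Isono scheme the proper conditional expectation one starts from is onto the commutant $L\Lambda_0'$ in $\mathbb{B}(H)$, and its existence already requires $L\Lambda_0$ to be amenable --- false for a general subgroup $\Lambda_0$. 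One must therefore first replace $L\Lambda_0$ by a diffuse amenable (in fact abelian) subalgebra $A_0$ with $A_0\not\preceq_M A\rtimes\Gamma_0$ for all $\Gamma_0\in\mathcal{G}$ (this is \cite[Proposition 5.2.4]{Is12_1}, exactly the first line of the proof of Proposition \ref{location core}), note that $L\Lambda_0'\cap pMp$ sits inside $A_0'\cap pMp\subset\mathcal{N}_{pMp}(A_0)''$, and only \emph{after} assuming the negation of (ii) does the expectation onto $L\Lambda_0'\cap pMp$ come out as a conclusion. Your evaluation of the expectation on the Jones projections $e_{A\rtimes\Gamma_0}$, the trace estimate, and the use of Lemma \ref{bi-exact lemma} to upgrade $L\Lambda_0\preceq_{qMq}q(A\rtimes\Gamma_0)q$ to $B\rtimes\Lambda_0\preceq_{qMq}q(A\rtimes\Gamma_0)q$ are all fine, but they must be run against an expectation whose existence is not itself the statement to be proved.

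The second gap is the weakly amenable, non-amenable-$A$ case, which you defer to ``Ozawa-style cb multipliers'' without an argument. The paper does not reprove anything here: assuming $B\rtimes\Lambda_0\not\preceq_M A\rtimes\Gamma_0$ for every $\Gamma_0\in\mathcal{G}$ (hence $L\Lambda_0\not\preceq_M A\rtimes\Gamma_0$ by Lemma \ref{bi-exact lemma}), it passes to a diffuse abelian $A_0\subset L\Lambda_0$ with $A_0\not\preceq_M A\rtimes\Gamma_0$ for all $\Gamma_0\in\mathcal{G}$ and invokes Popa--Vaes \cite[Theorem 3.1]{PV12}, which produces the conditional expectation from $p\langle M,A\rangle p$ onto $\mathcal{N}_{pMp}(A_0)''\supset L\Lambda_0'\cap pMp$. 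The normalizer of a diffuse \emph{amenable} subalgebra is precisely the object the weak-amenability machinery controls; your sketch applies the multiplier argument directly to $L\Lambda_0$, which can be non-amenable, and that is where it breaks. (Likewise, in the amenable-$A$ case the paper simply cites \cite[Theorem 5.3.3 and Remark 5.3.4]{Is12_1} for the dichotomy ``$L\Lambda_0'\cap pMp$ amenable or $L\Lambda_0\preceq_{qMq}q(A\rtimes\Gamma_0)q$'' and then applies Lemma \ref{bi-exact lemma}; your from-scratch AO construction, even once repaired, would only be reproving that reference.)
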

\begin{proof}
Since $\mathrm{Tr}_A|_{\mathcal{Z}(A)}$ is semifinite, condition (ii) is equivalent to that there exists $\Gamma_0 \in \mathcal{G}$ such that $B\rtimes\Lambda_0\preceq_{M}A\rtimes\Gamma_0$. 

Suppose that $A$ is amenable. Then since $p\langle M,A\rangle p$ is also amenable, condition (i) means amenability of $L\Lambda_0'\cap pMp$. Assume that $L\Lambda_0'\cap pMp$ is non-amenable. 
Then by \cite[\textrm{Theorem 5.3.3 and Remark 5.3.4}]{Is12_1}, there exists $\Gamma_0 \in \mathcal{G}$ and a projection $q\in \mathcal{Z}(A)$ with $\mathrm{Tr}_A(q)<\infty$ 
such that $L\Lambda_0\preceq_{rMr}q(A\rtimes\Gamma_0)q$, where $r:=p\vee q$. Exchanging $r$ with $q$, we may assume $p\leq q$ and $r=q$. 
By the last lemma, we get $B\rtimes\Lambda_0\preceq_{qMq}q(A\rtimes\Gamma_0)q$.

Suppose next $\Gamma$ is weakly amenable and $B\rtimes\Lambda_0\not\preceq_{M}A\rtimes\Gamma_0$ for any $\Gamma_0 \in \mathcal{G}$. 
This means $L\Lambda_0\not\preceq_{M}A\rtimes\Gamma_0$ for any $\Gamma_0 \in \mathcal{G}$ by the previous lemma. 
Then by \cite[\textrm{Proposition 5.2.4}]{Is12_1}, there is a unital diffuse abelian subalgebra $A_0\subset L\Lambda_0$ such that $A_0\not\preceq_{M}A\rtimes\Gamma_0$ for any $\Gamma_0 \in \mathcal{G}$. 
By \cite[\textrm{Theorem 3.1}]{PV12}, there is a conditional expectation from $p\langle M,A\rangle p$ onto $\mathcal{N}_{pMp}(A_0)''$, which restricts to the trace preserving expectation on $pMp$. 
Since $\mathcal{N}_{pMp}(A_0)''$ contains $L\Lambda_0'\cap pMp$, 
we are done.
\end{proof}

\subsection{\bf Intertwiners inside crossed product von Neumann algebras}\label{Intertwiners inside crossed product von Neumann algebras}

In the subsection, we study intertwiners in crossed product von Neumann algebras. 
Under the assumption of freeness of actions, we can construct a special form of an embedding and an intertwiner. 

\begin{Lem}\label{proj}
Let $A$ be a commutative von Neumann algebra and $p_i\in A$ $(i\in\mathbb{N})$ be projections. If $\sum_ip_i\leq n$ for some $n\in\mathbb{N}$, then there exists a projection $z\in A$ such that $\sum_izp_i\neq 0$ and $zp_i=0$ except for finitely many $i$.
\end{Lem}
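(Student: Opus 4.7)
The plan is to pass to the standard measure-theoretic picture for $A$ and use the bound $\sum_i p_i\leq n$ to see that, on the operator side, the ``limsup projection'' $\bigwedge_k\bigvee_{i\geq k}p_i$ must vanish. Concretely, since $A$ is a commutative von Neumann algebra with separable predual, identify $A\simeq L^\infty(X,\mu)$ and write $p_i=1_{E_i}$; fix the trace $\mathrm{Tr}=\mu$. The hypothesis becomes $\sum_i 1_{E_i}(x)\leq n$ for a.e.\ $x$, so a.e.\ point lies in only finitely many $E_i$. This is exactly the measure-theoretic content we want to exploit: the desired $z$ will be the indicator of a set where all but finitely many $E_i$ have been avoided.

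The operator-algebraic formulation is to define the increasing sequence $q_k:=1-\bigvee_{i\geq k}p_i$ and to prove $\bigvee_k q_k=1$, i.e.\ $r:=\bigwedge_k\bigvee_{i\geq k}p_i=0$. For this I would take any $\mathrm{Tr}$-finite subprojection $z\leq r$; using that in a commutative algebra $\bigvee_{i\geq k}p_i\leq \sum_{i\geq k}p_i$ (which follows by induction from $p\vee q=p+q-pq\leq p+q$ and passing to the strong limit), and using the bound $\sum_i\mathrm{Tr}(zp_i)\leq n\,\mathrm{Tr}(z)<\infty$, one obtains
\begin{equation*}
\mathrm{Tr}(z)=\mathrm{Tr}\bigl(z\bigvee_{i\geq k}p_i\bigr)\leq \sum_{i\geq k}\mathrm{Tr}(zp_i)\xrightarrow[k\to\infty]{}0,
\end{equation*}
so $z=0$. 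Semifiniteness of $\mathrm{Tr}$ then forces $r=0$.

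Finally, assuming some $p_{i_0}\neq0$ (otherwise $\sum_i zp_i$ is $0$ for every $z$, so this nontriviality is the only sensible reading of the conclusion), the relation $q_k p_{i_0}\uparrow p_{i_0}\neq0$ gives some $k>i_0$ with $q_k p_{i_0}\neq0$. Taking $z:=q_k$, we get $zp_i=0$ for all $i\geq k$, so only finitely many $zp_i$ are nonzero, while $zp_{i_0}\neq0$ ensures $\sum_i zp_i\neq0$. The main obstacle is the second step, namely the vanishing of the limsup projection; the rest is bookkeeping. In the measure picture this is the trivial remark that a point lying in infinitely many $E_i$ would make $\sum 1_{E_i}(x)=\infty$, but phrasing it operator-theoretically is what requires invoking a faithful normal semifinite trace and the commutative inequality $\bigvee\leq\sum$.
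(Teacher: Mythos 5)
Your proof is correct, but it takes a genuinely different route from the paper's. The paper argues purely algebraically and without any trace: starting from $p_1$ (implicitly assumed nonzero), it repeatedly multiplies by the first $p_j$ that the current projection meets nontrivially, and observes that a nonzero product $q=p_1p_{i_2}\cdots p_{i_m}$ of $m$ distinct $p_i$'s satisfies $mq\leq p_1+p_{i_2}+\cdots+p_{i_m}\leq n$, which is impossible for $m\geq n+1$; hence after at most $n$ steps one reaches a nonzero $z\leq p_1$ orthogonal to all remaining $p_i$. You instead pass to $L^\infty(X,\mu)$, fix a faithful normal (semi)finite trace, and kill the limsup projection $r=\bigwedge_k\bigvee_{i\geq k}p_i$ by the Borel--Cantelli-type estimate $\mathrm{Tr}(z)\leq\sum_{i\geq k}\mathrm{Tr}(zp_i)\to 0$ for finite-trace $z\leq r$, then take $z=1-\bigvee_{i\geq k}p_i$ for $k$ large enough that $zp_{i_0}\neq 0$. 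All the steps you use are sound: the commutative inequality $\bigvee\leq\sum$, normality of the trace, and semifiniteness to upgrade ``every finite subprojection of $r$ vanishes'' to $r=0$ are each correctly invoked, and your explicit remark that some $p_{i_0}\neq 0$ must be assumed applies equally to the paper's argument. What the paper's approach buys is elementarity (no trace, no measure space, no separability) and a sharper conclusion — its $z$ sits under $p_1$ and meets at most $n$ of the $p_i$ — whereas your argument is conceptually cleaner and identifies the real content as the vanishing of the limsup projection; for the application in Proposition \ref{intertwiner cross} either output suffices.
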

\begin{proof}
If $p_1$ is orthogonal to any other $p_i$, then $p_1$ does the work. If not, then we find the minimum number $i_2$ in the set $\{j\mid p_1p_j\neq0, j\neq 1\}$. If $p_1p_{i_2}$ is orthogonal to any other $p_i$, then $p_1p_{i_2}$ does the work. Repeating this argument, we get $p_1p_{i_2}\cdots p_{i_m}$ ($m\leq n$) which is orthogonal to any other $p_i$. 
In fact, if we have a non-zero projection $p_1p_{i_2}\cdots p_{i_m}$ for $n+1\leq m$, then we have $m\cdot p_1p_{i_2}\cdots p_{i_m}\leq p_1+p_{i_2}+\cdots +p_{i_m}\leq n$. Hence a contradiction. 
\end{proof}

\begin{Lem}\label{free}
Let $\Gamma$ be a discrete group and $\alpha$ a free action on a commutative von Neumann algebra $A$. For any non-zero projection $p\in A$  and any distinct elements $s_1,\ldots,s_n\in\Gamma$, there is a non-zero projection $q\in A$ such that $q\leq p$ and $\alpha_{s_i}(q)\alpha_{s_j}(q)=0$ for any $i,j$ with $i\neq j$. \end{Lem}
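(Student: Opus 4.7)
The plan is to reduce the joint orthogonality condition for all pairs $(i,j)$ to finitely many single orthogonality conditions, and then handle those conditions one at a time using freeness. First, observe that $\alpha_{s_i}(q)\alpha_{s_j}(q)=0$ for $i\neq j$ is equivalent, after applying $\alpha_{s_j^{-1}}$, to $\alpha_{s_j^{-1}s_i}(q)\,q=0$ for $i\neq j$. Since the $s_i$ are distinct, the set $T:=\{s_j^{-1}s_i\mid i\neq j\}$ is a finite collection of non-identity elements of $\Gamma$, so the lemma is equivalent to: given a nonzero $p\in A$ and a finite $T\subset\Gamma\setminus\{e\}$, find a nonzero projection $q\leq p$ with $\alpha_t(q)\,q=0$ for every $t\in T$.

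The key sub-claim to establish is the single-element version: for each $t\neq e$ and each nonzero projection $p'\in A$, there is a nonzero projection $q'\leq p'$ with $\alpha_t(q')q'=0$. Once this is in hand, enumerate $T=\{t_1,\ldots,t_m\}$ and inductively produce a decreasing chain $p=q_0\geq q_1\geq\cdots\geq q_m$ of nonzero projections with $\alpha_{t_k}(q_k)q_k=0$, by applying the sub-claim to $t_k$ with $p'=q_{k-1}$. Since $\alpha_{t_l}$ is order-preserving and $q_k\leq q_l$ for $k\geq l$, we get $\alpha_{t_l}(q_k)q_k\leq \alpha_{t_l}(q_l)q_l=0$ for every $l\leq k$, so $q:=q_m$ meets all the required orthogonalities.

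For the sub-claim I would use the standard reformulation of freeness for commutative von Neumann algebras. Since $A$ is commutative with separable predual, identify $A=L^\infty(X,\mu)$ with $(X,\mu)$ a standard Borel measure space and let $\tau_t$ be the nonsingular Borel transformation on $X$ implementing $\alpha_t$; freeness of $\alpha$ says $\mu(\{x:\tau_t(x)=x\})=0$ for every $t\neq e$. For $p'=1_E$ with $\mu(E)>0$, after deleting the null fixed-point set we may assume $\tau_t(x)\neq x$ everywhere on $E$. Using a countable generating family of Borel sets on $X$, we can cover $E$ by countably many Borel subsets $F_n$ with $\tau_t(F_n)\cap F_n=\emptyset$; at least one $F_n$ has positive measure, and $q':=1_{F_n}$ is the desired projection. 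The main technical content lives in this sub-claim; the finite iteration over $T$ is routine bookkeeping.
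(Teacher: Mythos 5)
Your proof is correct. The core measure-theoretic content is the same as the paper's: both arguments pass to $\alpha_{s_j^{-1}s_i}(q)q=0$, realize $A=L^\infty(X,\mu)$ over a standard space, use freeness to discard the null union of fixed-point sets of the $s_j^{-1}s_i$, and use a countable basis of Borel sets to produce a positive-measure subset of the support of $p$ disjoint from its relevant translates. Where you differ is in the organization: the paper handles all the (at most $n^2-n$) translates in one shot, by choosing a countable family $\{E_k\}$ that separates a point from $n^2$ prescribed other points simultaneously and extracting a single positive-measure set $E_k\setminus\bigcup_{i\neq j}s_j^{-1}s_iE_k$ meeting the support of $p$; you instead isolate the single-element statement ($t\neq e$, find $q'\leq p'$ with $\alpha_t(q')q'=0$, via $F_k:=E_k\setminus\tau_t^{-1}(E_k)$) and then iterate over the finite set $T=\{s_j^{-1}s_i\}$, shrinking the projection at each stage. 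Your iteration is legitimate because for commuting projections the product is the infimum, so the condition $\alpha_t(q)q=0$ is inherited by any smaller projection; this monotonicity is exactly what makes the reduction to the one-translate case work and is the one point worth stating explicitly. The trade-off is minor: your decomposition needs only pairwise separation (any basis of a Polish topology works verbatim) at the cost of a finite induction, while the paper's one-shot construction avoids the induction at the cost of a slightly more elaborate separating family.
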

\begin{proof}Write $A=L^\infty(X,\mu)$ for some standard probability space $(X,\mu)$. We may assume that $\Gamma$ acts on $(X,\mu)$ as a free action. Put $X_g:=\{x\in X\mid g\cdot x=x\}$ for any $g\in \Gamma$. Then by freeness, we have $\mu(X_{s_i^{-1}s_j})=0$ for $i,j$ with $i\neq j$. Hence we have $\mu(\cup_{i\neq j}X_{s_i^{-1}s_j})=0$. This implies $Y\cap(\cup_{i\neq j}X_{s_i^{-1}s_j})^c$ is non-null for any non-null set $Y\subset X$. 

Since $X$ is a standard Borel space, it is isomorphic to $\mathbb{R}$ (or a countable set). Let $E_k$ $(k\in\mathbb{N})$ be a countable basis of open subsets of $\mathbb{R}$ (or the countable set). 
Let $E_k\subset X$ $(k\in\mathbb{N})$ be Borel subsets satisfying for any $x_1,\ldots,x_{n^2}\in X$ with $x_1\neq x_i$ for any $i\neq 1$, there exists $E_k$ such that $x_1\in E_k$ and $x_i\not\in E_k$ for any $i\neq1$ (e.g.\ take a basis of open subsets of $\mathbb{R}$). 
Then we have 
\begin{eqnarray*}
(\cup_{i\neq j}X_{s_i^{-1}s_j})^c
&=&\cap_{i\neq j}X_{s_i^{-1}s_j}^c\\
&=& \{x\in X\mid s_i^{-1}s_j\cdot x\neq x \textrm{ for any $i\neq j$}\}\\
&=& \cup_k (E_k\setminus (\cup_{i\neq j}s_j^{-1}s_iE_k)).
\end{eqnarray*}
Let $Y\subset X$ be a non-null set whose characteristic function is $p$. Then we can find some $k$ such that $Y\cap(E_k\setminus (\cup_{i\neq j}s_j^{-1}s_iE_k))$ is non-null. Let $q$ be the projection corresponding to this set. Then it satisfies $q\leq p$ and $\alpha_{s_j^{-1}s_i}(q)q=0$ for $i\neq j$.
\end{proof}

\begin{Lem}\label{projections in Cartan}
Let $M$ be a $\rm II_1$ factor and $A\subset M$ be a regular von Neumann subalgebra (namely, $\mathcal{N}_M(A)''=M$). If non-zero projections $e,f\in\mathcal{Z}(A)$ have the same trace in $M$, then there exists $u\in\mathcal{N}_M(A)$ such that $ueu^*=f$.
\end{Lem}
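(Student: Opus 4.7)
The plan is to assemble $u$ out of countably many matching pieces produced by individual normalizers. First I will prove a local version of the conclusion: for any non-zero $e',f'\in\mathcal{Z}(A)$, there exist non-zero $\tilde{e}\leq e'$ and $\tilde{f}\leq f'$ in $\mathcal{Z}(A)$ and a normalizer $w\in\mathcal{N}_M(A)$ with $w\tilde{e}w^*=\tilde{f}$. Because $M$ is a factor, $e'Mf'\neq 0$; since $\mathcal{N}_M(A)$ is WOT-total in $M$ by regularity, there is some $w\in\mathcal{N}_M(A)$ with $e'wf'\neq 0$. Conjugation by $w^*$ is an automorphism of $A$ that preserves $\mathcal{Z}(A)$, so $\tilde{f}:=f'\cdot w^*e'w$ is a non-zero central subprojection of $f'$ and $\tilde{e}:=w\tilde{f}w^*=e'\cdot wf'w^*$ lies in $\mathcal{Z}(A)$ below $e'$.

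Next, a Zorn's lemma argument produces a maximal (necessarily countable, by separability of the predual) family $\{(e_i,f_i,u_i)\}_i$ of non-zero central projections $e_i\leq e$ and $f_i\leq f$ with mutually orthogonal $e_i$ (resp.\ $f_i$) and normalizers $u_i\in\mathcal{N}_M(A)$ satisfying $u_ie_iu_i^*=f_i$. Since each $u_i$ preserves the trace, $\tau(e-\sum_ie_i)=\tau(f-\sum_if_i)$; applying the local step to these residuals would contradict maximality unless both vanish, so $\sum_ie_i=e$ and $\sum_if_i=f$.

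I then form $v:=\sum_iu_ie_i$ (SOT-convergent) and, repeating the construction with $1-e$, $1-f$ in place of $e,f$ (which have the same trace by hypothesis), a second partial isometry $v'$. The claim is that $u:=v+v'$ is the desired unitary normalizer: a direct computation gives $v^*v=e$, $vv^*=f$ and analogously for $v'$, making $u$ unitary. The main technical point, which I expect to be the hardest step, is verifying $uAu^*=A$ and $ueu^*=f$. This reduces to showing that the cross terms $e_iu_i^*Au_je_j$ (for $i\neq j$) and $vAv'^*$ vanish, which in turn follows from the identity $(u_j^*f_iu_j)(u_j^*f_ju_j)=0$ whenever $f_if_j=0$ --- an immediate consequence of the fact that $\mathrm{Ad}(u_j^*)$ restricts to a $*$-automorphism of the commutative algebra $\mathcal{Z}(A)$. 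Once this bookkeeping is done, $ueu^*=vv^*=f$ since $v'e=0$, and the proof is complete.
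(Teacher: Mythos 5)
Your argument is correct and is essentially the standard maximality/patching proof that the paper delegates to \cite[Lemma F.16]{BO08}: a local matching step from WOT-density of the span of $\mathcal{N}_M(A)$ and factoriality, a Zorn's lemma exhaustion whose residuals vanish by the trace hypothesis, and the observation that centrality of the $e_i$, $f_i$ kills the cross terms so that $v=\sum_i u_ie_i$ (and its complement) assemble into a unitary normalizer. No gaps worth flagging.
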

\begin{proof}
See the proof of \cite[\textrm{Lemma F.16}]{BO08}.
\end{proof}

\begin{Lem}
Let $\Gamma$ be a discrete group and $\alpha$ an action on a von Neumann algebra $A$. 
Let $\Gamma_0$ be a (possibly trivial) subgroup of $\Gamma$ and $\sigma\colon \Gamma/\Gamma_0 \rightarrow\Gamma$ a section with $\sigma(\Gamma_0)=e$, where $e$ is the unit of $\Gamma$. 
Consider the unitary element 
\begin{equation*}
U_\sigma\colon L^2(A)\otimes\ell^2(\Gamma)\rightarrow L^2(A)\otimes \ell^2(\Gamma_0)\otimes\ell^2(\Gamma/\Gamma_0); \ \hat{a}\otimes \delta_{\sigma(s\Gamma_0)g}\mapsto \hat{a}\otimes \delta_{g}\otimes \delta_{s\Gamma_0}.
\end{equation*}
Then $U_\sigma$ gives an identification
\begin{eqnarray*}
\mathrm{Ad}U_\sigma\colon  \langle A\rtimes\Gamma, A\rtimes\Gamma_0\rangle\xrightarrow{\sim} (A\rtimes \Gamma_0) \mathbin{\bar{\otimes}} \mathbb{B}(\ell^2(\Gamma/\Gamma_0)).
\end{eqnarray*}
We have $\mathrm{Ad}U_\sigma(a)=\sum_{s\Gamma_0\in \Gamma/\Gamma_0}\alpha_{\sigma(s\Gamma_0)}^{-1}(a)\otimes e_{s\Gamma_0}$ for $a\in A$, where $e_{s\Gamma_0}$ is the orthogonal projection onto $\mathbb{C}\delta_{s\Gamma_0}$. 
If the action $\alpha$ is free on $A$, we have 
\begin{equation*}
\mathrm{Ad}U_\sigma\colon A'\cap \langle A\rtimes \Gamma,A\rtimes\Gamma_0\rangle \xrightarrow{\sim} \mathcal{Z}(A)\mathbin{\bar{\otimes}}\ell^\infty(\Gamma/\Gamma_0).
\end{equation*}
\end{Lem}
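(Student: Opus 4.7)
The plan is to verify all three claims by direct computation on generators, using the unique decomposition $g=\sigma(s\Gamma_0)h$ with $s\Gamma_0\in\Gamma/\Gamma_0$ and $h\in\Gamma_0$, which makes $U_\sigma$ a bona fide unitary $L^2(A)\otimes\ell^2(\Gamma)\to L^2(A)\otimes\ell^2(\Gamma_0)\otimes\ell^2(\Gamma/\Gamma_0)$. Under $U_\sigma$ the subspace $L^2(A\rtimes\Gamma_0)=L^2(A)\otimes\ell^2(\Gamma_0)$ corresponds to $L^2(A)\otimes\ell^2(\Gamma_0)\otimes\mathbb{C}\delta_{\Gamma_0}$, so the Jones projection $e_{A\rtimes\Gamma_0}$ transports to $1\otimes 1\otimes e_{\Gamma_0}$.

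Evaluating $\mathrm{Ad}U_\sigma$ on a basis vector $\hat{b}\otimes\delta_{\sigma(s\Gamma_0)h}$ immediately yields the stated formula $\mathrm{Ad}U_\sigma(a)=\sum_{s\Gamma_0}\alpha_{\sigma(s\Gamma_0)}^{-1}(a)\otimes e_{s\Gamma_0}$ for $a\in A$, and a parallel computation gives
\begin{equation*}
\mathrm{Ad}U_\sigma(u_g)=\sum_{t\Gamma_0}u_{\sigma(gt\Gamma_0)^{-1}g\sigma(t\Gamma_0)}\otimes e_{gt\Gamma_0,t\Gamma_0},
\end{equation*}
whose exponents lie in $\Gamma_0$ since $g\sigma(t\Gamma_0)\in gt\Gamma_0=\sigma(gt\Gamma_0)\Gamma_0$. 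Together with the formula for $e_{A\rtimes\Gamma_0}$, this shows $\mathrm{Ad}U_\sigma(\langle A\rtimes\Gamma,A\rtimes\Gamma_0\rangle)\subset(A\rtimes\Gamma_0)\mathbin{\bar{\otimes}}\mathbb{B}(\ell^2(\Gamma/\Gamma_0))$. For the reverse inclusion, the products $u_{\sigma(s\Gamma_0)}e_{A\rtimes\Gamma_0}u_{\sigma(t\Gamma_0)}^*$ transport to $1\otimes 1\otimes e_{s\Gamma_0,t\Gamma_0}$, supplying all matrix units of $\mathbb{B}(\ell^2(\Gamma/\Gamma_0))$, and compressing $\mathrm{Ad}U_\sigma(A\rtimes\Gamma_0)$ by $1\otimes 1\otimes e_{\Gamma_0}$ recovers $(A\rtimes\Gamma_0)\otimes e_{\Gamma_0}$.

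For the final assertion, I would write a generic element of $(A\rtimes\Gamma_0)\mathbin{\bar{\otimes}}\mathbb{B}(\ell^2(\Gamma/\Gamma_0))$ as a matrix $T=\sum T_{s\Gamma_0,t\Gamma_0}\otimes e_{s\Gamma_0,t\Gamma_0}$, Fourier expand each entry as $T_{s\Gamma_0,t\Gamma_0}=\sum_{k\in\Gamma_0}x_ku_k$ with $x_k\in A$, and read off the relation $T\cdot\mathrm{Ad}U_\sigma(a)=\mathrm{Ad}U_\sigma(a)\cdot T$ for every $a\in A$ coefficient by coefficient. After substituting $b=\alpha_{\sigma(s\Gamma_0)^{-1}}(a)$, this becomes $bx_k=x_k\alpha_h(b)$ for all $b\in A$, with $h=k\sigma(t\Gamma_0)^{-1}\sigma(s\Gamma_0)$. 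The main conceptual step is then to invoke freeness of $\alpha$ on $A$ in the form ``any $y\in A$ with $yc=\alpha_h(c)y$ for all $c\in A$ and some $h\neq e$ vanishes'', which forces $x_k=0$ except when $h=e$; this happens only for $s\Gamma_0=t\Gamma_0$ and $k=e$, where $x_e\in\mathcal{Z}(A)$. This identifies $A'\cap\langle A\rtimes\Gamma,A\rtimes\Gamma_0\rangle$ with $\mathcal{Z}(A)\mathbin{\bar{\otimes}}\ell^\infty(\Gamma/\Gamma_0)$. The principal nuisance throughout is the Schreier-type cocycle $\sigma(gt\Gamma_0)^{-1}g\sigma(t\Gamma_0)\in\Gamma_0$, but it causes no genuine difficulty once the conventions are fixed.
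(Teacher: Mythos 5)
Your proof is correct. The second and third assertions are handled exactly as in the paper: the formula for $\mathrm{Ad}U_\sigma(a)$ by direct evaluation on basis vectors, and the relative commutant by writing an element as a matrix $\sum_{s,t}T_{s\Gamma_0,t\Gamma_0}\otimes e_{s\Gamma_0,t\Gamma_0}$, Fourier-expanding each entry over $\Gamma_0$, and killing every coefficient with $k\sigma(t\Gamma_0)^{-1}\sigma(s\Gamma_0)\neq e$ by freeness (the surviving case $s\Gamma_0=t\Gamma_0$, $k=e$ forcing the diagonal entries into $A'\cap A=\mathcal{Z}(A)$). Where you diverge is the identification of the basic construction: the paper gets it in one line by observing that $U_\sigma$ transports the canonical right action to $x^{\rm op}\otimes 1_{\mathbb{B}(\ell^2(\Gamma/\Gamma_0))}$ and then taking commutants, $\langle A\rtimes\Gamma,A\rtimes\Gamma_0\rangle=((A\rtimes\Gamma_0)^{\rm op})'\mapsto((A\rtimes\Gamma_0)^{\rm op})'\mathbin{\bar{\otimes}}\mathbb{B}(\ell^2(\Gamma/\Gamma_0))$, which dispenses with both inclusions at once and with the Schreier cocycle entirely. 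Your route instead computes $\mathrm{Ad}U_\sigma$ on the generators $a$, $u_g$, $e_{A\rtimes\Gamma_0}$ and checks surjectivity via the matrix units $u_{\sigma(s\Gamma_0)}e_{A\rtimes\Gamma_0}u_{\sigma(t\Gamma_0)}^*$ and the corner compression; this is more laborious but the explicit cocycle formula $\mathrm{Ad}U_\sigma(u_g)=\sum_t u_{\sigma(gt\Gamma_0)^{-1}g\sigma(t\Gamma_0)}\otimes e_{gt\Gamma_0,t\Gamma_0}$ is exactly the kind of information the paper ends up re-deriving by hand later (in the proof of Corollary \ref{intertwiner cross2}, where intertwiners are shown to have the form $a_j\lambda_{\tilde s_j}$), so nothing is wasted. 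Both arguments are complete and correct.
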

\begin{proof}
Since the right action of $A\rtimes\Gamma_0$ satisfies  $U_{\sigma}x^{\rm op}U_{\sigma}^*=x^{\rm op}\mathbin{\bar{\otimes}} 1_{\mathbb{B}(\ell^2(\Gamma/\Gamma_0))}$ for $x\in A\rtimes\Gamma_0$, we have 
\begin{eqnarray*}
U_{\sigma}\langle A\rtimes\Gamma, A\rtimes\Gamma_0\rangle U_{\sigma}^*=U_{\sigma}((A\rtimes\Gamma_0)^{\rm op})'U_{\sigma}^*
&=&((A\rtimes\Gamma_0)^{\rm op})'\mathbin{\bar{\otimes}} \mathbb{B}(\ell^2(\Gamma/\Gamma_0))\\
&=&(A\rtimes\Gamma_0)\mathbin{\bar{\otimes}}\mathbb{B}(\ell^2(\Gamma/\Gamma_0)).
\end{eqnarray*}
The formula of $\mathrm{Ad}U_\sigma(a)$ for $a\in A$ follows from a direct calculation.

Next assume that the action is free. Let $x$ be in  $(A\rtimes\Gamma_0)\mathbin{\bar{\otimes}}\mathbb{B}(\ell^2(\Gamma/\Gamma_0))$. Write it as $x=\sum_{s,t\in \Gamma/\Gamma_0}x_{s,t}\otimes e_{s,t}$, where $x_{s,t}\in A\rtimes\Gamma_0$ and $e_{s,t}$ is the matrix unit along $s,t\in \Gamma/\Gamma_0$. 
By the first part of the statement, elements of $A$ in $(A\rtimes\Gamma_0)\mathbin{\bar{\otimes}}\mathbb{B}(\ell^2(\Gamma/\Gamma_0))$ is of the form $a=\sum_{s\in\Gamma/\Gamma_0}a_{s,s}\otimes e_{s,s}$, where $a_{s,s}= \alpha_{\sigma(s)}^{-1}(a)$. 
If $x$ commutes $a\in A$, a simple calculation shows that $x_{s,t}a_{t,t}=a_{s,s}x_{s,t}$ for any $s,t\in\Gamma/\Gamma_0$. 

When $s=t$, then we get $x_{s,s}\alpha_{\sigma(s)}^{-1}(a)=\alpha_{\sigma(s)}^{-1}(a)x_{s,s}$ for any $a\in A$. Hence by the freeness, we get $x_{s,s}\in A'\cap (A\rtimes \Gamma_0)=\mathcal{Z}(A)$. 
When $s\neq t$, then we have $x_{s,t}\alpha_{\sigma(t)}^{-1}(a)=\alpha_{\sigma(s)}^{-1}(a)x_{s,t}$ for any $a\in A$. This means $x_{s,t}a=\alpha_{k}^{-1}(a)x_{s,t}$ for all $a\in A$, where $k=\sigma(t)^{-1}\sigma(s)\neq e$. 
Let $x_{s,t}=\sum_{b\in\Gamma_0}\lambda_b (x_{s,t})_b$ be the Fourier expansion of $x_{s,t}$ in $A\rtimes \Gamma_0$. 
Then the equality means $(x_{s,t})_ba=\alpha_{kb}^{-1}(a)(x_{s,t})_b$ for any $b\in \Gamma_0$ and $a\in A$. Since $kb$ is not the unit of $\Gamma$, we get $(x_{s,t})_b=0$ for $b\in \Gamma_0$ and hence $x_{s,t}=0$. 
Thus $x$ is contained in $\mathcal{Z}(A)\mathbin{\bar{\otimes}}\ell^\infty(\Gamma/\Gamma_0)$.
\end{proof}

From now on, we keep the following setting. 
Let $\Gamma$ be a discrete group and $\alpha$ a trace preserving free action of $\Gamma$ on a semifinite tracial von Neumann algebra $(A,\mathrm{Tr}_A)$ with $\mathrm{Tr}_A|_{\mathcal{Z}(A)}$ semifinite. Write $M:=A\rtimes \Gamma$. 
Let $\Gamma_0\subset \Gamma$ be a (possibly trivial) subgroup. 
Let $p\in \mathcal{Z}(A)$ be a projection with $\mathrm{Tr}_A(p)=1$ and $N\subset p(A\rtimes\Gamma)p$ a von Neumann subalgebra containing $pAp$. 
Assume either that $\mathcal{Z}(A)$ is diffuse, or $A$ is a $\rm II_1$ factor (so that $\mathrm{Tr}_A$ is finite and $p=1_A$). 
We fix $U_\sigma$ in the previous lemma for a section $\sigma$ and write $\tilde{s}:=\sigma(s)$ for $s\in\Gamma/\Gamma_0$. 

\begin{Pro}\label{intertwiner cross}
If $N\preceq_M A\rtimes \Gamma_0$, 
then there exist non-zero projections $e\in \mathcal{Z}(A)p$ and $d\in (N)'e \cap \langle M, A\rtimes\Gamma_0\rangle$ such that $U_\sigma dU_\sigma^*$ is contained in $(\mathbb{C}\mathbin{\bar{\otimes}} \ell^\infty(\mathcal{S}))U_\sigma eU_\sigma^*$ for a finite subset $\mathcal{S}\subset \Gamma/\Gamma_0$. 
In the case, choosing an appropriate $\mathcal{S}$, a $*$-homomorphism $\pi\colon eNe\ni x\mapsto U_\sigma dxU_\sigma^* \in (A\rtimes\Gamma_0)\mathbin{\bar{\otimes}}\mathbb{B}(\ell^2(\mathcal{S}))$ satisfies that $\pi(eAe)\subset A\mathbin{\bar{\otimes}} \ell^\infty(\mathcal{S})$ and $\pi(a)=\sum_{s\in\mathcal{S}} \alpha_{\tilde{s}}^{-1}(a)\otimes e_{s,s}$ for $a\in eAe$. 
\end{Pro}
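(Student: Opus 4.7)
The plan is to extract $d$ from Popa's semifinite intertwining theorem, upgrade its commutation to all of $A$, transport it via $U_\sigma$ to a diagonal element of $\mathcal{Z}(A) \mathbin{\bar{\otimes}} \ell^\infty(\Gamma/\Gamma_0)$, and then cut by a central projection $e \in \mathcal{Z}(A)p$ so that the diagonal collapses to a finite set of indices.

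From $N \preceq_M A\rtimes\Gamma_0$, Corollary \ref{Popa embed3} yields a non-zero positive $d \in p\langle M, A\rtimes\Gamma_0\rangle p \cap N'$ with $\mathrm{Tr}_{\langle M, A\rtimes\Gamma_0\rangle}(d) < \infty$; replacing $d$ by a spectral projection we may assume $d$ is a projection. Because $pAp \subset N$, $p \in \mathcal{Z}(A)$, and $dp = pd = d$, the computation
\begin{equation*}
da = d(ap) + d(a(1-p)) = (ap)d + 0 = apd = ad \qquad (a \in A)
\end{equation*}
shows $d \in A' \cap \langle M, A\rtimes\Gamma_0\rangle$. By the preceding lemma, $\mathrm{Ad}U_\sigma$ identifies this algebra with $\mathcal{Z}(A) \mathbin{\bar{\otimes}} \ell^\infty(\Gamma/\Gamma_0)$, so $U_\sigma dU_\sigma^* = \sum_{s \in \Gamma/\Gamma_0} d_s \otimes e_{s,s}$ with each $d_s \in \mathcal{Z}(A)$ a projection satisfying $d_s \leq \alpha_{\tilde{s}}^{-1}(p)$, and the trace condition becomes $\sum_s \mathrm{Tr}_A(d_s) < \infty$.

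Set $p_s := \alpha_{\tilde{s}}(d_s) \in \mathcal{Z}(A)p$, so by $\alpha$-invariance $\sum_s \mathrm{Tr}_A(p_s) < \infty$, and the extended positive element $f := \sum_s p_s$ affiliated with $\mathcal{Z}(A)p$ has finite trace. Taking $n$ large enough that $z := \chi_{\{0 < f \leq n\}}$ is non-zero gives $\sum_s zp_s \leq nz$, whence Lemma \ref{proj} produces a non-zero projection $z' \leq z$ in $\mathcal{Z}(A)p$ with $z'p_s = 0$ outside a finite set $\mathcal{S}_0 \subset \Gamma/\Gamma_0$. Inside the finite commutative algebra generated by $\{z'p_s : s \in \mathcal{S}_0\}$ pick a non-zero atom $e$; since $z' \leq \bigvee_{s \in \mathcal{S}_0} p_s$ (by the support of $f$), the atom-address $\mathcal{S} \subseteq \mathcal{S}_0$ is non-empty, with $e \leq p_s$ for $s \in \mathcal{S}$ and $e \perp p_s$ otherwise. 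Applying $\alpha_{\tilde{s}}^{-1}$ gives $\alpha_{\tilde{s}}^{-1}(e) \leq d_s$ for $s \in \mathcal{S}$ and $\alpha_{\tilde{s}}^{-1}(e) d_s = 0$ otherwise, so replacing $d$ by $de \in N'e \cap \langle M, A\rtimes\Gamma_0\rangle$ yields
\begin{equation*}
U_\sigma d U_\sigma^* = (1 \otimes \chi_\mathcal{S})\, U_\sigma e U_\sigma^*,
\end{equation*}
which is the required form.

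For the $*$-homomorphism, $d$ is now a projection commuting with $N$, so $dxdy = d(xd)y = d(dx)y = d^2 xy = dxy$ for $x, y \in eNe$, whence $\pi(x) := U_\sigma dxU_\sigma^*$ is multiplicative; it is visibly $*$-preserving. The identity $dx = dxd$ then gives $\pi(x) = U_\sigma dU_\sigma^* \cdot U_\sigma xU_\sigma^* \cdot U_\sigma dU_\sigma^*$, and since $U_\sigma dU_\sigma^*$ is supported on $\mathcal{S}$ in the second tensor factor, $\pi(eNe) \subset (A\rtimes\Gamma_0) \mathbin{\bar{\otimes}} \mathbb{B}(\ell^2(\mathcal{S}))$. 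For $a \in eAe$, combining $U_\sigma a U_\sigma^* = \sum_s \alpha_{\tilde{s}}^{-1}(a) \otimes e_{s,s}$ with $\alpha_{\tilde{s}}^{-1}(e)\alpha_{\tilde{s}}^{-1}(a) = \alpha_{\tilde{s}}^{-1}(a)$ (using $a = ea$) yields $\pi(a) = \sum_{s \in \mathcal{S}} \alpha_{\tilde{s}}^{-1}(a) \otimes e_{s,s} \in A \mathbin{\bar{\otimes}} \ell^\infty(\mathcal{S})$. The main obstacle is the third step: the a priori summability $\sum_s \mathrm{Tr}_A(d_s) < \infty$ is $L^1$ in character, while Lemma \ref{proj} needs a uniform $L^\infty$-bound on $\sum p_s$; the spectral truncation of $f = \sum_s p_s$, available thanks to the semifiniteness of $\mathrm{Tr}_A|_{\mathcal{Z}(A)}$, bridges these two.
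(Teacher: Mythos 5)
Your proof is correct and follows essentially the same route as the paper's: extract the finite-trace projection $d$ from Popa's criterion, observe it commutes with $A$, diagonalize via $U_\sigma$ into $\mathcal{Z}(A)\mathbin{\bar{\otimes}}\ell^\infty(\Gamma/\Gamma_0)$, transport the entries by $\alpha_{\tilde{s}}$, truncate $f=\sum_s p_s$ spectrally, and apply Lemma \ref{proj}. Your explicit atom argument is just a careful rendering of the paper's ``replacing $e$ with a sufficiently small one, we may assume $d_s\alpha_{\tilde{s}}^{-1}(e)=\alpha_{\tilde{s}}^{-1}(e)$ or $0$''.
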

\begin{proof}
By assumption there is an element $d\in N'\cap \langle M, A\rtimes\Gamma_0\rangle$ satisfying $d=dp$ and $\mathrm{Tr}_{\langle M,A\rtimes\Gamma_0\rangle}(d)<\infty$. 
Taking a spectral projection of $d$, we may assume $d$ is a projection. By the last lemma, $d$ is contained in 
\begin{equation*}
N'\cap p\langle M, A\rtimes\Gamma_0\rangle p 
\subset p(A'\cap \langle M, A\rtimes\Gamma_0\rangle)p
\simeq (\mathcal{Z}(A)\mathbin{\bar{\otimes}} \ell^\infty(\Gamma/\Gamma_0))U_\sigma pU_\sigma^*.
\end{equation*}
We write $U_\sigma dU_\sigma^*=(d_s)_{s\in\Gamma/\Gamma_0}$ as an element in $\mathcal{Z}(A)\mathbin{\bar{\otimes}} \ell^\infty(\Gamma/\Gamma_0)$. 
Note that the canonical trace on $\langle M, A\rtimes\Gamma_0\rangle$ is of the form $\mathrm{Tr}_{A\rtimes\Gamma_0}\otimes \mathrm{Tr}_{\ell^2(\Gamma/\Gamma_0)}$ on $\mathcal{Z}(A)\mathbin{\bar{\otimes}} \ell^\infty(\Gamma/\Gamma_0)$. 
When $A$ is a $\rm II_1$ factor and $p=1$, then since $d$ is a trace finite projection, it is contained in $\mathbb{C}\mathbin{\bar{\otimes}} \ell^\infty(\mathcal{S})$ for some finite $\mathcal{S}$. 
So we assume $\mathcal{Z}(A)$ is diffuse. 

We claim that there exists a projection $e\in \mathcal{Z}(A)p$ such that $U_\sigma deU_\sigma^*=(d_se_s)_s\neq0$ and $d_se_s=0$ except for finitely many $s\in\Gamma/\Gamma_0$, where $e_s=\alpha_{\tilde{s}}^{-1}(e)$. 
Let $\psi$ be an isomorphism from $\mathcal{Z}(A)\mathbin{\bar{\otimes}} \ell^\infty(\Gamma/\Gamma_0)$ onto itself given by $\psi((x_s)_s)=(\alpha_{\tilde{s}}(x_s))_s$. 
Put $(\tilde{d}_s)_s:=\psi((d_s)_s)=(\alpha_{\tilde{s}}(d_s))_s$. 
We regard $(\mathrm{id}_{\mathcal{Z}(A)}\otimes \mathrm{Tr}_{\ell^2(\Gamma/\Gamma_0)})((\tilde{d}_s)_s)=\sum_{s\in\Gamma/\Gamma_0}\tilde{d}_s (=:f)$ as a function on the spectrum of $\mathcal{Z}(A)$ which takes values on $[0,\infty]$. 
Note that $\mathrm{id}_{\mathcal{Z}(A)}\otimes \mathrm{Tr}_{\ell^2(\Gamma/\Gamma_0)}$ gives an extended center valued trace on $\mathcal{Z}(A)\mathbin{\bar{\otimes}} \mathbb{B}(\ell^2(\Gamma/\Gamma_0))$. 
Since $\sum_{s\in\Gamma/\Gamma_0}\mathrm{Tr}_A(\tilde{d}_s)=\sum_{s\in\Gamma/\Gamma_0}\mathrm{Tr}_A(d_s)=\mathrm{Tr}_{\langle M,A\rtimes\Gamma_0\rangle}(d)<\infty$, the function $f$ actually takes values on $[0, \infty)$. 
Let $z$ be a projection in $\mathcal{Z}(A)$ which corresponds to the characteristic function on $f^{-1}([0,n])$ for some large $n\in\mathbb{N}$. Then we have $0\neq\sum_{s\in \Gamma/\Gamma_0}\tilde{d}_sz=fz\leq n$ and hence the element $fz$ is contained in $\mathcal{Z}(A)$. 
Now by Lemma \ref{proj}, we can find a projection $w\in \mathcal{Z}(A)$ such that $fzw\neq0$ and $\tilde{d}_szw=0$ except for finitely many $s\in\Gamma/\Gamma_0$. 
Hence we get $0\neq \psi^{-1}((\tilde{d_s}zw)_s)=(d_s\alpha_{\sigma(s)}^{-1}(zw))_s$. 
Thus $e:=zw$ does the work. 

Finally replacing $e\in\mathcal{Z}(A)p$ in the claim with a sufficiently small one, we can assume $d_s\alpha_{\tilde{s}}^{-1}(e)=\alpha_{\tilde{s}}^{-1}(e)$ or $0$. 
Putting $\mathcal{S}:=\{s\mid d_s\alpha_{\tilde{s}}^{-1}(e)=\alpha_{\tilde{s}}^{-1}(e) \}$, we can end the proof. 
\end{proof}

For finite von Neumann algebras $A\subset M$, we say that the inclusion is a \textit{finite extension} if for any trace on $A$, the associated semifinite trace on $\langle M,A\rangle$ is finite. 
We use this notion in the next subsection with the following observation.  

Let $B\subset A\subset M$ be finite von Neumann algebras with a fixed trace. 
We identify $\langle M,A\rangle= \overline{Me_AM}^{\rm w} \subset \langle M,B\rangle$ and $\langle A, B\rangle=\overline{Ae_BA}^{\rm w} \subset \langle M,B\rangle$ and notice that $\mathrm{Tr}_{\langle M,B\rangle}|_{\langle A, B\rangle}=\mathrm{Tr}_{\langle A,B\rangle}$. 
Assume that $B\subset A$ is a finite extension. Then since $\mathrm{Tr}_{\langle M,B\rangle}$ is still semifinite on $\langle M,A\rangle$ (because $\mathrm{Tr}_{\langle M,B\rangle}(e_A)=\mathrm{Tr}_{\langle A,B\rangle}(e_A)<\infty$), there is a conditional expectation from $\langle M,B\rangle$ onto $\langle M,A\rangle$. 

\begin{Rem}\upshape\label{intertwiner cross1.5}
Let $\pi$ and $e$ be as in the previous proposition. 
When $A$ is a $\rm II_1$ factor, $\pi$ is defined on $N$ as a unital and normal one. The inclusion $\pi(A)\subset A\mathbin{\bar{\otimes}}\mathbb{B}(\ell^2(\mathcal{S}))$ is a finite extension. 
When $\mathcal{Z}(A)$ is diffuse, if we further assume $N$ and $A\rtimes\Gamma_0$ are $\rm II_1$ factors and $pAp\subset N$ is regular, then we can extend $\pi$ on $N$ in the following way. 
We first extend $\pi$ on $pAp$ by $\pi(a):=\sum_{s\in\mathcal{S}} \alpha_{\tilde{s}}^{-1}(a)\otimes e_{s,s}$. 
Exchanging $e$ with a small one, we may assume $e$ has trace $1/m$ in $N$. 
Then $\pi(e)=\sum_{s\in\mathcal{S}}\alpha_{\tilde{s}}^{-1}(e)\otimes e_{s,s}\in \pi(p)(\mathcal{Z}(A)\mathbin{\bar{\otimes}} \mathbb{B}(\ell^2(\mathcal{S})))\pi(p)$ has trace $1/m$ with $\mathrm{Tr}_A\otimes \mathrm{Tr}_{\ell^2(\mathcal{S})}$, where $\mathrm{Tr}_{\ell^2(\mathcal{S})}$ is the normalized trace. 
Let $e_1:=e$ and $e_i\in \mathcal{Z}(A)p$ $(i=2,\ldots,m)$ be mutually orthogonal projections having trace $1/m$. We have $p=\sum_{i=1}^{m}e_i$. 
By Lemma \ref{projections in Cartan}, take partial isometries $v_i\in N$ and $w_i\in \pi(p)(A\rtimes \Gamma_0\mathbin{\bar{\otimes}} \mathbb{B}(\ell^2(\mathcal{S})))\pi(p)$ $(i=1,\ldots,m)$ satisfying that 
$v_iv_i^*=e_1$, $w_iw_i^*=\pi(e_1)$, $v_i^*v_i=e_i$, $w_i^*w_i=\pi(e_i)$, $v_iAv_i^*= Ae_1$ and $w_i(A\mathbin{\bar{\otimes}} \ell^\infty(\mathcal{S}))w_i^*=(A\mathbin{\bar{\otimes}} \ell^\infty(\mathcal{S}))\pi(e_1)$. 
Define $\tilde{\pi}(x):=\sum_{i,j=1}^{m} w_i^* \pi(v_i xv_j^*) w_j$ for $x\in N$. Then $\tilde{\pi}$ is a unital normal $*$-homomorphism from $N$ into $\pi(p)(A\rtimes \Gamma_0\mathbin{\bar{\otimes}} \mathbb{B}(\ell^2(\mathcal{S})))\pi(p)$ satisfying $\tilde{\pi}=\pi$ on $eNe$. 
Since $\pi(eAe)\subset \pi(e) (A\mathbin{\bar{\otimes}}\mathbb{B}(\ell^2(\mathcal{S}))) \pi(e)$ is a finite extension, $\pi(pAp)\subset \pi(p)(A\mathbin{\bar{\otimes}}\mathbb{B}(\ell^2(\mathcal{S})))\pi(p)$ is also a finite extension. 
\end{Rem}

\begin{Cor}\label{intertwiner cross2}
Assume either that $\alpha|_{\Gamma_0}$ is ergodic on $\mathcal{Z}(A)$ so that $A\rtimes\Gamma_0$ is a $\rm II_1$ factor, or $\alpha$ is free on $\mathcal{Z}(A)$. 
If $N\preceq_M A\rtimes \Gamma_0$, 
then there exist non-zero projections $e,f\in\mathcal{Z}(A)$ (or $e,f\in A$ when $A$ is a $\rm II_1$ factor) with $e\leq p$, a normal unital $*$-homomorphism $\theta\colon eNe \rightarrow f(A\rtimes \Gamma_0)f$, and a non-zero partial isometry $v\in e(A\rtimes\Gamma)f$ such that 
$xv=v\theta(x)$ for $x\in eNe$, $vv^*\in \mathcal{Z}(A)e\cap N'$, and $\theta(eAe)\subset fAf$. 
In the case, the inclusion $\theta(eAe)\subset fAf$ is a finite extension.
\end{Cor}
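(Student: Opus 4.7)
The plan is to build on Proposition \ref{intertwiner cross}, upgrade its matrix-valued $*$-homomorphism $\pi$ to an intertwiner $\theta$ landing inside $f(A\rtimes\Gamma_0)f$ by a reduction-to-rank-one step, and then apply the polar-decomposition construction of Corollary \ref{Popa embed4} to lift everything to a partial isometry $v\in eMf$. Concretely, Proposition \ref{intertwiner cross} supplies a projection $e_0\in\mathcal{Z}(A)p$ (or $e_0\in pAp$ when $A$ is a $\rm II_1$ factor), a finite set $\mathcal{S}\subset\Gamma/\Gamma_0$, a projection $d\in N'\cap p\langle M,A\rtimes\Gamma_0\rangle p$ with $U_\sigma dU_\sigma^*=\pi(e_0)=\sum_{s\in\mathcal{S}}\alpha_{\tilde s}^{-1}(e_0)\otimes e_{s,s}$, and a $*$-homomorphism $\pi\colon e_0Ne_0\to(A\rtimes\Gamma_0)\mathbin{\bar{\otimes}}\mathbb{B}(\ell^2(\mathcal{S}))$ that is diagonal on $e_0Ae_0$. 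I want to shrink $e_0$ to some $e\le e_0$ and construct partial isometries $w_s\in A\rtimes\Gamma_0$ with $w_s^*w_s=\alpha_{\tilde s}^{-1}(e)$, pairwise orthogonal range projections $f_s:=w_sw_s^*\in\mathcal{Z}(A)$ (or $\in A$ in the factor case), and $w_sAw_s^*\subset A$. Setting $f:=\sum_s f_s$ and $W:=\sum_s w_s\otimes e_{\tilde 1,s}$ then gives $W^*W=\pi(e)$ and $WW^*=f\otimes e_{\tilde 1,\tilde 1}$, so $\theta(x)\otimes e_{\tilde 1,\tilde 1}:=W\pi(x)W^*$ defines a unital normal $*$-homomorphism $\theta\colon eNe\to f(A\rtimes\Gamma_0)f$ with $\theta(a)=\sum_s w_s\alpha_{\tilde s}^{-1}(a)w_s^*\in fAf$ on $eAe$; the decomposition $fAf=\bigoplus_s f_sAf_s$ exhibits $\theta(eAe)\subset fAf$ as a finite extension of index $|\mathcal{S}|$.

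Producing the $w_s$ splits into three sub-cases according to the hypotheses. If $\alpha$ is free on $\mathcal{Z}(A)$, Lemma \ref{free} yields $e\le e_0$ in $\mathcal{Z}(A)$ whose translates $\alpha_{\tilde s}^{-1}(e)$ are already pairwise orthogonal, and I take $w_s:=\alpha_{\tilde s}^{-1}(e)$. If $\alpha|_{\Gamma_0}$ is ergodic on $\mathcal{Z}(A)$ and $\mathcal{Z}(A)$ is diffuse, then $A\rtimes\Gamma_0$ is a $\rm II_1$ factor in which $A$ is a regular subalgebra, so after cutting $e_0$ so that $|\mathcal{S}|\cdot\mathrm{Tr}_{A\rtimes\Gamma_0}(e)\le 1$ I pick pairwise orthogonal $f_s\in\mathcal{Z}(A)$ of trace $\mathrm{Tr}_A(e)$ and use Lemma \ref{projections in Cartan} to find normalisers $u_s\in\mathcal{N}_{A\rtimes\Gamma_0}(A)$ implementing $u_s\alpha_{\tilde s}^{-1}(e)u_s^*=f_s$, then set $w_s:=u_s\alpha_{\tilde s}^{-1}(e)$. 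When $A$ itself is a $\rm II_1$ factor, factoriality of $A$ alone supplies $w_s\in A$ once $e$ is cut to have trace at most $1/|\mathcal{S}|$.

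Finally, setting $V:=U_\sigma^*WU_\sigma\in\langle M,A\rtimes\Gamma_0\rangle$, the polar data $V^*V=d\le e$ and $VV^*=fe_{A\rtimes\Gamma_0}$ match the setup of Corollary \ref{Popa embed4}, so $\xi:=V^*\hat f$ lies in $L^2(eMf)$ and its polar decomposition produces $v\in eMf$ with $xv=v\theta(x)$ for all $x\in eNe$. The relation $vv^*\in\mathcal{Z}(A)e\cap N'$ follows from $vv^*\in(eNe)'\cap eMe\subset(eAe)'\cap eMe=\mathcal{Z}(A)e$ (using freeness of $\alpha$ on $A$, so that $A'\cap M=\mathcal{Z}(A)$) together with the centrality $d\in A'\cap\langle M,A\rtimes\Gamma_0\rangle$, which propagates to force $vv^*$ into $N'$. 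The main obstacle will be the ergodic sub-case: without freeness of $\alpha$ on $\mathcal{Z}(A)$, the projections $\alpha_{\tilde s}^{-1}(e)$ cannot be orthogonalised inside $\mathcal{Z}(A)$ by an internal cut, and one has to import normalisers from $\mathcal{N}_{A\rtimes\Gamma_0}(A)$ via Lemma \ref{projections in Cartan}; ensuring that these still conjugate $A$ into $A$ so as to keep $\theta(eAe)$ inside $fAf$ (and preserve the finite-extension property) is the crux of the argument.
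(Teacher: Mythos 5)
Your proposal is correct and follows essentially the same route as the paper: both use Proposition \ref{intertwiner cross} to get the diagonal $\pi$ supported on a finite set $\mathcal{S}$, then orthogonalise the translates $\alpha_{\tilde{s}}^{-1}(e)$ via Lemma \ref{free} in the free case, Lemma \ref{projections in Cartan} (normalisers of the regular subalgebra $A$ in the $\rm II_1$ factor $A\rtimes\Gamma_0$) in the ergodic case, and equivalence of projections when $A$ is a factor, before extracting $v$ through the polar-decomposition mechanism of Corollary \ref{Popa embed4}. The only cosmetic difference is the order of operations — you compress $\pi$ to a rank-one corner with $W$ before applying Corollary \ref{Popa embed4}, whereas the paper takes the matrix-level partial isometry $w$ first and then identifies its Fourier coefficients $w_j=a_j\lambda_{\tilde{s}_j}$ explicitly to read off $vv^*=\sum_i a_ia_i^*\in\mathcal{Z}(A)e$ — and your alternative derivation of $vv^*\in\mathcal{Z}(A)e$ from $(eNe)'\cap eMe\subset(eAe)'\cap eMe$ via freeness is equally valid.
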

\begin{proof}
Let $e$, $d$, $\mathcal{S}$, and $\pi$ be as in the previous proposition. 
Write $\mathcal{S}=\{s_1,\ldots,s_n\}$ and write the matrix unit of $\mathbb{B}(\ell^2(\mathcal{S}))$ along $s_i$ as $(e_{i,j})_{i,j}$. 
Assume first $\mathcal{Z}(A)$ is diffuse and $A\rtimes\Gamma_0$ is a $\rm II_1$ factor. 
Then there are projections $e_i\in\mathcal{Z}(A)$ such that $e=\sum_{i=1}^ne_i$ and each $e_i$ has the same trace in $A\rtimes\Gamma_0$. 
We may assume $de_1\neq 0$ so that $\pi(e_1)\neq 0$. 
By the proof of Corollary \ref{Popa embed4} (regarding $e_1Ne_1\simeq e_1Ne_1\mathbin{\bar{\otimes}}\mathbb{C} e_{1,1}$), there is a partial isometry $w\in (e_1\otimes e_{1,1})(M\mathbin{\bar{\otimes}} \mathbb{M}_n)$ such that $(x\otimes e_{1,1})w=w\pi(x)$ for any $x\in e_1Ne_1$. 
This equation implies $aw_j=w_j\alpha_{\tilde{s}_j}^{-1}(a)$ for all $a\in A$, where we write $w=\sum_{j}w_{j}\otimes e_{1,j}$. 
By a similar manner to that in the proof of the last lemma, we get $w_j=a_j\lambda_{\tilde{s}_j}$ for some $a_j\in e_1\mathcal{Z}(A)$. 
By Lemma \ref{projections in Cartan}, take partial isometries $u_i$ in $A\rtimes\Gamma_0$ such that $u_i^*u_i= e_i$, $u_iu_i^*=\alpha_{\tilde{s}_i}^{-1}(e_1)$, and $u_i^*Au_i=e_iA$. 
Put $U:=(\delta_{i,j}u_i)_{i,j}$. 
Let $V$ be a partial isometry in $(A\rtimes\Gamma_0) \mathbin{\bar{\otimes}} \mathbb{B}(\ell^2(\mathcal{S}))$ given by $V_{i,j}=e_i\delta_{1,j}$ and note  $VV^*= U^*\pi(e_1)U$. 
Put $\theta:=\mathrm{Ad}(V^*U^*)\circ\pi\colon e_1Ne_1\rightarrow A\rtimes \Gamma_0\mathbin{\bar{\otimes}} \mathbb{C}e_{1,1}\simeq A\rtimes \Gamma_0$, $\theta(e_1)=\sum_{i=1}^ne_i=:f$ and  $wUV\simeq \sum_i a_i\lambda_{\tilde{s}_i}u_i=:v$. 
We have $v\theta(x)=xv$ for $x\in e_1Ne_1$ and hence $vv^*\in (e_1Ne_1)'$. 
Also we have $vv^*=\sum_ia_i\lambda_{\tilde{s}_i}u_iu_i^*\lambda_{\tilde{s}_i}^*a_i^*
=\sum_ia_i\lambda_{\tilde{s}_i}\alpha_{\tilde{s}_i}^{-1}(e_1)\lambda_{\tilde{s}_i}^*a_i^*=\sum_ia_ia_i^*\in e_1\mathcal{Z}(A)$ and 
$\theta(a)=\sum_{i=1}^n   u_i^*\alpha_{\tilde{s}}^{-1}(a) u_i  \in fAf$ for $a\in Ae_1$. 
Finally $\theta(e_1Ae_1)\subset fAf$ is a finite extension, since so is $\mathrm{Ad}U^*\circ\pi(e_1Ae_1)\subset E(A\mathbin{\bar{\otimes}}\mathbb{B}(\ell^2(\mathcal{S})))E$, where $E:=\mathrm{Ad}U^*\circ\pi(e_1)$. 

Next assume that $A$ is a $\rm II_1$ factor so that $\mathrm{Tr}_A$ is finite and $p=1$. Then decomposing $e=1_A=\sum_{i=1}^ne_i$ for $e_i\in A$ with $e_i\sim e_j$ in $A$, we can take $u_i,U,V,v,\pi$, and $\theta$ as above, which do the work.

Finally assume $\mathcal{Z}(A)$ is diffuse and $\alpha$ is free on $\mathcal{Z}(A)$. 
By Lemma \ref{free}, there is $e_1\leq e$ such that $\alpha_{\tilde{s}}^{-1}(e_1)\alpha_{\tilde{t}}^{-1}(e_1)=0$ for any $s,t\in\mathcal{S}$ with $s\neq t$. 
In the case, we do not need to take $u_i$ above. So putting $\theta:=\mathrm{Ad}V^*\circ\pi|_{e_1Ne_1}$, we are done. 
\end{proof}

\subsection{\bf Proof of Theorem \ref{B}}

In the subsection, we use the same notation as in Theorem \ref{B}. We write $M:=A\rtimes \Gamma$, $\Gamma_X:=\prod_{i\in X}\Gamma_i$ and $\Lambda_Y:=\prod_{j\in Y}\Lambda_j$ for $Y\subset \{1,\ldots,n\}$ and $X\subset \{1,\ldots,m\}$. 

\begin{Lem}\label{reduction lemma2}
For any subset $Y\subset \{1,\ldots,n\}$ with $|Y|\leq m$, there is $X\subset \{1,\ldots,n\}$ such that $|X|=|Y|$ and $B\rtimes\Lambda_Y\preceq_{M} A\rtimes\Gamma_X$.
\end{Lem}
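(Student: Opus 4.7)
The plan is to adapt the strategy of Lemma \ref{reduction lemma} to the crossed product setting, with Proposition \ref{bi-exact theorem} and Corollary \ref{intertwiner cross2} playing the roles of Proposition \ref{location core} and Corollary \ref{Popa embed4}, respectively. I proceed by induction on $m$, the number of bi-exact direct factors of $\Gamma$; the base case $m = 0$ is vacuous.

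For the inductive step with $m \geq 1$, fix $Y \subset \{1, \ldots, n\}$ with $|Y| \leq m$. If $|Y| = m$, take $X = \{1, \ldots, m\}$, so that $B \rtimes \Lambda_Y \subset p(A \rtimes \Gamma)p$ is the desired embedding. Otherwise $|Y| < m$, and I apply Proposition \ref{bi-exact theorem} with $\Lambda_0 = \Lambda_Y$, using that each $\Gamma_i$ is bi-exact and hence that $\Gamma$ is bi-exact relative to $\mathcal{G} = \{\Gamma_{\{i\}^c}\}_{i = 1}^{m}$. To rule out case (i), I exploit that the relative commutant $L\Lambda_Y' \cap pMp$ contains $L\Lambda_{Y^c}$, which is non-amenable whenever $Y^c \neq \emptyset$ (since each $\Lambda_j$ is non-amenable by assumption). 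The degenerate situation $Y = \{1, \ldots, n\}$ with $n < m$ is handled separately by applying the proposition to a single $\Lambda_0 = \Lambda_j$ for $j \in Y$ and combining with the inductive hypothesis applied to $Y \setminus \{j\}$. Either way this yields $B \rtimes \Lambda_Y \preceq_M A \rtimes \Gamma_{X_1}$ for some $X_1 \subset \{1, \ldots, m\}$ with $|X_1| = m - 1$.

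To close the induction, Corollary \ref{intertwiner cross2} converts this embedding into a standard-form intertwiner: projections $e, f \in \mathcal{Z}(A)$ with $e \leq p$, a normal unital $*$-homomorphism $\theta : e(B \rtimes \Lambda_Y) e \to f (A \rtimes \Gamma_{X_1}) f$ with $\theta(eAe) \subset fAf$ a finite extension, and a non-zero partial isometry $v$ with $xv = v\theta(x)$. Since $\Gamma_{X_1}$ is a direct product of $m - 1$ bi-exact groups acting on $A$ via a trace-preserving free action with the required ergodicity properties on centers, after recasting the image of $\theta$ into a genuine crossed product over a corner of $A$ the new inclusion fits the hypotheses of the lemma with $m$ replaced by $m - 1$. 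The inductive hypothesis then produces $X \subset X_1$ with $|X| = |Y|$ together with an embedding of the image into $fAf \rtimes \Gamma_X$; transporting back along $v$ yields $B \rtimes \Lambda_Y \preceq_M A \rtimes \Gamma_X$.

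The main obstacle is the recasting step in the last paragraph: the image $\theta(e(B \rtimes \Lambda_Y) e)$ is not literally of the form $B' \rtimes \Lambda'$ with $B' \subset fAf$, because the projections $e, f \in \mathcal{Z}(A)$ need not be $\Lambda_Y$-invariant and the elements $\theta(e \lambda_s e)$ do not directly assemble into a group action on $fAf$. This is to be resolved either by extending $\theta$ to a unital normal $*$-homomorphism on all of $B \rtimes \Lambda_Y$ along the lines of Remark \ref{intertwiner cross1.5} (exploiting regularity of $pAp$ in $B \rtimes \Lambda_Y$ coming from the action $\beta$), or by bypassing the crossed-product formulation and working directly with intertwiners in the basic construction $\langle f(A \rtimes \Gamma_{X_1})f, fAf \rtimes \Gamma_X \rangle$, in the spirit of the corner reduction of Lemma \ref{reduction lemma}.
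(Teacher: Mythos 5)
Your overall strategy (alternate the dichotomy of Proposition \ref{bi-exact theorem} with an intertwiner--transport step and iterate) is the right one, and you correctly flag the main technical obstacle, but the way you organize the induction creates a genuine gap. By taking $\Lambda_0=\Lambda_Y$ at the first step, the relative-commutant element witnessing $B\rtimes\Lambda_Y\preceq_M A\rtimes\Gamma_{X_1}$ commutes only with $B\rtimes\Lambda_Y$, so the transported homomorphism is defined only on (a corner of) $B\rtimes\Lambda_Y$: the complementary factors $L\Lambda_j$, $j\in Y^c$, whose non-amenability is what excludes alternative (i), are not carried into the new ambient algebra. Your recursion therefore lands, already at the second step whenever $|Y|\leq m-2$ (e.g.\ $Y$ a singleton and $m\geq 3$, exactly the case needed for Theorem \ref{B}), in your ``degenerate situation'' $Y=\{1,\dots,n'\}$ with $n'<m'$. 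There the literal statement is false: take $n=1$, $\Lambda_1=\Gamma=\Gamma_1\times\Gamma_2$ acting identically on $B=A=R$, so $B\rtimes\Lambda_1=M$, and $M\not\preceq_M A\rtimes\Gamma_i$ because $M'\cap\langle M,A\rtimes\Gamma_i\rangle$ sits inside the $\Gamma_{\{i\}^c}$-translation-invariant part of $\mathcal{Z}(A)\mathbin{\bar{\otimes}}\ell^\infty(\Gamma/\Gamma_i)$ and hence contains no nonzero trace-finite positive element. And your proposed repair --- apply the proposition to a single $\Lambda_j$ and ``combine'' with the inductive hypothesis for $Y\setminus\{j\}$ --- is not a valid step: intertwiners witnessing $B\rtimes\Lambda_j\preceq_M A\rtimes\Gamma_{X_j}$ for different $j$ live in different basic constructions and do not assemble into one for $B\rtimes\Lambda_Y$ over $A\rtimes\Gamma_{\bigcup_jX_j}$ (the Sako-type combination lemma used later in the paper requires the targets to coincide). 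Part of the trouble is that the printed conclusion $|X|=|Y|$ should read $|X^c|=|Y^c|$, as in Lemma \ref{reduction lemma}; the two agree only when $n=m$, and it is the latter form that the proof of Theorem \ref{B} actually uses.

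The paper instead peels off the elements of $Y^c$ one at a time: at the $k$-th step it applies Proposition \ref{bi-exact theorem} to $\Lambda_0=\Lambda_{\{1,\dots,k\}^c}$, so that the next complementary factor $\Lambda_{k+1}$ still lies in $L\Lambda_0'\cap pMp$ \emph{and} in the domain of the transport map. Concretely, Proposition \ref{intertwiner cross} together with Remark \ref{intertwiner cross1.5} yields a unital normal $\pi$ on all of $B\rtimes\Lambda_{\{1,\dots,k\}^c}$ with values in $\tilde A\rtimes\Gamma_{X_k}$, where $\tilde A:=A\mathbin{\bar{\otimes}}\mathbb{M}_{m_1}$ carries the trivial amplification of the $\Gamma_{X_k}$-action and $\pi(B)\subset\pi(p)\tilde A\pi(p)$. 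Keeping this matrix amplification, rather than compressing to a corner $f(A\rtimes\Gamma_{X_1})f$ as in your use of Corollary \ref{intertwiner cross2}, is what makes the next application of Proposition \ref{bi-exact theorem} legitimate with no re-verification of freeness or ergodicity: $\tilde A$ is still amenable, $\mathrm{Tr}|_{\mathcal{Z}(\tilde A)}$ is still semifinite, and $\pi(L\Lambda_{k+1})$ is available to exclude alternative (i). Two further omissions you should repair: in the weakly amenable case ($A$ non-amenable, $p=1$) non-amenability of $L\Lambda_{Y^c}$ alone does not exclude (i), since $\langle M,A\rangle$ is then not amenable --- one must restrict the expectation to the amenable algebra $\mathbb{C}1_B\mathbin{\bar{\otimes}}\mathbb{B}(\ell^2(\Lambda))\subset\langle B\rtimes\Lambda,B\rangle\subset\langle M,A\rangle$, using at later stages that $\pi(B)\subset\tilde A$ is a finite extension; and the pull-back to $B\rtimes\Lambda_{\{1,\dots,k+1\}^c}\preceq_M A\rtimes\Gamma_{X_{k+1}}$ is carried out by composing partial isometries in the basic constructions exactly as in Lemma \ref{reduction lemma}, which is the second of the two resolutions you sketch.
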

\begin{proof}
We apply Proposition \ref{bi-exact theorem} to $B\rtimes (\Lambda_2\times\cdots\times\Lambda_n)\subset pMp$. 
If there is an expectation from $p\langle M,A\rangle p$ onto $L(\Lambda_2\times\cdots\times\Lambda_n)'$ which contains $L\Lambda_1$, then we have a contradiction when $A$ is amenable because $\langle M,A\rangle$ is amenable. 
When $A$ is non-amenable and $p=1$, we have an expectation from $\mathbb{C}1_B\mathbin{\bar{\otimes}} \mathbb{B}(\ell^2(\Lambda))\subset \langle B\rtimes\Lambda,B\rangle\subset \langle M,A\rangle$ into $L\Lambda_1$ and hence a contradiction. 
So we have $B\rtimes(\Lambda_2\times\cdots\times\Lambda_n)\preceq_{M}A\rtimes \Gamma_{X_1}$ for some $X_1:=\{1,\ldots, m\}\setminus \{i\}$. 
For simplicity we assume $i=1$. 

We take $e,d,\mathcal{S}$, and $\pi$ in Proposition \ref{intertwiner cross} and write $\mathbb{B}(\ell^2(\mathcal{S}))=:\mathbb{M}_{m_1}$ and $\tilde{A}:=A\mathbin{\bar{\otimes}}\mathbb{M}_{m_1}$. 
Considering the trivial action of $\Gamma_{X_1}$ on $\mathbb{M}_{m_1}$, we regard $(A\rtimes \Gamma_{X_1})\mathbin{\bar{\otimes}} \mathbb{M}_{m_1}=\tilde{A}\rtimes\Gamma_{X_1}$. 
Here we claim that $\pi(e(B\rtimes (\Lambda_3\times\cdots\times\Lambda_n))e)\preceq_{\pi(e)(\tilde{A}\rtimes \Gamma_{X_1})\pi(e)} \pi(e)(\tilde{A}\rtimes \Gamma_{X_2})\pi(e)$ for some $X_2:=X_1\setminus \{i\}$ (we assume $i=2$ for simplicity). 
By Remark \ref{intertwiner cross1.5}, we extend $\pi$ on $B\rtimes(\Lambda_2\times\cdots\times\Lambda_n)$ and apply again Proposition \ref{bi-exact theorem} to $\pi(B\rtimes (\Lambda_3\times\cdots\times\Lambda_n))\subset \tilde{A}\rtimes \Gamma_{X_1}$. 
If there is an expectation from $\pi(p)\langle \tilde{A}\rtimes \Gamma_{X_1}, \tilde{A}\rangle \pi(p)$ onto $\pi(L(\Lambda_3\times\cdots\times\Lambda_n))'$ which contains $\pi(L\Lambda_2)$, then we have a contradiction when $A$ is amenable. 
When $A$ is non-amenable and $p=1$, 
since $\pi(B)\subset \tilde{A}$ is a finite extension, there is an expectation from $\langle \tilde{A}\rtimes \Gamma_{X_1}, \pi(B)\rangle$ onto $\langle \tilde{A}\rtimes \Gamma_{X_1}, \tilde{A}\rangle$. 
So we have an expectation from $\langle \pi(B\rtimes(\Lambda_2\times\cdots\times\Lambda_n)),\pi(B)\rangle$, which is a subalgebras of $\langle \tilde{A}\rtimes \Gamma_{X_1}, \pi(B)\rangle$, onto $\pi(L\Lambda_2)$. This contradicts to the amenability of $\Lambda_2$. 
Thus we get $\pi(B\rtimes (\Lambda_3\times\cdots\times\Lambda_n))\preceq_{\tilde{A}\rtimes \Gamma_{X_1}} \tilde{A}\rtimes \Gamma_{X_2}$ for some $X_2$. 
By Corollary \ref{Popa embed3}, we get the claim. 

Now by construction, we are in fact seeing $e(B\rtimes(\Lambda_2\times\cdots\times\Lambda_n))ed\subset \pi(e)(A\rtimes \Gamma_{X_1}\mathbin{\bar{\otimes}}\mathbb{B}(\ell^2(\mathcal{S})))\pi(e)$. 
So by the same manner as in the proof of Lemma \ref{reduction lemma}, we can deduce $B\rtimes(\Lambda_3\times\cdots\times\Lambda_n)\preceq_{M}A\rtimes \Gamma_{X_2}$. 
This completes the proof. 
\end{proof}

\begin{proof}[\bf Proof of Theorem \ref{B} (first half)]
Suppose by contradiction that $n>m$. 
Then by the previous lemma, we have $B\rtimes\Lambda_Y\preceq_M A$ for some $Y\neq \emptyset$. 
By Proposition \ref{intertwiner cross} and Remark \ref{intertwiner cross1.5}, we have  a $*$-homomorphism $\pi\colon B\rtimes\Lambda_Y\rightarrow A\mathbin{\bar{\otimes}}\mathbb{M}_n$ for some $n$ such that $\pi(B)\subset \pi(p)(A\mathbin{\bar{\otimes}} \mathbb{M}_n)\pi(p)$ is a finite extension. 
Then since $\pi(B)$ is co-amenable in $\pi(p)(A\mathbin{\bar{\otimes}} \mathbb{M}_n)\pi(p)$, $\pi(B)$ is co-amenable in $\pi(B\rtimes\Lambda_Y)$. 
This contradicts to the non-amenability of $\Lambda_Y$. 
\end{proof}

By Lemma \ref{reduction lemma2}, if $n=m$, for any $i$ there is some $j$ such that $B\rtimes \Lambda_i\preceq_MA\rtimes \Gamma_j$. We show that the assignment $i\mapsto j$ is one to one.

\begin{Lem}
If $B\rtimes\Lambda_i\preceq_{M}A\rtimes \Gamma_j$ and $B\rtimes\Lambda_i\preceq_{M}A\rtimes \Gamma_l$ for some $i$, $j$ and $l$, then we have $j=l$.
\end{Lem}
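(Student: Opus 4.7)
The plan is to reduce to the case $B \rtimes \Lambda_i \preceq_M A$ under the assumption $j \neq l$, and then derive a contradiction exactly as in the first half of the proof of Theorem \ref{B}: by Proposition \ref{intertwiner cross} and Remark \ref{intertwiner cross1.5} we would produce a $*$-homomorphism $\pi \colon B \rtimes \Lambda_i \to A \mathbin{\bar{\otimes}} \mathbb{M}_n$ with $\pi(B) \subset \pi(p)(A \mathbin{\bar{\otimes}} \mathbb{M}_n)\pi(p)$ a finite extension, forcing $\pi(B)$ to be co-amenable in $\pi(B \rtimes \Lambda_i)$ and hence $\Lambda_i$ amenable --- contradicting the hypothesis.

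To get $B \rtimes \Lambda_i \preceq_M A$, the strategy is to convert the two embeddings into approximate containments and then intersect the families, following the template of Lemmas \ref{uniqueness1} and \ref{uniqueness2}. With $\mathcal{S}_1 := \{\Gamma_j\}$ and $\mathcal{S}_2 := \{\Gamma_l\}$, applying the direction $1 \Rightarrow 2$ of Lemma \ref{2.5} to each hypothesis yields non-zero projections $q_a \in (B \rtimes \Lambda_i)' \cap pMp$ with $(B \rtimes \Lambda_i) q_a \subset_{\rm approx} M^{\mathcal{S}_a}$; passing to the maximum projections provided by Proposition \ref{2.6} gives $Q_a \in \mathcal{Z}(\mathcal{N}_{pMp}(B \rtimes \Lambda_i)'')$ with the same property. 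Assuming one can choose a common non-zero subprojection $Q \leq Q_1 Q_2$, Lemma \ref{2.7} combined with Remark \ref{2.8} produces $(B \rtimes \Lambda_i) Q \subset_{\rm approx} M^{\mathcal{S}}$ for $\mathcal{S} = \{\Gamma_j \cap g \Gamma_l g^{-1} \mid g \in \Gamma\}$; as $\Gamma$ is a direct product, each $\Gamma_k$ is normal, so $\Gamma_j \cap \Gamma_l = \{e\}$ for $j \neq l$ and $M^{\mathcal{S}} = A$. The direction $2 \Rightarrow 1$ of Lemma \ref{2.5} then delivers $B \rtimes \Lambda_i \preceq_M A$, completing the reduction.

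The hard part will be the projection-matching step, namely, producing the common non-zero $Q \leq Q_1 Q_2$. The key inputs are that $B \rtimes \Lambda \subset \mathcal{N}_{pMp}(B \rtimes \Lambda_i)''$ (since $\Lambda_{i^c}$ normalizes $\Lambda_i$), and that $B \rtimes \Lambda$ is a factor (by freeness of $\beta$ and ergodicity of each $\beta|_{\Lambda_k}$ on $\mathcal{Z}(B)$); an outerness argument then gives $(B \rtimes \Lambda_i)' \cap (B \rtimes \Lambda) = \mathbb{C}$, so that $\mathcal{Z}(\mathcal{N}_{pMp}(B \rtimes \Lambda_i)'')$ lies inside $(B \rtimes \Lambda)' \cap pMp$. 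In the irreducible case $(B \rtimes \Lambda)' \cap pMp = \mathbb{C}$ this forces $Q_a = p$ and the matching is trivial; in the general case one must exploit the maximality of the $Q_a$ and the fine structure of this central abelian algebra --- precisely the kind of subtlety that caused the analogous uniqueness lemmas \ref{uniqueness1} and \ref{uniqueness2} in the tensor-product setting to require separate cases.
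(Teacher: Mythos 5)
Your overall route is the same as the paper's: convert both embeddings into approximate containments via Lemma \ref{2.5}, intersect them via Lemma \ref{2.7} and Remark \ref{2.8} (using $\Gamma_j\cap g\Gamma_lg^{-1}=\Gamma_j\cap\Gamma_l=\{e\}$ for $j\neq l$), conclude $B\rtimes\Lambda_i\preceq_M A$, and contradict the first half of Theorem \ref{B} via Proposition \ref{intertwiner cross} and Remark \ref{intertwiner cross1.5}. However, the step you flag as ``the hard part'' --- producing a common nonzero subprojection $Q\leq Q_1Q_2$ --- is left unresolved in your write-up, and the apparatus you set up for it (the maximal projections of Proposition \ref{2.6}, the normalizer $\mathcal{N}_{pMp}(B\rtimes\Lambda_i)''$, and the commutant of all of $B\rtimes\Lambda$) obscures the fact that there is nothing to resolve. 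The standing hypotheses of Theorem \ref{B} already give $(B\rtimes\Lambda_i)'\cap pMp=\mathbb{C}p$ directly: freeness of $\alpha$ on $A$ gives $A'\cap M=\mathcal{Z}(A)$, hence $B'\cap pMp=(pAp)'\cap pMp=\mathcal{Z}(A)p=\mathcal{Z}(B)$ (recall $B=pAp$ with $p\in\mathcal{Z}(A)$), and an element of $\mathcal{Z}(B)$ commuting with $L\Lambda_i$ is a $\beta|_{\Lambda_i}$-fixed point, so ergodicity of $\beta|_{\Lambda_i}$ on $\mathcal{Z}(B)$ forces it to be a scalar multiple of $p$. Consequently the projections furnished by Lemma \ref{2.5} are both equal to $p$, the matching is automatic, and the ``general case'' you worry about simply cannot occur under the hypotheses in force. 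This one-line relative commutant computation is exactly what the paper's proof opens with, and it is the single missing ingredient in your argument; with it inserted, the remainder of your proposal goes through as written.
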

\begin{proof}
Suppose $j\neq l$. 
Since $(B\rtimes\Lambda_i)' \cap pMp=\mathbb{C}p$, 
by Lemma \ref{2.5}, we have $B\rtimes\Lambda_i\subset_{\rm approx}A\rtimes \Gamma_a$ for $a=j,l$. Hence we have $B\rtimes\Lambda_i\subset_{\rm approx} A$ by Lemma \ref{2.7}. This implies $B\rtimes\Lambda_i\preceq_M A$ and it contradicts to (the proof of) the first part of Theorem \ref{B}.  
\end{proof}

The proof of the following lemma was from that of \cite[\textrm{Lemma 33}]{Sa09}. 

\begin{Lem}
If $B\rtimes\Lambda_i\preceq_{M}A\rtimes \Gamma_j$ and $B\rtimes\Lambda_k\preceq_{M}A\rtimes \Gamma_j$ for some $i$, $j$ and $k$, then we have $i=k$.
\end{Lem}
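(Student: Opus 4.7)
The plan is to assume $i \neq k$ and derive a contradiction. As in the proof of the previous lemma, freeness of $\beta$ on $B$ together with ergodicity of each $\beta|_{\Lambda_\ell}$ on $\mathcal{Z}(B)$ yields $(B \rtimes \Lambda_H)' \cap pMp = \mathbb{C} p$ for $H \in \{\Lambda_i, \Lambda_k, \Lambda_{\{i,k\}}\}$, so Lemma \ref{2.9} promotes both hypotheses to approximate containments $B \rtimes \Lambda_i \subset_{\rm approx} A \rtimes \Gamma_j$ and $B \rtimes \Lambda_k \subset_{\rm approx} A \rtimes \Gamma_j$. Applying Lemma \ref{reduction lemma2} to $Y = \{i,k\}$, one obtains $X \subset \{1, \ldots, m\}$ with $|X| = 2$ and $B \rtimes \Lambda_{\{i,k\}} \preceq_M A \rtimes \Gamma_X$, which Lemma \ref{2.9} upgrades to $\subset_{\rm approx}$. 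Restricting this to the subalgebra $B \rtimes \Lambda_i$ and intersecting with $B \rtimes \Lambda_i \subset_{\rm approx} A \rtimes \Gamma_j$ via Lemma \ref{2.7} and Remark \ref{2.8} yields $B \rtimes \Lambda_i \subset_{\rm approx} A \rtimes \Gamma_{\{j\} \cap X}$; the possibility $j \notin X$ would give $B \rtimes \Lambda_i \preceq_M A$, contradicting non-amenability of $\Lambda_i$ by the argument in the proof of the first half of Theorem \ref{B}. Hence $j \in X$, and one writes $X = \{j, b\}$ with $b \neq j$.

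The central point is to sharpen $B \rtimes \Lambda_{\{i,k\}} \preceq_M A \rtimes \Gamma_{\{j,b\}}$ to $B \rtimes \Lambda_{\{i,k\}} \preceq_M A \rtimes \Gamma_j$. For this, extract via Corollary \ref{intertwiner cross2} and Remark \ref{intertwiner cross1.5} a $*$-homomorphism $\pi \colon B \rtimes \Lambda_{\{i,k\}} \to \pi(p)(\tilde{A} \rtimes \Gamma_{\{j,b\}}) \pi(p)$ with $\pi(B) \subset \pi(p) \tilde{A} \pi(p)$ a finite extension, implemented by a partial isometry in $M$. Apply Proposition \ref{bi-exact theorem} to the inclusion $\pi(B) \rtimes \Lambda_{\{i,k\}} \subset \pi(p) (\tilde{A} \rtimes \Gamma_{\{j,b\}}) \pi(p)$ with $\Lambda_0 = \Lambda_{\{i,k\}}$ and $\mathcal{G} = \{\Gamma_j, \Gamma_b\}$. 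The amenable conditional-expectation case (i) is to be ruled out by exploiting that, via the companion application with $\Lambda_0 = \Lambda_i$, the commutant $\pi(L \Lambda_i)' \cap \pi(p)(\tilde{A} \rtimes \Gamma_{\{j,b\}}) \pi(p)$ already contains the non-amenable algebra $\pi(L \Lambda_k)$. Case (ii) then produces $\pi(B) \rtimes \Lambda_{\{i,k\}} \preceq \tilde{A} \rtimes \Gamma_l$ for some $l \in \{j, b\}$; pulling back via the intertwining partial isometry gives $B \rtimes \Lambda_{\{i,k\}} \preceq_M A \rtimes \Gamma_l$, and since $B \rtimes \Lambda_i \subset B \rtimes \Lambda_{\{i,k\}}$, the sub-case $l = b$ would restrict to $B \rtimes \Lambda_i \preceq_M A \rtimes \Gamma_b$, contradicting the previous lemma since $b \neq j$. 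Therefore $l = j$.

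Once $B \rtimes \Lambda_{\{i,k\}} \preceq_M A \rtimes \Gamma_j$ is in hand, extract another intertwiner $\pi' \colon B \rtimes \Lambda_{\{i,k\}} \to \pi'(p) (\tilde{A}' \rtimes \Gamma_j) \pi'(p)$ with $\pi'(B)$ a finite extension in $\pi'(p) \tilde{A}' \pi'(p)$, and apply Proposition \ref{bi-exact theorem} to $\pi'(B) \rtimes \Lambda_{\{i,k\}} \subset \pi'(p)(\tilde{A}' \rtimes \Gamma_j) \pi'(p)$ with $\Lambda_0 = \Lambda_i$ and $\mathcal{G} = \{\{e\}\}$, using that the bi-exact group $\Gamma_j$ is bi-exact relative to the trivial subgroup. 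Case (i) again contradicts non-amenability of $\pi'(L \Lambda_k) \subset \pi'(L \Lambda_i)'$, and case (ii) gives $\pi'(B) \rtimes \Lambda_i \preceq \tilde{A}'$, which pulls back to $B \rtimes \Lambda_i \preceq_M A$ and contradicts non-amenability of $\Lambda_i$ via the first-half argument. The main obstacle is the rigorous exclusion of the amenability case of Proposition \ref{bi-exact theorem} for $\Lambda_0 = \Lambda_{\{i,k\}}$ in the central step, since the commutant of $\pi(L \Lambda_{\{i,k\}})$ itself, as opposed to that of $\pi(L \Lambda_i)$, need not obviously contain a non-amenable subalgebra; this is where the commuting direct-product structure $\Lambda_{\{i,k\}} = \Lambda_i \times \Lambda_k$ must be exploited in the spirit of the argument in \cite[Lemma 33]{Sa09}.
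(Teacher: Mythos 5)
Your overall plan is reasonable up to the point where everything hinges on upgrading $B\rtimes\Lambda_{\{i,k\}}\preceq_M A\rtimes\Gamma_{\{j,b\}}$ to $B\rtimes\Lambda_{\{i,k\}}\preceq_M A\rtimes\Gamma_j$, and there the argument has a genuine hole that you yourself flag but do not close. Proposition \ref{bi-exact theorem} applied with $\Lambda_0=\Lambda_{\{i,k\}}$ cannot be used here: to exclude alternative $\rm(i)$ one needs the relative commutant of $\pi(L\Lambda_{\{i,k\}})$ to contain something non-amenable, and since $\Lambda_{\{i,k\}}$ is the whole acting group (in the $n=2$ picture), that commutant is just $\pi(p)\mathcal{Z}(B)\pi(p)$, which is abelian. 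The ``companion application with $\Lambda_0=\Lambda_i$'' you invoke only yields $B\rtimes\Lambda_i\preceq A\rtimes\Gamma_l$ for some single $l$, which is information you already have from the hypotheses and does not combine with the $\Lambda_k$ statement to control the product $\Lambda_i\times\Lambda_k$; approximate containment of the unit balls of $B\rtimes\Lambda_i$ and $B\rtimes\Lambda_k$ separately in $A\rtimes\Gamma_j$ does not pass to the algebra they generate, because products of sets small relative to $\Gamma_j$ need not be small relative to $\Gamma_j$. So the central claim is simply not proved.

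What the paper does instead (following Sako) is to work directly with the intertwining data in the basic construction $\langle M, A\rtimes\Gamma_j\rangle$. One takes finite-trace positive elements (projections) $e_a\in (B\rtimes\Lambda_a)'\cap ((A\rtimes\Gamma_j)^{\rm op})'$ for $a=i,k$ witnessing the two embeddings. Translating $e_i$ by $\rho_g\lambda_s$ with $s\in\Lambda_{\{i\}^c}$, $g\in\Gamma_{\{j\}^c}$ preserves its defining properties, and the supremum of all translates lies in $(B\rtimes\Lambda)'p\cap(A\rtimes\Gamma)=\mathbb{C}p$, hence equals $p$; this lets one replace $e_i$ by a finite join of translates so that $e_ie_k\neq0$. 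Then, crucially, one averages: the circumcenter $d$ of $\overline{\mathrm{co}}\{\lambda_se_i\lambda_s^*\mid s\in\Lambda_k\}$ in $L^2(\langle M,A\rtimes\Gamma_j\rangle)$ lies in $(B\rtimes(\Lambda_i\times\Lambda_k))'p\cap((A\rtimes\Gamma_j)^{\rm op})'$ and is non-zero because $\langle \lambda_se_i\lambda_s^*,e_k\rangle=\mathrm{Tr}(e_ie_k)>0$ uniformly in $s\in\Lambda_k$ (here the $\Lambda_k$-invariance of $e_k$ is exactly where the commuting direct-product structure enters). This gives $B\rtimes(\Lambda_i\times\Lambda_k)\preceq_M A\rtimes\Gamma_j$ with no appeal to bi-exactness, after which the reduction argument yields $B\rtimes\Lambda_i\preceq_M A$ and the contradiction, as in your final step. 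Without this averaging argument your proof does not go through.
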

\begin{proof}
By assumption, there are non-zero trace finite projections $e_a\in (B\rtimes\Lambda_a)' \cap ((A\rtimes\Gamma_j)^{\rm op})'$ with $e_a= e_ap$ for $a=i,k$. 
Observe that for any $s\in \Lambda_{\{i\}^c}$ and $g\in \Gamma_{\{j\}^c}$, the element $\rho_g\lambda_s e_i \lambda_s^*\rho_g^*$ satisfies the same condition as that on $e_i$. 
Let $e$ be the element $\sup_{s\in \Lambda_{\{i\}^c},g\in \Gamma_{\{j\}^c}}\rho_g\lambda_s e_i \lambda_s^*\rho_g^*$. Then $e$ is contained in 
\begin{eqnarray*}
&& (L\Lambda_{\{i\}^c})'\cap(B\rtimes\Lambda_i)'p \cap  ((L\Gamma_{\{j\}^c})^{\rm op})'\cap((A\rtimes\Gamma_j)^{\rm op})'\\
&=& (B\rtimes\Lambda)'p \cap ((A\rtimes\Gamma)^{\rm op})'\\
&=& (B\rtimes\Lambda)'p \cap (A\rtimes\Gamma)\\
&=& (B\rtimes\Lambda)' \cap p\mathcal{Z}(A)= L\Lambda'\cap \mathcal{Z}(B)=\mathbb{C}p.
\end{eqnarray*}
Hence we have $e=p$. Thus there exist finite subsets $\mathcal{E} \subset \Lambda_{\{i\}^c}$ and $\mathcal{F}\subset \Gamma_{\{j\}^c}$ satisfying that $\vee_{s\in \mathcal{E},g\in\mathcal{F}}\rho_g\lambda_s e_i \lambda_s^*\rho_g^*$ is not orthogonal to $e_k$. 
By exchanging $e_i$ with this element, we can assume $e_ie_k\neq0$. 

Suppose now $i\neq k$. 
We claim that $B\rtimes(\Lambda_i\times\Lambda_k)\preceq_{M}A\rtimes \Gamma_j$. 
Consider the $\sigma$-weak closure of $\mathrm{co}\{\lambda_se_i\lambda_s^*\mid s\in \Lambda_k \}$ and, 
regarding this set as a subset of $L^2(\langle A\rtimes \Gamma, A\rtimes \Gamma_j\rangle)$, take the circumcenter $d$, which is contained in $L\Lambda_k'\cap (B\rtimes\Lambda_i)'p \cap ((A\rtimes\Gamma_j)^{\rm op})'=(B\rtimes (\Lambda_i\times\Lambda_k))'p\cap ((A\rtimes\Gamma_j)^{\rm op})'$. 
This is non-zero since we have for any $s\in\Lambda_k$,
\begin{eqnarray*}
 \langle\lambda_se_i\lambda_s^* , e_k\rangle
=\mathrm{Tr}_{\langle A\rtimes \Gamma, A\rtimes \Gamma_i\rangle}(\lambda_se_i\lambda_s^*e_k)
=\mathrm{Tr}_{\langle A\rtimes \Gamma, A\rtimes \Gamma_i\rangle}(e_ie_k)>0.
\end{eqnarray*}
So we get the claim. 

Now by the proof of Lemma \ref{reduction lemma2},  we have $B\rtimes\Lambda_i\preceq_{M}A$. This contradicts to (the proof of) the first part of Theorem \ref{B}. 
\end{proof}

Thanks for previous two lemmas, the assignment $i\mapsto j$ above gives a bijective map on $\{1,\ldots,n\}$. Putting $j=\sigma(i)$, we complete the proof.

\section{\bf Another approach to prime factorization results}

\subsection{\bf Irreducibility and primeness}

In the number theory, there are two notions of prime numbers. Recall that a number $p\in\mathbb{N}$ is irreducible if for any $q,r\in\mathbb{N}$ with $p=qr$, we have $q=1$ or $r=1$; and is prime if for any $q,r,s\in\mathbb{N}$ with $pq=rs$, we have $p\mid r$ or $p\mid s$. 
In our von Neumann algebra theory, we used irreducibility as a definition of primeness for von Neumann algebras. If we adopt primeness of the number theory, the following condition should be a corresponding notion:
\begin{itemize}
	\item We say a $\rm II_1$ factor $M$ is ``prime'' if for any $\rm II_1$ factor $N,K,L$ with $M\mathbin{\bar{\otimes}} N=K\mathbin{\bar{\otimes}} L$, there is a unitary $u\in\mathcal{U}(M)$ and $t>0$ such that 
$uM u^*\subset K^t$ or $uMu^*\subset L^t$.
\end{itemize}
We prove that there are such examples. 

\begin{Thm}
Let $\Gamma$ be a discrete group. Assume that $\Gamma$ is non-amenable, ICC, bi-exact and weakly amenable. 
Then for any $\rm II_1$ factor $B$, $K$ and $L$ with $L\Gamma\mathbin{\bar{\otimes}} B=K\mathbin{\bar{\otimes}} L(=:M)$, we have either $L\Gamma\preceq_M K$ or $L\Gamma\preceq_M L$. 
If $L\Gamma\preceq_M K$, then there is a unitary $u\in\mathcal{U}(M)$ and $t>0$ such that 
$uL\Gamma u^*\subset K^t$.
\end{Thm}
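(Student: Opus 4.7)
Write $M:=L\Gamma\mathbin{\bar{\otimes}} B=K\mathbin{\bar{\otimes}} L$. The plan is to prove the dichotomy $L\Gamma\preceq_M K$ or $L\Gamma\preceq_M L$ and then invoke Lemma \ref{hariawase1} for the patching step.

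First I would establish the dichotomy. My starting point is to view $M$ as the crossed product $B\rtimes_{\mathrm{triv}}\Gamma$ with the trivial $\Gamma$-action on $B$. Since $\Gamma$ is bi-exact and weakly amenable, the Popa--Vaes-style dichotomy underlying Proposition \ref{bi-exact theorem} (compare \cite[Theorem 3.1]{PV12}) applies: for every von Neumann subalgebra $Q\subset pMp$, either $Q\preceq_M B$, or the normalizer $\mathcal{N}_{pMp}(Q)''$ is amenable relative to $B$ inside $M$. I would apply this with $p=1$ and $Q=K$. Because $L$ commutes with $K$ and $K\vee L=M$, we have $\mathcal{N}_M(K)''=M$; and $M=L\Gamma\mathbin{\bar{\otimes}} B$ being amenable relative to $B$ is equivalent to $L\Gamma$ being amenable, which contradicts the non-amenability of $\Gamma$. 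Hence $K\preceq_M B$.

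Next I would convert $K\preceq_M B=(L\Gamma)'\cap M$ into $L\Gamma\preceq_M L=K'\cap M$ by taking relative commutants inside the tensor decomposition $M=K\mathbin{\bar{\otimes}} L$. This is the tensor-factor duality captured by Lemma \ref{relative commutant}, except that the stated form requires a type III tensor factor; since here both $K$ and $L$ are $\rm II_1$, I would use the $\rm II_1$ variant of that argument, for instance the $\rm II_1$ analog of \cite[Lemma 3.5]{Va08}. (Alternatively, the symmetric application of the dichotomy to $L$ yields $L\preceq_M B$ and hence $L\Gamma\preceq_M K$.) In either case one of the two desired embeddings holds.

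Granted $L\Gamma\preceq_M K$, the second statement follows directly: Lemma \ref{hariawase1} applied to the two tensor decompositions $M=K\mathbin{\bar{\otimes}} L=L\Gamma\mathbin{\bar{\otimes}} B$, with $M_1=K$, $M_2=L$, $N_1=L\Gamma$, $N_2=B$, produces a unitary $u\in\mathcal{U}(M)$ and a parameter $t>0$ with a refined decomposition $M\simeq K^t\mathbin{\bar{\otimes}} L^{1/t}$ satisfying $uL\Gamma u^*\subset K^t$. The main obstacle I foresee is Step 1: one must ensure that the bi-exactness plus weak-amenability hypothesis delivers the dichotomy for a general (possibly non-amenable) subalgebra $Q=K$, rather than only for amenable $Q$ as in the earlier \cite{PV12} statement; and one must carefully set up the $\rm II_1$ version of the commutant duality passing $K\preceq_M B$ to $L\Gamma\preceq_M L$, since Lemma \ref{relative commutant} as written requires a type III tensor factor. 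Once both are handled, the patching via Lemma \ref{hariawase1} is routine.
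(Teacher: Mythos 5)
Your overall architecture (Popa--Vaes dichotomy over the base $B$, commutant duality \`a la \cite[Lemma 3.5]{Va08}, then Lemma \ref{hariawase1} for the patching) matches the paper's, and the last two steps are fine. But the first step, which you yourself flag as the main obstacle, is a genuine gap and not merely a technical worry: \cite[Theorem 1.4]{PV12} (and the relative version \cite[Theorem 3.1]{PV12}) is a dichotomy for subalgebras that are amenable, or amenable relative to $B$; it does not apply to an arbitrary non-amenable $Q=K$. Worse, the conclusion your argument would deliver --- $K\preceq_M B$ unconditionally, and by symmetry also $L\preceq_M B$, hence \emph{both} $L\Gamma\preceq_M K$ and $L\Gamma\preceq_M L$ --- is false. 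Take the trivial decomposition $K=L\Gamma$, $L=B$: then $K\preceq_M B$ would say $L\Gamma\preceq_{L\Gamma\mathbin{\bar{\otimes}} B}B$, which fails (the unitaries $\lambda_{g_n}$ with $g_n\to\infty$ witness $\|E_B(b^*\lambda_{g_n}a)\|_2\to 0$). So the dichotomy genuinely cannot be pushed to non-amenable $K$, and the theorem's conclusion really is an ``either/or,'' not a ``both.''

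The paper repairs exactly this point by a two-stage bootstrap inside a proof by contradiction. Assume $L\Gamma\not\preceq_M K$ and $L\Gamma\not\preceq_M L$; by \cite[Lemma 3.5]{Va08} this means $K\not\preceq_M B$ and $L\not\preceq_M B$. By \cite[Corollary F.14]{BO08} one extracts a \emph{diffuse abelian} (hence amenable) $A\subset K$ with $A\not\preceq_M B$; now \cite[Theorem 1.4]{PV12} legitimately applies to $A$ and yields that $\mathcal{N}_M(A)''$, which contains $L=K'\cap M$, is amenable relative to $B$. This upgrades $L$ to a subalgebra that is amenable \emph{relative to} $B$, so a second application of \cite[Theorem 1.4]{PV12} to $L$ (using $L\not\preceq_M B$) gives that $M=\mathcal{N}_M(L)''$ is amenable relative to $B$, i.e.\ $L\Gamma$ is amenable --- the desired contradiction. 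If you insert this extraction-of-an-abelian-subalgebra step and run the argument by contradiction on both embeddings simultaneously, rather than trying to prove $K\preceq_M B$ directly, your proof becomes the paper's.
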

\begin{proof}
Suppose by contradiction that $L\Gamma\not\preceq_M K$ and $L\Gamma\not\preceq_M L$. 
By \cite[\textrm{Lemma 3.5}]{Va08}, this exactly means $K \not\preceq_M B$ and $L \not\preceq_M B$. By \cite[\textrm{Corollary F.14}]{BO08}, there is a diffuse abelian subalgebra $A\subset K$ such that $A \not\preceq_M B$. 
By \cite[\textrm{Theorem 1.4}]{PV12}, $\mathcal{N}_M(A)''$ is amenable relative to $B$ in M. Since $L\subset\mathcal{N}_M(A)''$, $L$ is also amenable relative to $B$. We again apply \cite[\textrm{Theorem 1.4}]{PV12} to $L$ and get that $M=\mathcal{N}_M(L)''$ is amenable relative to $B$. 
This means $L\Gamma$ is amenable and hence a contradiction. 
The last assertion follows from Lemma \ref{hariawase1}.
\end{proof}

\begin{Rem}\upshape
Since we generalized \cite[\textrm{Theorem 1.4}]{PV12} to quantum groups of Kac type \cite[\textrm{Theorem A}]{Is13}, 
the same thing is true for $\rm II_1$ factors of $L^\infty(\mathbb{G}_i)$, where $\hat{\mathbb{G}}_i$ is non-amenable, bi-exact and weakly amenable. 
\end{Rem}

Once we get the property, it is easy to deduce the following prime factorization results. 
Since proofs are straightforward, we leave it to the reader.

\begin{Cor}
Let $M_i$ $(i=1,\ldots,m)$ be $\rm II_1$ factors. Assume that each $M_i$ is ``prime'' in the above sense. Let $M_0$ and $N_j$ $(j=0,1,\ldots,n)$ be prime $\rm II_1$ factors in the usual sense satisfying 
$M_0\mathbin{\bar{\otimes}} M_1\mathbin{\bar{\otimes}} \cdots \mathbin{\bar{\otimes}} M_m= N_0\mathbin{\bar{\otimes}} \cdots \mathbin{\bar{\otimes}} N_n (=:M)$. 
Then $n=m$ and there are a unitary $u\in \mathcal{U}(M)$, $\sigma\in  \mathfrak{S}_{n+1}$, and $t_i>0$ with $t_0\cdots t_n=1$ such that $uM_iu^*=N_{\sigma(i)}^{t_i}$.
\end{Cor}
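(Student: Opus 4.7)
I will prove the corollary by induction on $m \ge 0$. The base $m = 0$ is immediate: $M_0 = N_0 \mathbin{\bar{\otimes}} \cdots \mathbin{\bar{\otimes}} N_n$ together with primeness of $M_0$ in the usual sense forces $n = 0$ and $M_0 = N_0$. For the inductive step with $m \ge 1$, the strategy is to locate $M_m$ inside exactly one $N_j$ (up to amplification and unitary conjugation), strip this factor off via its commutant, and feed what remains into the induction hypothesis.

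The key step is to produce $j \in \{0, \ldots, n\}$, a unitary $u \in \mathcal{U}(M)$, and $t > 0$ with $u M_m u^* \subset N_j^t$. I iterate over shrinking subsets $T_k \subset \{0, \ldots, n\}$. Set $T_0 = \{0, \ldots, n\}$, $u_0 = 1$, $s_0 = 1$, and suppose at step $k$ that $u_k M_m u_k^* \subset N_{T_k}^{s_k}$. If $|T_k| = 1$ we stop. Otherwise, the decomposition $M = N_{T_k}^{s_k} \mathbin{\bar{\otimes}} N_{T_k^c}^{1/s_k}$, the fact that $u_k M_m u_k^*$ sits in the first tensor slot, and the factoriality of $(u_k M_m u_k^*)' \cap M = u_k M_{\{m\}^c} u_k^*$ together give
\[
(u_k M_m u_k^*)' \cap M = R_k \mathbin{\bar{\otimes}} N_{T_k^c}^{1/s_k}, \qquad N_{T_k}^{s_k} = u_k M_m u_k^* \mathbin{\bar{\otimes}} R_k,
\]
where $R_k := (u_k M_m u_k^*)' \cap N_{T_k}^{s_k}$ is automatically a factor. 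Choose $j_k \in T_k$ and write $N_{T_k}^{s_k} = N_{\{j_k\}}^{s_k'} \mathbin{\bar{\otimes}} N_{T_k \setminus \{j_k\}}^{s_k''}$. Applying the ``new prime'' property of $u_k M_m u_k^* \simeq M_m$ to this two-factor presentation of $u_k M_m u_k^* \mathbin{\bar{\otimes}} R_k$ yields a unitary $v_k \in \mathcal{U}(N_{T_k}^{s_k}) \subset \mathcal{U}(M)$ and $r_k > 0$ such that $v_k u_k M_m u_k^* v_k^*$ is contained either in $N_{\{j_k\}}^{s_k' r_k}$ (stop, $j := j_k$) or in $N_{T_k \setminus \{j_k\}}^{s_k'' r_k}$ (set $u_{k+1} := v_k u_k$, $T_{k+1} := T_k \setminus \{j_k\}$, and continue). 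Since $|T_k|$ strictly decreases in the second alternative, the procedure terminates in at most $n$ steps.

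Having $P := u M_m u^* \subset N_j^t$, the same commutant-splitting argument inside $M = N_j^t \mathbin{\bar{\otimes}} N_{\{j\}^c}^{1/t}$ gives $N_j^t = P \mathbin{\bar{\otimes}} R$ with $R := P' \cap N_j^t$ a factor. Since $N_j$ (hence $N_j^t$) is prime in the usual sense and $P$ is a $\rm II_1$ factor, $R$ must be of type $\rm I$ and finite, so $R \simeq \mathbb{M}_k$ for some $k \in \mathbb{N}$. Hence $N_j^t = P^k$ and $u M_m u^* = N_j^{t_m}$ with $t_m := t/k$. Setting $\sigma(m) := j$, the commutant $u M_{\{m\}^c} u^* = R \mathbin{\bar{\otimes}} N_{\{j\}^c}^{1/t}$ is isomorphic to $N_{\{\sigma(m)\}^c}^{1/t_m}$ after absorbing $R \simeq \mathbb{M}_k$ into one of the remaining factors, yielding
\[
M_0 \mathbin{\bar{\otimes}} M_1 \mathbin{\bar{\otimes}} \cdots \mathbin{\bar{\otimes}} M_{m-1} \simeq N_{\{\sigma(m)\}^c}^{1/t_m},
\]
a tensor product of one usual-prime factor and $m-1$ new-prime factors on the left, and $n$ usual-prime factors (with amplification scalars multiplying to $1/t_m$) on the right. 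The induction hypothesis applied with parameter $m-1$ forces $n = m$, supplies a permutation $\sigma' \in \mathfrak{S}_m$ on $\{0, \ldots, m-1\}$ and scalars $t_i$ with $\prod_{i<m} t_i = 1/t_m$, and provides a unitary implementing the assignments. Combining with $u$ and extending $\sigma'$ by $\sigma(m) = j$ yields the desired $\sigma \in \mathfrak{S}_{n+1}$, scalars with $\prod_{i \le m} t_i = 1$, and a unitary $u \in \mathcal{U}(M)$ with $u M_i u^* = N_{\sigma(i)}^{t_i}$ for all $i$.

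The main obstacle is the iterative first step: at each stage one must verify that $u_k M_m u_k^*$ genuinely splits off a tensor complement $R_k$ inside $N_{T_k}^{s_k}$. This splitting is not automatic from a subfactor inclusion, but it does follow here because $u_k M_m u_k^*$ sits entirely in one tensor slot of the \emph{ambient} decomposition $M = N_{T_k}^{s_k} \mathbin{\bar{\otimes}} N_{T_k^c}^{1/s_k}$, which forces its commutant in $M$ to split as a product, and matching the second factor on both sides yields the splitting of $N_{T_k}^{s_k}$; this pattern is then re-used in the upgrade to equality in the second step.
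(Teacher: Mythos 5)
The paper offers no proof of this corollary (it is explicitly ``left to the reader''), so I can only judge your argument on its own terms. Your overall strategy --- iteratively apply the ``new primeness'' of $M_m$ inside a shrinking tensor block $N_{T_k}^{s_k}$, use the commutant-splitting $N_{T_k}^{s_k}=P_k\mathbin{\bar{\otimes}}R_k$ to set up each application, upgrade the final containment to an equality via usual primeness of $N_j$, and then induct on $m$ --- is the natural one, and the splitting argument, the final upgrade ($R$ must be type $\mathrm{I}$ because $N_j$ is prime and primeness passes to amplifications), and the bookkeeping of amplification constants are all correct.

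There is, however, a genuine gap in the iteration. The definition of ``prime'' quantifies only over $\rm II_1$ factors $N,K,L$, so applying it to the equation $P_k\mathbin{\bar{\otimes}}R_k=N_{\{j_k\}}^{s_k'}\mathbin{\bar{\otimes}}N_{T_k\setminus\{j_k\}}^{s_k''}$ requires $R_k$ to be a $\rm II_1$ factor. This holds at step $k=0$, where $R_0\simeq M_0\mathbin{\bar{\otimes}}\cdots\mathbin{\bar{\otimes}}M_{m-1}$, but for $k\geq 1$ you only know that $R_k$ is a finite factor with $R_k\mathbin{\bar{\otimes}}N_{T_k^c}^{1/s_k}\simeq M_{\{m\}^c}$, and nothing you have established rules out $R_k\simeq\mathbb{M}_l$. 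In that case $u_kM_mu_k^*\simeq N_{T_k}^{s_k/l}$ with $|T_k|\geq 2$, i.e.\ $M_m$ is stably isomorphic to a tensor product of at least two of the $N_j$; your iteration then cannot continue, and indeed the corollary's conclusion would be false in such a scenario. Excluding it amounts to showing that a factor which is ``prime'' in the new sense cannot be a tensor product of two $\rm II_1$ factors (the von Neumann analogue of ``prime implies irreducible''), which does not follow formally from the definition as stated --- the obvious number-theoretic proof would need the degenerate choice $N=\mathbb{C}$, which the definition excludes. The gap disappears if you additionally assume (or first prove, e.g.\ by testing the definition against an auxiliary prime factor not stably isomorphic to any $N_j$) that each $M_i$ is also prime in the usual sense, since then $R_k$ of type $\mathrm{I}$ with $|T_k|\geq 2$ immediately contradicts irreducibility of $M_m$; with that one supplementary fact your argument goes through.
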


\begin{Cor}
Let $M_i$ $(i=1,\ldots,m)$, $M_0$, and $M$ be as in the previous corollary. Then $\mathcal{F}(M)=\mathcal{F}(M_0)\mathcal{F}(M_1)\cdots\mathcal{F}(M_m)$. Here $\mathcal{F}(M)$ and $\mathcal{F}(M_i)$ are fundamental groups of $M$ and $M_i$.
\end{Cor}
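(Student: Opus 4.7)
The inclusion $\mathcal{F}(M_0)\mathcal{F}(M_1)\cdots\mathcal{F}(M_m)\subset\mathcal{F}(M)$ is immediate: for $t_i\in\mathcal{F}(M_i)$ the product $t_0 t_1\cdots t_m$ lies in $\mathcal{F}(M)$ because
$$M^{t_0\cdots t_m}\simeq M_0^{t_0}\mathbin{\bar{\otimes}} M_1^{t_1}\mathbin{\bar{\otimes}}\cdots\mathbin{\bar{\otimes}} M_m^{t_m}\simeq M_0\mathbin{\bar{\otimes}} M_1\mathbin{\bar{\otimes}}\cdots\mathbin{\bar{\otimes}} M_m=M.$$

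For the reverse inclusion, the plan is to turn a given $t\in\mathcal{F}(M)$ into two prime tensor decompositions of $M$ that differ only in the $0$-th factor, apply the previous corollary, and then decode the resulting permutation. Given $t\in\mathcal{F}(M)$, the isomorphism $M^t\simeq M$ yields
$$M_0\mathbin{\bar{\otimes}} M_1\mathbin{\bar{\otimes}}\cdots\mathbin{\bar{\otimes}} M_m\simeq M\simeq M^t\simeq M_0^t\mathbin{\bar{\otimes}} M_1\mathbin{\bar{\otimes}}\cdots\mathbin{\bar{\otimes}} M_m.$$
I would then apply the previous corollary with $N_0:=M_0^t$ and $N_j:=M_j$ for $j\geq 1$ (noting that $M_0^t$ is still prime in the usual sense) to obtain $\sigma\in\mathfrak{S}_{m+1}$ and positive scalars $s_0,\ldots,s_m$ with $s_0 s_1\cdots s_m=1$ such that, after conjugating by a unitary, $M_i^{s_i}\simeq N_{\sigma(i)}$ for every $i$.

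The heart of the argument is a cycle-by-cycle analysis of $\sigma$. For a cycle $C=(j_1,\ldots,j_\ell)$ not containing $0$, telescoping the isomorphisms $M_{j_k}^{s_{j_k}}\simeq M_{j_{k+1}}$ around the cycle gives $M_{j_1}^{\prod_{j\in C}s_j}\simeq M_{j_1}$, so $\prod_{j\in C}s_j\in\mathcal{F}(M_{j_1})$; moreover the $M_j$ with $j\in C$ are mutually stably isomorphic along the chain, so their fundamental groups coincide and the cycle-product in fact lies in $\prod_{j\in C}\mathcal{F}(M_j)$. For the special cycle $C_0$ containing $0$, the analogous telescoping ends at $N_0=M_0^t$ rather than at $M_0$, giving $\prod_{i\in C_0}s_i=t\cdot u$ for some $u\in\mathcal{F}(M_0)$; and every $M_i$ with $i\in C_0$ is stably isomorphic to $M_0$, so $\mathcal{F}(M_i)=\mathcal{F}(M_0)$ throughout $C_0$. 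Substituting into $\prod_{i=0}^m s_i=\prod_C\prod_{i\in C}s_i=1$ and solving for $t$ yields
$$t=u^{-1}\cdot\prod_{C\neq C_0}\Bigl(\prod_{j\in C}s_j\Bigr)^{-1}\in\mathcal{F}(M_0)\cdot\prod_{C\neq C_0}\prod_{j\in C}\mathcal{F}(M_j)=\prod_{i=0}^m\mathcal{F}(M_i),$$
as required.

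The only real obstacle is the asymmetric role of $M_0$ (prime in the usual sense) versus $M_1,\ldots,M_m$ (prime in the new sense) inherited from the previous corollary: this forces the separate treatment of the cycle $C_0$ and prevents a fully symmetric cycle-product argument. Once this asymmetry is isolated and the fundamental groups of all factors in $C_0$ are identified with $\mathcal{F}(M_0)$, the remainder is straightforward bookkeeping of cycle products against the constraint $\prod s_i=1$.
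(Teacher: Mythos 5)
The paper itself offers no proof of this corollary (it is explicitly left to the reader), so there is nothing to compare against line by line; judged on its own, your argument is the natural one and is essentially correct. The easy inclusion via $(M_0\mathbin{\bar{\otimes}}\cdots\mathbin{\bar{\otimes}} M_m)^{t_0\cdots t_m}\simeq M_0^{t_0}\mathbin{\bar{\otimes}}\cdots\mathbin{\bar{\otimes}} M_m^{t_m}$ is standard, and the cycle-by-cycle bookkeeping for the reverse inclusion works: telescoping around a cycle $C$ of $\sigma$ not through $0$ gives $\prod_{j\in C}s_j\in\mathcal{F}(M_{j_1})$, the cycle through $0$ gives $t\cdot\prod_{i\in C_0}s_i\in\mathcal{F}(M_0)$, and the constraint $\prod_i s_i=1$ then expresses $t$ as a product of elements of the $\mathcal{F}(M_i)$. (You have a couple of harmless inversions: the previous corollary gives $M_{j_k}\simeq N_{\sigma(j_k)}^{s_{j_k}}$, i.e.\ $M_{j_{k+1}}^{s_{j_k}}\simeq M_{j_k}$ rather than the relation you telescope, and correspondingly the cycle through $0$ yields $t\prod_{i\in C_0}s_i\in\mathcal{F}(M_0)$ rather than $\prod_{i\in C_0}s_i=tu$; since fundamental groups are subgroups of $\mathbb{R}^*_+$ these sign flips do not affect the conclusion.) One point you should address explicitly: when you invoke the previous corollary with $N_0:=M_0^t$ and $N_j:=M_j$ for $j\geq 1$, its hypotheses require every $N_j$ to be prime \emph{in the usual sense}, whereas the standing assumption on $M_1,\ldots,M_m$ is only that they are ``prime'' in the strong sense of the preceding theorem, and it is not obvious that the latter formally implies the former. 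In all of the paper's intended examples (group factors of non-amenable, ICC, bi-exact, weakly amenable groups) the $M_j$ are solid and hence prime in the usual sense, so the fix is to record usual primeness of the $M_j$ as part of the hypotheses (or to observe it for the examples at hand) before applying the previous corollary.
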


\end{document}